\def\iddots{\mathinner{\mkern1mu\raise\p@
    \hbox{.}\mkern2mu\raise4\p@\hbox{.}\mkern2mu
    \raise7\p@\vbox{\kern7\p@\hbox{.}}\mkern1mu}}
\def\adots{\mathinner{\mkern2mu\raise\p@\hbox{.} 
 \mkern2mu\raise4\p@\hbox{.}\mkern1mu
 \raise7\p@\vbox{\kern7\p@\hbox{.}}\mkern1mu}}
\def\udots{\mathinner{\mkern1mu\raise\p@
    \vbox{\kern7\p@\hbox{.}}\mkern2mu
            \raise4\p@\hbox{.}\mkern2mu\raise7\p@\hbox{.}\mkern1mu}}
\newtheorem{theo}{Theorem}[section]
\newtheorem{lemma}[theo]{Lemma}
\newtheorem{pr}[theo]{Proposition}
\theoremstyle{definition}
\newtheorem{defn}[theo]{Definition}
\newtheorem{remark}[theo]{Remark}
\newtheorem{ex}[theo]{Example}
\newtheorem{nt}[theo]{Notation}
\def\dim{\mathop{\rm dim}\nolimits}
\newcommand\bigzerou{\smash{\lower.3ex\hbox{\large 0}}} 
\newcommand\bigstaru{\smash{\lower.3ex\hbox{\Large $*$}}} 
\numberwithin{equation}{section}
\title{ Lie algebras generated by Lie modules}
\author{ Nagatoshi Sasano}
\begin{document}
\thispagestyle{empty}

\newpage
\begin{center}{\huge  Lie algebras constructed with Lie modules and their positively and negatively graded modules}\footnote{{\bf Key Words and Phrases}: Graded Lie algebras, Infinite-dimensional Lie algebras, Standard quadruplets\\{\bf 2010 Mathematic Subjects Classification}: Primary 17B70; Secondary 17B65}\end{center}
\vspace{50truept}

\begin{center}{Nagatoshi SASANO}\end{center}
\begin{abstract}
In this paper, we shall give a way to construct a graded Lie algebra $L(\mathfrak{g},\rho,V,{\cal V},B_0)$ from a standard pentad $(\mathfrak{g},\rho,V,{\cal V},B_0)$ which consists of a Lie algebra $\mathfrak{g}$ which has a non-degenerate invariant bilinear form $B_0$ and $\mathfrak{g}$-modules $(\rho, V)$ and ${\cal V}\subset \mathrm {Hom }(V,\mathfrak{k})$ all defined over a field $\mathfrak{k}$.
In general, we do not assume that these objects are finite-dimensional.
We can embed the objects $\mathfrak{g},\rho,V,{\cal V}$ into $L(\mathfrak{g},\rho,V,{\cal V},B_0)$.
Moreover, we construct specific positively and negatively graded modules of $L(\mathfrak{g},\rho,V,{\cal V},B_0)$.
Finally, we give a chain rule on the embedding rules of standard pentads.
\end{abstract}

\section{Introduction}\label{sec;intro}
A standard quadruplet is a quadruplet of the form $(\mathfrak{g},\rho,V,B_0)$, where $\mathfrak{g}$ is a finite-dimensional reductive Lie algebra, $(\rho,V)$ a finite-dimensional representation of $\mathfrak{g}$ and $B_0$ a non-degenerate symmetric invariant bilinear form on $\mathfrak{g}$ all defined over the complex number field $\mathbb{C}$, which satisfies the conditions that $\rho $ is faithful and completely reducible and that $V$ does not have a non-zero invariant element.
In \cite{Sa}, the author proved that any standard quadruplet $(\mathfrak{g},\rho,V,B_0)$ has a graded Lie algebra, denoted by $L(\mathfrak{g},\rho,V,B_0)=\bigoplus _{n\in\mathbb{Z}}V_n$, such that $V_0\simeq \mathfrak{g}$, $V_1\simeq V$ and $V_{-1}\simeq \mathrm {Hom }(V,\mathbb{C})$ (see \cite[Theorem 2.11]{Sa}).
That is, any finite-dimensional reductive Lie algebra and its finite-dimensional faithful and completely reducible representation can be embedded into some (finite or infinite-dimensional) graded Lie algebra.
We call a graded Lie algebra of the form $L(\mathfrak{g},\rho,V,B_0)$ the Lie algebra associated with a standard quadruplet.
Some well-known Lie algebras correspond to some standard quadruplet, for example, finite-dimensional semisimple Lie algebras and loop algebras.
Moreover, the bilinear form $B_0$ can be also embedded into $L(\mathfrak{g},\rho,V,B_0)$, i.e. there exists a non-degenerate symmetric invariant bilinear form on $L(\mathfrak{g},\rho,V,B_0)$ whose restriction to $V_0\times V_0$ coincides with $B_0$ (see \cite[Proposition 3.2]{Sa}).
By the way, H. Rubenthaler obtained some similar results in \cite {arxiv} using the Kac theory in \cite{ka-1}.

\par The first purpose of this paper is to extend the theory of standard quadruplets to the cases where the objects are infinite-dimensional.
For this, we need to consider pentads $(\mathfrak{g},\rho,V,{\cal V},B_0)$ instead of quadruplets, where $\mathfrak{g}$ is a finite or infinite-dimensional Lie algebra, $\rho :\mathfrak{g}\otimes V\rightarrow V$ a representation of $\mathfrak{g}$ on a finite or infinite-dimensional vector space $V$, ${\cal V}$ a $\mathfrak{g}$-submodule of $\mathrm {Hom}(V,\mathfrak{k})$, $B_0$ a non-degenerate invariant bilinear form on $\mathfrak{g}$ all defined over a field $\mathfrak{k}$.
In general, we do not assume that $B_0$ is symmetric.
We define the notion of {\it standard pentads} by the existence of a linear map $\Phi _{\rho }:V\otimes {\cal V}\rightarrow \mathfrak{g}$ satisfying $B_0(a,\Phi _{\rho }(v\otimes \phi ))=\langle \rho (a\otimes v),\phi \rangle$ for any $a\in\mathfrak{g}$, $v\in V$ and $\phi \in {\cal V}$.
A standard quadruplet $(\mathfrak{g},\rho,V,B_0)$ can be naturally regarded as a standard pentad $(\mathfrak{g},\rho,V,\mathrm {Hom}(V,\mathbb{C}),B_0)$, and, thus, we can say that the notion of standard pentads is an extension of the notion of standard quadruplets.
Then, by a similar argument to the argument in \cite{Sa}, we can construct a graded Lie algebra from an arbitrary standard pentad $(\mathfrak{g},\rho,V,{\cal V},B_0)$ denoted by $L(\mathfrak{g},\rho,V,{\cal V},B_0)=\bigoplus _{n\in \mathbb{Z}}V_n$ such that the objects $\mathfrak{g},\rho,V,{\cal V}$ can be embedded into it.
We call such a graded Lie algebra {\it a Lie algebra associated with a standard pentad}.
This is the first main result of this paper.
Of course, the graded Lie algebra associated with a standard quadruplet $(\mathfrak{g},\rho,V,B_0)$ is isomorphic to the graded Lie algebra associated with a standard pentad $(\mathfrak{g},\rho,V,\mathrm {Hom}(V,\mathbb{C}),B_0)$.
Moreover, if the bilinear form $B_0$ of $(\mathfrak{g},\rho,V,{\cal V},B_0)$ is symmetric, then $B_0$ can be also embedded into $L(\mathfrak{g},\rho,V,{\cal V},B_0)$, i.e. there exists a non-degenerate symmetric invariant bilinear form $B_L$ on $L(\mathfrak{g},\rho,V,{\cal V},B_0)$ whose restriction to $V_0\times V_0$ coincides with $B_0$.

\par When $B_0$ is symmetric, we can expect that a Lie algebra of the form $L(\mathfrak{g},\rho,V,{\cal V},B_0)$ (not necessary finite-dimensional) and its representation can be embedded into some graded Lie algebra using $B_L$.
The second purpose is to construct positively graded modules and negatively graded modules of $L(\mathfrak{g},\rho,V,{\cal V},B_0)$ which can be embedded into some graded Lie algebra under some assumptions.
In general, it is known that for any graded Lie algebra $\mathfrak{l}=\bigoplus _{n\in\mathbb{Z}}\mathfrak{l}_n$ and $\mathfrak{l}_0$-module $U$, there exists a positively (respectively negatively) graded $\mathfrak{l}$-module such that the base space (respectively top space) is the given $\mathfrak{l}_0$-module $U$ (see \cite [Theorem 1.2]{Shen}).
In this paper, we shall try to construct such $L(\mathfrak{g},\rho,V,{\cal V},B_0)$-modules from a $\mathfrak{g}$-module $(\pi, U)$ using a similar way to the construction of a Lie algebra associated with a standard pentad.
Precisely, we inductively construct a positively (respectively negatively) graded $L(\mathfrak{g},\rho,V,{\cal V},B_0)$-module $(\tilde{\pi }^+,\tilde{U}^+)$, $\tilde{U}^+=\bigoplus _{m\geq 0}U_m^+$ (respectively $(\tilde{\pi }^-,\tilde{U}^-)$, $\tilde{U}^-=\bigoplus _{m\leq 0}U_m^-$) such that the ``base space'' $U^+_0$ (respectively the ``top space'' $U^-_0$) is the given $\mathfrak{g}$-module $U$.
In general,  the modules $\tilde{U}^+$ and $\tilde{U}^-$ are infinite-dimensional.
We shall try to embed $L(\mathfrak{g},\rho,V,{\cal V},B_0)$ and its module of the form $\tilde{U}^+$ into some graded Lie algebra.
If we assume that $B_0$ is symmetric and that $U$ has a $\mathfrak{g}$-submodule ${\cal U}$ of $\mathrm{Hom }(U,\mathfrak{k})$ such that $(\mathfrak{g},\pi,U,{\cal U},B_0)$ is a standard pentad, then we can embed the objects $L(\mathfrak{g},\rho,V,{\cal V},B_0)$ and $\tilde{U}^+$ into some graded Lie algebra.
Precisely, under these assumptions, we have that a pentad $(L(\mathfrak{g},\rho,V,{\cal V},B_0),\tilde{ \pi}^+ ,\tilde{ U}^+,\tilde{\cal U}^-,B_L)$ is also standard, and, thus, we can embed the objects $L(\mathfrak{g},\rho,V,{\cal V},B_0)$, $\tilde{ U}^+$, $\tilde{\cal U}^-$ into the graded Lie algebra $L(L(\mathfrak{g},\rho,V,{\cal V},B_0),\tilde{ \pi}^+ ,\tilde{ U}^+,\tilde{\cal U}^-,B_L)$.
In this situation, we have a ``chain rule'' of the Lie algebras associated with a standard pentad.
This is the second main result of this paper.

\par This paper consists of three sections.
\par In section \ref{sec;standard_pentads}, we shall study the Lie algebras associated with a standard pentad.
First, in section \ref{sec;prepair_stap}, we define the notion of standard pentads (see Definition \ref{defn;stap}) and construct a graded Lie algebra from a standard pentad $(\mathfrak{g},\rho,V,{\cal V},B_0)$, which is denoted by $L(\mathfrak{g},\rho,V,{\cal V},B_0)=\bigoplus _{n\in \mathbb{Z}}V_n$ (see Theorem \ref{StapLieAlgebraexists}).
In section \ref{sec;standard_pentads_with_sym_bilinear}, we consider some properties of Lie algebras of the form $L(\mathfrak{g},\rho,V,{\cal V},B_0)$ such that $B_0$ is symmetric.
In these cases, we can also embed the bilinear form $B_0$ into $L(\mathfrak{g},\rho,V,{\cal V},B_0)$, i.e. we can obtain a non-degenerate symmetric invariant bilinear form on $L(\mathfrak{g},\rho,V,{\cal V},B_0)$ whose restriction to $V_0\times V_0$ coincides with $B_0$ (see Proposition \ref{pr;stap_bilinear_exist}).
Moreover, the Lie algebra $L(\mathfrak{g},\rho,V,{\cal V},B_0)$ can be characterized by the transitivity and the existence of such a bilinear form (see Theorem  \ref{th;universality_stap}).
Finally, we give two lemmas on derivations on $L(\mathfrak{g},\rho,V,{\cal V},B_0)$ (see Lemmas \ref{lem;derivationext} and \ref{lemma;der_Binv}).

\par In section \ref{sec;graded_modules}, we shall study positively and negatively graded modules of a Lie algebra of the form $L(\mathfrak{g},\rho,V,{\cal V},B_0)$.
First, in sections \ref{sec;const_graded_ext_vec} and \ref{sec;const_graded_ext}, we shall construct positively graded $L(\mathfrak{g},\rho,V,{\cal V},B_0)$-module and negatively graded $L(\mathfrak{g},\rho,V,{\cal V},B_0)$-module from a $\mathfrak{g}$-module $(\pi, U)$, i.e. we shall give another proof of \cite [Theorem 1.2]{Shen} in the special cases where the graded Lie algebra is of the form $L(\mathfrak{g},\rho,V,{\cal V},B_0)$.
In section \ref{sec;const_graded_ext_vec}, we construct a family of $\mathfrak{g}$-modules $\{U^+_m\}_{m\geq 0}$ (respectively $\{U^-_m\}_{m\leq 0}$) from the pentad $(\mathfrak{g},\rho,V,{\cal V},B_0)$ and the $\mathfrak{g}$-module $(\pi, U)$ by induction.
In section  \ref{sec;const_graded_ext}, we define a structure of positively (respectively negatively) graded $L(\mathfrak{g},\rho,V,{\cal V},B_0)$-module on $\tilde{U}^+:=\bigoplus _{m\geq 0}U_m^+$ (respectively $\tilde{U}^-:=\bigoplus _{m\leq 0}U_m^-$).
We call this positively (respectively negatively) graded module of $L(\mathfrak{g},\rho,V,{\cal V},B_0)$ the {\it positive extension} (respectively {\it negative extension}) of $U$ with respect to $(\mathfrak{g},\rho,V,{\cal V},B_0)$ (see Theorems \ref{th;pi_positivelyextend} and \ref{th;pi_negativelyextend}).
These modules are transitive and characterized by their transitivity (see Theorem \ref{th;universalitystap}).
In sections \ref{sec;stapext_1} and \ref{sec;stapext_2}, we try to construct a standard pentad which contains a Lie algebra of the form $L(\mathfrak{g},\rho,V,{\cal V},B_0)$ and its module of the form $\tilde{U}^+$.
For this, we need to assume that $B_0$ is symmetric and $U$ is embedded into some standard pentad $(\mathfrak{g},\pi,U,{\cal U},B_0)$.
In section \ref{sec;stapext_1}, for the $\mathfrak{g}$-submodule ${\cal U}$ of $\mathrm {Hom }(U,\mathfrak{k})$, we shall extend the canonical pairing $U\times {\cal U}$ to $\tilde{U}^+\times \tilde{\cal U}^-$.
Moreover, in section \ref{sec;stapext_2}, we shall construct the $\Phi $-map of $(L(\mathfrak{g},\rho,V,{\cal V},B_0),\tilde{\pi }^+,\tilde{U}^+,\tilde{\cal U}^-,B_L)$ from the $\Phi $-map of the pentad $(\mathfrak{g},\pi,U,{\cal U},B_0)$ inductively.
Consequently, under the assumptions that $(\mathfrak{g},\rho,V,{\cal V},B_0)$ and $(\mathfrak{g},\pi,U,{\cal U},B_0)$ are standard pentads and that their bilinear form $B_0$ is symmetric, we can embed the Lie algebra $L(\mathfrak{g},\rho,V,{\cal V},B_0)$ and its module $\tilde{U}^+$ into a standard pentad $(L(\mathfrak{g},\rho,V,{\cal V},B_0),\tilde{\pi }^+,\tilde{U}^+,\tilde{\cal U}^-,B_L)$.
Finally, in section \ref{sec;chainrule}, we consider the graded Lie algebra $L(L(\mathfrak{g},\rho,V,{\cal V},B_0),\tilde{\pi }^+,\tilde{U}^+,\tilde{\cal U}^-,B_L)$ under the situation of sections \ref{sec;stapext_1} and \ref{sec;stapext_2}.
From the constructions of $L(\mathfrak{g},\rho,V,{\cal V},B_0)$, $\tilde{U}^+$ and $\tilde{\cal U}^-$, we can write this graded Lie algebra using the data $\mathfrak{g},\rho,V,{\cal V},B_0$ and $U,{\cal U}$.
Precisely, we have the following result on the structures of Lie algebras:
$$
L(L(\mathfrak{g},\rho,V,{\cal V},B_0),\tilde{\pi }^+,\tilde{U}^+,\tilde{\cal U}^-,B_L)\simeq L(\mathfrak{g},\rho\oplus \pi,V\oplus U,{\cal V}\oplus {\cal U},B_0).
$$
This is a chain rule in the theory of standard pentads (see Theorem \ref{theo;chainrule}).
\begin{nt}
In this paper, we regard a representation $\rho $ of a Lie algebra $\mathfrak{l}$ on $V$ as a linear map $\rho :\mathfrak{l}\otimes V\rightarrow V$ which satisfies that 
$$
\rho ([a,b]\otimes v)=\rho (a\otimes \rho(b\otimes v))-\rho (b\otimes \rho(a\otimes v))
$$
for any $a,b\in \mathfrak{l}$ and $v\in V$.
\end{nt}
\begin{defn}
In this paper, we say that a Lie algebra $\mathfrak{l}$ is a {\it $\mathbb{Z}$-graded Lie algebra} or simply a {\it graded Lie algebra} if and only if there exist vector subspaces $\mathfrak{l}_n$ of $\mathfrak{l}$ for all $n\in\mathbb{Z}$ such that:
\begin{itemize}
\item{$\mathfrak{l}=\bigoplus _{n\in\mathbb{Z}}\mathfrak{l}_n$\quad and\quad  $[\mathfrak{l}_n,\mathfrak{l}_m]\subset \mathfrak{l}_{n+m}$ for any $n,m\in\mathbb{Z}$,}
\item{$\mathfrak{l}$ is generated by $\mathfrak{l}_{-1}\oplus \mathfrak{l}_0\oplus \mathfrak{l}_1$.}
\end{itemize}
In general, we do not assume that each $\mathfrak{l}_n$ is finite-dimensional (cf. \cite[Definition 1]{ka-1}).
\par 
Moreover, if $\mathfrak{l}$ satisfies the following two conditions, we say that $\mathfrak{l}$ is transitive (see \cite[Definition 2]{ka-1}):
\begin{itemize}
\item{for $x\in \mathfrak{l}_{i}$, $i\geq 0$, $[x,\mathfrak{l}_{-1}]=\{0\}$ implies $x=0$,}
\item{for $x\in \mathfrak{l}_{i}$, $i\leq 0$, $[x,\mathfrak{l}_{1}]=\{0\}$ implies $x=0$.}
\end{itemize}
\end{defn}
\begin{defn}
In this paper, we say that a module $(\varpi ^+, W)$, $W=\bigoplus _{m\geq 0}W_m$ (respectively $(\varpi ^-, W)$, $W=\bigoplus _{m\leq 0}W_m$) of a graded Lie algebra $\bigoplus _{n\in\mathbb{Z}}\mathfrak{l}_n$ is positively graded (respectively negatively graded) when $\varpi ^+(\mathfrak{l}_n\otimes W_m)\subset W_{n+m}$ (respectively $\varpi ^-(\mathfrak{l}_n\otimes W_m)\subset W_{n+m}$) for any $n,m$ (cf. \cite[Definition 0.1]{Shen}), and, moreover, we say that a positively graded module $(\varpi ^+, W)$ (respectively a negatively graded module $(\varpi ^-, W)$) is transitive when the following condition holds (cf. \cite[Definition 1.1]{Shen}):
\begin{align*}
&\text{for $w\in W_{m}$, $m\geq 1$, $\varpi ^+(V_{-1}\otimes w)=\{0\}$ implies $w=0$}\\
(\text{respectively} \quad &\text{for $w\in W_{m}$, $m\leq -1$, $\varpi ^-(V_{1}\otimes w)=\{0\}$ implies $w=0$}).
\end{align*}
\end{defn}

\begin{nt}
In this paper, we denote the set of all natural numbers, integers and complex numbers by $\mathbb{N}$, $\mathbb{Z}$ and $\mathbb{C}$ respectively.
We denote the set of matrices of size $n\times m$ $(n,m\in\mathbb{N})$ whose entries are belong to a ring $R$ by $M(n,m;R)$, the unit matrix and the zero matrix of size $n$ by $I_n$ and $O_n$ respectively.
Moreover, $\delta _{kl}$ stands for the Kronecker delta, $\mathrm{Tr}(A)$ stands for the trace of a square matrix $A$.
\end{nt}

\section{Standard pentads and corresponding Lie algebras}\label{sec;standard_pentads}
\subsection{Standard pentads}\label{sec;prepair_stap}
Let us start with the definitions of $\Phi $-map and standard pentads.
\begin{defn}[$\Phi $-map]\label{defn;phimap}
Let $\mathfrak{g}$ be a non-zero Lie algebra with non-degenerate invariant bilinear form $B_0$, $\rho :\mathfrak{g}\otimes V\rightarrow V$ a representation of $\mathfrak{g}$ on a vector space $V$ and ${\cal V}$ a $\mathfrak{g}$-submodule of ${\rm Hom }(V,\mathfrak{k})$ all defined over a field $\mathfrak{k}$.
We denote the canonical pairing between $V$ and $\mathrm {Hom}(V,\mathfrak{k})$ by $\langle \cdot,\cdot\rangle $ and the canonical representation of $\mathfrak{g}$ on ${\cal V}$ by $\varrho$.
Then, if a pentad $(\mathfrak{g},\rho,V,{\cal V},B_0)$ has a linear map $\Phi _{\rho }:V\otimes {\cal V}\rightarrow \mathfrak{g}$ which satisfies an equation
\begin{align}
B_0(a,\Phi _{\rho }(v\otimes \phi ))=\langle \rho (a\otimes v),\phi \rangle =-\langle v,\varrho (a\otimes \phi )\rangle \label{defn;eq_stap_Phimap}
\end{align}for any $a\in \mathfrak{g}$, $v\in V$ and $\phi \in {\cal V}$ (cf. \cite [Definition 1.9]{Sa}), we call it a $\Phi $-map of the pentad $(\mathfrak{g},\rho,V,{\cal V},B_0)$.
Moreover, when a pentad $(\mathfrak{g},\rho,V,{\cal V},B_0)$ has a $\Phi $-map, we define a linear map $\Psi _{\rho }:{\cal V}\otimes V\rightarrow \mathfrak{g}$ by:
\begin{align}
B_0(a,\Psi _{\rho }(\phi \otimes v))=\langle v,\varrho (a\otimes \phi) \rangle =-\langle \rho (a\otimes v),\phi \rangle.
\end{align}
We call this map $\Psi _{\rho }$ a $\Psi $-map of $(\mathfrak{g},\rho,V,{\cal V},B_0)$.
\end{defn}
In general, a pentad might not have a $\Phi $-map.
If a pentad $(\mathfrak{g},\rho,V,{\cal V},B_0)$ has a $\Phi $-map, then the equation (\ref{defn;eq_stap_Phimap}) determines the linear map $\Phi _{\rho }$ uniquely.
Moreover, we have an equation 
$$
\Phi _{\rho }(v\otimes \phi )+\Psi _{\rho }(\phi \otimes v)=0
$$
for any $v\in V$ and $\phi \in {\cal V}$.
\begin{defn}[Standard pentads]\label{defn;stap}
We retain to use the notation of Definition \ref{defn;phimap}.
If a pentad $(\mathfrak{g},\rho,V,{\cal V},B_0)$ satisfies the following conditions, we call it a {\it standard pentad}:
\begin{align}&\text{the restriction of $\langle \cdot,\cdot\rangle$ to $V\times {\cal V}$ is non-degenerate,}\label{defn;condi_stap_pairing}\\&\text{there exists a $\Phi $-map from $V\otimes {\cal V}$ to $\mathfrak{g}$.}\label{defn;condi_stap_Phimap}\end{align}
\end{defn}
\begin{lemma}
Under the notation of Definitions \ref{defn;phimap} and \ref{defn;stap}, we have the following claims:
\begin{align}
&\text{if $V$ is finite-dimensional, then a vector space ${\cal V}$ satisfying (\ref{defn;condi_stap_pairing}) coincides with ${\rm Hom }(V,\mathfrak{k})$,}\label{lem;vfin}\\
&\text{if $\mathfrak{g}$ is finite-dimensional, then any pentad $(\mathfrak{g},\rho,V,{\cal V},B_0)$ satisfies the condition (\ref{defn;condi_stap_Phimap}).}\label{lem;gfin}
\end{align}
In particular, if both $\mathfrak{g}$ and $V$ are finite-dimensional, then any quadruplet $(\mathfrak{g},\rho,V,B_0)$ can be naturally regarded as a standard pentad $(\mathfrak{g},\rho,V,{\rm Hom }(V,\mathfrak{k}),B_0)$.
\end{lemma}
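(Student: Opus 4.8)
The plan is to treat the two claims separately by elementary linear algebra: the first by a dimension count, and the second by using the non-degeneracy of $B_0$ to invert the defining relation (\ref{defn;eq_stap_Phimap}).

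For (\ref{lem;vfin}), the inclusion ${\cal V}\subseteq {\rm Hom}(V,\mathfrak{k})$ already gives one containment, so it suffices to compare dimensions. Since $V$ is finite-dimensional, ${\rm Hom}(V,\mathfrak{k})$ is finite-dimensional of dimension $\dim V$, whence ${\cal V}$ is finite-dimensional with $\dim {\cal V}\leq \dim V$. On the other hand, non-degeneracy of the pairing (\ref{defn;condi_stap_pairing}) in the $V$-slot says exactly that the map $V\to {\cal V}^{*}$, $v\mapsto \langle v,\cdot\rangle$, is injective, so $\dim V\leq \dim {\cal V}^{*}=\dim {\cal V}$. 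Combining the two inequalities gives $\dim {\cal V}=\dim {\rm Hom}(V,\mathfrak{k})$, and a subspace of the same finite dimension as the ambient space must be the whole space; hence ${\cal V}={\rm Hom}(V,\mathfrak{k})$. Note that the $\mathfrak{g}$-module structure on ${\cal V}$ plays no role here.

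For (\ref{lem;gfin}), the idea is to solve (\ref{defn;eq_stap_Phimap}) for $\Phi _{\rho }$ directly. Fixing $v\in V$ and $\phi \in {\cal V}$, the assignment $a\mapsto \langle \rho (a\otimes v),\phi \rangle$ is a linear functional on $\mathfrak{g}$, i.e. an element of $\mathfrak{g}^{*}$. Since $\mathfrak{g}$ is finite-dimensional and $B_0$ is non-degenerate, the linear map $\mathfrak{g}\to \mathfrak{g}^{*}$ sending $x$ to $B_0(\cdot ,x)$ is injective between spaces of equal dimension, hence an isomorphism; here one must use the slot of $B_0$ matching the left argument in (\ref{defn;eq_stap_Phimap}), since $B_0$ is not assumed symmetric. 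Thus there is a unique $x\in \mathfrak{g}$ with $B_0(a,x)=\langle \rho (a\otimes v),\phi \rangle$ for all $a$, and I set $\Phi _{\rho }(v\otimes \phi ):=x$. The remaining point is well-definedness on the tensor product: the functional $a\mapsto \langle \rho (a\otimes v),\phi \rangle$ depends bilinearly on $(v,\phi )$, and the isomorphism $\mathfrak{g}\to \mathfrak{g}^{*}$ is linear, so $x$ depends bilinearly on $(v,\phi )$ and the assignment descends to a linear map $\Phi _{\rho }:V\otimes {\cal V}\to \mathfrak{g}$, verifying (\ref{defn;condi_stap_Phimap}).

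Finally, the particular assertion follows immediately: when both $\mathfrak{g}$ and $V$ are finite-dimensional, take ${\cal V}={\rm Hom}(V,\mathfrak{k})$. The canonical pairing $V\times {\rm Hom}(V,\mathfrak{k})$ is non-degenerate because $V$ is finite-dimensional, giving (\ref{defn;condi_stap_pairing}), while (\ref{lem;gfin}) supplies a $\Phi $-map, giving (\ref{defn;condi_stap_Phimap}); hence $(\mathfrak{g},\rho ,V,{\rm Hom}(V,\mathfrak{k}),B_0)$ is standard. I do not expect a serious obstacle anywhere, as the statement is essentially a bookkeeping consequence of non-degeneracy; the only points genuinely requiring care are keeping track of which argument of the possibly non-symmetric $B_0$ is used, and checking bilinearity so that $\Phi _{\rho }$ is defined on $V\otimes {\cal V}$ rather than merely on pure tensors.
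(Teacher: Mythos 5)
Your proof is correct and follows essentially the same route as the paper: claim (\ref{lem;gfin}) is proved exactly as in the text, by using non-degeneracy of $B_0$ on the finite-dimensional $\mathfrak{g}$ to identify the functional $a\mapsto\langle\rho(a\otimes v),\phi\rangle$ with an element of $\mathfrak{g}$, and the paper dismisses (\ref{lem;vfin}) as clear, for which your dimension count is the standard justification. The extra care you take with the slot of the non-symmetric $B_0$ and with bilinearity in $(v,\phi)$ is sound and only makes the argument more complete.
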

\begin{proof}
The claim (\ref{lem;vfin}) is clear.
Let us show the claim (\ref{lem;gfin}).
If $\mathfrak{g}$ is finite-dimensional, then the dual space of $\mathfrak{g}$ can be identified with $\mathfrak{g}$.
Precisely, if $\mathfrak{g}$ is finite-dimensional, then any linear map $f :\mathfrak{g}\rightarrow \mathfrak{k}$ corresponds to some element $A\in \mathfrak{g}$ such that 
$$
f (a)=B_0(a,A)
$$
for any $a\in\mathfrak{g}$.
Thus, for any $v\in V$ and $\phi \in {\cal V}$, there exists an element of $\mathfrak{g}$ which corresponds to a linear map $\mathfrak{g}\rightarrow \mathfrak{k}$ defined by
$$
a\mapsto \langle \rho (a\otimes v),\phi \rangle.
$$
It means that the pentad $(\mathfrak{g},\rho,V,{\cal V},B_0)$ has the $\Phi $-map.
\end{proof}
\begin{remark}
If $V$ is infinite-dimensional, then a submodule ${\cal V}$ of $\mathrm{Hom }(V,\mathfrak{k})$ satisfying the condition (\ref{defn;condi_stap_pairing}) does not necessary coincide with $\mathrm {Hom}(V,\mathfrak{k})$.
\end{remark}
\begin{remark}\label{rem;mght_not_stap}
In general, a Lie algebra $\mathfrak{g}$ and its module $(\rho,V)$ might not have a $\mathfrak{g}$-submodule ${\cal V}\subset \mathrm{Hom}(V,\mathfrak{k})$ and a bilinear form $B_0$ such that a pentad $(\mathfrak{g},\rho,V,{\cal V},B_0)$ is standard.
\end{remark}
\begin{ex}
Let $\mathfrak{g}=\mathfrak{sl}_2(\mathbb{C})$, $K$ be the Killing form on $\mathfrak{g}$ and ${\cal L}(\mathfrak{g})=\mathbb{C}[t,t^{-1}]\otimes \mathfrak{g}$ be the loop algebra (see \cite [Ch.7]{ka-2}).
Let $K_{\cal L}$ be a bilinear form on ${\cal L}(\mathfrak{g})$ defined by:
$$
K_{\cal L}(t^n\otimes X,t^m\otimes Y):=\delta _{n+m,0}K(X,Y).
$$
Clearly, the bilinear form $K_{\cal L}$ is non-degenerate and invariant.
Thus, we can regard ${\cal L}(\mathfrak{g})$ itself as a ${\cal L}(\mathfrak{g})$-submodule of $\mathrm {Hom }( {\cal L}(\mathfrak{g}),\mathbb{C})$ via the non-degenerate invariant bilinear form $K_{\cal L}$.
Then, a pentad $({\cal L}(\mathfrak{g}),\mathrm {ad}, {\cal L}(\mathfrak{g}),{\cal L}(\mathfrak{g}),K_{\cal L})$, where $\mathrm {ad}$ stands for the adjoint representation, is standard.
In fact, we have the condition (\ref{defn;condi_stap_pairing}) clearly, and, we can identify the bracket product ${\cal L}(\mathfrak{g})\times {\cal L}(\mathfrak{g})\rightarrow {\cal L}(\mathfrak{g})$ with the $\Phi $-map of $({\cal L}(\mathfrak{g}),\mathrm {ad}, {\cal L}(\mathfrak{g}),{\cal L}(\mathfrak{g}),K_{\cal L})$, denoted by $\Phi _{\mathrm {ad}}^1$.
\par However, a pentad $({\cal L}(\mathfrak{g}),\mathrm {ad}, {\cal L}(\mathfrak{g}),\mathrm {Hom}({\cal L}(\mathfrak{g}),\mathbb{C}),K_{\cal L})$ is not standard since it does not have the $\Phi $-map.
In fact, if we assume that this pentad might have the $\Phi $-map, denoted by $\Phi _{\mathrm {ad}}^2$, and put
\begin{align*}
H_0:=\begin{pmatrix}
1&0\\0&-1
\end{pmatrix},\quad
X_0:=\begin{pmatrix}
0&1\\0&0
\end{pmatrix},
\quad
Y_0:=\begin{pmatrix}
0&0\\1&0
\end{pmatrix}
\in\mathfrak{g},\\
\quad \phi _{Y_0}\in \mathrm {Hom}({\cal L}(\mathfrak{g}),\mathbb{C}),\quad \langle t^n\otimes X, \phi _{Y_0}\rangle := K(Y_0,X),
\end{align*}
then an element $\Phi _{\mathrm {ad}}^2((1\otimes X_0)\otimes \phi _{Y_0})\in {\cal L}(\mathfrak{g})$ satisfies the equation 
\begin{align}
&K_{\cal L}(t^n\otimes H_0,\Phi _{\mathrm {ad}}^2 ((1\otimes X_0)\otimes \phi _{Y_0}))=\langle [t^n\otimes H_0,1\otimes X_0],\phi _{Y_0}\rangle\notag \\
&\quad =\langle t^n\otimes 2X_0,\phi _{Y_0}\rangle\notag\\
&\quad =K(Y_0,2X_0)\notag \\
&\quad =8 \label{eq;1}
\end{align}
for any $n\in \mathbb{Z}$.
The Lie algebra ${\cal L}(\mathfrak{g})$ does not have an element satisfying (\ref {eq;1}) for any $n\in\mathbb{Z}$, and, thus, the pentad $({\cal L}(\mathfrak{g}),\mathrm {ad}, {\cal L}(\mathfrak{g}),\mathrm {Hom}({\cal L}(\mathfrak{g}),\mathbb{C}),K_{\cal L})$ does not have the $\Phi $-map.
\end{ex}

On the $\Phi $-map and $\Psi $-map of a standard pentad, we have similar properties to ones of the $\Phi $-map and $\Psi $-map of a standard quadruplet (see \cite{Sa}).

\begin{pr}\label{pr;phopsi_liehom}
The $\Phi $-map and the $\Psi $-map of a standard quadruplet $(\mathfrak{g},\rho,V,{\cal V},B_0)$ are homomorphisms of Lie modules.
 (cf. \cite [Proposition 1.3]{Sa}).
\end{pr}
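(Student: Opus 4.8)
The plan is to verify directly that $\Phi _{\rho }$ intertwines the natural $\mathfrak{g}$-action on its source $V\otimes {\cal V}$ with the adjoint action on its target $\mathfrak{g}$; the statement for $\Psi _{\rho }$ will then follow by the same template, or more quickly from the relation $\Phi _{\rho }(v\otimes \phi )+\Psi _{\rho }(\phi \otimes v)=0$ together with the fact that the flip $V\otimes {\cal V}\to {\cal V}\otimes V$ is a $\mathfrak{g}$-module isomorphism. Recall that the tensor-product module structure on $V\otimes {\cal V}$ is $b\cdot (v\otimes \phi )=\rho (b\otimes v)\otimes \phi +v\otimes \varrho (b\otimes \phi )$ for $b\in \mathfrak{g}$. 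Thus the identity to establish is
$$
\Phi _{\rho }(\rho (b\otimes v)\otimes \phi )+\Phi _{\rho }(v\otimes \varrho (b\otimes \phi ))=[b,\Phi _{\rho }(v\otimes \phi )]
$$
for all $b\in \mathfrak{g}$, $v\in V$, $\phi \in {\cal V}$.

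Since $B_0$ is non-degenerate, it suffices to show that both sides pair equally with an arbitrary $a\in \mathfrak{g}$. I would compute the left-hand side using the defining equation (\ref{defn;eq_stap_Phimap}) termwise: the first term yields $\langle \rho (a\otimes \rho (b\otimes v)),\phi \rangle$, and for the second term I first apply (\ref{defn;eq_stap_Phimap}) and then the contragredient relation $\langle \rho (a\otimes v),\varrho (b\otimes \phi )\rangle =-\langle \rho (b\otimes \rho (a\otimes v)),\phi \rangle$ to rewrite it as $-\langle \rho (b\otimes \rho (a\otimes v)),\phi \rangle$. Summing and invoking the representation identity $\rho ([a,b]\otimes v)=\rho (a\otimes \rho (b\otimes v))-\rho (b\otimes \rho (a\otimes v))$ from the Notation collapses the left-hand pairing to $\langle \rho ([a,b]\otimes v),\phi \rangle$.

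For the right-hand side I would use the invariance of $B_0$ in the form $B_0(a,[b,c])=B_0([a,b],c)$ with $c=\Phi _{\rho }(v\otimes \phi )$, and then apply (\ref{defn;eq_stap_Phimap}) once more to obtain $B_0([a,b],\Phi _{\rho }(v\otimes \phi ))=\langle \rho ([a,b]\otimes v),\phi \rangle$. The two pairings agree for every $a\in \mathfrak{g}$, so non-degeneracy of $B_0$ forces the desired identity, which is exactly the assertion that $\Phi _{\rho }$ is a homomorphism of $\mathfrak{g}$-modules.

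I expect no serious obstacle here; the argument is the two short pairing computations above followed by an appeal to non-degeneracy. The only genuine care required is the bookkeeping of signs, namely the sign in the contragredient action $\varrho $ inherited from (\ref{defn;eq_stap_Phimap}) and the sign convention in the invariance of $B_0$, together with ensuring that the tensor-product action is arranged so that the representation identity for $\rho $ applies in the correct order. Once these conventions are fixed the verification is routine.
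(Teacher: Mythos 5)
Your argument is correct and is precisely the computation that the paper delegates to \cite[Proposition 1.3]{Sa}: pair both sides with an arbitrary $a\in\mathfrak{g}$, use the defining equation of $\Phi_{\rho}$ together with the contragredient relation and the invariance $B_0(a,[b,c])=B_0([a,b],c)$, and conclude by non-degeneracy of $B_0$; the claim for $\Psi_{\rho}$ then follows from $\Psi_{\rho}(\phi\otimes v)=-\Phi_{\rho}(v\otimes\phi)$. No gaps.
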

\begin{proof}
We can prove it by the same way to \cite[Proposition 1.3]{Sa}.
\end{proof}
\begin{defn}\label{defn;Phi_rhov}
Let $(\mathfrak{g},\rho,V,{\cal V},B_0)$ be a standard pentad.
For each element $v\in V$ and $\phi \in {\cal V}$, we define linear maps $\Phi _{\rho, v}\in\text{Hom}({\cal V},\mathfrak{g})$ and $\Psi _{\rho, \phi}\in\text{Hom}(V,\mathfrak{g})$  by:
\begin{align*}
&\Phi _{\rho, v} (\psi):={\Phi }_{\rho } (v\otimes \psi), \hspace {10truept}\Psi _{\rho, \phi} (u):={\Psi }_{\rho } (\phi \otimes u)
\end{align*}
for any $u\in V$ and $\psi \in {\cal V}$.
Moreover, we define the following linear maps:
\begin{align*}
\Phi _{\rho }^{\circ}:\ &V\rightarrow \mathrm {Hom} ({\cal V},\mathfrak{g})&
\Psi _{\rho }^{\circ}:\ &{\cal V}\rightarrow \mathrm {Hom} (V,\mathfrak{g})&\notag\\
&v\mapsto \Phi _{\rho, v} ,&
&\phi \mapsto \Psi _{\rho, \phi}.&
\end{align*}
To simplify, we denote $\Phi _{\rho ,v}(\psi )$ and $\Psi _{\rho,\phi }(u)$ by $v(\psi )$ and $\phi (u)$ respectively.
\end{defn}
\begin{defn}\label{defn_p_q}
Let $(\mathfrak{g},\rho,V,{\cal V},B_0)$ be a standard pentad.
Put $V_0:=\mathfrak{g}$, $V_1:=V$ and $V_{-1}:={\cal V}$ and denote the canonical representations of $\mathfrak{g}$ on $V_0$ and $V_{\pm 1}$ by $\rho _0$ and $\rho _{\pm 1}$.
We define homomorphisms of $\mathfrak{g}$-modules $p_0$ and $q_0$ by:
\begin{align*}
p_0:&V_1\otimes V_0\rightarrow V_1\notag\\
&v_1\otimes a\mapsto -\rho _1(a\otimes v_1),\\
q_0:&V_{-1}\otimes V_0\rightarrow V_{-1}\notag\\
&\phi _{-1}\otimes b\mapsto -\rho _{-1}(b\otimes \phi _{-1}).
\end{align*}
Moreover, we define homomorphisms of $\mathfrak{g}$-modules $p_1$ and $q_{-1}$ by:
\begin{align*}
p_1:&V_1\otimes V_1\rightarrow \mathrm {Hom}(V_{-1},V_1)\notag\\
&v_1\otimes u_1\mapsto (\eta  _{-1}\mapsto \rho _1(v_1(\eta _{-1})\otimes u_1)-\rho _1(u_1(\eta _{-1})\otimes v_1)),\\
q_{-1}:&V_{-1}\otimes V_{-1}\rightarrow \mathrm {Hom}(V_1,V_{-1})\notag\\
&\phi _{-1}\otimes \psi _{-1}\mapsto (\xi _1\mapsto \rho _{-1}(\phi _{-1}(\xi _1)\otimes \psi _{-1})-\rho _{-1}(\psi _{-1}(\xi _1)\otimes \phi _{-1})),
\end{align*}
where $v_1(\eta _{-1})\in V_0$ and $\phi _{-1}(\xi _1)\in V_0$ stand for $\Phi _{\rho ,v_1}(\eta _{-1})$ and $\Psi _{\rho ,\phi _{-1}}(\xi _1)$ respectively.
\par Moreover, suppose that $i\geq 2$ and there exist $\mathfrak{g}$-modules $(\rho _{i-1},V_{i-1})$ and $(\rho _{-i+1},V_{-i+1})$
and homomorphisms of $\mathfrak{g}$-modules $p_{i-1}:V_{1}\otimes V_{i-1}\rightarrow {\rm Hom}(V_{-1},V_{i-1})$ and $q_{-i+1}:V_{-1}\otimes V_{-i+1}\rightarrow {\rm Hom}(V_{1},V_{-i+1})$.
Then, we put $V_{i}:={\rm Im}\ p_{i-1}$, $V_{-i}:={\rm Im}\ q_{-i+1}$ and define linear maps $p_{i}$, $q_{-i}$ by:
\begin{align*}
p_{i}:\ &V_{1}\otimes V_{i}\rightarrow \text{Hom} (V_{-1},V_{i})\notag\\
&v_1\otimes u_{i}\mapsto (\eta _{-1}\mapsto \rho _{i}(v_1 (\eta _{-1})\otimes u_{i})+p_{i-1}( v_1\otimes u_{i}(\eta _{-1}))) ,\\
q_{-i}:\ &V_{-1}\otimes V_{-i}\rightarrow \text{Hom} (V_1,V_{-i})\notag\\
&\phi _{-1}\otimes \psi _{-i}\mapsto (\xi _1\mapsto \rho _{-i}(\phi _{-1}(\xi _1)\otimes \psi _{-i})+q_{-i+1} (\phi _{-1}\otimes \psi _{-i}(\xi _1))),
\end{align*}
where $u_i(\eta _{-1})\in V_{i-1}$ and $\psi _{-i}(\xi _1)\in V_{-i+1}$ are the images of $\eta _{-1}$ and $\xi _1$ via $u_i$ and $\psi _{-i}$ respectively.
Then, the linear maps $p_{i}$ and $q_{-i}$ are homomorphisms of $\mathfrak{g}$-modules (cf. \cite[Proposition 1.10]{Sa}).
We denote the images of $p_{i}$ and $q_{-i}$ by $V_{i+1}$ and $V_{-i-1}$ and the canonical representations of $\mathfrak{g}$ on $V_{i+1}$ and $V_{-i-1}$ by $\rho _{i+1}$ and $\rho _{-i-1}$ respectively.
Thus, inductively, we obtain $\mathfrak{g}$-modules $V_n$ and representations $\rho _n$ of $\mathfrak{g}$ on $V_n$ for all $n\in\mathbb{Z}$.
We call $V_n$ the {\it $n$-graduation} of $(\mathfrak{g},\rho,V,{\cal V},B_0)$.
\end{defn}
\begin{remark}
For any $v_1\in V_1$ and $\phi _{-1}\in V_{-1}$, we have
\begin{align*}
&p_1(v_1\otimes v_1)(\eta _{-1})=\rho _1(v_1(\eta _{-1})\otimes v_1)-\rho _1(v_1(\eta _{-1})\otimes v_1)=0,\\
&q_{-1}(\phi _{-1}\otimes \phi _{-1})(\xi_1)=\rho _{-1}(\phi _{-1}(\xi _1)\otimes \phi _{-1})-\rho _{-1}(\phi _{-1}(\xi _1)\otimes \phi _{-1})=0.
\end{align*} 
\end{remark}
In general, we do not assume that $\rho $ and $\varrho $ are surjective, i.e. we do not assume that $V_1=\mathrm{Im}\ p_0$ and $V_{-1}=\mathrm {Im}\ q_0$.
In particular cases where these linear maps are surjective, we have the following proposition.
\begin{pr}\label{defn;case_rho_sur}
If $\rho :\mathfrak{g}\otimes V\rightarrow V$ and $\varrho :\mathfrak{g}\otimes {\cal V}\rightarrow {\cal V}$ are surjective, then $\Phi ^{\circ}_{\rho }$ and $\Psi ^{\circ}_{\rho}$ are injective, and, thus, $V$ and ${\cal V}$ can be regarded as $\mathfrak{g}$-submodules of $\mathrm{Hom}(V_{-1},V_0)$ and $\mathrm {Hom }(V_1,V_0)$ respectively.
\end{pr}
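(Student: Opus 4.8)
The plan is to unwind the definitions of $\Phi^{\circ}_{\rho}$ and $\Psi^{\circ}_{\rho}$ from Definition \ref{defn;Phi_rhov}, reduce each injectivity statement to a vanishing argument, and then feed the surjectivity hypotheses into the non-degeneracy condition (\ref{defn;condi_stap_pairing}). The crucial observation is that each of the two injectivity claims is powered by exactly one of the two surjectivity hypotheses, via the two forms of the defining identity (\ref{defn;eq_stap_Phimap}).

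First I would treat $\Phi^{\circ}_{\rho}$. Suppose $\Phi^{\circ}_{\rho}(v)=0$ for some $v\in V$; this means $\Phi_{\rho}(v\otimes\psi)=0$ for every $\psi\in{\cal V}$. Pairing with an arbitrary $a\in\mathfrak{g}$ through $B_0$ and applying (\ref{defn;eq_stap_Phimap}) gives $\langle v,\varrho(a\otimes\psi)\rangle=-B_0(a,\Phi_{\rho}(v\otimes\psi))=0$ for all $a\in\mathfrak{g}$ and $\psi\in{\cal V}$. Since $\varrho$ is surjective, the elements $\varrho(a\otimes\psi)$ span ${\cal V}$, so $\langle v,\chi\rangle=0$ for every $\chi\in{\cal V}$; the non-degeneracy of the pairing in (\ref{defn;condi_stap_pairing}) then forces $v=0$. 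The argument for $\Psi^{\circ}_{\rho}$ is the mirror image: if $\Psi^{\circ}_{\rho}(\phi)=0$ then $\Psi_{\rho}(\phi\otimes u)=0$ for all $u\in V$, and the $\Psi$-form of (\ref{defn;eq_stap_Phimap}) yields $\langle\rho(a\otimes u),\phi\rangle=0$ for all $a,u$; here I would instead use the surjectivity of $\rho$, so that the $\rho(a\otimes u)$ span $V$, and conclude $\phi=0$ by non-degeneracy.

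For the final clause I would observe that $\Phi^{\circ}_{\rho}$ maps $V$ into $\mathrm{Hom}({\cal V},\mathfrak{g})=\mathrm{Hom}(V_{-1},V_0)$ and $\Psi^{\circ}_{\rho}$ maps ${\cal V}$ into $\mathrm{Hom}(V,\mathfrak{g})=\mathrm{Hom}(V_1,V_0)$; the injectivity just established makes these linear embeddings, and it remains to check that they respect the $\mathfrak{g}$-actions. This follows by rewriting the statement that $\Phi_{\rho}$ is a homomorphism of $\mathfrak{g}$-modules (Proposition \ref{pr;phopsi_liehom}): expanding $\Phi_{\rho}$ of the diagonal $\mathfrak{g}$-action on $V\otimes{\cal V}$ and comparing with the natural action of $\mathfrak{g}$ on $\mathrm{Hom}(V_{-1},V_0)$ shows $\Phi^{\circ}_{\rho}(\rho(a\otimes v))(\psi)=[a,\Phi_{\rho}(v\otimes\psi)]-\Phi_{\rho}(v\otimes\varrho(a\otimes\psi))=(a\cdot\Phi^{\circ}_{\rho}(v))(\psi)$, and symmetrically for $\Psi^{\circ}_{\rho}$. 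Hence $V$ and ${\cal V}$ embed as $\mathfrak{g}$-submodules of $\mathrm{Hom}(V_{-1},V_0)$ and $\mathrm{Hom}(V_1,V_0)$.

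I do not anticipate a genuine obstacle here; the only point requiring care is bookkeeping, namely ensuring that the injectivity of $\Phi^{\circ}_{\rho}$ is derived from the surjectivity of $\varrho$ (not $\rho$) and conversely for $\Psi^{\circ}_{\rho}$, and tracking the signs in (\ref{defn;eq_stap_Phimap}) when passing between the $\rho$-side and the $\varrho$-side of the defining identity.
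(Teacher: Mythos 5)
Your proof is correct and follows essentially the same route as the paper: take $v$ with $\Phi_{\rho,v}=0$, use the identity (\ref{defn;eq_stap_Phimap}) to get $\langle v,\varrho(a\otimes\phi)\rangle=0$ for all $a,\phi$, then combine the surjectivity of $\varrho$ with the non-degeneracy condition (\ref{defn;condi_stap_pairing}) to conclude $v=0$, and argue symmetrically for $\Psi^{\circ}_{\rho}$ using the surjectivity of $\rho$. Your additional verification that the embeddings respect the $\mathfrak{g}$-actions is a detail the paper leaves implicit (citing Proposition \ref{pr;phopsi_liehom}), and it is carried out correctly.
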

\begin{proof}
To show this proposition, we use the condition (\ref{defn;condi_stap_pairing}).
Let us show that the linear map $\Phi _{\rho }^{\circ}$ is injective.
We take an arbitrary element $v\in V$ which satisfies that $\Phi _{\rho ,v}=0$.
Then we have
\begin{align}
0=B_0(a,\Phi _{\rho, v}(\phi ))=\langle \rho (a\otimes v),\phi \rangle =-\langle v,\varrho (a\otimes \phi) \rangle
\end{align}for all $a\in \mathfrak{g}$ and $\phi \in {\cal V}$.
By the condition (\ref{defn;condi_stap_pairing}) and the assumption that $\varrho $ is surjective, we have that $v=0$.
Therefore, we obtain that $\Phi _{\rho }^{\circ}$ is injective.
Similarly, we can show that $\Psi _{\rho }^{\circ}$ is injective.
\end{proof}
\begin{defn}
We define the following bilinear maps
\begin{align*}
 [\cdot, \cdot  ]^0_n:V_0\times V_n\rightarrow V_n,\quad 
  [\cdot,\cdot   ]^1_n :V_1\times V_n\rightarrow V_{n+1},\quad 
  [\cdot,\cdot   ]^{-1}_n :V_{-1}\times V_n\rightarrow V_{n-1}
\end{align*} by:
\begin{align*}
  [a_0,z_n  ]^0_n:=\rho _n(a_0\otimes z_n),
\end{align*}
\begin{align*}
  [x_1 ,z_n  ]_{n}^1:=\begin{cases}p_n(x_1\otimes z_n)&(n\geq 0)\\-z_n (x_1) &(n\leq -1)\end{cases},
\end{align*}
\begin{align*}
  [y _{-1},z_n  ]_{n}^{-1}:=\begin{cases}-z_n (y _{-1} )&(n\geq 1)\\q_n(y _{-1}\otimes z_n)&(n\leq 0)\end{cases}
\end{align*}where $a_0\in V_0$, $x_1\in V_1$, $y _{-1}\in V_{-1}$ and $z_n\in V_n$.
Note that $z_n(x_1)$ stands for $\Psi _{\rho, z_{-1}}(x_1)$ when $n=-1$ and the image of $x_1$ via $z_n\in \mathrm{Hom}(V_1,V_{n+1})$ when $n\leq -2$.
Moreover, for $i\geq 1$, we define the following bilinear maps 
\begin{align*}
  [\cdot,\cdot   ]^{i+1}_n:V_{i+1}\times V_n\rightarrow V_{i+n+1},\quad 
  [\cdot,\cdot   ]^{-i-1}_n:V_{-i-1}\times V_n\rightarrow V_{-i+n-1}
\end{align*}
by:
\begin{align}
  [p_i(x_1\otimes z_i) ,w_n  ]_{n}^{i+1}:=& [x_1,  [z_i,w_n  ]_{n}^i ]_{i+n}^1- [z_i,  [x_1,w_n  ]_{n}^1 ]_{n+1}^i\label{def:bra_n+1}\\
&(x_1\in V_1, z_i\in V_i, w_n\in V_n)\notag
\end{align}and
\begin{align}
  [q_{-i}(y _{-1}\otimes \omega _{-i}) ,w_n  ]_{n}^{-i-1}:=& [y _{-1},  [\omega _{-i},w_n  ]_{n}^{-i} ]_{-i+n}^{-1}- [\omega _{-i},  [y _{-1},w_n  ]_{n}^{-1}]_{n-1}^{-i}\label{def:bra_-n-1}\\
&(y _{-1}\in V_{-1},\omega_{-i}\in V_{-i},w_n\in V_{n})\notag
\end{align}
inductively.
Then the bilinear maps (\ref{def:bra_n+1}) and (\ref{def:bra_-n-1}) are well-defined.
It can be shown by the same argument to the argument of \cite[Propositions 2.5 and 2.6]{Sa}.
Consequently, we can define a bilinear map $ [\cdot,\cdot ]^n_m:V_n\times V_m\rightarrow V_{n+m}$ for any $n,m\in \mathbb{Z}$.
\end{defn}

\begin{defn}\label{defn;bracket_stap}
For a standard pentad $(\mathfrak{g},\rho,V,{\cal V},B_0)$, we denote a direct sum of its $n$-graduations by $L(\mathfrak{g},\rho,V,{\cal V},B_0)$, i.e. 
\begin{align*}
L(\mathfrak{g},\rho,V,{\cal V},B_0):=\bigoplus _{n\in \mathbb{Z}}V_n.
\end{align*}
Moreover, we define a bilinear map $[\cdot,\cdot]:L(\mathfrak{g},\rho,V,{\cal V},B_0)\times L(\mathfrak{g},\rho,V,{\cal V},B_0)\rightarrow L(\mathfrak{g},\rho,V,{\cal V},B_0)$ by
\begin{align}
[x_n,y_m]:=[x_n,y_m]^n_m
\end{align}for any $n,m\in \mathbb{Z}$, $x_n\in V_n$ and $y_m\in V_m$.
\end{defn}
\begin{pr}\label{pr_lie_axiom}
This bilinear map $[\cdot,\cdot]$ satisfies the following equations
\begin{align}
&[x,y]+[y,x]=0,\\
&[x,[y,z]]+[z,[x,y]]+[y,[z,x]]=0
\end{align}for any $x,y,z\in L(\mathfrak{g},\rho,V,{\cal V},B_0)$.
\end{pr}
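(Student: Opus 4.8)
The plan is to verify both identities only on homogeneous elements $x\in V_n$, $y\in V_m$, $z\in V_p$, the general case following by bilinearity together with the decomposition $L(\mathfrak{g},\rho,V,{\cal V},B_0)=\bigoplus_{n}V_n$. I would treat the two identities in turn, establishing antisymmetry first, since the argument for the Jacobi identity makes repeated use of it.

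For antisymmetry I would induct on $|n|+|m|$. The base cases are exactly those for which the two prescriptions $[x_n,y_m]^n_m$ and $[y_m,x_n]^m_n$ differ: for $V_0\times V_1$ it is $\rho_1(a_0\otimes v_1)=-p_0(v_1\otimes a_0)$ and for $V_0\times V_{-1}$ the analogous identity for $q_0$, both immediate from the definitions of $p_0$ and $q_0$; for $V_1\times V_{-1}$ it is the relation $\Phi_\rho(v\otimes\phi)+\Psi_\rho(\phi\otimes v)=0$ recorded after Definition \ref{defn;phimap}; and for $V_1\times V_1$ and $V_{-1}\times V_{-1}$ it is the vanishing $p_1(v_1\otimes v_1)=0$, $q_{-1}(\phi_{-1}\otimes\phi_{-1})=0$ noted in the Remark following Definition \ref{defn_p_q}, which forces $p_1$ and $q_{-1}$ to be alternating, hence antisymmetric. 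For the inductive step I would write a top-degree factor as an image of $p_i$ or $q_{-i}$, i.e. as a bracket $[x_1,z_i]$ or $[y_{-1},\omega_{-i}]$ with one factor in $V_{\pm1}$, expand both sides through the defining relations (\ref{def:bra_n+1}) and (\ref{def:bra_-n-1}), and close the induction using antisymmetry at strictly smaller total degree.

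For the Jacobi identity I would use the derivation principle. Writing $J(x,y,z)=[x,[y,z]]+[z,[x,y]]+[y,[z,x]]$, antisymmetry makes $J$ a totally alternating trilinear map, and the vanishing of $J(x,\cdot,\cdot)$ is equivalent to $\mathrm{ad}_x$ being a derivation of $[\cdot,\cdot]$. The set $D=\{x:\mathrm{ad}_x \text{ is a derivation}\}$ is a linear subspace, and it is a subalgebra: if $x,y\in D$, then $x\in D$ already gives $\mathrm{ad}_{[x,y]}=[\mathrm{ad}_x,\mathrm{ad}_y]$ as operators, and the commutator of two derivations is again a derivation, so $[x,y]\in D$. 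Since $L(\mathfrak{g},\rho,V,{\cal V},B_0)$ is generated by its local part $V_{-1}\oplus V_0\oplus V_1$, it therefore suffices to prove $V_{-1}\oplus V_0\oplus V_1\subseteq D$, i.e. that $J(s,y_m,z_p)=0$ for every $s$ in the local part and all $m,p$.

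The three generating cases split as follows. For $s=a_0\in V_0$ the vanishing $J(a_0,y_m,z_p)=0$ is exactly the $\mathfrak{g}$-equivariance of $[\cdot,\cdot]^m_p$, which I would prove by a separate induction on $|m|+|p|$ fed by the fact, recorded in Definition \ref{defn_p_q}, that every structure map $\rho_n,p_i,q_{-i}$ is a homomorphism of $\mathfrak{g}$-modules. For $s=x_1\in V_1$ the defining relation (\ref{def:bra_n+1}) is, after rearrangement via antisymmetry, precisely $J(x_1,z_i,w_n)=0$ whenever the middle argument has degree $i\geq1$; combining this with total antisymmetry of $J$ covers every triple in which at least one of the remaining two arguments has positive degree, while triples in which a $V_0$ argument appears reduce to the settled case $s=a_0$. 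The mirror relation (\ref{def:bra_-n-1}) handles $s=y_{-1}\in V_{-1}$ symmetrically. I expect the genuine obstacle to be the leftover mixed cases, namely $J(x_1,y,z)=0$ with both $y$ and $z$ of strictly negative degree (and symmetrically $J(y_{-1},y,z)$ with both of positive degree), where neither defining relation applies directly. I would clear these by a simultaneous induction on the total degree $|m|+|p|$ of the two non-generator arguments, writing a negative-degree factor as a bracket $[\phi_{-1},\cdot]$ and reducing to instances of $J$ already known at smaller total degree, exactly as in \cite[Propositions 2.5 and 2.6]{Sa}; the non-symmetry of $B_0$ and the possible infinite-dimensionality play no role in this purely combinatorial bookkeeping.
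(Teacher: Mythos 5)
Your proposal is correct and follows essentially the same route as the paper, which simply defers to the inductive argument of \cite[Propositions 2.9 and 2.10]{Sa}: antisymmetry first, by induction on the graded pieces using the base identities ($p_0$, $q_0$, $\Phi_\rho+\Psi_\rho=0$, and the alternating property of $p_1$, $q_{-1}$), then the Jacobi identity by exploiting that the defining recursions (\ref{def:bra_n+1}) and (\ref{def:bra_-n-1}) are themselves instances of $J=0$ with a $V_{\pm1}$-argument, leaving only the mixed-sign triples to a further induction. Your packaging of the Jacobi step via the subalgebra of elements whose adjoint action is a derivation is a sound and slightly cleaner organization of the same induction, not a genuinely different method.
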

\begin{proof}
We can prove it by the same argument to the argument of \cite[Propositions 2.9 and 2.10]{Sa}.
\end{proof}
As a corollary of Proposition \ref{pr_lie_axiom}, we have the following theorem.
\begin{theo}[Lie algebra associated with a standard pentad]\label{StapLieAlgebraexists}
Let $(\mathfrak{g},\rho,V,{\cal V},B_0)$ be a standard pentad over a field $\mathfrak{k}$.
Then the vector space $L(\mathfrak{g},\rho,V,{\cal V},B_0)=\bigoplus _{n\in \mathbb{Z}}V_n$ is a graded Lie algebra with a bracket product $[\cdot ,\cdot]$ defined in Definition \ref{defn;bracket_stap}.
We call this graded Lie algebra {\rm the Lie algebra associated with $(\mathfrak{g},\rho,V,{\cal V},B_0)$}
(cf. \cite [Theorem 2.11]{Sa}).
\end{theo}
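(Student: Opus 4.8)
The plan is to deduce the statement directly from what has already been established. By Proposition \ref{pr_lie_axiom} the bracket $[\cdot,\cdot]$ of Definition \ref{defn;bracket_stap} is antisymmetric and satisfies the Jacobi identity, so $(L(\mathfrak{g},\rho,V,{\cal V},B_0),[\cdot,\cdot])$ is already a Lie algebra. Hence the only thing left to check is that it meets the two defining conditions of a graded Lie algebra: compatibility of the bracket with the $\mathbb{Z}$-grading, and generation by $V_{-1}\oplus V_0\oplus V_1$.

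For the grading compatibility I would simply read it off the construction. For each pair $n,m$ the bilinear map $[\cdot,\cdot]^n_m\colon V_n\times V_m\rightarrow V_{n+m}$ was defined with target $V_{n+m}$, and by Definition \ref{defn;bracket_stap} the restriction of $[\cdot,\cdot]$ to $V_n\times V_m$ is exactly $[\cdot,\cdot]^n_m$. Therefore $[V_n,V_m]\subset V_{n+m}$ for all $n,m\in\mathbb{Z}$, which is the first condition.

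The substance of the remaining argument is the generation condition, and here I would run an induction built directly on the definition of the graduations. For $n\geq 1$ one has $V_{n+1}=\mathrm{Im}\ p_n$, and since $[x_1,z_n]^1_n=p_n(x_1\otimes z_n)$ for $n\geq 0$, every element of $V_{n+1}$ is a finite sum of brackets $[x_1,z_n]$ with $x_1\in V_1$ and $z_n\in V_n$; that is, $V_{n+1}=[V_1,V_n]$. Starting from the base case $V_2=[V_1,V_1]$ and arguing by induction on $n$, each $V_n$ with $n\geq 1$ therefore lies in the Lie subalgebra generated by $V_1$. Dually, using $V_{-n-1}=\mathrm{Im}\ q_{-n}$ together with $[y_{-1},z_{-n}]^{-1}_{-n}=q_{-n}(y_{-1}\otimes z_{-n})$, one obtains $V_{-n-1}=[V_{-1},V_{-n}]$ for $n\geq 1$, so that every $V_{-n}$ with $n\geq 1$ lies in the subalgebra generated by $V_{-1}$. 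Together with $V_{-1},V_0,V_1$ themselves, this shows that $L(\mathfrak{g},\rho,V,{\cal V},B_0)$ is generated by $V_{-1}\oplus V_0\oplus V_1$, completing the verification.

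I do not anticipate a serious obstacle: the only genuinely nontrivial content, namely antisymmetry and the Jacobi identity, is already packaged in Proposition \ref{pr_lie_axiom}, and the two remaining points follow formally from the way the spaces $V_n$ and the bracket maps were defined. The single place deserving a little care is keeping the index bookkeeping straight in the inductive step, in particular the base cases $V_2=[V_1,V_1]$ and $V_{-2}=[V_{-1},V_{-1}]$ and the fact that the identities $V_{n+1}=\mathrm{Im}\ p_n$ and $V_{-n-1}=\mathrm{Im}\ q_{-n}$ hold for $n\geq 1$ while the defining bracket identities hold already for $n\geq 0$; but matching these ranges is entirely routine.
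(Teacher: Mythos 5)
Your reduction to Proposition \ref{pr_lie_axiom} is incomplete: that proposition only gives $[x,y]+[y,x]=0$ together with the Jacobi identity, and the theorem is stated over an arbitrary field $\mathfrak{k}$. Over a field of characteristic $2$ the identity $[x,y]+[y,x]=0$ does not imply $[x,x]=0$, so you have not verified that the bracket is alternating, which is part of the definition of a Lie algebra; your remark that ``the only genuinely nontrivial content \ldots is already packaged in Proposition \ref{pr_lie_axiom}'' is therefore false in characteristic $2$. This is exactly the point on which the paper's proof spends all its effort: for $x_0\in V_0$ one uses that the bracket restricts to the bracket of $\mathfrak{g}$; for $x_n\in V_n$ with $n\geq 1$ one applies the Jacobi identity to get $[[x_n,x_n],\phi_{-1}]=0$ for every $\phi_{-1}\in V_{-1}$ and then uses that $V_{2n}$ sits inside $\mathrm{Hom}(V_{-1},V_{2n-1})$ to conclude $[x_n,x_n]=0$; similarly for $n\leq -1$; and finally one expands $[x,x]$ for a general sum of homogeneous components, where the cross terms cancel by antisymmetry. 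Without this step your argument only proves the theorem when the characteristic of $\mathfrak{k}$ is not $2$.

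On the other hand, the part of your argument devoted to the two grading conditions is correct and is in fact left implicit in the paper: the inclusion $[V_n,V_m]\subset V_{n+m}$ is indeed read off from the targets of the maps $[\cdot,\cdot]^n_m$, and the identities $V_{n+1}=\mathrm{Im}\ p_n=[V_1,V_n]$ and $V_{-n-1}=\mathrm{Im}\ q_{-n}=[V_{-1},V_{-n}]$ for $n\geq 1$ do give generation by $V_{-1}\oplus V_0\oplus V_1$ (correctly starting from $V_2=[V_1,V_1]$ rather than from $V_1$, since $V_1=\mathrm{Im}\ p_0$ is not assumed). So the skeleton of your proof is sound, but it must be supplemented by the verification that $[x,x]=0$ in characteristic $2$.
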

\begin{proof}
If the characteristic of $\mathfrak{k}$ is not $2$, our claim follows from {\rm Proposition} \ref{pr_lie_axiom} immediately.
\par Assume that the characteristic of $\mathfrak{k}$ is $2$.
To prove our claim, it is sufficient to show that the bilinear form $[\cdot ,\cdot]$ satisfies the equation 
\begin{align}
[x,x]=0\label{skeq_ch_two}
\end{align}for any $x\in L(\mathfrak{g},\rho,V,{\cal V},B_0)$.
Let us show the equation (\ref{skeq_ch_two}) for the cases where $x=x_n\in V_n$ for some $n\in \mathbb{Z}$.
For $n=0$, the restriction of $[\cdot,\cdot]:L(\mathfrak{g},\rho,V,{\cal V},B_0)\times L(\mathfrak{g},\rho,V,{\cal V},B_0)\rightarrow L(\mathfrak{g},\rho,V,{\cal V},B_0)$ to $V_0\times V_0$ coincides with the bracket product of $\mathfrak{g}$.
Thus we have $[x_0,x_0]=0$ for any $x_0\in V_0$.
For $n\geq 1$, by Proposition \ref{pr_lie_axiom}, we have
\begin{align}
&[[x_{n},x_{n}],\phi _{-1}]
=[[x_{n},\phi _{-1}],x_{n}]+[x_{n},[x_{n},\phi _{-1}]]=0
\end{align}for any $\phi _{-1}\in V_{-1}$ and $x_n\in V_n$.
Thus, we have the equation $[x_{n},x_{n}]=0$ in $V_{2n}=\mathrm {Hom }(V_{-1},V_{2n-1})$ for any $x_{n}\in V_{n}$.
Similarly, we have the same equation $[x_{n},x_{n}]=0$ for any $n\leq -1$ and $x_{n}\in V_{n}$.
Therefore, we have the equation (\ref{skeq_ch_two}) for the cases where $x=x_n\in V_n$ for some $n\in \mathbb{Z}$.
\par Now, we take an arbitrary element $x\in L(\mathfrak{g},\rho,V,{\cal V},B_0)$.
There exist integers $n_1,\ldots ,n_k$ and elements $x_{n_1}\in V_{n_1},\ldots ,x_{n_k}\in V_{n_k}$ such that $x=x_{n_1}+\cdots +x_{n_k}$.
Then we have
\begin{align*}
[x,x]
=[x_{n_1}+\cdots +x_{n_k},x_{n_1}+\cdots +x_{n_k}]
=\sum _{i=1}^k[x_{n_i},x_{n_i}]+\sum _{i\neq j}\biggl ( [x_{n_i},x_{n_j}]+[x_{n_j},x_{n_i}]\biggr )
=0.
\end{align*}
Thus we have  the equation (\ref{skeq_ch_two}) for any $x\in L(\mathfrak{g},\rho,V,{\cal V},B_0)$.
This completes the proof.
\end{proof}
\begin{remark}
Note that we can prove Theorem \ref {StapLieAlgebraexists} without the assumption that the bilinear form $B_0$ is symmetric.
\end{remark}
Note that $V_0=\mathfrak{g}$ and that the $V_0$-modules $V_0$, $V_1$, $V_{-1}$ are isomorphic to $\mathfrak{g}$, $V$, ${\cal V}$ respectively.
In this sense, we can say that the objects $\mathfrak{g}$, $(\rho, V)$, $(\varrho ,{\cal V})$ can be embedded into $L(\mathfrak{g},\rho,V,{\cal V},B_0)$.
\par In particular, when $\rho $ and $\varrho $ are faithful and surjective, we have a similar result on the structure of a graded Lie algebra of the form $L(\mathfrak{g},\rho,V,{\cal V},B_0)$ to the result which is obtained by H. Rubenthaler in \cite [Proposition 3.4.2]{arxiv}.
We can show the following proposition by Proposition \ref{defn;case_rho_sur} immediately.
\begin{pr}\label{pr;stap_transitive}
Let $(\mathfrak{g},\rho,V,{\cal V},B_0)$ be a standard pentad.
If both $\rho :\mathfrak{g}\otimes V\rightarrow V$ and $\varrho :\mathfrak{g}\otimes {\cal V}\rightarrow {\cal V}$ are faithful and surjective, then the graded Lie algebra $L(\mathfrak{g},\rho,V,{\cal V},B_0)$ is transitive.
\end{pr}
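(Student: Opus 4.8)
The plan is to verify the two defining conditions of transitivity directly, organizing the argument by the degree $i$ of the element, with the natural split into the cases $|i|\le 1$ and $|i|\ge 2$. The point is that for $|n|\ge 2$ the graded piece $V_n$ is by construction a space of homomorphisms (into an adjacent graded piece), and the bracket with $V_{\mp 1}$ acts on it essentially by evaluation; this makes the high-degree cases automatic, so that the two hypotheses (faithfulness and surjectivity of $\rho$ and $\varrho$) are only needed in degrees $0$ and $\pm 1$.

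First I would check the condition ``for $x\in V_i$, $i\ge 0$, $[x,V_{-1}]=\{0\}$ implies $x=0$''. For $i=0$ we have $x\in\mathfrak{g}$ and $[x,\phi_{-1}]=\varrho(x\otimes\phi_{-1})$ for $\phi_{-1}\in V_{-1}={\cal V}$, so the hypothesis says $\varrho(x\otimes\cdot)=0$, and faithfulness of $\varrho$ forces $x=0$. For $i=1$ we have $x\in V$, and unwinding the piecewise definition of the bracket gives $[x,\phi_{-1}]=\Phi_{\rho}(x\otimes\phi_{-1})=\Phi_{\rho,x}(\phi_{-1})$; the hypothesis then says $x\in\ker\Phi_{\rho}^{\circ}$, and since $\Phi_{\rho}^{\circ}$ is injective by Proposition \ref{defn;case_rho_sur} (here surjectivity of $\rho,\varrho$ enters) we get $x=0$. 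For $i\ge 2$ we use $V_i\subset\mathrm{Hom}(V_{-1},V_{i-1})$: combining the defining formula $[y_{-1},z_n]^{-1}_n=-z_n(y_{-1})$ for $n\ge 1$ with skew-symmetry from Proposition \ref{pr_lie_axiom} yields $[x,\phi_{-1}]=x(\phi_{-1})$, the evaluation of the homomorphism $x$ at $\phi_{-1}$. Hence $[x,V_{-1}]=\{0\}$ means $x$ vanishes on all of $V_{-1}$, i.e. $x=0$ as a homomorphism, disposing of every $i\ge 2$ at once.

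The second condition, ``for $x\in V_i$, $i\le 0$, $[x,V_1]=\{0\}$ implies $x=0$'', is settled by the mirror-image argument: faithfulness of $\rho$ gives $[x,v_1]=\rho(x\otimes v_1)=0\Rightarrow x=0$ in degree $0$; in degree $-1$ one finds $[x,v_1]=\Psi_{\rho,x}(v_1)$, so the hypothesis gives $x\in\ker\Psi_{\rho}^{\circ}$ and injectivity of $\Psi_{\rho}^{\circ}$ (again Proposition \ref{defn;case_rho_sur}) forces $x=0$; and for $i\le -2$ the inclusion $V_i\subset\mathrm{Hom}(V_1,V_{i+1})$ together with skew-symmetry gives $[x,v_1]=x(v_1)$, so vanishing of the bracket is vanishing of the homomorphism.

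The main obstacle is purely the bookkeeping of signs and identifications when unwinding $[x,\phi_{-1}]$ and $[x,v_1]$ from the several piecewise definitions of $[\cdot,\cdot]^n_m$, in particular confirming that for $|i|\ge 2$ the relevant bracket coincides with evaluation of the underlying homomorphism. Once that identification is pinned down, each degree reduces either to faithfulness of the representations ($i=0$) or to the injectivity of $\Phi_{\rho}^{\circ}$ and $\Psi_{\rho}^{\circ}$ supplied by Proposition \ref{defn;case_rho_sur} ($i=\pm 1$), and the proposition follows immediately.
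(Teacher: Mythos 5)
Your proof is correct and follows essentially the same route as the paper, which simply asserts that the proposition follows immediately from Proposition \ref{defn;case_rho_sur}: degrees $\pm 1$ are handled by the injectivity of $\Phi_{\rho}^{\circ}$ and $\Psi_{\rho}^{\circ}$, degree $0$ by faithfulness, and $|i|\ge 2$ is automatic because $V_{\pm i}$ is by construction a subspace of a Hom-space on which the bracket with $V_{\mp 1}$ acts by evaluation. Your write-up supplies exactly the details the paper leaves implicit.
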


\subsection{Standard pentads with a symmetric bilinear form}\label{sec;standard_pentads_with_sym_bilinear}
In the previous section, we proved that for any standard pentad $(\mathfrak{g},\rho,V,{\cal V},B_0)$, there exists a graded Lie algebra such that $\mathfrak{g}$, $\rho$, $V$ and ${\cal V}$ can be embedded into it.
In this section, we discuss cases where $B_0$ is symmetric.
In these cases, we can also embed $B_0$ into $L(\mathfrak{g},\rho,V,{\cal V},B_0)$ and we can obtain some useful properties.
\begin{pr}\label{pr;stap_bilinear_exist}
Let $(\mathfrak{g},\rho,V,{\cal V},B_0)$ be a standard pentad such that $B_0$ is symmetric.
We define a symmetric bilinear form $B_L$ on $L(\mathfrak{g},\rho,V,{\cal V},B_0)$ inductively as follows:
\begin{align*}
\begin{cases}
B_L(a,b)=B_0(a,b),\\B_L(v,\phi )=\langle v,\phi \rangle ,\\ B_L(p_i(v_1\otimes u_i),q_{-i}(\phi _{-1}\otimes \psi _{-i}))=B_L(u_i,[q_{-i}(\phi _{-1}\otimes \psi _{-i}),v_1]),\\B_L(x_n,y_m)=0
\end{cases}
\end{align*}for any $a,b\in V_0$, $v\in V$, $\phi \in {\cal V}$, $i\geq 1$, $v_1\in V_1$, $\phi _{-1}\in V_{-1}$, $u_i\in V_i$, $\psi _{-i}\in V_{-i}$, $n,m\in \mathbb{Z}$, $n+m\neq 0$, $x_n\in V_n$ and $y_m\in V_m$.
Then $B_L$ is a non-degenerate symmetric invariant bilinear form on $L(\mathfrak{g},\rho,V,{\cal V},B_0)$
(cf. \cite[Proposition 3.2]{Sa}).
\end{pr}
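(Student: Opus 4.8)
The plan is to verify the four required properties of $B_L$ — well-definedness of the inductive formula, symmetry, invariance, and non-degeneracy — by a single induction on the absolute value of the grading, following \cite[Proposition 3.2]{Sa}. Write $L=L(\mathfrak{g},\rho,V,{\cal V},B_0)$. Since $B_L(V_n,V_m)=0$ unless $n+m=0$, it suffices to control, for each $n\geq 0$, the pairing $\beta_n:=B_L|_{V_n\times V_{-n}}$. Recalling from Definition \ref{defn;bracket_stap} that $p_i(v_1\otimes u_i)=[v_1,u_i]$ and $q_{-i}(\phi_{-1}\otimes\psi_{-i})=[\phi_{-1},\psi_{-i}]$, the third defining line reads
\[
B_L([v_1,u_i],w)=B_L(u_i,[w,v_1]),\qquad w\in V_{-i-1},
\]
which is exactly the invariance identity $B_L([v_1,u_i],w)+B_L(u_i,[v_1,w])=0$ for the acting element $v_1\in V_1$. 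Thus the definition is arranged so that invariance under $V_1$ (and, symmetrically, under $V_{-1}$) holds by construction, provided the formula is well-defined; the substance of the proof is this consistency check.

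For the full invariance identity $B_L([x,y],z)+B_L(y,[x,z])=0$ it suffices to treat $x$ in a generating set, because the set of $x$ satisfying this identity for all $y,z$ is a Lie subalgebra of $L$ (a short computation using the Jacobi identity of Proposition \ref{pr_lie_axiom}), and $L$ is generated by $V_{-1}\oplus V_0\oplus V_1$. For $x\in V_0=\mathfrak{g}$ the identity is the $\mathfrak{g}$-invariance of $\beta_n$, which I would prove by induction from the $\mathfrak{g}$-invariance of $B_0$ and of the canonical pairing, using that $p_i$ and $q_{-i}$ are homomorphisms of $\mathfrak{g}$-modules. For $x\in V_1$ and $x\in V_{-1}$ the identity is the defining relation together with its symmetric counterpart, so the crux is to show that these two specializations are mutually consistent at each level.

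Concretely, I would fix $i$ and, assuming $\beta_{i-1}$ and $\beta_i$ are well-defined, symmetric and invariant under $V_0,V_{\pm 1}$, prove the single identity
\[
\beta_i([[v_1,u_i],\phi_{-1}],\psi_{-i})=\beta_i(u_i,[[\phi_{-1},\psi_{-i}],v_1])
\]
for all $v_1\in V_1$, $u_i\in V_i$, $\phi_{-1}\in V_{-1}$, $\psi_{-i}\in V_{-i}$. Expanding both double brackets by the Jacobi identity and repeatedly applying the inductive invariance under $V_0$ and under $V_{\pm1}$, both sides reduce to a common combination of $\beta_{i-1}$- and $\beta_i$-pairings, so they agree. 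This single identity simultaneously yields that the value $B_L([v_1,u_i],w)$ depends only on $[v_1,u_i]\in V_{i+1}$ (via the intrinsic rewriting $\beta_i([x_{i+1},\phi_{-1}],\psi_{-i})$), that it depends only on $w\in V_{-i-1}$, that $\beta_{i+1}$ is symmetric, and that invariance under $V_{\pm1}$ propagates to level $i+1$; this closes the induction.

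Finally, non-degeneracy of $B_L$ reduces to non-degeneracy of each $\beta_n$. The base cases $n=0$ and $n=1$ are the non-degeneracy of $B_0$ and of the restriction of $\langle\cdot,\cdot\rangle$ to $V\times{\cal V}$ (condition (\ref{defn;condi_stap_pairing})). For the inductive step, suppose $x_{i+1}\in V_{i+1}$ satisfies $\beta_{i+1}(x_{i+1},V_{-i-1})=\{0\}$; since $V_{-i-1}$ is spanned by brackets $[\phi_{-1},\psi_{-i}]$, invariance gives $\beta_i([x_{i+1},\phi_{-1}],\psi_{-i})=0$ for all $\phi_{-1},\psi_{-i}$, whence $[x_{i+1},\phi_{-1}]=0$ for every $\phi_{-1}$ by the inductive non-degeneracy of $\beta_i$. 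As $V_{i+1}\subset\mathrm{Hom}(V_{-1},V_i)$ with $x_{i+1}$ acting as $\phi_{-1}\mapsto[x_{i+1},\phi_{-1}]$, this forces $x_{i+1}=0$, and symmetrically on the negative side. I expect the main obstacle to be the bookkeeping in the crux identity of the third paragraph: getting all the grading shifts and signs of the Jacobi expansions to line up so that the left- and right-specializations of invariance coincide, since this one computation is precisely what makes the inductive definition of $B_L$ consistent.
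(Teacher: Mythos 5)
Your proposal is correct and follows essentially the same route as the paper: the crux identity you isolate, $B_L([p_i(v_1\otimes u_i),\phi_{-1}],\psi_{-i})=B_L(u_i,[q_{-i}(\phi_{-1}\otimes\psi_{-i}),v_1])$, is exactly the equation the paper verifies (explicitly for $i=1$, by induction for higher $i$) to obtain well-definedness, and the symmetry, invariance and non-degeneracy arguments you sketch are the same ones the paper defers to \cite[section 1.2 and Proposition 3.2]{Sa}. Your explicit reduction of full invariance to the generators $V_{-1}\oplus V_0\oplus V_1$ via the observation that the invariance-satisfying elements form a subalgebra is a slightly cleaner packaging of a step the paper leaves implicit, but it is not a different method.
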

\begin{proof}
Note that it is clear that the restriction of $B_L$ to $V_0\times V_0$ and $V_1\times V_{-1}$ is well-defined.
Let us show the well-definedness of $B_L$ on $V_2\times V_{-2}$.
For any $v_1,u_1\in V_1$ and $\phi _{-1},\psi _{-1}\in V_{-1}$, we have
\begin{align}
&B_L(u_1,[q_{-1}(\phi _{-1}\otimes \psi _{-1}),v_1]) =B_L(u_1,[[\phi _{-1},v_1],\psi _{-1}]+[\phi _{-1},[\psi _{-1},v_1]])\notag\\
&\quad =\langle u_1,[[\phi _{-1},v_1],\psi _{-1}]+[\phi _{-1},[\psi _{-1},v_1]]\rangle \notag\\
&\quad =B_0([\phi _{-1},v_1],\psi _{-1}(u_1))-B_0([\psi _{-1},v_1],\phi _{-1}(u_1)) \notag\\
&\quad =B_0([\phi _{-1},v_1],\psi _{-1}(u_1))-B_0(\phi _{-1}(u_1),[\psi _{-1},v_1]) \quad \text{(by the assumption that $B_0$ is symmetric)}\notag\\
&\quad =B_0([v_1,\phi _{-1}],u_1(\psi _{-1}))-B_0(u_1(\phi _{-1}),[v_1,\psi _{-1}]) \notag\\
&\quad =\langle [[v_1,\phi _{-1}],u_1]+[v_1,[u_1,\phi _{-1}]],\psi _{-1}\rangle \notag\\
&\quad =B_L([p_1(v_1\otimes u_1),\phi _{-1}],\psi _{-1}).
\end{align}
Thus, if $v_1^1,\ldots ,v_1^l,u_1^1,\ldots ,u_1^l\in V_1$ and $\phi _{-1}^1,\ldots ,\phi _{-1}^k, \psi _{-1}^1,\ldots ,\psi _{-1}^k\in V_{-1}$ satisfy the equation 
$$
\sum _{s=1}^lp_1(v_1^s\otimes u_1^s)=0,\quad \sum _{t=1}^kq_{-1}(\phi _{-1}^t\otimes \psi _{-1}^t)=0,
$$
then 
\begin{align*}
&\sum _{s=1}^lB_L(u_1^s,[q_{-1}(\phi _{-1}\otimes \psi _{-1}),v_1^s])=\sum _{s=1}^lB_L([p_1(v_1^s\otimes u_1^s),\phi _{-1}],\psi _{-1})=0,\\
 &\sum _{t=1}^kB_L(u_1,[q_{-1}(\phi _{-1}^t\otimes \psi _{-1}^t),v_1])=0
\end{align*}
for any $v_1,u_1\in V_1$ and $\phi _{-1},\psi _{-1}\in V_{-1}$, that is, we have the well-definedness of $B_L$ on $V_2\times V_{-2}$.
This $B_L\mid _{V_2\times V_{-2}}$ is $\mathfrak{g}$-invariant.
Moreover, by a similar argument, we have the well-definedness of $B_L$ on $V_i\times V_{-i}$ for each $i\geq 3$ by induction (see \cite[section 1.2]{Sa}).
Consequently, we can show the well-definedness of $B_L$ on the whole $L(\mathfrak{g},\rho,V,{\cal V},B_0)$ and that $B_L$ is non-degenerate symmetric invariant by the same argument as the argument in \cite[section 1.2 and Proposition 3.2]{Sa}.
\end{proof}
\begin{remark}
We need the assumption that $B_0$ is symmetric to show that the bilinear form $B_L$ is $L(\mathfrak{g},\rho,V,{\cal V},B_0)$-invariant.
Precisely, we need this assumption to show an equation 
\begin{align*}
B_L(v_1,[\phi _{-1},a])=B_L([v_1,\phi _{-1}],a)
\end{align*}for any $a\in V_0$, $v_1\in V_1$, $\phi _{-1}\in V_{-1}$.
\end{remark}
Under the assumption that $B_0$ is symmetric, the graded Lie algebra is characterized by the existence of such a bilinear form.
The following is a proposition concerning the ``universality'' and ``uniqueness'' of Lie algebras associated with a standard pentad with a symmetric bilinear form.

\begin{theo}\label{th;universality_stap}
Let $\mathfrak{L}=\bigoplus _{n\in\mathbb{Z}}\mathfrak{L}_n$ be a graded Lie algebra which has a non-degenerate symmetric invariant bilinear form $B_{\mathfrak{L}}$.
If $\mathfrak{L}$ and $B_{\mathfrak{L}}$ satisfy the following conditions, then a pentad $(\mathfrak{L}_0,{\rm ad},\mathfrak{L}_1,\mathfrak{L}_{-1},B_{\mathfrak{L}}\mid _{\mathfrak{L}_0\times \mathfrak{L}_0})$ is standard and $\mathfrak{L}$ is isomorphic to $L(\mathfrak{L}_0,{\rm ad},\mathfrak{L}_1,\mathfrak{L}_{-1},B_{\mathfrak{L}}\mid _{\mathfrak{L}_0\times \mathfrak{L}_0})$:
\begin{align}
&\text{$\mathfrak{L}_{i+1}=[\mathfrak{L}_1,\mathfrak{L}_i]$, $\mathfrak{L}_{-i-1}=[\mathfrak{L}_{-1},\mathfrak{L}_{-i}]$ for all $i\geq 1$,}\label{univ:condi_1}\\
&\text{the restriction of $B_{\mathfrak{L}}$ to $\mathfrak{L}_i\times \mathfrak{L}_{-i}$ is non-degenerate for any $i\geq 0$,}\label{univ:condi_2}
\end{align}
where $\mathrm{ad}$ stands for the adjoint representation of $\mathfrak{L}$ on itself
(cf. \cite [Proposition 3.3]{Sa}).
\end{theo}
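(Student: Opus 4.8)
The plan is first to identify the local data of $\mathfrak{L}$ with a standard pentad, and then to reconstruct all of $\mathfrak{L}$ from this local data via the inductive machine of Definition \ref{defn_p_q}, using transitivity to control the construction. Put $B_0:=B_{\mathfrak{L}}\mid_{\mathfrak{L}_0\times\mathfrak{L}_0}$, which is non-degenerate by (\ref{univ:condi_2}) with $i=0$ and invariant since $B_{\mathfrak{L}}$ is. For $\phi\in\mathfrak{L}_{-1}$ I would define $\iota(\phi)\in\mathrm{Hom}(\mathfrak{L}_1,\mathfrak{k})$ by $\iota(\phi)(v):=B_{\mathfrak{L}}(v,\phi)$. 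The non-degeneracy of $B_{\mathfrak{L}}\mid_{\mathfrak{L}_1\times\mathfrak{L}_{-1}}$ (condition (\ref{univ:condi_2}) with $i=1$) makes $\iota$ injective and the induced pairing $\langle v,\phi\rangle=B_{\mathfrak{L}}(v,\phi)$ between $\mathfrak{L}_1$ and $\iota(\mathfrak{L}_{-1})$ non-degenerate, giving (\ref{defn;condi_stap_pairing}); invariance of $B_{\mathfrak{L}}$ shows $\iota$ is $\mathfrak{L}_0$-equivariant, so $\mathfrak{L}_{-1}$ is realised as a submodule of $\mathrm{Hom}(\mathfrak{L}_1,\mathfrak{k})$. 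Finally I would verify that the restriction of the bracket $\mathfrak{L}_1\times\mathfrak{L}_{-1}\to\mathfrak{L}_0$ is the required $\Phi$-map: for $a\in\mathfrak{L}_0$ invariance gives $B_0(a,[v,\phi])=B_{\mathfrak{L}}([a,v],\phi)=\langle\mathrm{ad}(a)v,\phi\rangle$, which is exactly (\ref{defn;eq_stap_Phimap}). Hence (\ref{defn;condi_stap_Phimap}) holds and $(\mathfrak{L}_0,\mathrm{ad},\mathfrak{L}_1,\mathfrak{L}_{-1},B_0)$ is standard, with $\Phi$-map and $\Psi$-map both given by the bracket.

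Next I would show $\mathfrak{L}$ is transitive. Let $x\in\mathfrak{L}_i$ with $i\geq 1$ satisfy $[x,\mathfrak{L}_{-1}]=\{0\}$. By (\ref{univ:condi_1}) any $y\in\mathfrak{L}_{-i}$ is a sum of brackets $[\eta_{-1},w]$ with $\eta_{-1}\in\mathfrak{L}_{-1}$ and $w\in\mathfrak{L}_{-i+1}$, whence $B_{\mathfrak{L}}(x,[\eta_{-1},w])=B_{\mathfrak{L}}([x,\eta_{-1}],w)=0$; non-degeneracy of $B_{\mathfrak{L}}\mid_{\mathfrak{L}_i\times\mathfrak{L}_{-i}}$ then forces $x=0$, and the case $i\leq-1$ is symmetric. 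Writing $V_n$ for the $n$-graduation of the pentad of the first step, I would then build graded isomorphisms $\Theta_n:V_n\to\mathfrak{L}_n$ by induction on $|n|$, with $\Theta_0,\Theta_{\pm1}$ the identity maps, maintaining at each stage that $\Theta$ intertwines the brackets with $\mathfrak{L}_0$ and with $\mathfrak{L}_{\pm1}$. For $n\geq 2$ every element of $V_n=\mathrm{Im}\,p_{n-1}$ is a sum $\sum_s p_{n-1}(v_1^s\otimes u_{n-1}^s)=\sum_s[v_1^s,u_{n-1}^s]$, and I would set $\Theta_n\bigl(\sum_s[v_1^s,u_{n-1}^s]\bigr):=\sum_s[v_1^s,\Theta_{n-1}(u_{n-1}^s)]_{\mathfrak{L}}$.

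The crux, and the main obstacle, is the well-definedness of $\Theta_n$. Here I would use that for $z_n\in V_n\subset\mathrm{Hom}(V_{-1},V_{n-1})$ the value $z_n(\eta_{-1})$ equals $[z_n,\eta_{-1}]$, so that the Jacobi identity in $L$ yields $p_{n-1}(v_1\otimes u_{n-1})(\eta_{-1})=[v_1(\eta_{-1}),u_{n-1}]+[v_1,u_{n-1}(\eta_{-1})]$. If $\sum_s p_{n-1}(v_1^s\otimes u_{n-1}^s)=0$ in $V_n$, then applying the (inductively bracket-preserving) $\Theta_{n-1}$ to this relation and comparing, via the Jacobi identity in $\mathfrak{L}$ and the first-step identity $[v_1^s,\eta_{-1}]_{\mathfrak{L}}=v_1^s(\eta_{-1})$, with $[\,\sum_s[v_1^s,\Theta_{n-1}(u_{n-1}^s)]_{\mathfrak{L}},\eta_{-1}\,]_{\mathfrak{L}}$, I would conclude that $w:=\sum_s[v_1^s,\Theta_{n-1}(u_{n-1}^s)]_{\mathfrak{L}}$ satisfies $[w,\mathfrak{L}_{-1}]=\{0\}$; transitivity then forces $w=0$. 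The same computation shows $\Theta_{n-1}(z_n(\eta_{-1}))=[\Theta_n(z_n),\eta_{-1}]_{\mathfrak{L}}$, which propagates the $(-1)$-bracket compatibility to degree $n$.

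With well-definedness secured, injectivity of $\Theta_n$ follows from the literal embedding $V_n\hookrightarrow\mathrm{Hom}(V_{-1},V_{n-1})$ together with injectivity of $\Theta_{n-1}$, and surjectivity follows from $\mathfrak{L}_n=[\mathfrak{L}_1,\mathfrak{L}_{n-1}]$ (condition (\ref{univ:condi_1})) together with surjectivity of $\Theta_{n-1}$; $\mathfrak{L}_0$-equivariance of $\Theta_n$ follows from the $\mathfrak{g}$-equivariance of $p_{n-1}$. The negative graduations are handled identically using $q_{-i}$, the $\Psi$-map, and the $1$-bracket, with the symmetric transitivity statement. Finally, once $\Theta:=\bigoplus_n\Theta_n$ is known to commute with $[\mathfrak{L}_0,-]$ and $[\mathfrak{L}_{\pm1},-]$ on every graduation, the fact that both $L$ and $\mathfrak{L}$ are generated by their local parts — all higher brackets being determined by (\ref{def:bra_n+1}) and (\ref{def:bra_-n-1}) — shows $\Theta$ is an isomorphism of graded Lie algebras, exactly as in \cite[Proposition 3.3]{Sa}.
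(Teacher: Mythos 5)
Your proposal is correct and follows essentially the same route as the paper: verify that the pentad is standard using (\ref{univ:condi_2}) and the identification of the bracket $\mathfrak{L}_1\times\mathfrak{L}_{-1}\rightarrow\mathfrak{L}_0$ with the $\Phi$-map, then build the isomorphism degree by degree on generators via $p_i(x_1\otimes z_i)\mapsto[x_1,\Theta_i(z_i)]$, which is exactly the paper's $\sigma_{i+1}$, with well-definedness handled by the pairing/transitivity induction. The only caveat is that your transitivity claim at $i=1$ would require $\mathfrak{L}_{-1}=[\mathfrak{L}_{-1},\mathfrak{L}_0]$, which is not among the hypotheses, but this case is never used since well-definedness of $\Theta_n$ only invokes transitivity in degrees $|n|\geq 2$, where (\ref{univ:condi_1}) applies.
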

\begin{proof}
First of all, let us check that the pentad $(\mathfrak{L}_0,{\rm ad},\mathfrak{L}_1,\mathfrak{L}_{-1},B_{\mathfrak{L}}\mid _{\mathfrak{L}_0\times \mathfrak{L}_0})$ is standard.
By (\ref{univ:condi_2}), we can obtain that $B_{\mathfrak{L}}\mid _{\mathfrak{L}_0\times \mathfrak{L}_0}$ is non-degenerate and that $\mathfrak{L}_1$ and $\mathfrak{L}_{-1}$ satisfy the condition (\ref{defn;condi_stap_pairing}).
It is easy to show that we can identify the restriction of the bracket product $[\cdot,\cdot]$ of $\mathfrak{L}$ to $\mathfrak{L}_{1}\times \mathfrak{L}_{-1}\rightarrow \mathfrak{L}_0$ with the $\Phi $-map of the pentad $(\mathfrak{L}_0,{\rm ad},\mathfrak{L}_1,\mathfrak{L}_{-1},B_{\mathfrak{L}}\mid _{\mathfrak{L}_0\times \mathfrak{L}_0})$.
Thus,  the condition  (\ref{defn;condi_stap_Phimap}) holds.
\par We denote the $n$-graduation of $(\mathfrak{L}_0,{\rm ad},\mathfrak{L}_1,\mathfrak{L}_{-1},B_{\mathfrak{L}}\mid _{\mathfrak{L}_0\times \mathfrak{L}_0})$ by $(\mathfrak{L})_n$ for any $n\in \mathbb{Z}$ and a bilinear form on $L(\mathfrak{L}_0,{\rm ad},\mathfrak{L}_1,\mathfrak{L}_{-1},B_{\mathfrak{L}}\mid _{\mathfrak{L}_0\times \mathfrak{L}_0})$ obtained in Proposition \ref{pr;stap_bilinear_exist} by $(B)_{\mathfrak{L}}$.
Let $\sigma _0:(\mathfrak{L})_0\rightarrow \mathfrak{L}_0$ and $\sigma _{\pm 1}:(\mathfrak{L})_{\pm 1}\rightarrow \mathfrak{L}_{\pm 1}$ be the identity maps respectively.
Then the linear maps $\sigma _0$ and $\sigma _{\pm 1}$ satisfy the following equations:
\begin{align}
&[\sigma _0(a),\sigma _{\pm 1}(x_{\pm 1})]=\sigma _{\pm 1}([a,x_{\pm 1}]),\label{th;universarity_stap_eq1}\\
&[\sigma _{1}(x_{1}),\sigma _{-1}(x_{-1})]=\sigma _0([x_1,x_{-1}])\label{th;universarity_stap_eq2}
\end{align}for any $a\in (\mathfrak{L})_0$ and $x_{\pm 1}\in (\mathfrak{L})_{\pm 1}$.
Indeed, the equation (\ref{th;universarity_stap_eq1}) is clear, and, we have
\begin{align}
&B_{\mathfrak{L}}(\sigma _0(b),[\sigma _{1}(x_{1}),\sigma _{-1}(x_{-1})])=B_{\mathfrak{L}}([\sigma _0(b),\sigma _{1}(x_{1})],\sigma _{-1}(x_{-1}))=B_{\mathfrak{L}}(\sigma _1([b,x_1]),\sigma _{-1}(x_{-1}))\notag\\
&\quad =(B)_{\mathfrak{L}}([b,x_1],x_{-1})=(B)_{\mathfrak{L}}(b,[x_1,x_{-1}])=B_{\mathfrak{L}}(\sigma _0(b),\sigma _0([x_1,x_{-1}]))
\end{align}for any $b\in (\mathfrak{L})_0$.
Thus, we can obtain the equation (\ref{th;universarity_stap_eq2}).
\par For each $i\geq 1$, we define linear maps $\sigma _{i+1}:(\mathfrak{L})_{i+1}\rightarrow \mathfrak{L}_{i+1}$ and $\sigma _{-i-1}:(\mathfrak{L})_{-i-1}\rightarrow \mathfrak{L}_{-i-1}$ by:
\begin{align}
&\sigma _{i+1}:p_i(x_1\otimes z_i)\mapsto [\sigma _1(x_1),\sigma _i(z_i)],\label{eq;thm_univ_stap1}\\
&\sigma _{-i-1}:q_{-i }(x_{-1}\otimes z_{-i})\mapsto [\sigma _{-1}(x_{-1}),\sigma _{-i}(z_{-i})]\label{eq;thm_univ_stap2}
\end{align}for any $x_{\pm 1}\in (\mathfrak{L})_{\pm 1}$ and $z_{\pm i}\in (\mathfrak{L})_{\pm i}$ inductively.
Note that it follows from (\ref{th;universarity_stap_eq1}) that the linear maps $\sigma _1$ and $\sigma _{-1}$ on $\rho (\mathfrak{g}\otimes V)$ and $\varrho (\mathfrak{g}\otimes {\cal V})$ defined by the same equations as (\ref{eq;thm_univ_stap1}) and (\ref{eq;thm_univ_stap2}) where $i=0$ coincide with the identity maps respectively.
We can prove that the linear maps $\sigma _n$ $(n\in \mathbb{Z})$ are well-defined and satisfy 
\begin{align}
&[\sigma _{0}(a), \sigma _n(z_n)]=\sigma _{n}([a,z_n]),\\
&[\sigma _{\pm 1}(x_{\pm 1}), \sigma _n(z_n)]=\sigma _{n\pm 1}([x_{\pm 1},z_n])
\end{align}for any $n\in\mathbb{Z}$, $a\in (\mathfrak{L})_{0}$, $x_{\pm 1}\in (\mathfrak{L})_{\pm 1}$ and $z_n\in (\mathfrak{L})_n$ by a similar argument to the argument of \cite[Proposition 3.3]{Sa}.
Then a linear map $\sigma :L(\mathfrak{L}_0,{\rm ad},\mathfrak{L}_1,\mathfrak{L}_{-1},B_{\mathfrak{L}}\mid _{\mathfrak{L}_0\times \mathfrak{L}_0})\rightarrow \mathfrak{L}$ defined by
\begin{align}
\sigma (z_n):=\sigma _n(z_n),
\end{align}where $n\in\mathbb{Z}$ and $z_n\in (\mathfrak{L})_n\subset L(\mathfrak{L}_0,{\rm ad},\mathfrak{L}_1,\mathfrak{L}_{-1},B_{\mathfrak{L}}\mid _{\mathfrak{L}_0\times \mathfrak{L}_0})$, is an isomorphism of Lie algebras.
We can also prove this by a similar argument to the argument of \cite[Proposition 3.3]{Sa}.
\end{proof}

As a corollary of Theorem \ref{th;universality_stap}, we can say that the theory of standard pentads is an extension of the theory of standard quadruplets.
\begin{pr}\label{pr;staq_is_stap}
Let $(\mathfrak{g},\rho,V,B_0)$ be a standard quadruplet (see \cite [Definition 1.9]{Sa}).
Then the Lie algebra $L(\mathfrak{g},\rho,V,B_0)$ associated with $(\mathfrak{g},\rho,V,B_0)$ (see \cite[Theorem 2.11]{Sa}) is isomorphic to the Lie algebra $L(\mathfrak{g},\rho,V,\mathrm {Hom }(V,\mathbb{C}),B_0)$.
\end{pr}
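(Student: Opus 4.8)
The plan is to apply Theorem \ref{th;universality_stap} to the graded Lie algebra $\mathfrak{L}:=L(\mathfrak{g},\rho,V,B_0)$ produced from the standard quadruplet in \cite{Sa}, and then to recognize the pentad manufactured from its own low-degree pieces as $(\mathfrak{g},\rho,V,\mathrm{Hom}(V,\mathbb{C}),B_0)$. The starting data are already available: by \cite[Theorem 2.11]{Sa} the algebra $\mathfrak{L}=\bigoplus_{n\in\mathbb{Z}}\mathfrak{L}_n$ is graded with $\mathfrak{L}_0\simeq\mathfrak{g}$, $\mathfrak{L}_1\simeq V$ and $\mathfrak{L}_{-1}\simeq\mathrm{Hom}(V,\mathbb{C})$, and by \cite[Proposition 3.2]{Sa} it carries a non-degenerate symmetric invariant bilinear form $B_{\mathfrak{L}}$ whose restriction to $\mathfrak{L}_0\times\mathfrak{L}_0$ equals $B_0$.

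First I would verify the two hypotheses of Theorem \ref{th;universality_stap}. Condition (\ref{univ:condi_1}), that $\mathfrak{L}_{i+1}=[\mathfrak{L}_1,\mathfrak{L}_i]$ and $\mathfrak{L}_{-i-1}=[\mathfrak{L}_{-1},\mathfrak{L}_{-i}]$ for $i\geq 1$, is immediate from the inductive construction of the graduations in \cite[Theorem 2.11]{Sa}, in which each higher piece is by definition the image of the bracket with $\mathfrak{L}_{\pm 1}$. Condition (\ref{univ:condi_2}), the non-degeneracy of $B_{\mathfrak{L}}$ on each $\mathfrak{L}_i\times\mathfrak{L}_{-i}$, is part of the properties of $B_{\mathfrak{L}}$ established in \cite[Proposition 3.2]{Sa}. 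With these in hand, Theorem \ref{th;universality_stap} yields that $(\mathfrak{L}_0,\mathrm{ad},\mathfrak{L}_1,\mathfrak{L}_{-1},B_{\mathfrak{L}}\mid_{\mathfrak{L}_0\times\mathfrak{L}_0})$ is a standard pentad and that $\mathfrak{L}\simeq L(\mathfrak{L}_0,\mathrm{ad},\mathfrak{L}_1,\mathfrak{L}_{-1},B_{\mathfrak{L}}\mid_{\mathfrak{L}_0\times\mathfrak{L}_0})$.

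It then remains to identify this pentad with $(\mathfrak{g},\rho,V,\mathrm{Hom}(V,\mathbb{C}),B_0)$. Under the isomorphisms $\mathfrak{L}_0\simeq\mathfrak{g}$, $\mathfrak{L}_1\simeq V$, $\mathfrak{L}_{-1}\simeq\mathrm{Hom}(V,\mathbb{C})$ of \cite[Theorem 2.11]{Sa}, the adjoint action of $\mathfrak{L}_0$ on $\mathfrak{L}_1$ corresponds to $\rho$ and the action on $\mathfrak{L}_{-1}$ to the contragredient representation $\varrho$ on $\mathrm{Hom}(V,\mathbb{C})$, while $B_{\mathfrak{L}}\mid_{\mathfrak{L}_0\times\mathfrak{L}_0}=B_0$. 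Here I would invoke that $V$ is finite-dimensional, so that by claim (\ref{lem;vfin}) the only submodule of $\mathrm{Hom}(V,\mathbb{C})$ admitting a non-degenerate pairing with $V$ is $\mathrm{Hom}(V,\mathbb{C})$ itself, and by claim (\ref{lem;gfin}), using finite-dimensionality of $\mathfrak{g}$, the resulting pentad automatically possesses a $\Phi$-map; this confirms that $(\mathfrak{g},\rho,V,\mathrm{Hom}(V,\mathbb{C}),B_0)$ is standard and matches the pentad produced above. Composing the two isomorphisms gives $L(\mathfrak{g},\rho,V,B_0)=\mathfrak{L}\simeq L(\mathfrak{g},\rho,V,\mathrm{Hom}(V,\mathbb{C}),B_0)$.

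The main obstacle is bookkeeping rather than a substantive difficulty: one must carefully match the representation data of the pentad $(\mathfrak{L}_0,\mathrm{ad},\mathfrak{L}_1,\mathfrak{L}_{-1},B_{\mathfrak{L}}\mid_{\mathfrak{L}_0\times\mathfrak{L}_0})$ with $(\rho,V)$ and its contragredient under the given isomorphisms, and confirm that the $\Phi$-map arising in Theorem \ref{th;universality_stap}, namely the restriction of the bracket $\mathfrak{L}_1\times\mathfrak{L}_{-1}\to\mathfrak{L}_0$, agrees with the $\Phi$-map used in the quadruplet construction of \cite{Sa}. Once these identifications are transported through the isomorphism $\sigma$ built in the proof of Theorem \ref{th;universality_stap}, the claim follows.
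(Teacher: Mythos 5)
Your proposal is correct and follows exactly the route the paper intends: the paper states this proposition as an immediate corollary of Theorem \ref{th;universality_stap} (giving no written proof), and your argument --- verifying conditions (\ref{univ:condi_1}) and (\ref{univ:condi_2}) for $L(\mathfrak{g},\rho,V,B_0)$ with the bilinear form from \cite[Proposition 3.2]{Sa}, then identifying the resulting pentad with $(\mathfrak{g},\rho,V,\mathrm{Hom}(V,\mathbb{C}),B_0)$ --- is precisely the intended derivation.
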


\begin{defn}\label{defn;quad_iso}
Let $(\mathfrak{g}^{1},\rho^{1},V^{1},{\cal V}^1,B_0^{1})$ and $(\mathfrak{g}^{2},\rho^{2},V^{2},{\cal V}^2,B_0^{2})$ be standard pentads.
We say that these pentads are {\it equivalent} if and only if there exists an isomorphism of Lie algebras $\tau:\mathfrak{g}^{1}\rightarrow \mathfrak{g}^{2}$, linear isomorphisms $\sigma:V^{1}\rightarrow V^{2}$, $\varsigma:{\cal V}^{1 }\rightarrow {\cal V}^{2}$ and a non-zero element $c\in\mathfrak{k}$ such that 
\begin{align}
&\sigma(\rho ^{1}(a^{1}\otimes x^{1}))=\rho^{2}(\tau (a^{1})\otimes \sigma ( x^{1}))\label{quad_iso_sigma_space},\\
&\varsigma(\varrho ^{1}(a^{1}\otimes y^{1}))=\varrho^{2}(\tau (a^{1})\otimes \varsigma (y^{1}))\label{quad_iso_sigma_space_circ},\\
&\langle x^1,y^1\rangle ^1=\langle \sigma (x^1),\varsigma (y^{1})\rangle ^2\label{tau_taucirc_compati},\\
&B_0^{1}(a^{1},b^{1} )=cB_0^{2}(\tau (a^{1}),\tau (b^{1} ))\label{quad_iso_sigma_bilinear}
\end{align}
where $a^{1},b^{1}\in\mathfrak{g}^{1}$, $x^{1}\in V^{1}$, $y^1\in {\cal V}^{1}$ and $\langle \cdot,\cdot\rangle ^i$ stands for the pairing between $V^i$ and ${\cal V}^{ i}$ $(i=1,2)$.
We denote this equivalence relation by
\begin{align}
(\mathfrak{g}^{1},\rho^{1},V^{1},{\cal V}^{ 1},B_0^{1})\simeq(\mathfrak{g}^{2},\rho^{2},V^{2},{\cal V}^{ 2},B_0^{2}).
\end{align}
\end{defn}
\begin{remark}
Note that if $V$ is finite-dimensional, then linear isomorphisms $\tau $, $\sigma $ satisfying (\ref{quad_iso_sigma_space}) induce a linear isomorphism from ${\cal V}^{1}=\mathrm{Hom}(V^1,\mathfrak{k})$ to ${\cal V}^{2}=\mathrm{Hom}(V^2,\mathfrak{k})$ satisfying (\ref{quad_iso_sigma_space_circ}) and (\ref{tau_taucirc_compati}).
\end{remark}
\begin{pr}\label{pr;stapequi_lieiso}
If standard pentads $(\mathfrak{g}^{1},\rho^{1},V^{1},{\cal V}^1,B_0^{1})$ and $(\mathfrak{g}^{2},\rho^{2},V^{2},{\cal V}^2,B_0^{2})$ are equivalent, then the Lie algebras associated with them are isomorphic, i.e. we have
\begin{align}
L(\mathfrak{g}^{1},\rho^{1},V^{1},{\cal V}^1,B_0^{1})\simeq L(\mathfrak{g}^{2},\rho^{2},V^{2},{\cal V}^2,B_0^{2})
\end{align}
(cf. \cite [Proposition 3.6]{Sa}).
\end{pr}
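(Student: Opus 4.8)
The plan is to build the isomorphism degree by degree, extending the given data on $V_0,V_1,V_{-1}$ by exactly the inductive recipe used to construct the graded Lie algebras, so that the argument runs parallel to the proof of Theorem \ref{th;universality_stap} and to \cite[Proposition 3.6]{Sa}. Writing $L^i=L(\mathfrak{g}^i,\rho^i,V^i,{\cal V}^i,B_0^i)=\bigoplus_n V^i_n$ for $i=1,2$, the goal is to produce linear bijections $\sigma_n\colon V^1_n\to V^2_n$ for every $n$ whose direct sum $\sigma=\bigoplus_n\sigma_n$ preserves the bracket. Since the Lie structure is rebuilt entirely from the degree $-1,0,1$ pieces together with the $\Phi$- and $\Psi$-maps, it suffices to start from $\sigma_0,\sigma_{\pm1}$ and to verify compatibility with the generating brackets.

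First I would translate the equivalence data into a compatibility of the $\Phi$-maps. Combining (\ref{tau_taucirc_compati}), (\ref{quad_iso_sigma_space}) and the defining identity (\ref{defn;eq_stap_Phimap}) gives $B_0^2(\tau(a),\Phi_{\rho^2}(\sigma(v)\otimes\varsigma(\phi)))=\langle\rho^1(a\otimes v),\phi\rangle^1$, while the defining identity for the first pentad together with (\ref{quad_iso_sigma_bilinear}) rewrites the same quantity as $cB_0^2(\tau(a),\tau(\Phi_{\rho^1}(v\otimes\phi)))$; since $\tau$ is surjective and $B_0^2$ is non-degenerate this yields $c\,\tau(\Phi_{\rho^1}(v\otimes\phi))=\Phi_{\rho^2}(\sigma(v)\otimes\varsigma(\phi))$. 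The scalar $c$ is then cleared by a single normalization: I set $\sigma_0:=\tau$, $\sigma_1:=\sigma$ and $\sigma_{-1}:=c^{-1}\varsigma$. With this choice (\ref{quad_iso_sigma_space}) and (\ref{quad_iso_sigma_space_circ}) give $[\sigma_0(a),\sigma_{\pm1}(x_{\pm1})]=\sigma_{\pm1}([a,x_{\pm1}])$, and the displayed $\Phi$-map relation gives $[\sigma_1(x_1),\sigma_{-1}(\phi_{-1})]=\sigma_0([x_1,\phi_{-1}])$, so $\sigma_0,\sigma_{\pm1}$ already act as a homomorphism on every bracket landing in or coming from $V_0$.

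Next I would extend to all degrees by the formulas of (\ref{eq;thm_univ_stap1})--(\ref{eq;thm_univ_stap2}): for $i\ge1$ define $\sigma_{i+1}(p_i(x_1\otimes z_i)):=[\sigma_1(x_1),\sigma_i(z_i)]$ and $\sigma_{-i-1}(q_{-i}(\phi_{-1}\otimes z_{-i})):=[\sigma_{-1}(\phi_{-1}),\sigma_{-i}(z_{-i})]$, and prove by simultaneous induction on $|n|$ the relations $[\sigma_0(a),\sigma_n(z_n)]=\sigma_n([a,z_n])$ and $[\sigma_{\pm1}(x_{\pm1}),\sigma_n(z_n)]=\sigma_{n\pm1}([x_{\pm1},z_n])$, exactly as in Theorem \ref{th;universality_stap} and \cite[Proposition 3.3]{Sa}. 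Because $\tau,\sigma,\varsigma$ are isomorphisms, the same recipe applied to $\tau^{-1},\sigma^{-1},c\,\varsigma^{-1}$ produces a two-sided inverse degree by degree, so each $\sigma_n$ is bijective and $\sigma$ is the desired isomorphism of graded Lie algebras.

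The main obstacle is the well-definedness of $\sigma_{i+1}$ on $V_{i+1}=\mathrm{Im}\,p_i$, since $p_i$ need not be injective. I would handle this exactly as in \cite[Proposition 3.3]{Sa}: realize $\sigma_{i+1}$ as the conjugation $f\mapsto\sigma_i\circ f\circ\sigma_{-1}^{-1}$ from $\mathrm{Hom}(V^1_{-1},V^1_i)$ to $\mathrm{Hom}(V^2_{-1},V^2_i)$, which is manifestly well defined, and then verify, using the lower-degree compatibilities together with the non-degeneracy of the pairing (\ref{defn;condi_stap_pairing}), the identity $\sigma_i\circ p_i^1(x_1\otimes z_i)=p_i^2(\sigma_1(x_1)\otimes\sigma_i(z_i))\circ\sigma_{-1}$. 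This single identity simultaneously shows that conjugation carries $V^1_{i+1}$ onto $V^2_{i+1}$, that the value $[\sigma_1(x_1),\sigma_i(z_i)]$ depends only on $p_i(x_1\otimes z_i)$, and that the generating-bracket relation holds; the normalization $\sigma_{-1}=c^{-1}\varsigma$ ensures that no residual factor of $c$ survives in this computation.
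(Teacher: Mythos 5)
Your proposal is correct and follows essentially the same route as the paper: fix $\sigma_0,\sigma_{\pm1}$ from the equivalence data (absorbing the scalar $c$ into one of $\sigma_{\pm1}$; you put $c^{-1}$ on $\varsigma$ where the paper puts it on $\sigma$, which is immaterial), verify the relations $[\sigma_0(a),\sigma_{\pm1}(x_{\pm1})]=\sigma_{\pm1}([a,x_{\pm1}])$ and $[\sigma_1(x_1),\sigma_{-1}(\phi_{-1})]=\sigma_0([x_1,\phi_{-1}])$, and then extend inductively exactly as in Theorem \ref{th;universality_stap}. The only cosmetic difference is that you check the middle identity directly from the defining equation of the $\Phi$-maps and the non-degeneracy of $B_0^2$, whereas the paper routes the same computation through the extended forms $B_L^i$; both are fine.
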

\begin{proof}
We denote the $n$-graduation of $(\mathfrak{g}^{i},\rho^{i},V^{i},{\cal V}^i,B_0^{i})$ by $V_n^i$ $(i=1,2)$ for all $n\in\mathbb{Z}$ and the bilinear forms on $L(\mathfrak{g}^{i},\rho^{i},V^{i},{\cal V}^i,B_0^{i})$ defined in Proposition \ref{pr;stap_bilinear_exist} by $B_L^i$ $(i=1,2)$.
Under the notation of Definition \ref{defn;quad_iso}, we define linear maps $\sigma _0:=\tau :V_0^1\rightarrow V_0^2$, $\sigma _1:=\frac{1}{c}\sigma :V_1^1\rightarrow V_1^2$ and $\sigma _{-1}:=\varsigma :V_{-1}^1\rightarrow V_{-1}^2$.
Then, these linear maps $\sigma _0$ and $\sigma _{\pm 1}$ satisfy the same equations as (\ref{th;universarity_stap_eq1}) and (\ref{th;universarity_stap_eq2}).
In fact, the equation (\ref{th;universarity_stap_eq1}) is clear, and, we have
\begin{align*}
&B_0^2(\sigma _0(a^1_0),[\sigma _1(x^1_1),\sigma _{-1}(y^1_{-1})])=B_L^2(\sigma _1([a^1_0,x^1_1]),\sigma _{-1}(y^1_{-1})) \notag\\
&\quad =\frac{1}{c}B_L^1([a^1_0,x^1_1],y^1_{-1})= \frac{1}{c}B_0^1(a^1_0,[x^1_1,y^1_{-1}])=B_0^2(\sigma _0(a^1_0),\sigma _0([x^1_1,y^1_{-1}]))
\end{align*}
for any $a^1_0\in V_0^1$, $x^1_1\in V_1^1$ and $y^1_{-1}\in V_{-1}^1$.
Thus, we have the equation (\ref{th;universarity_stap_eq2}).
Then, by the same argument as the argument in proof of Theorem \ref{th;universality_stap}, we can construct an isomorphism of Lie algebras from $L(\mathfrak{g}^{1},\rho^{1},V^{1},{\cal V}^1,B_0^{1})$ to $L(\mathfrak{g}^{2},\rho^{2},V^{2},{\cal V}^2,B_0^{2})$.
\end{proof}
\begin{remark}
The converse of Proposition \ref{defn;quad_iso} is not true.
In fact, we have an example of two non-equivalent pentads such that the corresponding Lie algebras are isomorphic (see \cite[pp. 398--399]{Sa}).
\end{remark}

\begin{defn}
Let $(\mathfrak{g}^{1},\rho^{1},V^{1},{\cal V}^1,B_0^{1})$ and $(\mathfrak{g}^{2},\rho^{2},V^{2},{\cal V}^2,B_0^{2})$ be standard pentads.
Let $\rho ^{1}\boxplus\rho ^{2}$ and $\varrho ^{1}\boxplus\varrho ^{2}$ be representations of $\mathfrak{g} ^{1}\oplus \mathfrak{g} ^{2}$ on $V ^{1}\oplus V ^{2}$ and ${\cal V}^1\oplus {\cal V}^2$ defined by:
\begin{align*}
& (\rho ^{1}\boxplus \rho ^{2})((a ^{1},a ^{2})  \otimes (v ^{1},v ^{2})):=(\rho ^{1}(a ^{1}\otimes v ^{1}), \rho ^{2}(a ^{2}\otimes v ^{2})),\\
& (\varrho ^{1}\boxplus \varrho ^{2})((b ^{1},b ^{2})  \otimes (\phi ^{1},\phi ^{2})):=(\varrho ^{1}(b ^{1}\otimes \phi ^{1}), \varrho ^{2}(b ^{2}\otimes \phi ^{2}))
\end{align*}
where $a ^{i}, b^i\in \mathfrak{g} ^{i}$, $v ^{i}\in V ^{i}$, $\phi  ^{i}\in {\cal V} ^{i}$ $(i=1,2)$.
Let $B_0 ^{1}\oplus B_0 ^ {2}$ be a bilinear form on $\mathfrak{g} ^{1}\oplus \mathfrak{g} ^{2}$ defined by:
\begin{align}
(B_0 ^{1}\oplus B_0^ {2})((a ^{1}, a ^{2}),(b ^{1},b ^{2})):=B_0 ^{1}(a ^{1},b ^{1})+B_0 ^{2}(a ^{2},b ^{2})
\end{align}
where $a ^{i}, b^{i}\in\mathfrak{g} ^{i}$ $(i=1,2)$.
Then, clearly, a pentad $(\mathfrak{g} ^{1}\oplus \mathfrak{g} ^{2},\rho ^{1}\boxplus \rho ^{2}, V ^{1}\oplus V ^{2}, {\cal V}^1\oplus {\cal V}^2,B_0 ^{1}\oplus B_0 ^{2})$ is also a standard pentad.
We call it a ${\it direct\ sum}$ of $(\mathfrak{g}^{1},\rho^{1},V^{1},{\cal V}^1,B_0^{1})$ and $(\mathfrak{g}^{2},\rho^{2},V^{2},{\cal V}^2,B_0^{2})$ and denote it by $(\mathfrak{g}^{1},\rho^{1},V^{1},{\cal V}^1,B_0^{1})\oplus(\mathfrak{g}^{2},\rho^{2},V^{2},{\cal V}^2,B_0^{2})$.
\end{defn}
\begin{pr}
Let $(\mathfrak{g}^{1},\rho^{1},V^{1},{\cal V}^1,B_0^{1})$ and $(\mathfrak{g}^{2},\rho^{2},V^{2},{\cal V}^2,B_0^{2})$ be standard pentads.
Then the Lie algebra $L((\mathfrak{g}^{1},\rho^{1},V^{1},{\cal V}^1,B_0^{1})\oplus(\mathfrak{g}^{2},\rho^{2},V^{2},{\cal V}^2,B_0^{2}))$ is isomorphic to $L(\mathfrak{g}^{1},\rho^{1},V^{1},{\cal V}^1,B_0^{1})\oplus L(\mathfrak{g}^{2},\rho^{2},V^{2},{\cal V}^2,B_0^{2})$
(cf. \cite [Proposition 3.9]{Sa}).
\end{pr}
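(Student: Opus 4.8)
The plan is to show that the $n$-graduation of the direct sum pentad is, for every $n\in\mathbb{Z}$, the direct sum of the corresponding $n$-graduations of the two summands, and then to read off the isomorphism of Lie algebras. Write $(\mathfrak{g},\rho,V,{\cal V},B_0)$ for the direct sum pentad, so that $\mathfrak{g}=\mathfrak{g}^1\oplus\mathfrak{g}^2$, $V=V^1\oplus V^2$, ${\cal V}={\cal V}^1\oplus{\cal V}^2$, $\rho=\rho^1\boxplus\rho^2$ and $B_0=B_0^1\oplus B_0^2$; let $V_n$ denote its $n$-graduations and $V_n^j$ those of $(\mathfrak{g}^j,\rho^j,V^j,{\cal V}^j,B_0^j)$. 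Throughout, a superscript $j\in\{1,2\}$ marks the objects and structure maps $p_i^j,q_{-i}^j$ of Definition \ref{defn_p_q} attached to the $j$-th summand. The first and decisive observation is that the $\Phi$-map of the direct sum splits diagonally: from the defining equation \eqref{defn;eq_stap_Phimap} together with the block forms of $B_0$, $\rho$ and the canonical pairing one obtains
\begin{align*}
\Phi_{\rho}\bigl((v^1,v^2)\otimes(\phi^1,\phi^2)\bigr)=\bigl(\Phi_{\rho^1}(v^1\otimes\phi^1),\,\Phi_{\rho^2}(v^2\otimes\phi^2)\bigr),
\end{align*}
so that in particular the cross terms $\Phi_{\rho}((v^1,0)\otimes(0,\phi^2))$ and $\Phi_{\rho}((0,v^2)\otimes(\phi^1,0))$ vanish; the same holds for $\Psi_{\rho}$. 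Note that this requires no symmetry of $B_0$.

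Next I would run an induction on $|n|$ proving simultaneously that $V_n\cong V_n^1\oplus V_n^2$ as $\mathfrak{g}$-modules (with $\mathfrak{g}^2$ acting trivially on the first summand and $\mathfrak{g}^1$ trivially on the second) and that the maps $p_i,q_{-i}$ are \emph{block diagonal}. The base cases $n=0,\pm 1$ are immediate, since $V_0=\mathfrak{g}$, $V_1=V$ and $V_{-1}={\cal V}$ are already the asserted direct sums. For the inductive step, one substitutes the block-diagonal lower-degree data and the diagonal $\Phi$-map into the recursion defining $p_k$ (and symmetrically $q_{-k}$); recalling that $v_1(\eta_{-1})=\Phi_{\rho,v_1}(\eta_{-1})$ also splits diagonally, one finds for $z=(z^1,z^2)\in V_k^1\oplus V_k^2$ that
\begin{align*}
p_k\bigl((v^1,v^2)\otimes z\bigr)(\eta^1,\eta^2)=\bigl(p_k^1(v^1\otimes z^1)(\eta^1),\,p_k^2(v^2\otimes z^2)(\eta^2)\bigr).
\end{align*}
Hence every element of $\mathrm{Im}\,p_k$ is a block-diagonal homomorphism in $\mathrm{Hom}(V_{-1},V_k)$ whose two blocks range exactly over $\mathrm{Im}\,p_k^1=V_{k+1}^1$ and $\mathrm{Im}\,p_k^2=V_{k+1}^2$, so $V_{k+1}=\mathrm{Im}\,p_k\cong V_{k+1}^1\oplus V_{k+1}^2$ with both the block structure and the $\mathfrak{g}$-module structure preserved; the negative side is handled identically using \eqref{def:bra_-n-1}.

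It then remains to transport the bracket. Because every bracket in $L(\mathfrak{g},\rho,V,{\cal V},B_0)$ is built from $\rho$, the maps $p_i,q_{-i}$ and the $\Phi$/$\Psi$-pairings, the block-diagonality just established shows that on the identified spaces the bracket of a superscript-$1$ element with a superscript-$2$ element is zero, while the bracket of two superscript-$j$ elements agrees with the bracket of $L(\mathfrak{g}^j,\rho^j,V^j,{\cal V}^j,B_0^j)$. Thus the degreewise identifications assemble into a graded linear isomorphism
\begin{align*}
\bigoplus_{n\in\mathbb{Z}}V_n\;\longrightarrow\;L(\mathfrak{g}^1,\rho^1,V^1,{\cal V}^1,B_0^1)\oplus L(\mathfrak{g}^2,\rho^2,V^2,{\cal V}^2,B_0^2)
\end{align*}
which intertwines the brackets, and this is the desired isomorphism.

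The main obstacle lies entirely in the inductive step: one must verify that block-diagonality genuinely propagates through the recursive formulas \eqref{def:bra_n+1}, which mingle $\rho_i$, the lower maps $p_{i-1}$ and the $\Phi$-pairings $u_i(\eta_{-1})$, and that no cross terms are created at any stage. This is a bookkeeping-heavy but routine verification modeled on \cite[Propositions 2.5 and 2.6]{Sa}. I would also remark that when $B_0$ is symmetric there is a shortcut avoiding the induction: endow $L^1\oplus L^2$ with the grading $\mathfrak{L}_n=V_n^1\oplus V_n^2$ and the bilinear form $B_L^1\oplus B_L^2$ supplied by Proposition \ref{pr;stap_bilinear_exist}, observe that its degree-$0$ part recovers precisely the direct sum pentad and that conditions \eqref{univ:condi_1} and \eqref{univ:condi_2} hold summandwise, and invoke the universality Theorem \ref{th;universality_stap} to conclude at once.
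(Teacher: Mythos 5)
Your proposal is correct, but it takes a genuinely different route from the paper. The paper's proof is exactly the ``shortcut'' you mention in your last sentence: it equips $L(\mathfrak{g}^{1},\rho^{1},V^{1},{\cal V}^1,B_0^{1})\oplus L(\mathfrak{g}^{2},\rho^{2},V^{2},{\cal V}^2,B_0^{2})$ with the grading $\bigoplus_{n}(V_n^1\oplus V_n^2)$ and immediately invokes the universality/uniqueness result (Theorem \ref{th;universality_stap}), with no explicit verification of the block-diagonal structure. Your main argument instead proceeds by direct induction on the graduations, showing that the $\Phi$-map splits diagonally and that block-diagonality propagates through the recursions defining $p_i$, $q_{-i}$ and the brackets. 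Each approach has its merits: the paper's is a two-line appeal to already-proved machinery, while yours is longer but more self-contained and, importantly, does not require $B_0$ to be symmetric. This is not a cosmetic point --- Theorem \ref{th;universality_stap} (and the bilinear form $B_L$ of Proposition \ref{pr;stap_bilinear_exist} that it relies on) is only available when the bilinear form is symmetric, whereas the proposition as stated imposes no such hypothesis; your inductive argument therefore actually covers the full generality of the statement, which the paper's cited proof strictly speaking does not. Your identification of the inductive step as the only real work, and your check that $\mathrm{Im}\,p_k$ is exactly $V_{k+1}^1\oplus V_{k+1}^2$ (by specializing one of the two components to zero), are both sound.
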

\begin{proof}
We retain to use the notation of Proposition \ref{pr;stapequi_lieiso}.
Then, we have the following $\mathbb{Z}$-grading of $L(\mathfrak{g}^{1},\rho^{1},V^{1},{\cal V}^1,B_0^{1})\oplus L(\mathfrak{g}^{2},\rho^{2},V^{2},{\cal V}^2,B_0^{2})$:
\begin{align}
L(\mathfrak{g}^{1},\rho^{1},V^{1},{\cal V}^1,B_0^{1})\oplus L(\mathfrak{g}^{2},\rho^{2},V^{2},{\cal V}^2,B_0^{2})=\bigoplus _{n\in\mathbb{Z}}(V_n^1\oplus V_n^2).
\end{align}
By Theorem \ref{th;universality_stap}, we have our claim.
\end{proof}

\begin{defn}
Let $(\mathfrak{g},\rho,V,{\cal V},B_0)$ be a standard pentad.
We say that $(\mathfrak{g},\rho,V,{\cal V},B_0)$ is {\it decomposable} if and only if there exist standard pentads $(\mathfrak{a},\rho _{\mathfrak{a}},V_{\mathfrak{a}},{\cal V}_{\mathfrak{a}},B_{0,\mathfrak{a}})$ and $(\mathfrak{b},\rho _{\mathfrak{b}},V_{\mathfrak{b}},{\cal V}_{\mathfrak{b}},B_{0,\mathfrak{b}})$ such that 
\begin{align}
&(\dim \mathfrak{a}+\dim V_{\mathfrak{a}})(\dim \mathfrak{b}+\dim V_{\mathfrak{b}})\neq 0,\\
&(\mathfrak{g},\rho,V,{\cal V},B_0)\simeq (\mathfrak{a},\rho _{\mathfrak{a}},V_{\mathfrak{a}},{\cal V}_{\mathfrak{a}},B_{0,\mathfrak{a}})\oplus (\mathfrak{b},\rho _{\mathfrak{b}},V_{\mathfrak{b}},{\cal V}_{\mathfrak{b}},B_{0,\mathfrak{b}}).
\end{align}
If $(\mathfrak{g},\rho,V,{\cal V},B_0)$ is not decomposable, we say that $(\mathfrak{g},\rho,V,{\cal V},B_0)$ is {\it indecomposable}.
\end{defn}

\begin{defn}\label{defn;stap_irr}
Let $(\mathfrak{g},\rho,V,{\cal V},B_0)$ be a standard pentad.
We say that $(\mathfrak{g},\rho,V,{\cal V},B_0)$ is {\it reducible} if and only if there exist an ideal $\mathfrak{a}$ of $\mathfrak{g}$ and $\mathfrak{g}$-submodules $V_{\mathfrak{a}}$ and ${\cal V}_{\mathfrak{a}}$ of $V$ and ${\cal V}$ satisfying that:
\begin{align}
&\text{$\{0\}\neq {\cal V}_{\mathfrak{a}}\oplus \mathfrak{a}\oplus V_{\mathfrak{a}}\subsetneq {\cal V}\oplus \mathfrak{a}\oplus V$,}\label{defn;stap_irr_condi_1}\\
&\text{$\rho (\mathfrak{a}\otimes V), \rho (\mathfrak{g}\otimes V_{\mathfrak{a}}) \subset V_{\mathfrak{a}}$ and $\varrho (\mathfrak{a}\otimes {\cal V}), \varrho (\mathfrak{g}\otimes {\cal V}_{\mathfrak{a}})\subset {\cal V}_{\mathfrak{a}}$,}\label{defn;stap_irr_condi_2}\\
&\text{$\Phi _{\rho }(V_{\mathfrak{a}}\otimes {\cal V}),\Phi _{\rho }(V\otimes {\cal V}_{\mathfrak{a}})\subset \mathfrak{a}$.}\label{defn;stap_irr_condi_3}
\end{align}
And, we say that $(\mathfrak{g},\rho,V,{\cal V},B_0)$ is {\it irreducible} if and only if it is not reducible.
\end{defn}
\begin{remark}
If a standard pentad is irreducible, then it is indecomposable.
\end{remark}
\begin{pr}\label{pr;irr_imply_sur}
Let $(\mathfrak{g},\rho,V,{\cal V},B_0)$ be an irreducible standard pentad.
Then the representations $\rho :\mathfrak{g}\otimes V\rightarrow V$, $\varrho :\mathfrak{g}\otimes {\cal V}\rightarrow {\cal V}$ and the $\Phi $-map $\Phi _{\rho }:V\otimes {\cal V}\rightarrow \mathfrak{g}$ are surjective.
\end{pr}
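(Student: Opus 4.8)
The plan is to argue by contraposition. Assuming that at least one of $\rho$, $\varrho$, $\Phi_\rho$ fails to be surjective, I would exhibit data witnessing that $(\mathfrak{g},\rho,V,{\cal V},B_0)$ is reducible in the sense of Definition \ref{defn;stap_irr}, contradicting irreducibility. The natural candidate is built from the images of the three structure maps themselves: set $\mathfrak{a}:=\Phi_\rho(V\otimes{\cal V})$, $V_\mathfrak{a}:=\rho(\mathfrak{g}\otimes V)$ and ${\cal V}_\mathfrak{a}:=\varrho(\mathfrak{g}\otimes{\cal V})$. First I would record that each of these is closed under the relevant action: $V_\mathfrak{a}$ and ${\cal V}_\mathfrak{a}$ are $\mathfrak{g}$-submodules (the image of a representation is a submodule, by the defining identity for $\rho$ and $\varrho$), and $\mathfrak{a}$ is an ideal of $\mathfrak{g}$, since Proposition \ref{pr;phopsi_liehom} gives $[a,\Phi_\rho(v\otimes\phi)]=\Phi_\rho(\rho(a\otimes v)\otimes\phi)+\Phi_\rho(v\otimes\varrho(a\otimes\phi))\in\mathfrak{a}$ for all $a\in\mathfrak{g}$, $v\in V$, $\phi\in{\cal V}$.

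The computational heart is verifying the closure conditions (\ref{defn;stap_irr_condi_2}) and (\ref{defn;stap_irr_condi_3}). Conditions of type (\ref{defn;stap_irr_condi_2}) are immediate: $\rho(\mathfrak{a}\otimes V)\subset\rho(\mathfrak{g}\otimes V)=V_\mathfrak{a}$ and $\rho(\mathfrak{g}\otimes V_\mathfrak{a})\subset V_\mathfrak{a}$ because $V_\mathfrak{a}$ is a submodule, and symmetrically for ${\cal V}_\mathfrak{a}$. The delicate part is (\ref{defn;stap_irr_condi_3}). Here I would reuse the intertwining identity above, rearranged as $\Phi_\rho(\rho(a\otimes v)\otimes\phi)=[a,\Phi_\rho(v\otimes\phi)]-\Phi_\rho(v\otimes\varrho(a\otimes\phi))$: this shows $\Phi_\rho(V_\mathfrak{a}\otimes{\cal V})\subset\mathfrak{a}$, because the first term lies in $[\mathfrak{g},\mathfrak{a}]\subset\mathfrak{a}$ (as $\mathfrak{a}$ is an ideal) and the second lies in $\Phi_\rho(V\otimes{\cal V})=\mathfrak{a}$; the analogous rearrangement solving for $\Phi_\rho(v\otimes\varrho(a\otimes\phi))$ gives $\Phi_\rho(V\otimes{\cal V}_\mathfrak{a})\subset\mathfrak{a}$. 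Thus the triple $({\cal V}_\mathfrak{a},\mathfrak{a},V_\mathfrak{a})$ satisfies (\ref{defn;stap_irr_condi_2}) and (\ref{defn;stap_irr_condi_3}) automatically, with no further hypotheses.

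It then remains to check the size condition (\ref{defn;stap_irr_condi_1}). Properness ${\cal V}_\mathfrak{a}\oplus\mathfrak{a}\oplus V_\mathfrak{a}\subsetneq{\cal V}\oplus\mathfrak{g}\oplus V$ is exactly the assumed failure of surjectivity of one of the three maps (whichever fails shrinks the corresponding summand). For non-triviality I would observe that the three images vanish simultaneously only when $\rho=\varrho=\Phi_\rho=0$: indeed $\Phi_\rho=0$ forces $\langle\rho(a\otimes v),\phi\rangle=0$ for all $a,v,\phi$ by non-degeneracy of $B_0$ in (\ref{defn;eq_stap_Phimap}), whence $\rho=0$ (and $\varrho=0$) by the non-degeneracy of the pairing in (\ref{defn;condi_stap_pairing}). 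In that degenerate situation one instead takes the witness $\mathfrak{a}=\mathfrak{g}$, $V_\mathfrak{a}=\{0\}$, ${\cal V}_\mathfrak{a}=\{0\}$, for which (\ref{defn;stap_irr_condi_2}) and (\ref{defn;stap_irr_condi_3}) hold trivially and (\ref{defn;stap_irr_condi_1}) holds as soon as $V\neq\{0\}$. Either way we reach a contradiction with irreducibility, so all three maps are surjective. I expect the main obstacle to be precisely the verification of (\ref{defn;stap_irr_condi_3}) — choosing the correct rearrangement of the intertwining identity so that both summands land in the ideal $\mathfrak{a}$ — together with the bookkeeping needed to rule out the trivial all-zero witness; everything else is formal.
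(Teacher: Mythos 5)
Your proposal is correct and follows essentially the same route as the paper: both take the witness triple $\mathfrak{a}=\Phi_\rho(V\otimes{\cal V})$, $V_{\mathfrak{a}}=\rho(\mathfrak{g}\otimes V)$, ${\cal V}_{\mathfrak{a}}=\varrho(\mathfrak{g}\otimes{\cal V})$, observe that it satisfies (\ref{defn;stap_irr_condi_2}) and (\ref{defn;stap_irr_condi_3}), and conclude by irreducibility that it must be all of ${\cal V}\oplus\mathfrak{g}\oplus V$. You merely spell out the verification of (\ref{defn;stap_irr_condi_3}) via the intertwining identity of Proposition \ref{pr;phopsi_liehom} (which the paper asserts without detail) and treat the all-zero degenerate case a little more explicitly; neither changes the substance of the argument.
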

\begin{proof}
If $\varrho (\mathfrak{g}\otimes {\cal V})\oplus \Phi _{\rho }(V\otimes {\cal V})\oplus \rho (\mathfrak{g}\otimes V)=\{0\}$, it follows that $\dim {\cal V}=\dim \mathfrak{g}=\dim V=0$ from the assumption that $(\mathfrak{g},\rho,V,{\cal V},B_0)$ is irreducible.
In particular, we have $\varrho (\mathfrak{g}\otimes {\cal V})={\cal V}=\{0\}$ and $\rho (\mathfrak{g}\otimes V)=V=\{0\}$.
If $\varrho (\mathfrak{g}\otimes {\cal V})\oplus \Phi _{\rho }(V\otimes {\cal V})\oplus \rho (\mathfrak{g}\otimes V)\neq \{0\}$, since it satisfies the conditions (\ref{defn;stap_irr_condi_2}) and (\ref{defn;stap_irr_condi_3}), we have $\varrho (\mathfrak{g}\otimes {\cal V})\oplus \Phi _{\rho }(V\otimes {\cal V})\oplus \rho (\mathfrak{g}\otimes V)={\cal V}\oplus \mathfrak{g}\oplus V$.
\end{proof}

\begin{pr}\label{indec_ker_V_1}
Let $(\mathfrak{g},\rho,V,{\cal V},B_0)$ be an irreducible standard pentad whose representation $\rho $ is faithful and denote the Lie algebra associated with it by $L(\mathfrak{g},\rho,V,{\cal V},B_0)=\bigoplus _{n\in\mathbb{Z}}V_n$.
Let $N$ (respectively $M$) be an integer such that $V _{N+1}$ is not $\{0\}$ (respectively $V_{-M-1}$ is not $\{0\}$).
Then for any non-zero element $z_N\in V_N$ (respectively $\omega _{-M}\in V_{-M}$), there exists an element $x_1\in V_1$ such that $[x_1,z_N]\neq 0$ (respectively $y_{-1}\in V_{-1}$ such that $[y_{-1},\omega _{-M}]\neq 0$) (cf. \cite [Proposition 3.11]{Sa}).
\end{pr}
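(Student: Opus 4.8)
The plan is to prove the positive statement by contradiction, manufacturing from a putative counterexample a nonzero proper graded ideal of $L(\mathfrak{g},\rho,V,{\cal V},B_0)$ whose degree $-1,0,1$ components violate the irreducibility of the pentad; the negative statement will then follow by the mirror-image argument. So fix $N$ with $V_{N+1}\neq\{0\}$ and set $K:=\{z_N\in V_N : [x_1,z_N]=0\ \text{for all}\ x_1\in V_1\}$, the goal being to show $K=\{0\}$. First I would record two elementary facts: $K$ is a $\mathfrak{g}$-submodule of $V_N$, since for $z_N\in K$ and $a\in\mathfrak{g}=V_0$ the Jacobi identity gives $[x_1,[a,z_N]]=[[x_1,a],z_N]+[a,[x_1,z_N]]=0$ (because $[x_1,a]\in V_1$ and $z_N\in K$); and $[V_1,K]=\{0\}$ by definition.

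Assuming $K\neq\{0\}$, I would build the graded subspace $\mathcal{I}:=\sum_{j\geq 0}K^{(j)}$, where $K^{(0)}:=K$ and $K^{(j)}:=[V_{-1},K^{(j-1)}]\subseteq V_{N-j}$. The decisive step is to verify that $\mathcal{I}$ is an ideal; since $L(\mathfrak{g},\rho,V,{\cal V},B_0)$ is generated by $V_{-1}\oplus V_0\oplus V_1$, it suffices to show $[V_{-1}\oplus V_0\oplus V_1,\mathcal{I}]\subseteq\mathcal{I}$. Stability under $V_{-1}$ is built into the definition; stability under $V_0$ follows by induction from $[\mathfrak{g},K^{(j)}]=[\mathfrak{g},[V_{-1},K^{(j-1)}]]\subseteq[V_{-1},K^{(j-1)}]+[V_{-1},[\mathfrak{g},K^{(j-1)}]]=K^{(j)}$, using $[\mathfrak{g},V_{-1}]\subseteq V_{-1}$ and the $\mathfrak{g}$-invariance of $K$, so that each $K^{(j)}$ is a $\mathfrak{g}$-submodule. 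The only genuine computation is stability under $V_1$: for $w'\in K^{(j-1)}$, $\eta_{-1}\in V_{-1}$, $x_1\in V_1$, Jacobi gives $[x_1,[\eta_{-1},w']]=[[x_1,\eta_{-1}],w']+[\eta_{-1},[x_1,w']]$, where the first summand lies in $[\mathfrak{g},K^{(j-1)}]\subseteq K^{(j-1)}$ and the second, by the inductive hypothesis $[V_1,K^{(j-1)}]\subseteq\mathcal{I}$ and stability under $V_{-1}$, lies in $[V_{-1},\mathcal{I}]\subseteq\mathcal{I}$, the base case $[V_1,K]=\{0\}$ starting the induction. Hence $\mathcal{I}$ is a graded ideal with $\mathcal{I}_n=K^{(N-n)}$ for $n\leq N$ and $\mathcal{I}_n=\{0\}$ for $n\geq N+1$.

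Next I would locate a nonzero low-degree component of $\mathcal{I}$. For $n\geq 2$ the construction realises $V_n$ as a subspace of $\mathrm{Hom}(V_{-1},V_{n-1})$ with $z_n(\eta_{-1})=[z_n,\eta_{-1}]$, so any nonzero $z_n$ has $[z_n,\eta_{-1}]\neq 0$ for some $\eta_{-1}$; thus bracketing down with $V_{-1}$ cannot kill $K^{(j)}$ while the degree stays $\geq 2$. Consequently, for $N\geq 1$ one reaches $\mathcal{I}_1=K^{(N-1)}\neq\{0\}$, while for $N=0$ one already has $\mathcal{I}_0=K\neq\{0\}$. Setting $\mathfrak{a}:=\mathcal{I}_0$, $V_{\mathfrak{a}}:=\mathcal{I}_1$ and ${\cal V}_{\mathfrak{a}}:=\mathcal{I}_{-1}$, conditions (\ref{defn;stap_irr_condi_2}) and (\ref{defn;stap_irr_condi_3}) hold automatically because $\mathcal{I}$ is an ideal: they merely translate $[\mathfrak{a},V_1],[V_0,\mathcal{I}_1]\subseteq\mathcal{I}_1$, the analogues for ${\cal V}$, the fact that $\mathfrak{a}$ is an ideal of $\mathfrak{g}$ since $[V_0,\mathcal{I}_0]\subseteq\mathcal{I}_0$, and $[\mathcal{I}_1,V_{-1}],[V_1,\mathcal{I}_{-1}]\subseteq\mathcal{I}_0$, recalling $[a,v]=\rho(a\otimes v)$, $[a,\phi]=\varrho(a\otimes\phi)$ and $[v,\phi]=\Phi_\rho(v\otimes\phi)$. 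The triple is nonzero by the previous sentence; it is also proper, for if ${\cal V}_{\mathfrak{a}}\oplus\mathfrak{a}\oplus V_{\mathfrak{a}}$ were all of ${\cal V}\oplus\mathfrak{g}\oplus V$ then $\mathcal{I}$ would contain $V_{-1}\oplus V_0\oplus V_1$, hence equal $L(\mathfrak{g},\rho,V,{\cal V},B_0)$, forcing $\mathcal{I}_{N+1}=V_{N+1}\neq\{0\}$ and contradicting $\mathcal{I}_{N+1}=\{0\}$. Thus (\ref{defn;stap_irr_condi_1}) holds and the pentad is reducible, contradicting the hypothesis; therefore $K=\{0\}$, which is exactly the assertion.

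The main obstacle is the ideal verification, and within it the $V_1$-stability, whose whole purpose is to upgrade the pointwise relation ``$z_N$ is annihilated by $V_1$'' into a bracket-closed object; the second delicate point is the degree-descent, where the embedding $V_n\hookrightarrow\mathrm{Hom}(V_{-1},V_{n-1})$ for $n\geq 2$ guarantees that $\mathcal{I}$ survives down to degrees $0$ and $1$ and so actually meets the pentad data, while the hypothesis $V_{N+1}\neq\{0\}$ enters only at the very end to force properness. (The faithfulness of $\rho$ gives an alternative quick treatment of the degree-$0$ base case, but the uniform argument above does not require it.) For the statement about $\omega_{-M}\in V_{-M}$ with $V_{-M-1}\neq\{0\}$, the same scheme applies with the roles of $V_1$ and $V_{-1}$ interchanged, building $\mathcal{I}$ by iterated brackets with $V_1$ and using the embedding $V_n\hookrightarrow\mathrm{Hom}(V_1,V_{n+1})$ for $n\leq -2$.
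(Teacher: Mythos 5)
Your argument is correct where it applies, and it takes a genuinely different route from the paper's. The paper splits into three cases: for $N\leq -1$ it invokes transitivity (via Propositions \ref{defn;case_rho_sur}, \ref{pr;stap_transitive} and \ref{pr;irr_imply_sur}), for $N=0$ it uses the faithfulness of $\rho$, and for $N\geq 1$ it descends from the annihilator $\mathfrak{a}_N=K$ by taking \emph{preimages} under $\mathrm{ad}\,V_1$ (setting $\mathfrak{a}_n:=\{a_n : [V_1,a_n]\subset \mathfrak{a}_{n+1}\}$), shows that $\mathfrak{a}_{-1}\oplus\mathfrak{a}_0\oplus\mathfrak{a}_1$ satisfies (\ref{defn;stap_irr_condi_2}) and (\ref{defn;stap_irr_condi_3}), concludes from irreducibility that this triple is zero, and then climbs back up to $\mathfrak{a}_N=\{0\}$ using transitivity. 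You instead descend by \emph{images} under $\mathrm{ad}\,V_{-1}$, which packages the contradiction into a single graded ideal $\mathcal{I}$; the payoff is that you never need transitivity to return to degree $N$ (properness of $\mathcal{I}$ at degree $N+1$ does that work), and you never use the faithfulness of $\rho$ (your $N=0$ case replaces it by the hypothesis $V_{N+1}=V_1\neq\{0\}$). The ideal verification, the descent $K^{(j)}\neq\{0\}\Rightarrow K^{(j+1)}\neq\{0\}$ via the realisation $V_n\subset \mathrm{Hom}(V_{-1},V_{n-1})$ for $n\geq 2$, and the translation of the degree $0,\pm 1$ components of $\mathcal{I}$ into the data $\rho,\varrho,\Phi_\rho$ of Definition \ref{defn;stap_irr} are all sound, and you avoid any circular appeal to Proposition \ref{pr;L_irrgraded_alg}.

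The one thing you must still add is the case $N\leq -1$ (and, in the mirror statement, $M\leq -1$), which your write-up does not treat: your descent then produces an $\mathcal{I}$ living entirely in degrees $\leq N$, and your concluding sentences only cover $N\geq 0$. This costs little. For $N\leq -2$ the claim is immediate from the construction, since $V_N=\mathrm{Im}\ q_{N+1}$ is by definition a subspace of $\mathrm{Hom}(V_1,V_{N+1})$ and $[x_1,z_N]=-z_N(x_1)$, so being annihilated by all of $V_1$ means being the zero homomorphism. For $N=-1$ your own construction still works verbatim: the triple is $({\cal V}_{\mathfrak{a}},\mathfrak{a},V_{\mathfrak{a}})=(K,\{0\},\{0\})$, condition (\ref{defn;stap_irr_condi_3}) holds by the very definition of $K$, and properness follows since $\mathfrak{g}\neq\{0\}$; equivalently one may quote the injectivity of $\Psi_{\rho}^{\circ}$ from Propositions \ref{defn;case_rho_sur} and \ref{pr;irr_imply_sur}, as the paper does. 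With that supplement your proof is complete.
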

\begin{proof}
When $N\leq -1$, we have our claim by Propositions \ref{defn;case_rho_sur}, \ref{pr;stap_transitive} and \ref{pr;irr_imply_sur}.
When $N=0$, we have our claim by the assumption that $\rho $ is faithful.
Assume that $N\geq 1$ and $V_{N+1}\neq \{0\}$ and put $\mathfrak{a}_N:=\{a_N\in V_N\mid [x_1,a_N]=0\text{ for any $x_1\in V_{1}$}\}$ and $\mathfrak{a}_n:=\{a_n\in V_n\mid [x_1,a_n]\in \mathfrak{a}_{n+1}\text{ for any $x_1\in V_{1}$}\}$ for $n\leq N-1$ inductively.
Then $\mathfrak{a}_n$ is a $V_0$-submodule of $V_n$ for each $n$, i.e. $[V_0,\mathfrak{a}_n]\subset \mathfrak{a}_n$, and, we have that $[V_{\pm 1},\mathfrak{a}_n]\subset \mathfrak{a}_{n\pm 1}$ for any $n\in\mathbb{Z}$ (see \cite[the proof of Proposition 3.11]{Sa}).
In particular, $\mathfrak{a}_{-1}\oplus \mathfrak{a}_0\oplus \mathfrak{a}_1$ satisfies the conditions (\ref{defn;stap_irr_condi_2}) and (\ref{defn;stap_irr_condi_3}).
If $\mathfrak{a}_{-1}\oplus \mathfrak{a}_0\oplus \mathfrak{a}_1={\cal V}\oplus \mathfrak{g}\oplus V$, then we have $\mathfrak{a}_N=V_N$ and a contradiction to the assumption that $V_{N+1}\neq \{0\}$.
Thus we have $\mathfrak{a}_1=\{0\}$, and, thus, $\mathfrak{a}_2=\{0\},\ldots ,\mathfrak{a}_{N}=\{0\}$ by the transitivity of $L(\mathfrak{g},\rho,V,{\cal V},B_0)$.
Similarly, we have our result for $M$ such that $V_{-M-1}\neq \{0\}$.
\end{proof}
\begin{pr}\label{L_simple}
Let $(\mathfrak{g},\rho,V,{\cal V},B_0)$ be an irreducible standard pentad whose representation $\rho $ is faithful.
If the Lie algebra $L(\mathfrak{g},\rho,V,{\cal V},B_0)$ is finite-dimensional, then $L(\mathfrak{g},\rho,V,{\cal V},B_0)$ is simple
(cf. \cite [Proposition 3.12]{Sa}).
Moreover, if $(\mathfrak{g},\rho,V,{\cal V},B_0)$ is defined over $\mathbb{C}$ and $L(\mathfrak{g},\rho,V,{\cal V},B_0)$ is a finite-dimensional simple Lie algebra, then a triplet $(\mathfrak{g},\rho,V)$ corresponds to some prehomogeneous vector space of parabolic type (see \cite [Theorem 3.13]{Sa}).
\end{pr}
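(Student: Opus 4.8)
The plan is to prove simplicity by combining the transitivity of $L:=L(\mathfrak{g},\rho,V,{\cal V},B_0)$ with the irreducibility criterion of Definition \ref{defn;stap_irr}, and then to invoke \cite[Theorem 3.13]{Sa} for the prehomogeneous statement. First I would record that the hypotheses force $L$ to be transitive. Indeed, if $a\in\mathfrak{g}$ lies in the kernel of $\varrho$, then by (\ref{defn;eq_stap_Phimap}) we get $\langle \rho(a\otimes v),\phi\rangle=-\langle v,\varrho(a\otimes\phi)\rangle=0$ for all $v\in V$, $\phi\in{\cal V}$; the non-degeneracy (\ref{defn;condi_stap_pairing}) then yields $\rho(a\otimes v)=0$ for all $v$, so $a\in\ker\rho=\{0\}$ by faithfulness. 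Hence $\varrho$ is faithful as well, and since $(\mathfrak{g},\rho,V,{\cal V},B_0)$ is irreducible, Proposition \ref{pr;irr_imply_sur} makes both $\rho$ and $\varrho$ surjective. Proposition \ref{pr;stap_transitive} then gives that $L$ is transitive.

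Now let $\mathfrak{I}\neq\{0\}$ be an ideal of $L$ and set $\mathfrak{a}:=\mathfrak{I}\cap V_0$, $V_{\mathfrak{a}}:=\mathfrak{I}\cap V_1$, ${\cal V}_{\mathfrak{a}}:=\mathfrak{I}\cap V_{-1}$. Because the bracket of $L$ respects the grading and $\mathfrak{I}$ is an ideal, these are an ideal of $\mathfrak{g}$ and $\mathfrak{g}$-submodules of $V$ and ${\cal V}$ satisfying conditions (\ref{defn;stap_irr_condi_2}) and (\ref{defn;stap_irr_condi_3}): for instance $\rho(\mathfrak{a}\otimes V)=[\mathfrak{a},V_1]\subset \mathfrak{I}\cap V_1=V_{\mathfrak{a}}$ and $\Phi_{\rho}(V_{\mathfrak{a}}\otimes{\cal V})=[V_{\mathfrak{a}},V_{-1}]\subset\mathfrak{I}\cap V_0=\mathfrak{a}$, and similarly for the remaining inclusions. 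Since the pentad is irreducible, condition (\ref{defn;stap_irr_condi_1}) must fail for this triple, so either ${\cal V}_{\mathfrak{a}}\oplus\mathfrak{a}\oplus V_{\mathfrak{a}}={\cal V}\oplus\mathfrak{g}\oplus V$ or ${\cal V}_{\mathfrak{a}}\oplus\mathfrak{a}\oplus V_{\mathfrak{a}}=\{0\}$. In the first case $\mathfrak{I}\supset V_{-1}\oplus V_0\oplus V_1$, and since this local part generates $L$ (Theorem \ref{StapLieAlgebraexists}) while $\mathfrak{I}$ is a subalgebra, we conclude $\mathfrak{I}=L$.

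It remains to rule out the second case when $\mathfrak{I}\neq\{0\}$, i.e. to show that an ideal meeting $V_{-1}\oplus V_0\oplus V_1$ trivially is zero. Here I would first extract a nonzero homogeneous element of $\mathfrak{I}$. Let $q$ be the least top-degree occurring among nonzero elements of $\mathfrak{I}$ and take $x=\sum_{n\leq q}x_n\in\mathfrak{I}$ with $x_q\neq 0$. If $q\geq 1$, then $[V_{-1},x]\subset\mathfrak{I}$ has strictly smaller top-degree, so by minimality $[y_{-1},x]=0$ for every $y_{-1}\in V_{-1}$; comparing graded components gives $[V_{-1},x_n]=\{0\}$ for each $n$, whence $x_n=0$ for all $n\geq 0$ by transitivity, contradicting $q\geq 1$. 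Thus $q\leq 0$, and a symmetric argument using $V_1$ produces a nonzero element supported in non-negative degrees; refining the choice and walking the resulting homogeneous component toward degree $0$ or $\pm 1$ by transitivity and Proposition \ref{indec_ker_V_1} produces a nonzero element of $V_{-1}$, $V_0$ or $V_1$ inside $\mathfrak{I}$, contradicting $\mathfrak{I}\cap(V_{-1}\oplus V_0\oplus V_1)=\{0\}$. I expect this homogeneous-extraction step --- showing that a nonzero ideal cannot live entirely in degrees $|n|\geq 2$ --- to be the main obstacle, since the non-homogeneous components interact under $\mathrm{ad}$ and one must use transitivity and Proposition \ref{indec_ker_V_1} carefully to separate top and bottom; this is the analogue of \cite[Proposition 3.12]{Sa}.

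Finally, for the ``moreover'' part, suppose $L$ is a finite-dimensional simple Lie algebra over $\mathbb{C}$. Its $\mathbb{Z}$-grading $L=\bigoplus_n V_n$ is then defined by an element of a Cartan subalgebra lying in the centre of $V_0$, so that $\bigoplus_{n\geq 0}V_n$ is a parabolic subalgebra with Levi factor $\mathfrak{g}=V_0$ and $V=V_1$ its first graded piece. The isotropy representation $(\mathfrak{g},\rho,V)=(V_0,\mathrm{ad},V_1)$ of such a parabolic grading is exactly a prehomogeneous vector space of parabolic type; this is the content of \cite[Theorem 3.13]{Sa}, which I would cite directly rather than reprove.
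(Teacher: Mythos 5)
Your overall architecture is the one the paper intends (its own proof is only a pointer to Proposition \ref{indec_ker_V_1} and \cite[Proposition 3.12 and Theorem 3.13]{Sa}): deduce transitivity of $L:=L(\mathfrak{g},\rho,V,{\cal V},B_0)$ from faithfulness and Propositions \ref{pr;irr_imply_sur} and \ref{pr;stap_transitive}, dispose of graded ideals via the irreducibility conditions (\ref{defn;stap_irr_condi_1})--(\ref{defn;stap_irr_condi_3}) (this part is essentially Proposition \ref{pr;L_irrgraded_alg}, already in the paper), and cite \cite[Theorem 3.13]{Sa} for the prehomogeneous-vector-space statement. Those portions of your write-up are correct.

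The step you yourself flag as ``the main obstacle'' is, however, a genuine gap rather than a technicality. What you actually prove is that the least top-degree $q$ realized by nonzero elements of $\mathfrak{I}$ satisfies $q\le 0$, and, symmetrically, that the greatest bottom-degree is $\ge 0$; but these statements concern two \emph{different} elements of $\mathfrak{I}$ and do not combine to yield a single nonzero homogeneous element. The subsequent ``walking the resulting homogeneous component toward degree $0$ or $\pm 1$'' is not available either, because bracketing a non-homogeneous element with $V_{\pm1}$ moves all of its graded components simultaneously, so nothing gets isolated. (A smaller imprecision: your case distinction only yields $\mathfrak{I}\cap V_j=\{0\}$ for $j=0,\pm1$, not $\mathfrak{I}\cap(V_{-1}\oplus V_0\oplus V_1)=\{0\}$, though a nonzero homogeneous element of the local part would still give the desired contradiction.) The missing device is one that converts an arbitrary nonzero ideal into a nonzero \emph{graded} one. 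For instance: for each $n$ let $\mathfrak{I}_n\subset V_n$ be the span of the leading (top-degree) components of those elements of $\mathfrak{I}$ whose top degree is $n$; one checks $[V_k,\mathfrak{I}_n]\subset\mathfrak{I}_{n+k}$, so $\bigoplus_n\mathfrak{I}_n$ is a nonzero graded ideal and hence equals $L$ by Proposition \ref{pr;L_irrgraded_alg}. In particular $\mathfrak{I}_{-M}=V_{-M}$ for the lowest degree $-M$ with $V_{-M}\ne\{0\}$ (which exists by finite-dimensionality), and elements of $\mathfrak{I}$ of top degree $-M$ are automatically homogeneous, so $V_{-M}\subset\mathfrak{I}$; transitivity and Proposition \ref{indec_ker_V_1} then carry this up to nonzero elements of $\mathfrak{I}\cap V_j$ for $j=0,\pm1$, whence $\mathfrak{I}=L$. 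Until some such reduction is supplied, your argument does not establish simplicity.
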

\begin{proof}
We can show this by Proposition \ref{indec_ker_V_1} and the same argument to the argument of \cite[Proposition 3.12 and Theorem 3.13]{Sa}.
\end{proof}
A prehomogeneous vector space of parabolic type (abbrev. a PV of parabolic type) is a PV which can be obtained from a $\mathbb{Z}$-graded finite-dimensional semisimple Lie algebra.
PVs of parabolic type are classified by H. Rubenthaler (see \cite {ru-1,ru-2,ru-3}).
\begin{ex}\label{ex;sa_sl}
Let $m\geq 2$ and $\mathfrak{g}=\mathfrak{gl}_1(\mathbb{C})\oplus \mathfrak{sl}_m(\mathbb{C})$, $\rho=\Lambda _1$ a representation of $\mathfrak{g}$ on $\mathbb{C}^m$ defined by 
$$
\Lambda _1 ((a,A)\otimes v):=av+Av\quad (a\in \mathfrak{gl}_1, A\in \mathfrak{sl}_m, v\in V),
$$
$B_0=\kappa _m$ a bilinear form on $\mathfrak{g}$ defined by
$$
\kappa _m((a,A),(a^{\prime },A^{\prime })):=\frac{m}{m+1}aa^{\prime }+\mathrm {Tr}(AA^{\prime })\quad (a,a^{\prime }\in \mathfrak{gl}_1,A,A^{\prime }\in \mathfrak{sl}_m).
$$
Then, a pentad $(\mathfrak{g},\rho,V,\mathrm {Hom}(V,\mathbb{C}),B_0)=(\mathfrak{gl}_1\oplus \mathfrak{sl}_m,\Lambda _1,\mathbb{C}^m,\mathbb{C}^m,\kappa _m)$ is a standard pentad which has a $(m^2+2m)$-dimensional graded simple Lie algebra $L(\mathfrak{gl}_1\oplus \mathfrak{sl}_m,\Lambda _1,\mathbb{C}^m,\mathbb{C}^m,\kappa _m)=V_{-1}\oplus V_0\oplus V_1$ (see \cite [Example 1.14]{Sa}).
This Lie algebra $L(\mathfrak{gl}_1\oplus \mathfrak{sl}_m,\Lambda _1,\mathbb{C}^m,\mathbb{C}^m,\kappa _m)$ is isomorphic to $\mathfrak{sl}_{m+1}$.
Indeed, from the classification of PVs of parabolic type (see \cite {ru-1,ru-2,ru-3}) and the dimension of  $L(\mathfrak{g},\rho,V,\mathrm {Hom}(V,\mathbb{C}),B_0)$, it is isomorphic to $\mathfrak{sl}_{m+1}$.
\end{ex}

\begin{ex}\label{ex;1}
Put $\mathfrak{g}:=\mathfrak{gl}_1(\mathbb{C})\oplus \mathfrak{gl}_1(\mathbb{C})\oplus \mathfrak{sl}_2(\mathbb{C})$,  $V:=\mathbb{C}^2=M(2,1;\mathbb{C})$, ${\cal V}:=\mathbb{C}^2$ and define representations $\rho :\mathfrak{g}\otimes V\rightarrow V$, $\varrho :\mathfrak{g}\otimes {\cal V}\rightarrow {\cal V}$ by:
\begin{align*}
\rho ((a,b,A)\otimes v):=bv+Av, \quad \varrho ((a,b,A)\otimes \phi ):=-b\phi -{}^tA\phi 
\end{align*}
for any $(a,b,A)\in \mathfrak{g}$, $v\in V$, $\phi \in {\cal V}$.
We can identify ${\cal V}$ with $\mathrm {Hom }(V,\mathbb{C})$ via the following bilinear map $\langle \cdot,\cdot\rangle _V:V\times {\cal V}\rightarrow \mathbb{C}$ defined by:
\begin{align*}
\langle v,\phi \rangle _V:={}^t v\phi .
\end{align*}
Let $B_0$ be a bilinear form on $\mathfrak{g}$ defined by:
\begin{align*}
&B_0((a,b,A),(a^{\prime },b^{\prime},A^{\prime })):=\frac{3}{4}aa^{\prime }+bb^{\prime}+\frac{1}{2}(ab^{\prime }+a^{\prime }b)+ \mathrm{Tr}(AA^{\prime}).
\end{align*}
Then, a pentad $(\mathfrak{g},\rho,V, {\cal V},B_0)$ is a standard pentad whose $\Phi $-map is given by:
\begin{align*}
\Phi _{\rho }(v\otimes \phi )=(-{}^tv\phi ,\frac{3}{2}{}^tv\phi ,v{}^t\phi -\frac{1}{2}{}^tv\phi I_2).
\end{align*}
The Lie algebra $L(\mathfrak{g},\rho,V, {\cal V},B_0)$ is isomorphic to $\mathfrak{gl }_1\oplus \mathfrak{sl}_3$.
Indeed, if we put $\mathfrak{g}_V^1:=\mathbb{C}\cdot (1,0,O_2)$, $\mathfrak{g}_V^2:=\mathbb{C}\cdot (-\frac{2}{3},1,O_2)\oplus \mathfrak{sl}_2$, then we have
\begin{align*}
L(\mathfrak{g},\rho,V, {\cal V},B_0)&\simeq  L((\mathfrak{g}_V^1,\rho \mid _{\mathfrak{g}_V^1},\{0\}, \{0\},B_0\mid _{\mathfrak{g}_V^1\times \mathfrak{g}_V^1})\oplus (\mathfrak{g}_V^2,\rho \mid _{\mathfrak{g}_V^2},V, {\cal V},B_0\mid _{\mathfrak{g}_V^2\times \mathfrak{g}_V^2}))\\
&\simeq \mathfrak{g}_V^1\oplus L(\mathfrak{g}_V^2,\rho \mid _{\mathfrak{g}_V^2},V, {\cal V},B_0\mid _{\mathfrak{g}_V^2\times \mathfrak{g}_V^2})\simeq \mathfrak{g}_V^1\oplus {\cal V}\oplus \mathfrak{g}_V^2\oplus V\\
&\simeq \mathfrak{gl}_1\oplus \mathfrak{sl}_3
\end{align*}
from Example \ref{ex;sa_sl}.
Moreover, under this identification, the bilinear form $B_L$ on $ L(\mathfrak{g}_V^2,\rho \mid _{\mathfrak{g}_V^2},V, {\cal V},B_0\mid _{\mathfrak{g}_V^2\times \mathfrak{g}_V^2})$ is given by $B_L(\hat{A},\hat{A}^{\prime})=\mathrm {Tr}(\hat{A}\hat{A}^{\prime})$ $(\hat{A},\hat{A}^{\prime}\in \mathfrak{sl}_3)$.
In fact, if we put 
$$
h:=(0,0,\begin{pmatrix}1&0\\0&-1\end{pmatrix})\in \mathfrak{g}_V^2,
$$
then $B_0(h,h)=2$.
On the other hand, we can obtain $\mathrm {Tr}(\mathrm {ad}\ h\ \mathrm {ad}\ h)=12$, where $\mathrm {ad}$ stands for the adjoint representation of $ L(\mathfrak{g}_V^2,\rho \mid _{\mathfrak{g}_V^2},V, {\cal V},B_0\mid _{\mathfrak{g}_V^2\times \mathfrak{g}_V^2})$, by a direct calculation.
Since any non-degenerate invariant bilinear form on $\mathfrak{sl}_3$ is a scalar multiple of the Killing form, we can obtain that $B_L$ is $1/6$ times the Killing form of $\mathfrak{sl}_3$, i.e. $B_L(\hat{A},\hat{A}^{\prime})=\mathrm {Tr}(\hat{A}\hat{A}^{\prime})$.
\end{ex}

\begin{pr}\label{pr;L_irrgraded_alg}
Let $(\mathfrak{g},\rho,V,{\cal V},B_0)$ be a standard pentad whose representation $\rho $ is faithful.
Under this assumption, the pentad $(\mathfrak{g},\rho,V,{\cal V},B_0)$ is irreducible if and only if the Lie algebra $L(\mathfrak{g},\rho,V,{\cal V},B_0)$ does not have a non-zero proper graded ideal.
\end{pr}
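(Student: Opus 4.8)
The plan is to prove the logically equivalent contrapositive biconditional: the pentad $(\mathfrak g,\rho,V,{\cal V},B_0)$ is reducible if and only if $L:=L(\mathfrak g,\rho,V,{\cal V},B_0)=\bigoplus_{n\in\mathbb Z}V_n$ possesses a non-zero proper graded ideal. Throughout I use that $L$ is generated by $V_{-1}\oplus V_0\oplus V_1$, so that a graded subspace $J=\bigoplus_n J_n$ is an ideal precisely when $[V_{\varepsilon},J_n]\subseteq J_{n+\varepsilon}$ for all $n\in\mathbb Z$ and $\varepsilon\in\{-1,0,1\}$; and I freely rewrite the structure maps as brackets via $[a_0,v_1]=\rho(a_0\otimes v_1)$, $[a_0,\phi_{-1}]=\varrho(a_0\otimes\phi_{-1})$ and $[v_1,\phi_{-1}]=\Phi_\rho(v_1\otimes\phi_{-1})$, using the conventions of Definition \ref{defn_p_q}.

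Suppose first that the pentad is reducible, with data $\mathfrak a,V_{\mathfrak a},{\cal V}_{\mathfrak a}$ as in Definition \ref{defn;stap_irr}. I set $J_{-1}:={\cal V}_{\mathfrak a}$, $J_0:=\mathfrak a$, $J_1:=V_{\mathfrak a}$ and define the higher graduations outward by $J_{n}:=[V_1,J_{n-1}]+[J_1,V_{n-1}]$ for $n\ge 2$ and $J_{n}:=[V_{-1},J_{n+1}]+[J_{-1},V_{n+1}]$ for $n\le -2$, so that $J_n\subseteq V_n$ by induction. The conditions (\ref{defn;stap_irr_condi_2}) and (\ref{defn;stap_irr_condi_3}), after the above translation, are exactly the assertion that the three ideal inclusions hold for $n\in\{-1,0,1\}$, together with the companion inclusions $[\mathfrak a,V_n]\subseteq J_n$, $[V_{\mathfrak a},V_n]\subseteq J_{n+1}$, $[{\cal V}_{\mathfrak a},V_n]\subseteq J_{n-1}$ in those degrees. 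I then propagate both families to all $n$ by a single simultaneous induction on $|n|$, expanding each bracket $[V_{\varepsilon},J_n]$ (respectively $[\hat J,V_n]$) by one application of the Jacobi identity of Proposition \ref{pr_lie_axiom} and feeding the two resulting terms back into the inductive hypothesis and the defining recursion of $J_n$. This shows $J=\bigoplus_n J_n$ is a graded ideal; it is non-zero because $\hat J:={\cal V}_{\mathfrak a}\oplus\mathfrak a\oplus V_{\mathfrak a}\neq\{0\}$, and it is proper because (\ref{defn;stap_irr_condi_1}) forces at least one of ${\cal V}_{\mathfrak a}\subsetneq{\cal V}$, $\mathfrak a\subsetneq\mathfrak g$, $V_{\mathfrak a}\subsetneq V$, whence $J_j\subsetneq V_j$ for the corresponding $j\in\{-1,0,1\}$. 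The bookkeeping of this intertwined induction is the main technical obstacle, since the ideal conditions on the $J_n$ and the companion conditions on $[\hat J,V_n]$ feed into one another; it runs entirely parallel to the inductive verifications of \cite[Propositions 2.5, 2.6 and 3.2]{Sa}.

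Conversely, suppose $I=\bigoplus_n I_n$ with $I_n:=I\cap V_n$ is a non-zero proper graded ideal. I claim the data $\mathfrak a:=I_0$, $V_{\mathfrak a}:=I_1$, ${\cal V}_{\mathfrak a}:=I_{-1}$ witnesses reducibility. Since $I$ is an ideal, $[V_0,I_j]\subseteq I_j$ makes $\mathfrak a$ an ideal of $\mathfrak g=V_0$ and $V_{\mathfrak a},{\cal V}_{\mathfrak a}$ into $\mathfrak g$-submodules, while the inclusions $[I_0,V_{\pm1}]\subseteq I_{\pm1}$, $[V_0,I_{\pm1}]\subseteq I_{\pm1}$, $[I_1,V_{-1}]\subseteq I_0$ and $[V_1,I_{-1}]\subseteq I_0$ are, after the translation above, exactly (\ref{defn;stap_irr_condi_2}) and (\ref{defn;stap_irr_condi_3}). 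For (\ref{defn;stap_irr_condi_1}), properness gives $\hat I:=I_{-1}\oplus I_0\oplus I_1\subsetneq{\cal V}\oplus\mathfrak g\oplus V$, since otherwise $I$ would contain the generating set $V_{-1}\oplus V_0\oplus V_1$ and equal $L$. For non-triviality I exploit that for $n\ge 2$ one has $V_n\subseteq\mathrm{Hom}(V_{-1},V_{n-1})$ with $[y_{-1},z_n]=-z_n(y_{-1})$ by Definition \ref{defn_p_q}: any non-zero $z_n\in I_n$ is then a non-zero linear map, so some $y_{-1}\in V_{-1}$ gives $0\neq[y_{-1},z_n]\in I_{n-1}$, and iterating this descent carries any non-zero homogeneous component of $I$ of degree $\ge 1$ down to a non-zero element of $I_1$ (symmetrically a component of degree $\le-1$ down to $I_{-1}$, while a degree-$0$ component already lies in $I_0$). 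Hence $\hat I\neq\{0\}$, so the pentad is reducible. This descent is precisely the transitivity of $L$ in degrees $|n|\ge 2$ that is automatic from the construction, and it plays the role here that faithfulness of $\rho$ plays in the degree-$0$ analysis of Proposition \ref{indec_ker_V_1}.
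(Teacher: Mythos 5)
Your proof is correct and follows essentially the same route as the paper's: the same translation of conditions (\ref{defn;stap_irr_condi_2})--(\ref{defn;stap_irr_condi_3}) into bracket inclusions in degrees $0,\pm 1$, the same outward propagation of ${\cal V}_{\mathfrak{a}}\oplus\mathfrak{a}\oplus V_{\mathfrak{a}}$ to a graded ideal in one direction, and the same reduction of a non-zero proper graded ideal to the triple $(I_{-1},I_0,I_1)$ in the other. The only cosmetic differences are that the paper uses the one-sided recursion $\mathfrak{a}_n:=[V_1,\mathfrak{a}_{n-1}]$, which lets the Jacobi-identity induction close without your intertwined ``companion'' conditions, and that in the converse direction the paper descends a homogeneous element all the way to degree $0$ via Proposition \ref{indec_ker_V_1}, whereas you stop at degree $\pm 1$ using only the $\mathrm{Hom}$-structure of $V_n$ for $|n|\geq 2$ --- which suffices and is in fact slightly more economical.
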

\begin{proof}
Assume that $(\mathfrak{g},\rho,V,{\cal V},B_0)$ is reducible.
Under the notation of Definition \ref{defn;stap_irr}, we put $\mathfrak{a}_{-1}:={\cal V}_{\mathfrak{a}}$, $\mathfrak{a}_0:=\mathfrak{a}$, $\mathfrak{a}_1:=V_{\mathfrak{a}}$.
Moreover, we put $\mathfrak{a}_n:=[V_1,\mathfrak{a}_{n-1}]$ for all $n\geq 2$ and $\mathfrak{a}_m:=[V_{-1},\mathfrak{a}_{m+1}]$ for all $m\leq -2$ inductively.
Then a direct sum $\mathfrak{A}:=\bigoplus _{n\in\mathbb{Z}}\mathfrak{a}_n$ is a non-zero proper graded ideal of $L(\mathfrak{g},\rho,V,{\cal V},B_0)$.
In fact, by the assumption that $[V_i,\mathfrak{a}_j]\subset \mathfrak{a}_{i+j}$ for any $-1\leq i,j,i+j\leq 1$, we can easily show that $[V_{0},\mathfrak{A}],[V_{\pm 1},\mathfrak{A}]\subset \mathfrak{A}$ by induction.
Since $L(\mathfrak{g},\rho,V,{\cal V},B_0)$ is generated by $V_0$ and $V_{\pm 1}$, we have $[L(\mathfrak{g},\rho,V,{\cal V},B_0),\mathfrak{A}]\subset \mathfrak{A}$.
Thus, $\mathfrak{A}$ is a graded ideal.
Since $\{0\}\neq \mathfrak{a}_{-1}\oplus \mathfrak{a}_0\oplus \mathfrak{a}_1\subsetneq {\cal V}\oplus \mathfrak{g}\oplus V$, we have $\{0\}\neq \mathfrak{A}\subsetneq L(\mathfrak{g},\rho,V,{\cal V},B_0)$.
\par Conversely, assume that $(\mathfrak{g},\rho,V,{\cal V},B_0)$ is irreducible.
Let $\mathfrak{b}=\sum _{n\in\mathbb{Z}}(\mathfrak{b}\cap V_n)$ be a non-zero graded ideal of $L(\mathfrak{g},\rho,V,{\cal V},B_0)$ and put $\mathfrak{b}_n:=\mathfrak{b}\cap V_n$.
Then, by {\rm Proposition} \ref{indec_ker_V_1}, we can obtain that $\mathfrak{b}_0\neq \{0\}$.
In fact, since $\mathfrak{b}\neq \{0\}$, there exists an integer $n\in \mathbb{Z}$ and a non-zero element $z_n\in \mathfrak{b}_n$.
For example, if $n\geq 1$, then there exist $n$ elements $y_{-1}^{1},\ldots ,y_{-1}^n\in V_{-1}$ such that $[y_{-1}^n,[\cdots [y_{-1}^1,z_n]\cdots ]]\in \mathfrak{b}_0\setminus \{0\}$.
Since $\mathfrak{b}_{-1}\oplus \mathfrak{b}_0\oplus \mathfrak{b}_1$ satisfies the conditions (\ref{defn;stap_irr_condi_2}) and (\ref{defn;stap_irr_condi_3}), it coincides with $V_{-1}\oplus V_0\oplus V_1$, and, thus, $\mathfrak{b}=L(\mathfrak{g},\rho,V,{\cal V},B_0)$.
\end{proof}
The following lemmas are to construct a derivation on $L(\mathfrak{g},\rho,V,{\cal V},B_0)$.
They are used in Theorem \ref{theo;chainrule}.
\begin{lemma}\label{lem;derivationext}
Let $(\mathfrak{g},\rho,V,{\cal V},B_0)$ be a standard pentad, $L(\mathfrak{g},\rho,V,{\cal V},B_0)=\bigoplus _{n\in\mathbb{Z}}V_n$ be the Lie algebra associated with it.
Let $\alpha _i:V_i\rightarrow V_i$ $(i=0,\pm 1)$ be linear maps which satisfy
\begin{align}
\alpha _{i+j}([a_i,b_j])=[\alpha _i(a_i),b_j]+[a_i,\alpha _j(b_j)]\label{der_eq1}
\end{align}
for any $-1\leq i,j,i+j\leq 1$ and elements $a_i\in V_i$, $b_j\in V_j$.
Then, there exists a linear map $\alpha :L(\mathfrak{g},\rho,V,{\cal V},B_0)\rightarrow L(\mathfrak{g},\rho,V,{\cal V},B_0)$ such that $\alpha $ is a derivation on $L(\mathfrak{g},\rho,V,{\cal V},B_0)$ and its restriction to $V_i$ $(i=0,\pm 1)$ coincides with $\alpha _i$.
\end{lemma}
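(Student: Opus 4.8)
The plan is to extend $\alpha$ from the generating degrees $i=0,\pm 1$ to all of $L(\mathfrak{g},\rho,V,{\cal V},B_0)=\bigoplus_n V_n$ one graduation at a time, exploiting a structural feature of the construction in Definition \ref{defn_p_q}: for $n\geq 2$ the space $V_n=\mathrm{Im}\,p_{n-1}$ sits inside $\mathrm{Hom}(V_{-1},V_{n-1})$ with the bracket realising the evaluation, $[z_n,\phi_{-1}]=z_n(\phi_{-1})$, and symmetrically $V_{-n}\subset\mathrm{Hom}(V_1,V_{-n+1})$ with $[z_{-n},\xi_1]=z_{-n}(\xi_1)$. In particular these evaluation maps are injective, so a homogeneous element of degree $n$ with $|n|\geq 2$ is completely determined by its brackets against $V_{-1}$ (resp. $V_1$). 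This built-in faithfulness is what lets me prescribe $\alpha_n$ by prescribing its brackets, so that the relations inside $V_1\otimes V_{n-1}$ never need to be discussed directly.

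I would argue by induction on $|n|$, carrying as inductive hypothesis that $\alpha_k$ has been defined for $|k|\leq n-1$ and that the identity
\[ \alpha_{i+j}([a,b])=[\alpha_i(a),b]+[a,\alpha_j(b)] \]
holds for every bracket $[a,b]$ at least one of whose factors lies in $V_{-1}\oplus V_0\oplus V_1$ and whose factors have degree of absolute value $\leq n-1$; the base case $n=1$ is exactly the hypothesis (\ref{der_eq1}). For the step, given $z_n\in V_n$ I define $\alpha_n(z_n)\in\mathrm{Hom}(V_{-1},V_{n-1})$ by
\[ \alpha_n(z_n)(\phi_{-1}):=\alpha_{n-1}([z_n,\phi_{-1}])-[z_n,\alpha_{-1}(\phi_{-1})],\qquad \phi_{-1}\in V_{-1}. \]
This is unambiguous and linear in $z_n$ because $[z_n,\phi_{-1}]\in V_{n-1}$ and the map $\alpha_{n-1}$ are already available; the real content is to show that this element lies in the subspace $V_n$, and that it then satisfies the expected generator formula.

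Since $V_n=[V_1,V_{n-1}]$ is spanned by the brackets $[x_1,z_{n-1}]$, it suffices to test the formula on $z_n=[x_1,z_{n-1}]$. Expanding $[[x_1,z_{n-1}],\phi_{-1}]$ and $[[x_1,z_{n-1}],\alpha_{-1}(\phi_{-1})]$ by the Jacobi identity (Proposition \ref{pr_lie_axiom}) and applying the inductive identity to each resulting bracket — all of whose factors now have degree of absolute value $\leq n-1$, after using skew-symmetry to place a generator in the first slot — a direct computation collapses the expression to
\[ \alpha_n([x_1,z_{n-1}])(\phi_{-1})=\bigl[[\alpha_1(x_1),z_{n-1}]+[x_1,\alpha_{n-1}(z_{n-1})],\phi_{-1}\bigr]. \]
The right-hand side is the evaluation at $\phi_{-1}$ of the element $[\alpha_1(x_1),z_{n-1}]+[x_1,\alpha_{n-1}(z_{n-1})]$, which visibly belongs to $V_n=[V_1,V_{n-1}]$. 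By injectivity of the evaluation map this shows at once that $\alpha_n(z_n)\in V_n$ for every $z_n$ and that $\alpha_n([x_1,z_{n-1}])=[\alpha_1(x_1),z_{n-1}]+[x_1,\alpha_{n-1}(z_{n-1})]$; thus the derivation identity now holds also for the brackets $[V_1,V_{n-1}]$ and (by the defining relation itself) for $[V_{-1},V_n]$. The same computation, carried out by testing against $V_{-1}$, settles the $V_0$-compatibility $\alpha_n([a_0,z_n])=[\alpha_0(a_0),z_n]+[a_0,\alpha_n(z_n)]$, and the negative graduations $V_{-n}$ are treated in the mirror-image way using evaluation against $V_1$. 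This closes the induction and defines a single linear map $\alpha:=\bigoplus_n\alpha_n$ restricting to $\alpha_i$ on $V_i$ for $i=0,\pm 1$.

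It remains to upgrade the identity from brackets with a factor in the generating set to arbitrary brackets $[x,y]$. I would do this by a secondary induction on $\deg x$: since $L(\mathfrak{g},\rho,V,{\cal V},B_0)$ is generated in degrees $0,\pm 1$, a homogeneous $x$ of $|\deg x|\geq 2$ is a sum of brackets $[s,x']$ with $s\in V_{\pm 1}$ and $\deg x'$ of smaller absolute value, and the Jacobi identity together with the cases already established and the inductive hypothesis yields $\alpha([x,y])=[\alpha(x),y]+[x,\alpha(y)]$; bilinearity extends this to all of $L(\mathfrak{g},\rho,V,{\cal V},B_0)$, so $\alpha$ is the desired derivation. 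The one genuinely laborious point is the Jacobi-plus-induction computation collapsing $\alpha_n([x_1,z_{n-1}])$ to the generator form; everything else is bookkeeping, and the scheme parallels the construction of the bracket and of $B_L$ in \cite{Sa}.
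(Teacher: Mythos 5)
Your proposal is correct and follows essentially the same route as the paper: the central computation --- expanding $[y_{-1},[\alpha _1(x_1),b_{n-1}]+[x_1,\alpha _{n-1}(b_{n-1})]]$ via the Jacobi identity and the inductive hypothesis to see that it depends only on the product $[x_1,b_{n-1}]$, then invoking the injectivity of the evaluation $V_n\hookrightarrow \mathrm{Hom}(V_{-1},V_{n-1})$ --- is exactly the paper's key identity, and the final upgrade to arbitrary brackets by a secondary induction on the degree is also the paper's argument. The only cosmetic difference is that you define $\alpha _n(z_n)$ by prescribing its evaluations against $V_{-1}$ and then verify it lands in $V_n$ and agrees with the generator formula, whereas the paper defines $\alpha _n$ directly on the generators $[x_1,b_{n-1}]$ and uses the same identity to check well-definedness.
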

\begin{proof}
First, let us construct linear maps $\alpha _i:V_i\rightarrow V_i$ for all $i\in \mathbb{Z}$ by induction.
Let $i\geq 1$ and assume that the integer $i$ satisfies the condition that we have linear maps $\alpha _j:V_j\rightarrow V_j$ for all $0\leq j\leq i$ which satisfy the following equations:
\begin{align*}
&\alpha _{j}([a_0,b_j])=[\alpha _{0}(a_0),b_j]+[a_0,\alpha _j(b_j)],\\
&\alpha _{j}([x_1,b_{j-1}])=[\alpha _{1}(x_1),b_{j-1}]+[x_1,\alpha _{j-1}(b_{j-1})],\\
&\alpha _{j-1}([y_{-1},b_j])=[\alpha _{-1}(y_{-1}),b_j]+[y_{-1},\alpha _j(b_j)]
\end{align*}
for any $0\leq j\leq i$, $a_0\in V_0$, $x_1\in V_1$, $y_{-1}\in V_{-1}$, $b_j\in V_j$ and $b_{j-1}\in V_{j-1}$.
By the assumption (\ref{der_eq1}), when $i=1$ the given linear maps $\alpha _0$, $\alpha _{\pm 1}$ satisfy these equations.
Then we define a linear map $\alpha _{i+1}:V_{i+1}\rightarrow V_{i+1}$ by:
\begin{align}
\alpha _{i+1}([x_1,b_i]):=[\alpha _1(x_1),b_i]+[x_1,\alpha _i(b_i)]
\end{align}for any $x_1\in V_1$ and $b_i\in V_i$.
Let us check the well-definedness of $\alpha _{i+1}$.
In fact, for any $y_{-1}\in V_{-1}$, $x_1\in V_1$ and $b_i\in V_i$, we have
\begin{align}
&[y_{-1},[\alpha _1(x_1),b_i]+[x_1,\alpha _i(b_i)]]=[y_{-1},[\alpha _1(x_1),b_i]]+[y_{-1},[x_1,\alpha _i(b_i)]]\notag\\
&\quad =[[y_{-1},\alpha _1(x_1)],b_i]+[\alpha _1(x_1),[y_{-1},b_i]]+[[y_{-1},x_1],\alpha _i(b_i)]+[x_1,[y_{-1},\alpha _i(b_i)]]\notag\\
&\quad =[\alpha _0([y_{-1},x_1]),b_i]-[[\alpha _{-1}(y_{-1}),x_1],b_i]+[\alpha _1(x_1),[y_{-1},b_i]]\notag\\
&\qquad+[[y_{-1},x_1],\alpha _i(b_i)]+[x_1,\alpha _{i-1}([y_{-1},b_i])]-[x_1,[\alpha _{-1}(y_{-1}),b_i]]\notag\\
&\quad =\alpha _i([[y_{-1},x_1],b_i])+\alpha _i([x_1,[y_{-1},b_i]])-[\alpha _{-1}(y_{-1}),[x_1,b_i]]\notag\\
&\quad =\alpha _i([y_{-1},[x_1,b_i]])-[\alpha _{-1}(y_{-1}),[x_1,b_i]].\label{proof;alphader_welldef}
\end{align}
Thus, if $x_1^1,\ldots ,x_1^l\in V_1$ and $b_i^1,\ldots ,b_i^l\in V_i$ satisfy $\sum _{s=1}^l [x_1^s,b_i^s]=0$, then we have
\begin{align*}
\sum _{s=1}^l[y_{-1},[\alpha _1(x_1^s),b_i^s]+[x_1^s,\alpha _i(b_i^s)]]=0
\end{align*}for any $y_{-1}\in V_{-1}$.
Therefore, we have $\sum _{s=1}^l([\alpha _1(x_1^s),b_i^s]+[x_1^s,\alpha _i(b_i^s)])=0$ and the well-definedness of $\alpha _{i+1}$.
Moreover, $\alpha _{i+1}$ satisfies the following equations:
\begin{align}
&\alpha _{i+1}([a_0,b_{i+1}])=[\alpha _{0}(a_0),b_{i+1}]+[a_0,\alpha _{i+1}(b_{i+1})],\label{der_eq;alpha_i+1_1}\\
&\alpha _{i+1}([x_1,b_{i}])=[\alpha _{1}(x_1),b_{i}]+[x_1,\alpha _{i}(b_{i})],\label{der_eq;alpha_i+1_2}\\
&\alpha _{i}([y_{-1},b_{i+1}])=[\alpha _{-1}(y_{-1}),b_{i+1}]+[y_{-1},\alpha _{i+1}(b_{i+1})]\label{der_eq;alpha_i+1_3}
\end{align}for any $a_0\in V_0$, $x_1\in V_1$, $y_{-1}\in V_{-1}$, $b_i\in V_i$ and $b_{i+1}\in V_{i+1}$.
In fact, for any $a_0\in V_0$, $x_1\in V_1$, and $b_{i}\in V_i$, we have
\begin{align}
&\alpha _{i+1}([a_0,[x_1,b_{i}]])=\alpha _{i+1}([[a_0,x_1],b_i])+\alpha _{i+1}([x_1,[a_0,b_i]])\notag\\
&\quad =[\alpha _{1}([a_0,x_1]),b_i]+[[a_0,x_1],\alpha _i(b_i)]+[\alpha _1(x_1),[a_0,b_i]]+[x_1,\alpha _{i}([a_0,b_i])]\notag\\
&\quad =[[\alpha _{0}(a_0),x_1],b_i]+[[a_0,\alpha _1(x_1)],b_i]+[[a_0,x_1],\alpha _i(b_i)]\notag\\
&\qquad+[\alpha _1(x_1),[a_0,b_i]]+[x_1,[\alpha _{0}(a_0),b_i]]+[x_1,[a_0,\alpha _{i}(b_i)]]\notag\\
&\quad =[\alpha _{0}(a_0),[x_1,b_i]]+[a_0,[\alpha _1(x_1),b_i]]+[a_0,[x_1,\alpha _i(b_i)]]\notag\\
&\quad =[\alpha _{0}(a_0),[x_1,b_i]]+[a_0,\alpha _{i+1}([x_1,b_i])].\label{der_eq;alpha_i+1_7}
\end{align}Thus, we can obtain the equation (\ref{der_eq;alpha_i+1_1}).
The equation (\ref{der_eq;alpha_i+1_2}) is clear.
The equation (\ref{der_eq;alpha_i+1_3}) follows from (\ref{proof;alphader_welldef}).
Thus, inductively, we can obtain linear maps $\alpha _i$ for all $i\geq 0$, and, similarly, we can construct linear maps $\alpha _{-i}:V_{-i}\rightarrow V_{-i}$ for all $i\geq 0$.
Consequently, we have linear maps $\alpha _n:V_n\rightarrow V_n$ for all $n\in\mathbb{Z}$ which satisfy
\begin{align}
&\alpha _{n}([a_0,b_{n}])=[\alpha _{0}(a_0),b_{n}]+[a_0,\alpha _{n}(b_{n})],\label{der_eq;alpha_i+1_4}\\
&\alpha _{n+1}([x_1,b_{n}])=[\alpha _{1}(x_1),b_{n}]+[x_1,\alpha _{n}(b_{n})],\label{der_eq;alpha_i+1_5}\\
&\alpha _{n-1}([y_{-1},b_{n}])=[\alpha _{-1}(y_{-1}),b_{n}]+[y_{-1},\alpha _{n}(b_{n})]\label{der_eq;alpha_i+1_6}
\end{align}
for any $a_0\in V_0$, $x_1\in V_1$, $y_{-1}\in V_{-1}$ and $b_n\in V_n$.
\par We define a linear map $\alpha :L(\mathfrak{g},\rho,V,{\cal V},B_0)\rightarrow L(\mathfrak{g},\rho,V,{\cal V},B_0)$ by:
\begin{align}
\alpha (a_n):=\alpha _n(a_n)
\end{align}for any $n\in\mathbb{Z}$ and $a_n\in V_n$.
Then $\alpha $ is a derivation of Lie algebras.
In fact, we can show the following equation 
\begin{align}
\alpha ([a_n,b_m])=[\alpha (a_n),b_m]+[a_n,\alpha (b_m)]
\end{align}for any $n,m\in\mathbb{Z}$, $a_n\in V_n$ and $b_m\in V_m$ by the equations (\ref{der_eq;alpha_i+1_4}), (\ref{der_eq;alpha_i+1_5}), (\ref{der_eq;alpha_i+1_6}) inductively.
\end{proof}
\begin{lemma}\label{lemma;der_Binv}
Let $(\mathfrak{g},\rho,V,{\cal V},B_0)$ be a standard pentad and $\alpha $ be a derivation on $L(\mathfrak{g},\rho,V,{\cal V},B_0)$.
If $\alpha $ satisfies the equation
\begin{align}
B_L(\alpha (z),\omega )=-B_L(z,\alpha (\omega ))
\end{align}
for any $z=z_n\in V_n$ $(n=0,\pm 1)$ and $\omega \in L(\mathfrak{g},\rho,V,{\cal V},B_0)$, then we have the same equation for any $z,\omega \in L(\mathfrak{g},\rho,V,{\cal V},B_0)$.
\end{lemma}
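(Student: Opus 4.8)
The plan is to extend the skew-symmetry relation from the generating subspace $V_0\oplus V_1\oplus V_{-1}$ to all of $L:=L(\mathfrak{g},\rho,V,{\cal V},B_0)$ by induction on the degree. Write $F(z,\omega):=B_L(\alpha(z),\omega)+B_L(z,\alpha(\omega))$, a bilinear expression whose vanishing is the asserted identity; the hypothesis says exactly that $F(z_n,\omega)=0$ for every $z_n\in V_n$ with $n\in\{-1,0,1\}$ and every $\omega$. Since $B_L$ vanishes on $V_n\times V_m$ whenever $n+m\neq 0$ (Proposition \ref{pr;stap_bilinear_exist}), it suffices to prove $F(z_n,\omega)=0$ for a homogeneous $z_n\in V_n$ and $\omega\in V_{-n}$, and I shall induct on $|n|$, the cases $|n|\leq 1$ being the hypothesis.

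For the inductive step with $n\geq 2$, I use that $V_n=\mathrm{Im}\,p_{n-1}$ (Definition \ref{defn_p_q}), so that $V_n$ is spanned by brackets $[x_1,z']$ with $x_1\in V_1$ and $z'\in V_{n-1}$; by bilinearity it is enough to treat $z_n=[x_1,z']$. Expanding $\alpha([x_1,z'])$ by the derivation property and moving the inner factor across $B_L$ by invariance in the form $B_L([a,b],c)=-B_L(b,[a,c])$ yields
$$
B_L(\alpha(z_n),\omega)=-B_L(z',[\alpha(x_1),\omega])-B_L(\alpha(z'),[x_1,\omega]).
$$
I then apply the inductive hypothesis to $z'\in V_{n-1}$ with the auxiliary test element $[x_1,\omega]$, and expand $\alpha([x_1,\omega])=[\alpha(x_1),\omega]+[x_1,\alpha(\omega)]$ once more by the derivation property. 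The two terms carrying $[\alpha(x_1),\omega]$ cancel, leaving $B_L(\alpha(z_n),\omega)=B_L(z',[x_1,\alpha(\omega)])$. A final use of invariance in the form $B_L(a,[b,c])=B_L([a,b],c)$ together with $[z',x_1]=-[x_1,z']=-z_n$ turns the right-hand side into $-B_L(z_n,\alpha(\omega))$, which is precisely $F(z_n,\omega)=0$. The case $n\leq -2$ is handled identically after writing $z_n=[y_{-1},z']$ with $y_{-1}\in V_{-1}$ and $z'\in V_{n+1}$, which again lowers $|n|$ by one.

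All the needed inputs are available: the derivation identity for $\alpha$, the invariance of $B_L$ in its two equivalent forms $B_L([a,b],c)=B_L(a,[b,c])=-B_L(b,[a,c])$ from Proposition \ref{pr;stap_bilinear_exist}, and the generation of each $V_n$ by a single bracket $[V_1,V_{n-1}]$ (respectively $[V_{-1},V_{n+1}]$) from Definition \ref{defn_p_q}. The point requiring the most care — the likeliest source of a sign slip rather than a real obstacle — is the bookkeeping of the direction of invariance at each step, and ensuring that the inductive hypothesis is invoked on the strictly lower-degree factor $z'$ applied to $[x_1,\omega]$, so that the unwanted terms $B_L(z',[\alpha(x_1),\omega])$ cancel. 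Once this pairing is arranged the homogeneous identity closes, and the general statement for arbitrary $z,\omega$ follows by bilinearity of $B_L$ and $\alpha$.
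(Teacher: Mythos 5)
Your proof is correct and follows essentially the same route as the paper's: induction on the degree of a homogeneous first argument, writing $z_n=[x_1,z_{n-1}]$ (resp.\ $[y_{-1},z_{n+1}]$), expanding by the derivation property, invoking the inductive hypothesis on the lower-degree factor against the test element $[x_1,\omega]$, and cancelling the $[\alpha(x_1),\omega]$ terms via invariance of $B_L$. The only cosmetic difference is your preliminary reduction to $\omega\in V_{-n}$ using the grading of $B_L$, which the paper does not bother with.
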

\begin{proof}
We argue our claim in the cases where $z=z_n\in V_n$ for some $n$ and prove it by induction on $n$.
Suppose that $n\geq 0$.
If $n=0,1$, then our claim follows from the assumption.
Suppose that $n\geq 2$.
Then, by the induction hypothesis, we have
\begin{align*}
&B_L(\alpha ([x_1,z_{n-1}]),\omega )=B_L([\alpha (x_1),z_{n-1}],\omega ) +B_L([x_1,\alpha (z_{n-1})],\omega )\notag\\
&\quad =-B_L(z_{n-1},[\alpha (x_1),\omega ]) -B_L(\alpha (z_{n-1}),[x_1,\omega ])\notag\\
&\quad =-B_L(z_{n-1},[\alpha (x_1),\omega ]) +B_L(z_{n-1},\alpha ([x_1,\omega ]))\notag\\
&\quad =B_L(z_{n-1},[x_1,\alpha (\omega )])\notag\\
&\quad =-B_L([x_1,z_{n-1}],\alpha (\omega ))
\end{align*}
for any $x_1\in V_1$, $z_{n-1}\in V_{n-1}$.
Since $V_{n}=[V_1,V_{n-1}]$, we have our claim for $n$.
Thus, by induction, we have our claim for all $n\geq 0$.
Similarly, we can show our claim for $n\leq -1$.
\end{proof}

\section{Graded modules of $L(\mathfrak{g},\rho,V,{\cal V},B_0)$}\label{sec;graded_modules}
\subsection{A construction of vector spaces $\tilde{U}^+$ and $\tilde{U}^-$}\label{sec;const_graded_ext_vec}
As mentioned in section \ref{sec;intro}, the purpose of this and the next section is to construct a positively graded module and a negatively graded module of $L(\mathfrak{g},\rho,V,{\cal V},B_0)$ from a given $\mathfrak{g}$-module $U$, which will be denoted by $\tilde{U}^+$ and $\tilde{U}^-$.
First, we construct $\tilde{U}^+ $ and $\tilde{U}^-$ as vector spaces by induction.
\begin{defn}\label{defn;r_-1}
Let $(\mathfrak{g},\rho,V,{\cal V},B_0)$ be a standard pentad and $L(\mathfrak{g},\rho,V,{\cal V},B_0)=\bigoplus _{n\in\mathbb{Z}}V_n$ be the Lie algebra associated with it.
Let $\pi:\mathfrak{g}\otimes U\rightarrow U$ be a representation of $\mathfrak{g}=V_0$ on a vector space $U$ over $\mathfrak{k}$. 
We put $U_{0}^+=U_{0}^-:=U$, $\pi _{0}^+=\pi _0^-:=\pi$ and define linear maps $r_0^+:V_{1}\otimes U_{0}^+\rightarrow {\rm Hom}(V_{-1},U_{0}^+)$ and $r_{0}^-:V_{-1}\otimes U_{0}^-\rightarrow {\rm Hom}(V_1,U_{0}^-)$ by:
\begin{align}
r_0^+:\ &V_1\otimes U_0^+\rightarrow {\rm Hom }(V_{-1},U_0^+)\notag\\
&x_1\otimes u_0\mapsto (\eta _{-1}\mapsto \pi _0^+([\eta _{-1},x_1]\otimes u_0)),\\
r_{0}^-:\ &V_{-1}\otimes U_0^-\rightarrow {\rm Hom }(V_{1},U_0^-)\notag\\
&y_{-1}\otimes u_0\mapsto (\xi _{1}\mapsto \pi _0^-([\xi _{1},y_{-1}]\otimes u_0)).
\end{align}
\end{defn}
\begin{pr}\label{pr;r_-1_hom}
The maps $r_{ 0}^+$ and $r_{0}^-$ are homomorphisms of $\mathfrak{g}$-modules. 
\end{pr}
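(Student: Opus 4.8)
The plan is to verify directly that $r_0^+$ intertwines the $\mathfrak{g}$-action on its source $V_1\otimes U_0^+$ with the $\mathfrak{g}$-action on its target $\mathrm{Hom}(V_{-1},U_0^+)$, and then observe that the argument for $r_0^-$ is obtained by interchanging the roles of $V_1$ and $V_{-1}$. First I would fix the two module structures explicitly. On the tensor product, $a\in\mathfrak{g}$ acts by the Leibniz rule
$$a\cdot(x_1\otimes u_0)=[a,x_1]\otimes u_0+x_1\otimes\pi_0^+(a\otimes u_0),$$
while on $\mathrm{Hom}(V_{-1},U_0^+)$ it acts by the natural contragredient-type rule
$$(a\cdot f)(\eta_{-1})=\pi_0^+(a\otimes f(\eta_{-1}))-f([a,\eta_{-1}]),$$
where the brackets $[a,x_1]\in V_1$ and $[a,\eta_{-1}]\in V_{-1}$ are computed in $L(\mathfrak{g},\rho,V,{\cal V},B_0)$ and coincide with $\rho_1(a\otimes x_1)=\rho(a\otimes x_1)$ and $\varrho(a\otimes\eta_{-1})$ respectively. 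The goal is then to show $r_0^+(a\cdot(x_1\otimes u_0))=a\cdot r_0^+(x_1\otimes u_0)$ as elements of $\mathrm{Hom}(V_{-1},U_0^+)$.

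Next I would evaluate both sides at an arbitrary $\eta_{-1}\in V_{-1}$. Expanding the left-hand side through the Leibniz rule and the definition of $r_0^+$ gives
$$\pi_0^+([\eta_{-1},[a,x_1]]\otimes u_0)+\pi_0^+([\eta_{-1},x_1]\otimes\pi_0^+(a\otimes u_0)),$$
whereas expanding the right-hand side through the $\mathrm{Hom}$-action and the definition of $r_0^+$ gives
$$\pi_0^+(a\otimes\pi_0^+([\eta_{-1},x_1]\otimes u_0))-\pi_0^+([[a,\eta_{-1}],x_1]\otimes u_0).$$

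The crux is to reconcile these two expressions, and this is the step I expect to require the most care because of the sign in the $\mathrm{Hom}$-action and the need to track which bracket lands in which graded piece. I would first apply the representation property of $\pi_0^+$ to the first term of the right-hand side, rewriting $\pi_0^+(a\otimes\pi_0^+([\eta_{-1},x_1]\otimes u_0))$ as $\pi_0^+([\eta_{-1},x_1]\otimes\pi_0^+(a\otimes u_0))+\pi_0^+([a,[\eta_{-1},x_1]]\otimes u_0)$; its first summand already matches the corresponding term on the left. What then remains on the right is the difference $\pi_0^+([a,[\eta_{-1},x_1]]\otimes u_0)-\pi_0^+([[a,\eta_{-1}],x_1]\otimes u_0)$, and the Jacobi identity in $L(\mathfrak{g},\rho,V,{\cal V},B_0)$, in the form $[a,[\eta_{-1},x_1]]-[[a,\eta_{-1}],x_1]=[\eta_{-1},[a,x_1]]$, converts this into $\pi_0^+([\eta_{-1},[a,x_1]]\otimes u_0)$, which is exactly the remaining term on the left. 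Thus the two sides agree for every $\eta_{-1}$, proving that $r_0^+$ is a homomorphism of $\mathfrak{g}$-modules; running the identical computation with $V_1$ and $V_{-1}$ exchanged, and using $r_0^-(y_{-1}\otimes u_0)(\xi_1)=\pi_0^-([\xi_1,y_{-1}]\otimes u_0)$, settles $r_0^-$ as well. Everything reduces to one application of the representation property and one application of the Jacobi identity, with all other terms cancelling in pairs.
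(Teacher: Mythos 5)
Your proposal is correct and follows essentially the same route as the paper's proof: both reduce the intertwining identity to one application of the representation property of $\pi_0^+$ (for the bracket $[a,[\eta_{-1},x_1]]$ with $[\eta_{-1},x_1]\in V_0$) and one application of the Jacobi identity $[\eta_{-1},[a,x_1]]=[a,[\eta_{-1},x_1]]-[[a,\eta_{-1}],x_1]$, with the case of $r_0^-$ handled by symmetry. The only difference is organizational — you expand both sides and match terms, while the paper transforms the left-hand side into the right-hand side in a single chain of equalities.
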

\begin{proof}
We prove for $r_0^+$.
For any elements $a\in\mathfrak{g}$, $x _1\in V_1$, $\eta _{-1}\in V_{-1}$ and $u_{0}\in U_{0}^+$, we have
\begin{align*}
&r_0^+([a,x_1]\otimes u_0+x_1\otimes \pi _0^+(a\otimes u_0))(\eta _{-1})=\pi _0^+([\eta _{-1},[a,x_1]]\otimes u_0)+\pi _0^+([\eta _{-1},x_1]\otimes \pi _0^+(a\otimes u_0))\\
&\quad =\pi _0^+([a,[\eta _{-1},x_1]]\otimes u_0)-\pi _0^+([[a,\eta _{-1}],x_1]]\otimes u_0)+\pi _0^+([\eta _{-1},x_1]\otimes \pi _0^+(a\otimes u_0))\\
&\quad =\pi _0^+(a\otimes \pi _0^+([\eta _{-1},x_1]\otimes u_0))-\pi _0^+([[a,\eta _{-1}],x_1]]\otimes u_0)\\
&\quad =\pi _0^+(a\otimes (r_0^+(x_1\otimes u_0)(\eta _{-1})))-r_0^+(x_1\otimes u_0)([a,\eta _{-1}]).
\end{align*}
Thus $r_{0}^+$ is a homomorphism of $\mathfrak{g}$-modules.
Similarly, we can prove that $r_{0}^-$ is a homomorphism of $\mathfrak{g}$-modules. 
\end{proof}
It follows from ${\rm Proposition}$ $\ref{pr;r_-1_hom}$ that the linear spaces $U_{1}^+:={\rm Im}\ r_{0}^+$ and $U_{-1}^-:={\rm Im}\ r_{0}^-$ have the canonical $\mathfrak{g}$-module structures.
We denote these canonical representations by $\pi _{1}^+$ and $\pi _{-1}^-$ respectively.
Moreover, we inductively construct $\mathfrak{g}$-modules $U_2^+,U_3^+,\ldots$ by using the following proposition.
\begin{pr}\label{pr;r_hom}
Assume that there exist $\mathfrak{g}$-modules $(\varpi ^+,W ^+)$, $(\varpi ^-,W ^-)$ and $\mathfrak{g}$-module homomorphisms $\lambda ^+:V_1\otimes W ^+\rightarrow {\rm Hom }(V_{-1},W ^+)$ and $\lambda ^-:V_{-1}\otimes W ^-\rightarrow {\rm Hom }(V_{1},W ^-)$.
We put $\hat{W}^+:={\rm Im}\ \lambda ^+$, $\hat{W}^+:={\rm Im}\ \lambda ^-$ and denote the canonical representations of $\mathfrak{g}$ on them by $\hat{\varpi }^+$ and $\hat{\varpi }^-$ respectively.
Then the following linear maps are $\mathfrak{g}$-module homomorphisms:
\begin{align}
\hat{\lambda}^+:&V_{1}\otimes \hat{W}^+\rightarrow {\rm Hom}(V_{-1},\hat{W}^+)\notag\\
&x_1\otimes \hat{w}^+\mapsto (\eta _{-1}\mapsto \hat{\varpi }^+([\eta _{-1},x_1]\otimes \hat{w}^+)+\lambda ^+(x_1\otimes \hat{w}^+(\eta _{-1}))),\\
\hat{\lambda}^-:&V_{-1}\otimes \hat{W}^-\rightarrow {\rm Hom}(V_1,\hat{W}^-)\notag\\
&y_{-1}\otimes \hat{w}^-\mapsto (\xi _1\mapsto \hat{\varpi }^-([\xi _1,y_{-1}]\otimes \hat{w}^-)+\lambda ^-(y_{-1}\otimes \hat{w}^-(\xi _1))).
\end{align}
\end{pr}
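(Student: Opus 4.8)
The plan is to verify directly that $\hat\lambda^+$ intertwines the $\mathfrak{g}$-actions; the argument for $\hat\lambda^-$ is identical with the roles of $V_1$ and $V_{-1}$ exchanged, so I would only treat $\hat\lambda^+$. Fix $a\in\mathfrak{g}$, $x_1\in V_1$, $\hat{w}^+\in\hat{W}^+$ and $\eta_{-1}\in V_{-1}$. The three module structures to keep in mind are: the tensor-product action $a\cdot(x_1\otimes\hat{w}^+)=[a,x_1]\otimes\hat{w}^++x_1\otimes\hat{\varpi}^+(a\otimes\hat{w}^+)$ on $V_1\otimes\hat{W}^+$; the $\mathrm{Hom}$-action $(a\cdot F)(\eta_{-1})=\hat{\varpi}^+(a\otimes F(\eta_{-1}))-F([a,\eta_{-1}])$ on $\mathrm{Hom}(V_{-1},\hat{W}^+)$; and, crucially, the fact that since $\hat{W}^+=\mathrm{Im}\,\lambda^+$ inherits the $\mathrm{Hom}$-action from $\mathrm{Hom}(V_{-1},W^+)$, one has $\hat{\varpi}^+(a\otimes\hat{w}^+)(\eta_{-1})=\varpi^+(a\otimes\hat{w}^+(\eta_{-1}))-\hat{w}^+([a,\eta_{-1}])$. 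The goal is then the identity in $\hat{W}^+$ obtained by evaluating $\hat\lambda^+(a\cdot(x_1\otimes\hat{w}^+))$ and $a\cdot\hat\lambda^+(x_1\otimes\hat{w}^+)$ at $\eta_{-1}$.

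First I would expand both sides using these three rules. After cancelling the common summand $\lambda^+(x_1\otimes\hat{w}^+([a,\eta_{-1}]))$ that appears on each side, the surviving terms sort into two independent groups. The first, ``$\lambda^+$-group'', collects
\[
\lambda^+([a,x_1]\otimes\hat{w}^+(\eta_{-1}))+\lambda^+(x_1\otimes\varpi^+(a\otimes\hat{w}^+(\eta_{-1})))
\]
against $\hat{\varpi}^+(a\otimes\lambda^+(x_1\otimes\hat{w}^+(\eta_{-1})))$; these agree precisely because $\lambda^+$ is a homomorphism of $\mathfrak{g}$-modules, so it carries the tensor action on $x_1\otimes\hat{w}^+(\eta_{-1})\in V_1\otimes W^+$ to the action $\hat{\varpi}^+$ on its image $\hat{W}^+$.

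The remaining ``$\hat{\varpi}^+$-group'' is exactly the computation already carried out for $r_0^+$ in Proposition \ref{pr;r_-1_hom}. Writing $[\eta_{-1},[a,x_1]]=[a,[\eta_{-1},x_1]]-[[a,\eta_{-1}],x_1]$ by the Jacobi identity in $L(\mathfrak{g},\rho,V,{\cal V},B_0)$ and setting $b:=[\eta_{-1},x_1]\in V_0=\mathfrak{g}$, the required equality collapses to the representation axiom
\[
\hat{\varpi}^+([a,b]\otimes\hat{w}^+)=\hat{\varpi}^+(a\otimes\hat{\varpi}^+(b\otimes\hat{w}^+))-\hat{\varpi}^+(b\otimes\hat{\varpi}^+(a\otimes\hat{w}^+))
\]
for the $\mathfrak{g}$-module $\hat{W}^+$. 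I expect no conceptual difficulty here: the only delicate point is bookkeeping — tracking the two correction terms $-F([a,\eta_{-1}])$ and $-\hat{w}^+([a,\eta_{-1}])$ produced by the two $\mathrm{Hom}$-actions and confirming that the $\hat{w}^+$-correction cancels across the two sides while the $F$-correction feeds exactly the term $-\hat{\varpi}^+([[a,\eta_{-1}],x_1]\otimes\hat{w}^+)$ needed to match the Jacobi expansion. Once the terms are separated into these two groups, both cancellations are forced, which establishes that $\hat\lambda^+$, and symmetrically $\hat\lambda^-$, is a homomorphism of $\mathfrak{g}$-modules.
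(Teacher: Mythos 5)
Your proposal is correct and follows essentially the same route as the paper: a direct evaluation of $\hat\lambda^+(a\cdot(x_1\otimes\hat w^+))$ and $a\cdot\hat\lambda^+(x_1\otimes\hat w^+)$ at $\eta_{-1}$, using the inherited $\mathrm{Hom}$-action on $\hat W^+=\mathrm{Im}\,\lambda^+$, the Jacobi identity $[\eta_{-1},[a,x_1]]=[a,[\eta_{-1},x_1]]-[[a,\eta_{-1}],x_1]$, the homomorphism property of $\lambda^+$, and the representation axiom for $\hat\varpi^+$ — exactly the ingredients of the paper's chain of equalities. The cancellations you identify (the common $\lambda^+(x_1\otimes\hat w^+([a,\eta_{-1}]))$ term and the two groups) are precisely how the paper's displayed computation resolves.
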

\begin{proof}
We can prove it by a similar argument to the argument of \cite [Proposition 1.10]{Sa}.
Take any elements $a\in\mathfrak{g}$, $x_1\in V_1$, $\eta _{-1}\in V_{-1}$ and $\hat{w}^+\in \hat{W}^+$.
Then we have
\begin{align*}
&(\hat{\lambda }^+([a,x_{1}]\otimes \hat{w}^+)+\hat{\lambda }^+(x_{1}\otimes \hat{\varpi }^+(a\otimes \hat{w}^+))) (\eta _{-1})\\
&\quad =\hat{\varpi }^+([\eta _{-1},[a,x_1]]\otimes \hat{w}^+)+\lambda ^+([a,x_1]\otimes \hat{w}^+(\eta _{-1}))\\&\qquad+ \hat{\varpi }^+([\eta _{-1},x_1]\otimes \hat{\varpi }^+(a\otimes \hat{w}^+))+\lambda ^+(x_1\otimes (\hat{\varpi }^+(a\otimes \hat{w}^+)(\eta _{-1})))\\
&\quad =\hat{\varpi }^+([a,[\eta _{-1},x_1]]\otimes \hat{w}^+)+\hat{\varpi }^+([\eta _{-1},x_1]\otimes  \hat{\varpi }^+(a\otimes \hat{w}^+))+\lambda ^+([a,x_1]\otimes \hat{w}^+(\eta _{-1}))\\
&\qquad+\lambda ^+(x_1\otimes \varpi ^+(a\otimes \hat{w}^+(\eta _{-1})))-\hat{\varpi }^+([[a,\eta _{-1}],x_1]\otimes \hat{w}^+)-\lambda ^+(x_1\otimes \hat{w}^+([a,\eta _{-1}]))\\
&\quad =\hat{\varpi }^+(a\otimes \hat{\varpi }^+([\eta _{-1},x_1]\otimes  \hat{w}^+))+\hat{\varpi }^+(a\otimes  \lambda ^+(x_1\otimes \hat{w}^+(\eta _{-1})))\\
&\qquad-\hat{\varpi }^+([[a,\eta _{-1}],x_1]\otimes  \hat{w}^+)-\lambda ^+(x_1\otimes \hat{w}^+([a,\eta _{-1}]))\\
&\quad =\hat{\varpi }^+(a\otimes  \hat{\lambda }^+(x_1\otimes \hat{w}^+)(\eta _{-1})) -\hat{\lambda }^+(x_1\otimes \hat{w}^+)([a,\eta _{-1}]).
\end{align*}
Thus, $\hat{\lambda}^+$ is a homomorphism of $\mathfrak{g}$-modules.
By the same way, we can prove that $\hat{\lambda}^-$ is also a $\mathfrak{g}$-module homomorphism.
\end{proof}
\begin{defn}
Suppose that $j\geq 1$ and there exist $\mathfrak{g}$-modules $(\pi _{j-1}^+,U_{j-1}^+)$, $(\pi _{-j+1}^-,U_{-j+1}^-)$ and homomorphisms of $\mathfrak{g}$-modules $r_{j-1}^+:V_{1}\otimes U_{j-1}^+\rightarrow {\rm Hom}(V_{-1},U_{j-1}^+)$ and $r_{-j+1}^-:V_{-1}\otimes U_{-j+1}^-\rightarrow {\rm Hom}(V_1,U_{-j+1}^-)$.
Put $U_{j}^+:={\rm Im}\ r_{j-1}^+$ and $U_{-j}^-:={\rm Im}\ r_{-j+1}^-$.
Then we define linear maps $r_{j}^+$ and $r_{-j}^-$ by:
\begin{align}
r_{j}^+:\ &V_{1}\otimes U_{j}^+\rightarrow \text{Hom} (V_1,U_{j}^+)\notag\\
&x_{1}\otimes u_{j}^+\mapsto (\eta _1 \mapsto \pi _{j}^+([\eta _{-1},x_1]\otimes u_{j}^+)+r_{j-1}^+(x_1 \otimes u_{j}^+(\eta _{-1}))),\\
r_{-j}^-:\ &V_{-1}\otimes U_{-j}^-\rightarrow \text{Hom} (V_1,U_{-j}^-)\notag\\
&y_{-1}\otimes u_{-j}^-\mapsto (\xi _1 \mapsto \pi _{-j}^-([\xi _1,y_{-1}]\otimes u_{-j}^-)+r_{-j+1}^-(y_{-1}\otimes u_{-j}^-(\xi _1))).
\end{align}
Then, by ${\rm Proposition}$ $\ref{pr;r_hom}$, $r_{j}^+$ and $r_{-j}^-$ are homomorphisms of $\mathfrak{g}$-modules.
We denote by $U_{j+1}^+$ and $U_{-j-1}^-$ the images of $r_{j}^+$ and $r_{-j}^-$ and the canonical representations of $\mathfrak{g}$ on $U_{j+1}^+$ and $U_{-j-1}^-$ by $\pi _{j+1}^+$ and $\pi _{-j-1}^-$ respectively.
Moreover, we put 
\begin{align*}
U_{-j}^+:=\{0\},\quad U^-_{j}:=\{0\}
\end{align*}
for $j\geq 1 $.
We denote the zero representations of $\mathfrak{g}$ on $U_{-j}^+$ and $U_{j}^-$ by $\pi _{-j}^+$ and $\pi _{j}^-$ for all $j\geq 1$.
Thus, inductively, we obtain $\mathfrak{g}$-modules $(\pi _m^+,U_m^+)$, $(\pi _m^-,U_{m}^-)$ for all $m\in\mathbb{Z}$.
Under these notation, we define linear spaces $\tilde{U}^+$ and $\tilde{U}^-$ by:
\begin{align}
\tilde{U}^+:=\bigoplus _{m\in \mathbb{Z}}U_n^+,\quad \tilde{U}^-:=\bigoplus _{m\in\mathbb{Z}}U_n^-.\label{def_U_pm_as_vec}
\end{align}
Throughout this paper, we use these notation.
\end{defn}

\subsection{A construction of representations of $L(\mathfrak{g},\rho,V,{\cal V},B_0)$ on $\tilde{U}^+$ and $\tilde{U}^-$}\label{sec;const_graded_ext}
In this section, we define $L(\mathfrak{g},\rho,V,{\cal V},B_0)$-module structures on vector spaces $\tilde{U}^+$ and $ \tilde{U}^-$ constructed in (\ref{def_U_pm_as_vec}).
For this, we start with the following definition.
\begin{defn}\label{defn;pi}
We define the following linear maps:
\begin{align*}
\pi _{0,m}^+:V_0\otimes U_{m}^+\rightarrow U_{m}^+,
\quad \pi _{1,m}^+:V_1\otimes U_{m}^+\rightarrow U_{m+1}^+,
\quad \pi _{-1,m}^+:V_{-1}\otimes U_{m}^+\rightarrow U_{m-1}^+
\end{align*}
by:
\begin{align}
&\pi _{0,m}^+(a\otimes u_{m}^+):=\pi _{m}^+(a\otimes u_{m}^+) \quad (m\in\mathbb{Z}),\label{defn_pi^0_m}\\
&\pi _{1,m}^+(x_1\otimes u_{m}^+):=\begin{cases}r_{m}^+(x_1\otimes u_m^+)&(m\geq 0)\\0&(m\leq -1)\end{cases},\label{defn_pi^1_m}\\
&\pi _{-1,m}^+(y_{-1}\otimes u_{m}^+):=\begin{cases}u_m^+(y_{-1})&(m\geq 1)\\0&(m\leq 0)\end{cases}\label{defn_pi^-1_m}
\end{align}
where $a\in V_0$, $x_1\in V_1$, $y_{-1}\in V_{-1}$ and $u_{m}^+\in U_{m}^+$.
\end{defn}
By the above definition, we can obtain the following proposition.
\begin{pr}\label{defn;pi_1_-1_compati}
Under the above notation, we have the following equations:
\begin{align}
&\pi ^+_{1,m}([x_1,a]\otimes u_{m}^+)=\pi ^+_{1,m}(x_1\otimes \pi ^+_{0,m}(a\otimes u_{m}^+))-\pi ^+_{0,m+1}(a\otimes \pi ^+_{1,m}(x_1\otimes u_{m}^+)),\label{pr_pi_0+1}\\
&\pi ^{+}_{-1,m}([y_{-1}, a]\otimes u_{m}^+)=\pi ^{+}_{-1,m}(y_{-1}\otimes \pi ^{+}_{0,m}(a\otimes u_{m}^+))-\pi ^{+}_{0,m-1}(a\otimes \pi ^{+}_{-1,m}(y_{-1}\otimes u_{m}^+)),\label{defn_pi_1}\\
&\pi ^+_{1,m-1}(x_1\otimes \pi ^{+}_{-1,m}(y_{-1}\otimes u_{m}^+))=\pi ^+ _{0,m}([x_1,y_{-1}]\otimes u_m)+\pi ^{+}_{-1,m+1}(y_{-1}\otimes \pi ^+_{1,m}(x_1\otimes u_m)).\label{pr_pi_1-1}
\end{align}
\end{pr}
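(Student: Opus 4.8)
The plan is to verify each of the three identities (\ref{pr_pi_0+1}), (\ref{defn_pi_1}), (\ref{pr_pi_1-1}) by a case analysis on the degree $m$, reducing everything to the formulas of Definition \ref{defn;pi} and to the homomorphism properties already established in Propositions \ref{pr;r_-1_hom} and \ref{pr;r_hom}. Since $U_m^+=\{0\}$ for $m\leq -1$ and the component maps $\pi_{1,m}^+$, $\pi_{-1,m}^+$ vanish identically in low degrees, the substantive content lies in the non-negative degrees, and the remaining cases collapse to $0=0$.

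For (\ref{pr_pi_0+1}) I would observe that for $m\geq 0$ the map $\pi_{1,m}^+$ is exactly $r_m^+$ and $\pi_{0,m}^+$ is the $\mathfrak{g}$-action $\pi_m^+$. Propositions \ref{pr;r_-1_hom} and \ref{pr;r_hom} (the latter applied inductively through the definition of $r_m^+$) assert that $r_m^+$ is a homomorphism of $\mathfrak{g}$-modules $V_1\otimes U_m^+\to U_{m+1}^+$. Writing out that property with respect to the tensor-product action gives $r_m^+([a,x_1]\otimes u_m^+)+r_m^+(x_1\otimes \pi_m^+(a\otimes u_m^+))=\pi_{m+1}^+(a\otimes r_m^+(x_1\otimes u_m^+))$; using $[a,x_1]=-[x_1,a]$ to move the first term to the other side yields precisely (\ref{pr_pi_0+1}). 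For $m\leq -1$ both sides are zero.

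For (\ref{defn_pi_1}) the key point is that, for $m\geq 1$, the element $u_m^+$ lives in $U_m^+\subset \mathrm{Hom}(V_{-1},U_{m-1}^+)$, so $\pi_{-1,m}^+(y_{-1}\otimes u_m^+)=u_m^+(y_{-1})$ is mere evaluation. The identity (\ref{defn_pi_1}) is then nothing but the defining formula of the canonical $\mathfrak{g}$-action on that Hom-space, namely $(\pi_m^+(a\otimes u_m^+))(y_{-1})=\pi_{m-1}^+(a\otimes u_m^+(y_{-1}))-u_m^+([a,y_{-1}])$, rearranged via $[y_{-1},a]=-[a,y_{-1}]$. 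For $m\leq 0$ every term vanishes.

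The genuine computation, and the main obstacle, is the cross-relation (\ref{pr_pi_1-1}), which interlocks the $V_1$- and $V_{-1}$-actions. For $m\geq 1$ I would expand the second summand on the right using the recursive definition of $r_m^+$: evaluating $r_m^+(x_1\otimes u_m^+)$ at $y_{-1}$ produces $\pi_m^+([y_{-1},x_1]\otimes u_m^+)+r_{m-1}^+(x_1\otimes u_m^+(y_{-1}))$. The first of these cancels the term $\pi_{0,m}^+([x_1,y_{-1}]\otimes u_m^+)$ by antisymmetry of the bracket, while the second equals the left-hand side $\pi_{1,m-1}^+(x_1\otimes u_m^+(y_{-1}))=r_{m-1}^+(x_1\otimes u_m^+(y_{-1}))$. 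The case $m=0$ is handled separately: there the left side vanishes since $\pi_{-1,0}^+=0$, while on the right the two summands cancel because $(r_0^+(x_1\otimes u_0))(y_{-1})=\pi_0^+([y_{-1},x_1]\otimes u_0)$ is opposite to $\pi_0^+([x_1,y_{-1}]\otimes u_0)$; and $m\leq -1$ is trivial. I expect the bookkeeping of which recursive clause of $r_m^+$ applies at the degree boundaries $m=0,1$ (where $r_0^+$ carries no $r_{-1}^+$ term yet still matches) to be the only place demanding real care.
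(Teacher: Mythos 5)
Your proposal is correct and follows essentially the same route as the paper's proof: a case split on the degree $m$, with the first two identities reduced to the $\mathfrak{g}$-module homomorphism property of $r_m^+$ and the canonical action on $\mathrm{Hom}(V_{-1},U_{m-1}^+)$, and the cross-relation (\ref{pr_pi_1-1}) handled by unwinding the recursive definition of $r_m^+$ for $m\geq 1$ and by the explicit cancellation $\pi_0^+([x_1,y_{-1}]\otimes u_0^+)+\pi_0^+([y_{-1},x_1]\otimes u_0^+)=0$ at $m=0$. The paper only writes out the $m=0$ case of (\ref{pr_pi_1-1}) and asserts the rest "follows from definition" or is "shown similarly," so your version simply supplies the details it omits.
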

\begin{proof}
Let us show (\ref{pr_pi_1-1}).
The equations (\ref{pr_pi_0+1}) and (\ref{defn_pi_1}) can be shown similarly.
If $m\leq -1$, then (\ref{pr_pi_1-1}) is clear.
If $m=0$, then the left hand side equals to $0$.
For the right hand side, we have 
\begin{align*}
&\pi ^+ _{0,0}([x_1,y_{-1}]\otimes u_0)+\pi ^{+}_{-1,1}(y_{-1}\otimes \pi ^+_{1,0}(x_1\otimes u_0))\\
&\quad =\pi _0^+([x_1,y_{-1}]\otimes u_0^+)+r_0^+(x_1\otimes u_0^+)(y_{-1})=\pi _0^+([x_1,y_{-1}]\otimes u_0^+)+\pi _0^+([y_{-1},x_1]\otimes u_0^+)=0.
\end{align*}Thus we have (\ref{pr_pi_1-1}) when $m=0$.
For $m\geq 1$, the equation (\ref{pr_pi_1-1}) follows from definition.
\end{proof}
\begin{defn}\label{pi^i+-1}
We define the following linear maps for $i\geq 1$ inductively:
\begin{align}
\pi ^+_{i+1,m}\ :\ &V_{i+1}\otimes U_{m}^+\rightarrow U_{i+m+1}^+\notag\\
&p_{i}(x_1\otimes z_i)\otimes u_{m}^+\mapsto \pi ^+_{1,i+m}(x_1\otimes \pi ^+_{i,m}(z_i\otimes u_{m}^+))-\pi ^+_{i,m+1}(z_i\otimes \pi ^+_{1,m}(x_1\otimes u_{m}^+)),\label{defn_pi_i+1}\\
\pi ^+_{-i-1,m}\ :\ &V_{-i-1}\otimes U_{m}^+\rightarrow U_{-i+m-1}^+\notag\\
&q_{-i}(y_{-1}\otimes \omega _{-i})\otimes u_{m}^+\mapsto \pi ^+_{-1,-i+m}(y_{-1}\otimes \pi ^+_{-i,m}(\omega _{-i}\otimes u_{m}^+))\notag\\&\hspace{180truept}-\pi ^+_{-i,m-1}(\omega _{-i}\otimes \pi ^+_{-1,m}(y_{-1}\otimes u_{m}^+)).\label{defn_pi_-i-1}
\end{align}
\end{defn}
Note that the linear maps $\pi ^+_{0,m}$, $\pi ^+_{\pm 1,m}$ defined in {\rm Definition} \ref{defn;pi} satisfy the same equations as (\ref{defn_pi_i+1}) and (\ref{defn_pi_-i-1}) in the cases where $i=0$ by \rm Proposition \ref{defn;pi_1_-1_compati}.
For $i\geq 1$, we must show the well-definedness of ${\rm Definition}$ $\ref{pi^i+-1}$.
To prove it, let us show the following two propositions.
\begin{pr}(The well-definedness of $\pi ^+_{i+1,m}$ given in (\ref{defn_pi_i+1}))\label{pr_well-def_pi_i+1}
Suppose that $i\geq 0$.
Suppose that the linear map $\pi _{i,m}^+$ defined in (\ref{defn_pi_i+1}) is well-defined for any $m\in\mathbb{Z}$ and satisfies the following equations:
\begin{align}
&\pi ^+_{0,i+m }(a\otimes \pi ^+_{i,m}(z_i\otimes u^+_{m}))=\pi ^+_{i,m}([a, z_i]\otimes u^+_{m})+\pi ^+_{i,m}(z_i\otimes \pi^+_{0,m}(a\otimes u^+_{m})),\label{well-def:i_0_hom}\\
&\pi ^+_{i,m -1}(z_i\otimes \pi ^+_{-1,m }(y_{-1}\otimes u^+_{m }))=\pi ^+_{i-1,m }([z_i,y_{-1}]\otimes u^+_{m })+\pi ^+_{-1,i+m }(y_{-1}\otimes \pi ^+_{i,m }(z_i\otimes u^+_{m }))\label{well-def:i_-1_hom}.
\end{align}
If $x_1^1,\ldots ,x_1^l\in V_1$ and $z_i^1,\ldots ,z_i^l\in V_i$ satisfy $\sum _{s=1}^lp_i(x_1^s\otimes z_i^s)=0$, then we have
\begin{align}
\displaystyle \sum _{s=1}^l (\pi ^+_{1,i+m}(x_1^s\otimes \pi ^+_{i,m}(z_i^s\otimes u^+_{m}))-\pi ^+_{i,m+1}(z_i^s\otimes \pi ^+_{1,m}(x_1^s\otimes u^+_{m})))=0 \label{well-defn_pi_i+1_m}
\end{align}
for all $m\in\mathbb{Z}$ and $u^+_{m}\in U_{m}^+$.
In particular, we can obtain the well-definedness of the linear map $\pi ^+_{i+1,m}$ defined in (\ref{defn_pi_i+1}) for any $m\in\mathbb{Z}$.
Moreover, the linear maps $\pi ^+_{i+1,m}$ $(m\in\mathbb{Z})$ satisfy the following equations:
\begin{align}
&\pi ^+_{0,i+m+1 }(a\otimes \pi ^+_{i+1,m}(z_{i+1}\otimes u^+_{m}))\notag\\
&\quad =\pi ^+_{i+1,m}([a, z_{i+1}]\otimes u^+_{m})+\pi ^+_{i+1,m}(z_{i+1}\otimes \pi^+_{0,m}(a\otimes u^+_{m})),\label{well-def:i+1_0_hom}\\
&\pi ^+_{i+1,m -1}(z_{i+1}\otimes \pi ^+_{-1,m }(y_{-1}\otimes u^+_{m }))\notag\\
&\quad =\pi ^+_{i,m }([z_{i+1},y_{-1}]\otimes u^+_{m })+\pi ^+_{-1,i+m+1 }(y_{-1}\otimes \pi ^+_{i+1,m }(z_{i+1}\otimes u^+_{m })).\label{well-def:i+1_-1_hom}
\end{align}
\end{pr}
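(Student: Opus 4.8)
The plan is to treat this Proposition as the inductive step (from level $i$ to level $i+1$) of a simultaneous induction on $i$, mirroring the argument for the bracket maps $p_i$ in \cite[Propositions 2.5 and 2.6]{Sa}. For the well-definedness (\ref{well-defn_pi_i+1_m}) the key observation is that the target sits inside a Hom-space: $U^+_{i+m+1}=\mathrm{Im}\,r^+_{i+m}\subseteq \mathrm{Hom}(V_{-1},U^+_{i+m})$, and for $w\in U^+_{i+m+1}$ with $i+m+1\geq 1$ one has $\pi^+_{-1,i+m+1}(y_{-1}\otimes w)=w(y_{-1})$ by (\ref{defn_pi^-1_m}); hence such a $w$ vanishes iff $\pi^+_{-1,i+m+1}(y_{-1}\otimes w)=0$ for all $y_{-1}\in V_{-1}$. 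Since $U^+_m=\{0\}$ for $m\leq -1$, the claim is trivial unless $m\geq 0$, in which case $i+m+1\geq 1$ and this reduction applies. So I would fix $m\geq 0$, set $E:=\sum_{s}\bigl(\pi^+_{1,i+m}(x_1^s\otimes\pi^+_{i,m}(z_i^s\otimes u^+_m))-\pi^+_{i,m+1}(z_i^s\otimes\pi^+_{1,m}(x_1^s\otimes u^+_m))\bigr)\in U^+_{i+m+1}$, and reduce the goal to showing $\pi^+_{-1,i+m+1}(y_{-1}\otimes E)=0$ for every $y_{-1}$.

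The core computation is to push $\pi^+_{-1,i+m+1}(y_{-1}\otimes -)$ through the two summands of $E$. Using (\ref{pr_pi_1-1}) to move $\pi^+_{-1}$ past $\pi^+_{1}$, and the inductive relations (\ref{well-def:i_-1_hom}) and (\ref{well-def:i_0_hom}) to move it past $\pi^+_{i,\cdot}$ and $\pi^+_{0,\cdot}$, I would expand $\pi^+_{-1,i+m+1}(y_{-1}\otimes E)$ into six terms and regroup them into two packages. Writing $p:=\pi^+_{-1,m}(y_{-1}\otimes u^+_m)\in U^+_{m-1}$ (which does not depend on $s$), the first package collapses, via the defining formula (\ref{defn_pi_i+1}) read at level $i+1$ and degree $m-1$, into $\sum_{s}\pi^+_{i+1,m-1}\bigl(p_i(x_1^s\otimes z_i^s)\otimes p\bigr)$; the remaining four terms collapse, using (\ref{well-def:i_0_hom}), the defining formula (\ref{defn_pi_i+1}) at level $i$, and the identity $p_i(x_1\otimes z_i)(y_{-1})=[[x_1,y_{-1}],z_i]+p_{i-1}(x_1\otimes[z_i,y_{-1}])$ coming from Definition \ref{defn_p_q}, into $-\,\pi^+_{i,m}\bigl((\textstyle\sum_s p_i(x_1^s\otimes z_i^s))(y_{-1})\otimes u^+_m\bigr)$. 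Both packages vanish once $\sum_s p_i(x_1^s\otimes z_i^s)=0$ is invoked: the second immediately, the first because $\pi^+_{i+1,m-1}$ is linear on $V_{i+1}\otimes U^+_{m-1}$ once it is known to be well-defined.

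This last point forces a nested induction on $m$: to kill the first package at degree $m$ I need well-definedness of $\pi^+_{i+1,m-1}$, i.e.\ (\ref{well-defn_pi_i+1_m}) at $m-1$. The base case $m=0$ is free, since $\pi^+_{-1,0}(y_{-1}\otimes u^+_0)=0$ by (\ref{defn_pi^-1_m}), so $p=0$ and the first package is identically zero; there the second package alone gives $-\pi^+_{i,0}\bigl((\textstyle\sum_s p_i)(y_{-1})\otimes u^+_0\bigr)=0$ from the level-$i$ hypotheses only. With well-definedness secured for all $m\geq 0$, the two ``moreover'' identities follow: (\ref{well-def:i+1_-1_hom}) is exactly the single-summand version of the core computation (no relation $\sum_s p_i=0$ is needed, only well-definedness at $m-1$, now available), and (\ref{well-def:i+1_0_hom}) — the assertion that $\pi^+_{i+1,\cdot}$ is a $\mathfrak{g}$-module homomorphism — follows by a direct computation from the definition (\ref{defn_pi_i+1}), the fact that $p_i$ is a $\mathfrak{g}$-module homomorphism, and the level-$i$ property (\ref{well-def:i_0_hom}), in the same spirit as Proposition \ref{pr;r_hom}.

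The main obstacle will be the bookkeeping in the core computation: keeping the six terms and their degrees straight while moving $\pi^+_{-1}$ inward, and recognizing the two correct regroupings — in particular seeing that the ``$p$-package'' is precisely $\pi^+_{i+1,m-1}$ evaluated on $\sum_s p_i(x_1^s\otimes z_i^s)\otimes p$, which is what makes the nested induction on $m$ both necessary and sufficient. Everything else is the pattern of \cite[Propositions 2.5 and 2.6]{Sa}, transported from the bracket $[\cdot,\cdot]$ on $L(\mathfrak{g},\rho,V,{\cal V},B_0)$ to the action $\pi^+$ on $\tilde{U}^+$.
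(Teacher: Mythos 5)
Your proposal is correct and follows essentially the same route as the paper's proof: an outer induction on $i$ with a nested induction on $m$, reducing the vanishing of the candidate element of $U^+_{i+m+1}\subset \mathrm{Hom}(V_{-1},U^+_{i+m})$ (using $i+m+1\geq 1$) to testing against all $y_{-1}\in V_{-1}$, expanding via (\ref{pr_pi_1-1}) and the level-$i$ identities (\ref{well-def:i_0_hom}), (\ref{well-def:i_-1_hom}), and killing the two resulting packages by $\sum_s p_i(x_1^s\otimes z_i^s)=0$ together with the induction hypotheses on $i$ and on $m$ respectively. Your derivation of the two ``moreover'' identities from the same core computation also matches the paper's.
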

\begin{proof}
We argue by induction on $i$.
For $i=0$, our claim follows from Proposition \ref{defn;pi_1_-1_compati}.
Suppose that $i\geq 1$.
We fix $i$ and argue (\ref{well-defn_pi_i+1_m}) by induction on $m$.
First, if $m\leq -1$, then the equation (\ref{well-defn_pi_i+1_m}) is clear.
If $m\geq 0$, then we have
\begin{align}
&\pi ^+_{-1,i+m+1}(y_{-1}\otimes \pi ^+_{1,i+m}(x_1\otimes \pi ^+_{i,m}(z_{i}\otimes u^+_{m}))-\pi ^+_{i,m+1}(z_{i}\otimes \pi ^+_{1,m}(x_1\otimes u^+_{m})))\notag\\
&\quad =\pi ^+_{0,i+m}([y_{-1},x_1]\otimes  \pi ^+_{i,m}(z_{i}\otimes u^+_{m}))+\pi ^+_{1,i+m-1}(x_1\otimes \pi ^+_{-1,i+m}(y_{-1}\otimes \pi ^+_{i,m}(z_{i}\otimes u^+_{m})))\notag\\
&\qquad-\pi ^+_{i-1,m+1}([y_{-1},z_{i}]\otimes \pi ^+_{1,m}(x_1\otimes u^+_{m}))-\pi ^+_{i,m}(z_i\otimes \pi ^+_{-1,m+1}(y_{-1}\otimes \pi ^+_{1,m}(x_1\otimes u^+_{m})))\notag\\
&\quad =\pi ^+_{0,i+m}([y_{-1},x_1]\otimes  \pi ^+_{i,m}(z_{i}\otimes u^+_{m}))+\pi ^+_{1,i+m-1}(x_1\otimes \pi ^+_{i-1,m}([y_{-1},z_{i}]\otimes u^+_{m}))\notag\\
&\qquad+\pi ^+_{1,i+m-1}(x_1\otimes \pi ^+_{i,m-1}(z_i\otimes \pi ^+_{-1,m}(y_{-1}\otimes u^+_{m})))-\pi ^+_{i-1,m+1}([y_{-1},z_{i}]\otimes \pi ^+_{1,m}(x_1\otimes u^+_{m}))\notag\\
&\qquad-\pi ^+_{i,m}(z_i\otimes \pi ^+_{0,m}([y_{-1},x_1]\otimes u^+_{m}))-\pi ^+_{i,m}(z_i\otimes \pi ^+_{1,m-1}(x_{1}\otimes \pi ^+_{-1,m}(y_{-1}\otimes u^+_{m})))\notag\\
&\quad =\pi ^+_{i,m}([[y_{-1},x_1],z_{i}]\otimes u^+_{m})+\pi ^+_{i,m}([x_1,[y_{-1},z_{i}]]\otimes u^+_{m})\notag\\
&\qquad+\pi ^+_{1,i+m-1}(x_1\otimes \pi ^+_{i,m-1}(z_i\otimes \pi ^+_{-1,m}(y_{-1}\otimes u^+_{m})))-\pi ^+_{i,m}(z_i\otimes \pi ^+_{1,m-1}(x_{1}\otimes \pi ^+_{-1,m}(y_{-1}\otimes u^+_{m})))\notag\\
&\quad =\pi ^+_{i,m}([y_{-1},[x_1,z_{i}]]\otimes u^+_{m})\notag\\
&\qquad+\pi ^+_{1,i+m-1}(x_1\otimes \pi ^+_{i,m-1}(z_i\otimes \pi ^+_{-1,m}(y_{-1}\otimes u^+_{m})))-\pi ^+_{i,m}(z_i\otimes \pi ^+_{1,m-1}(x_{1}\otimes \pi ^+_{-1,m}(y_{-1}\otimes u^+_{m})))\label{well-def;pi_i+1_hom}
\end{align}for any $x_1\in V_1$, $z_i\in V_i$, $y_{-1}\in V_{-1}$ and $u^+_{m}\in U_{m}^+$.
By the induction hypotheses on $i$ and $m$, if we take elements $x_1^1,\ldots ,x_1^l\in V_1$ and $z_i^1,\ldots ,z_i^l\in V_i$ satisfying $\sum _{s=1}^lp_i(x_1^s\otimes z_i^s)=0$, then we have
\begin{align}
&\displaystyle \sum _{s=1}^l\pi ^+_{i,m}([[x_1^s,z_i^s],y_{-1}]\otimes u^+_m)=0\quad \text{(by the induction hypothesis on $i$),}\\
&\displaystyle \sum _{s=1}^l   (\pi ^+_{1,i+m-1}(x_1^s\otimes \pi ^+_{i,m-1}(z_i^s\otimes \pi ^+_{-1,m}(y_{-1}\otimes u^+_{m}))) -\pi ^+_{i,m}(z_i^s\otimes \pi ^+_{1,m-1}(x_{1}^s\otimes \pi ^+_{-1,m}(y_{-1}\otimes u^+_{m}))))\notag \\&\quad =0\quad \text{(by the induction hypothesis on $m$)}.
\end{align}
Thus, we have
\begin{align}
\sum _{s=1}^l\pi ^+_{-1,i+m+1}(y_{-1}\otimes \pi ^+_{1,i+m}(x_1^s\otimes \pi ^+_{i,m}(z_i^s\otimes u^+_{m}))-\pi ^+_{i,m+1}(z_i^s\otimes \pi ^+_{1,m}(x_1^s\otimes u^+_m)))=0
\end{align}from (\ref{well-def;pi_i+1_hom}).
Since $i+m+1\geq 1$, we can obtain that
\begin{align}
&\displaystyle \sum _{s=1}^l(\pi ^+_{1,i+m}(x_1^s\otimes \pi ^+_{i,m}(z_i^s\otimes r _{m+1}(y_{-1}\otimes u^+_{m+1})))-\pi ^+_{i,m+1}(z_i^s\otimes \pi ^+_{1,m}(x_1\otimes r _{m+1}(y_{-1}\otimes u^+_{m+1}))) )\notag\\&\quad =0\in U^+_{i+m+1}\subset \mathrm {Hom}(V_{-1},U^+_{i+m}).
\end{align}
Therefore we can obtain the well-definedness of the linear map $\pi ^{i+1}_m:V_{i+1}\otimes U_m^+\rightarrow U_{i+m+1}^+$ given in (\ref{defn_pi_i+1}) for any $m$.\par
In order to complete the proof, we must show the equations (\ref{well-def:i+1_0_hom}) and (\ref{well-def:i+1_-1_hom}).
Let us show (\ref{well-def:i+1_0_hom}).
Under the above notation, for any $m\in\mathbb{Z}$, we have
\begin{align*}
&\pi ^+_{0,i+m+1}(a\otimes \pi ^+_{i+1,m}(p_i(x_1\otimes z_{i})\otimes u^+_m))\\
&\quad =\pi ^+_{0 ,i+m+1}(a\otimes \pi ^+_{1,i+m}(x_1\otimes \pi ^+_{i,m}(z_i\otimes u^+_{m})))-\pi ^+_{0,i+m+1}(a\otimes \pi ^+_{i,m+1}(z_i\otimes \pi ^+_{1,m}(x_1\otimes u^+_{m})))\\
&\quad =\pi ^+_{1,i+m}([a,x_1]\otimes \pi ^+_{i,m}(z_i\otimes u^+_{m}))+\pi ^+_{1,i+m}(x_1\otimes \pi ^+_{0,i+m}(a\otimes \pi ^+_{i,m}(z_i\otimes u^+_{m})))\\
&\qquad-\pi ^+_{i,m+1}([a,z_i]\otimes \pi ^+_{1,m}(x_1\otimes u^+_{m}))-\pi ^+_{i,m+1}(z_i\otimes \pi ^+_{0,m+1}(a\otimes \pi ^+_{1,m}(x_1\otimes u^+_{m})))\\
&\quad =\pi ^+_{1,i+m}([a,x_1]\otimes \pi ^+_{i,m}(z_i\otimes u^+_{m}))+\pi ^+_{1,i+m}(x_1\otimes  \pi ^+_{i,m}([a,z_i]\otimes u^+_{m}))\\
&\qquad+\pi ^+_{1,i+m}(x_1\otimes  \pi ^+_{i,m}(z_i\otimes \pi ^+_{0,m}(a\otimes u^+_{m})))-\pi ^+_{i,m+1}([a,z_i]\otimes \pi ^+_{1,m}(x_1\otimes u^+_{m}))\\
&\qquad-\pi ^+_{i,m+1}(z_i\otimes \pi ^+_{1,m}([a,x_1]\otimes u^+_{m}))-\pi ^+_{i,m+1}(z_i\otimes \pi ^+_{1,m}(x_1\otimes \pi ^+_{0 ,m}(a\otimes u^+_{m})))\\
&\quad =\pi ^+_{i+1,m}([[a,x_1],z_i]\otimes u^+_m)+\pi ^+_{i+1,m}([x_1,[a,z_i]]\otimes u^+_m)+\pi ^+_{i+1,m}([x_1,z_i]\otimes \pi ^+_{0,m}(a\otimes u^+_m))\\
&\quad =\pi ^+_{i+1,m}([a,p_i(x_1\otimes z_i)]\otimes u^+_m)+\pi ^+_{i+1,m}(p_i(x_1\otimes z_i)\otimes \pi ^+_{0,m}(a\otimes u^+_m)).
\end{align*}
Thus we have (\ref{well-def:i+1_0_hom}).
The equation (\ref{well-def:i+1_-1_hom}) follows from (\ref{well-def;pi_i+1_hom}).
This completes the proof.
\end{proof}
\begin{pr}(The well-definedness of $\pi ^+_{-i-1,m}$ given in (\ref{defn_pi_-i-1}))\label{pr_well-def_pi_-i-1}
Suppose that $i\geq 0$.
Suppose that the linear map $\pi ^+_{-i,m}$ defined in (\ref{defn_pi_-i-1}) is well-defined for any $m\in\mathbb{Z}$ and satisfies the following equations:
\begin{align}
&\pi ^+_{0,-i+m}(a\otimes \pi ^+_{-i,m}(\omega _{-i}\otimes u_m^+))=\pi ^+_{-i,m}([a,\omega _{-i}]\otimes u_m^+)+\pi ^+_{-i,m}(\omega _{-i}\otimes \pi ^+_{0,m}(a\otimes u_m^+)),\label{well-def:-i_0_hom}\\
&\pi ^+_{-i,m+1}(\omega _{-i}\otimes \pi ^+_{1,m}(x_1\otimes u_m^+))=\pi ^+_{-i+1,m}([\omega _{-i},x_1]\otimes u_m^+)+\pi ^+_{1,-i+m}(x_1\otimes \pi ^+_{-i,m}(\omega _{-i}\otimes u_m^+)).\label{well-def:-i_+1_hom}
\end{align}
If $y_{-1}^1,\ldots ,y_{-1}^l\in V_{-1}$ and $\omega _{-i}^1,\ldots ,\omega _{-i}^l\in V_{-i}$ satisfy $\sum _{s=1}^lq_{-i}(y_{-1}^s\otimes \omega _{-i}^s)=0$, then we have
\begin{align}
\displaystyle \sum _{s=1}^l (\pi ^+_{-1,-i+m}(y_{-1}^s\otimes \pi ^+_{-i,m}(\omega _{-i}^s\otimes u_{m}^+))-\pi ^+_{-i,m-1}(\omega _{-i}^s\otimes \pi ^+_{-1,m}(y_{-1}^s\otimes u_{m}^+)))=0 \label{well-defn_pi_-i-1_m}
\end{align}for all $m\in\mathbb{Z}$ and $u_{m}^+\in U_{m}^+$.
In particular, we can obtain the well-definedness of the linear map $\pi ^+_{-i-1,m}$ defined in (\ref{defn_pi_-i-1}) for any $m\in\mathbb{Z}$.
Moreover, the maps $\pi ^+_{-i-1,m}$ $(m\in\mathbb{Z})$ satisfy the following equations:
\begin{align}
&\pi ^+_{0,-i+m-1 }(a\otimes \pi ^+_{-i-1,m}(\omega _{-i-1}\otimes u_{m}^+))\notag\\
&\hspace{30truept}=\pi ^+_{-i-1,m}([a, \omega _{-i-1}]\otimes u_{m}^+)+\pi ^+_{-i-1,m}(\omega _{-i-1}\otimes \pi^+_{0,m}(a\otimes u_{m}^+)),\label{well-def:-i-1_0_hom}\\
&\pi ^+_{-i-1,m +1}(\omega _{-i-1}\otimes \pi ^+_{1,m }(x_{1}\otimes u_{m }^+))\notag\\
&\hspace{30truept}=\pi ^+_{-i,m }([\omega _{-i-1},x_{1}]\otimes u_{m }^+)+\pi ^+_{1,-i+m-1 }(x_{1}\otimes \pi ^+_{-i-1,m }(\omega _{-i-1}\otimes u_{m }^+)).\label{well-def:-i-1_-1_hom}
\end{align}
\end{pr}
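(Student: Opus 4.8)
The plan is to follow the proof of Proposition \ref{pr_well-def_pi_i+1}, but with one essential change of technique forced by the geometry of $\tilde{U}^+$. The spaces $U^+_n$ with $n\geq 1$ are detected by the action of $V_{-1}$ (they sit inside $\mathrm{Hom}(V_{-1},U^+_{n-1})$), whereas the compatibilities available here, (\ref{well-def:-i_0_hom}) and (\ref{well-def:-i_+1_hom}), are with $V_0$ and $V_1$. Since $\pi^+_{-i-1,m}$ \emph{lowers} the grade, its target $U^+_{-i+m-1}$ can be $U^+_0=U$ (at $m=i+1$), where the $V_{-1}$-probe used in Proposition \ref{pr_well-def_pi_i+1} is uninformative (indeed $\pi^+_{-1,0}=0$); moreover, probing with $V_{-1}$ would reintroduce $\pi^+_{-i-1}$ circularly via (\ref{defn_pi_-i-1}). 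I would therefore argue by double induction, outer on $i$ and inner on $m$, and replace the probing step by an \emph{unfolding} of the source.

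For the outer base $i=0$ the required statements are exactly those recorded after Definition \ref{pi^i+-1}, coming from Proposition \ref{defn;pi_1_-1_compati}. Fix $i\geq 1$ and assume (\ref{well-def:-i_0_hom}) and (\ref{well-def:-i_+1_hom}); I prove the vanishing (\ref{well-defn_pi_-i-1_m}) by induction on $m$. For $m\leq i$ either the source $U^+_m$ or the target $U^+_{-i+m-1}$ is $\{0\}$, so there is nothing to prove. For $m\geq i+1$, by linearity it suffices to treat $u^+_m=\pi^+_{1,m-1}(\xi_1\otimes v)$ with $\xi_1\in V_1$, $v\in U^+_{m-1}$, since $U^+_m=\mathrm{Im}\,r^+_{m-1}$. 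I then push the $V_{-1}$- and $V_{-i}$-actions past this $\pi^+_1$, using relation (\ref{pr_pi_1-1}) to commute $\pi^+_{-1}$ with $\pi^+_1$, the hypothesis (\ref{well-def:-i_+1_hom}) to commute $\pi^+_{-i}$ with $\pi^+_1$, and the hypothesis (\ref{well-def:-i_0_hom}) together with the defining relation (\ref{defn_pi_-i-1}) at level $i-1$ for the resulting lower-grade brackets.

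The point of this rearrangement is that each summand of (\ref{well-defn_pi_-i-1_m}) splits into three groups. One group cancels in pairs outright. A second group has the shape $\pi^+_{1,\,-i+m-2}(\xi_1\otimes(\cdot))$, where the inner factor is precisely the summand of (\ref{well-defn_pi_-i-1_m}) at grade $m-1$ and hence vanishes after summing, by the inner induction hypothesis. The third group reduces, after a single use of the Jacobi identity in $L$ in the form $[y_{-1},[\omega_{-i},\xi_1]]-[[\xi_1,y_{-1}],\omega_{-i}]=-[\xi_1,[y_{-1},\omega_{-i}]]$, to $-\pi^+_{-i,m-1}([\xi_1,\sum_s q_{-i}(y^s_{-1}\otimes\omega^s_{-i})]\otimes v)$; since $\sum_s q_{-i}(y^s_{-1}\otimes\omega^s_{-i})=0$ by assumption, this vanishes as well. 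This establishes (\ref{well-defn_pi_-i-1_m}), hence the well-definedness of $\pi^+_{-i-1,m}$ for all $m$.

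Once $\pi^+_{-i-1,m}$ is defined, the equation (\ref{well-def:-i-1_-1_hom}) is exactly the identity produced by the unfolding step read with $\omega_{-i-1}\in V_{-i-1}$ fixed (it asserts that $\pi^+_{-i-1}$ passes through $\pi^+_1$ up to the $[\omega_{-i-1},x_1]$-term), and (\ref{well-def:-i-1_0_hom}) follows by a direct expansion of $\pi^+_{0,-i+m-1}(a\otimes\pi^+_{-i-1,m}(q_{-i}(y_{-1}\otimes\omega_{-i})\otimes u^+_m))$ from the definition (\ref{defn_pi_-i-1}) and the level-$i$ relation (\ref{well-def:-i_0_hom}), mirroring the derivation of (\ref{well-def:i+1_0_hom}). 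I expect the main obstacle to be the bookkeeping in the rearrangement of the third paragraph: keeping the grade shifts aligned so that the three groups separate cleanly, and in particular spotting the Jacobi reduction that concentrates all the dependence on $(y^s_{-1},\omega^s_{-i})$ into the single factor $\sum_s q_{-i}(y^s_{-1}\otimes\omega^s_{-i})$, which is where the hypothesis is used.
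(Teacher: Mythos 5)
Your proposal matches the paper's own proof of this proposition: the same double induction (outer on $i$, inner on $m$), the same replacement of the $V_{-1}$-probing used in Proposition \ref{pr_well-def_pi_i+1} by unfolding $u^+_m$ as $\pi^+_{1,m-1}(x_1\otimes u^+_{m-1})$ via surjectivity of $r^+_{m-1}$, and the same final split into pairwise cancellations, a term of the form $\pi^+_{1,-i+m-2}(x_1\otimes(\cdot))$ killed by the inner induction on $m$, and a Jacobi-collapsed term $\pi^+_{-i,m-1}([[y_{-1},\omega_{-i}],x_1]\otimes u^+_{m-1})$ that vanishes because $\sum_s q_{-i}(y^s_{-1}\otimes\omega^s_{-i})=0$. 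Your handling of (\ref{well-def:-i-1_0_hom}) and (\ref{well-def:-i-1_-1_hom}) also coincides with the paper's, so the proof is correct and essentially identical to the original.
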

\begin{proof}
If $i=0$, then our claim immediately follows from the definition.
Suppose that $i\geq 1$.
We fix $i$ and discuss by induction on $m$.
If $m\leq 0$, the equation (\ref{well-defn_pi_-i-1_m}) is clear.
Suppose that $m\geq 1$.
Then, for any $x_1\in V_1$, $y_{-1}\in V_{-1}$, $\omega _{-i}\in V_{-i}$ and $u_{m-1}^+\in U_{m-1}^+$, we have
\begin{align}
&\pi ^+_{-1,-i+m}(y_{-1}\otimes \pi ^+_{-i,m}(\omega _{-i}\otimes \pi ^+_{1,m-1}(x_1\otimes u_{m-1}^+)))\notag\\
&\qquad-\pi ^+_{-i,m-1}(\omega _{-i}\otimes \pi ^+_{-1,m}(y_{-1}\otimes \pi ^+_{1,m-1}(x_1\otimes u_{m-1}^+)))\notag\\
&\quad =\pi ^+_{-1,-i+m}(y_{-1}\otimes \pi ^+_{-i+1,m-1}([\omega _{-i},x_1]\otimes u_{m-1}^+))\notag\\
&\qquad+\pi ^+_{-1,-i+m}(y_{-1}\otimes \pi ^+_{1,-i+m-1}(x_1\otimes \pi ^+_{-i,m-1}(\omega _{-i}\otimes u_{m-1}^+)))\notag\\
&\qquad-\pi ^+_{-i,m-1}(\omega _{-i}\otimes \pi ^+_{0,m-1}([y_{-1},x_1]\otimes u_{m-1}^+))\notag\\
&\qquad-\pi ^+_{-i,m-1}(\omega _{-i}\otimes \pi ^+_{1,m-2}(x_1\otimes \pi ^+_{-1,m-1}(y_{-1}\otimes u_{m-1}^+)))\notag\\
&\quad =\pi ^+_{-1,-i+m}(y_{-1}\otimes \pi ^+_{-i+1,m-1}([\omega _{-i},x_1]\otimes u_{m-1}^+))-\pi ^+_{-i,m-1}(\omega _{-i}\otimes \pi ^+_{0,m-1}([y_{-1},x_1]\otimes u_{m-1}^+))\notag\\
&\qquad+\pi ^+_{0,-i+m-1}([y_{-1},x_1]\otimes \pi ^+_{-i,m-1}(\omega _{-i}\otimes u_{m-1}^+))\notag\\
&\qquad+\pi ^+_{1,-i+m-2}(x_1\otimes \pi ^+_{-1,-i+m-1}(y_{-1}\otimes \pi ^+_{-i,m-1}(\omega _{-i}\otimes u_{m-1}^+)))\notag\\
&\qquad-\pi ^+_{-i+1,m-2}([\omega _{-i},x_1]\otimes \pi ^+_{-1,m-1}(y_{-1}\otimes u_{m-1}^+))\notag\\
&\qquad-\pi ^+_{1,-i+m-2}(x_1\otimes \pi ^+_{-i,m-2}(\omega _{-i}\otimes \pi ^+_{-1,m-1}(y_{-1}\otimes u_{m-1}^+)))\notag\\
&\quad =-\pi ^+_{-i,m-1}([[\omega _{-i},x_1],y_{-1}]\otimes u_{m-1}^+)+\pi ^+_{-i,m-1}([[y_{-1},x_1],\omega _{-i}]\otimes u_{m-1}^+)\notag\\
&\qquad+\pi ^+_{1,-i+m-2}(x_1\otimes \pi ^+_{-1,-i+m-1}(y_{-1}\otimes \pi ^+_{-i,m-1}(\omega _{-i}\otimes u_{m-1}^+)))\notag\\
&\qquad-\pi ^+_{1,-i+m-2}(x_1\otimes \pi ^+_{-i,m-2}(\omega _{-i}\otimes \pi ^+_{-1,m-1}(y_{-1}\otimes u_{m-1}^+)))\notag\\
&\quad =\pi ^+_{-i,m-1}([[y_{-1},\omega _{-i}],x_1]\otimes u_{m-1}^+)\notag\\
&\qquad+\pi ^+_{1,-i+m-2}(x_1\otimes \pi ^+_{-1,-i+m-1}(y_{-1}\otimes \pi ^+_{-i,m-1}(\omega _{-i}\otimes u_{m-1}^+)))\notag\\
&\qquad-\pi ^+_{1,-i+m-2}(x_1\otimes \pi ^+_{-i,m-2}(\omega _{-i}\otimes \pi ^+_{-1,m-1}(y_{-1}\otimes u_{m-1}^+))).\label{well-def;pi_-i-1_hom}
\end{align}
By the induction hypotheses on $i$ and $m$, if we take elements $y_{-1}^1,\ldots ,y_{-1}^l\in V_{-1}$ and $\omega _{-i}^1,\ldots ,\omega _{-i}^l\in V_{-i}$ satisfying $\sum _{s=1}^lq_{-i}(y_{-1}^s\otimes \omega _{-i}^s)=0$, then we have
\begin{align}
&\displaystyle \sum _{s=1}^l\pi ^+_{-i,m-1}([[y_{-1}^s,\omega _{-i}^s],x_1]\otimes u_{m-1}^+)=0\quad \text{(by the induction hypothesis on $i$),}\\
&\displaystyle \sum _{s=1}^l ( \pi ^+_{1,-i+m-2}(x_1\otimes \pi ^+_{-1,-i+m-1}(y_{-1}\otimes \pi ^+_{-i,m-1}(\omega _{-i}\otimes u_{m-1}^+)))\notag\\
&\qquad-\pi ^+_{1,-i+m-2}(x_1\otimes \pi ^+_{-i,m-2}(\omega _{-i}\otimes \pi ^+_{-1,m-1}(y_{-1}\otimes u_{m-1}^+))) )=0\notag\\&\hspace{185truept}\text{(by the induction hypothesis on $m$)}.
\end{align}
Thus, we have
\begin{align}
\sum _{s=1}^l(&\pi ^+_{-1,-i+m}(y_{-1}^s\otimes \pi ^+_{-i,m}(\omega _{-i}^s\otimes \pi ^+_{1,m-1}(x_1\otimes u_{m-1}^+)))\notag\\
&\qquad-\pi ^+_{-i,m-1}(\omega _{-i}^s\otimes \pi ^+_{-1,m}(y_{-1}^s\otimes \pi ^+_{1,m-1}(x_1\otimes u_{m-1}^+))))=0
\end{align}from (\ref{well-def;pi_-i-1_hom}).
Since $\pi ^+_{1,m-1}:V_1\otimes U_{m-1}^+\rightarrow U^+_m$ is surjective, we can obtain the equation (\ref{well-defn_pi_-i-1_m}).
Therefore we can obtain the well-definedness of the linear map $\pi ^+_{-i-1,m}:V_{-i-1}\otimes U_m^+\rightarrow U_{-i+m-1}^+$ given in (\ref{defn_pi_-i-1}) for any $m$.\par
The equation (\ref{well-def:-i-1_0_hom}) can be shown by a similar way to the proof of Proposition \ref{pr_well-def_pi_i+1}.
Moreover, the equation (\ref{well-def:-i-1_-1_hom}) follows from (\ref{well-def;pi_-i-1_hom}).
\end{proof}
\begin{defn}
By the above propositions, Propositions \ref{pr_well-def_pi_i+1} and \ref{pr_well-def_pi_-i-1}, we define a linear map $\tilde{\pi }^{+ } :L(\mathfrak{g},\rho,V,{\cal V},B_0)\otimes \tilde{U}^+\rightarrow \tilde{U}^+$ by:
\begin{align*}
\tilde{\pi }^{+ } (z_n\otimes u_m^+):=\pi ^+_{n,m}(z_n\otimes u_m^+)
\end{align*}
where $n,m\in\mathbb{Z}$, $z_n\in V_n$ and $u_m^+\in U_m^+$.
\end{defn}
This linear map $\tilde{\pi }^+$ satisfies the following equations:
\begin{align*}
&\tilde{\pi }^{+ } ([a,z_n]\otimes u_m^+)=\tilde{\pi }^{+ } (a\otimes \tilde{\pi }^{+ } (z_n\otimes u_m^+)) -\tilde{\pi }^{+ } (z_n\otimes \tilde{\pi }^{+ } (a\otimes u_m^+)),\\
&\tilde{\pi }^{+ } ([x_1,z_n]\otimes u_m^+)=\tilde{\pi }^{+ } (x_1\otimes \tilde{\pi }^{+ } (z_n\otimes u_m^+)) -\tilde{\pi }^{+ } (z_n\otimes \tilde{\pi }^{+ } (x_1\otimes u_m^+)),\\
&\tilde{\pi }^{+ } ([y _{-1},z_n]\otimes u_m^+)=\tilde{\pi }^{+ } (y _{-1}\otimes \tilde{\pi }^{+ } (z_n\otimes u_m^+)) -\tilde{\pi }^{+ } (z_n\otimes \tilde{\pi }^{+ } (y _{-1}\otimes u_m^+))
\end{align*}
for any $n,m\in\mathbb{Z}$, $a\in V_0$, $x_1\in V_1$, $y _{-1}\in V_{-1}$, $z_n\in V_n$ and $u_m^+\in U_m^+$.
Moreover, we have the following proposition on $\tilde{\pi}^+$.
\begin{pr}\label{pr_tilde_pi_rep}
The map $\tilde{\pi }^{+ }$ satisfies the following equation:
\begin{align}
\tilde{\pi }^{+ }([x,y]\otimes u)=\tilde{\pi }^{+ }(x\otimes \tilde{\pi }^{+ }(y\otimes u))-\tilde{\pi }^{+ }(y\otimes \tilde{\pi }^{+ }(x\otimes u))
\end{align}for any $x,y\in L(\mathfrak{g},\rho,V,{\cal V},B_0)$ and $u^+\in \tilde{U}^+$.
\end{pr}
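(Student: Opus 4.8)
The plan is to exploit the fact that $L(\mathfrak{g},\rho,V,{\cal V},B_0)$ is generated by $V_{-1}\oplus V_0\oplus V_1$, together with the three identities for $\tilde{\pi}^+$ displayed immediately before the statement. Those identities assert precisely that the desired equation holds whenever the first argument lies in $V_{-1}\oplus V_0\oplus V_1$ and $y=z_n$, $u=u_m^+$ are homogeneous; since both sides are linear in $y$ and in $u$, this extends at once to arbitrary $y\in L(\mathfrak{g},\rho,V,{\cal V},B_0)$ and $u\in\tilde{U}^+$. So the whole task is to upgrade the identity from the generators to all of $L(\mathfrak{g},\rho,V,{\cal V},B_0)$ in the first slot.

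For $x\in L(\mathfrak{g},\rho,V,{\cal V},B_0)$ I would write $\phi(x)$ for the endomorphism $u\mapsto\tilde{\pi}^+(x\otimes u)$ of $\tilde{U}^+$. In this notation the assertion is that $\phi([x,y])=[\phi(x),\phi(y)]$ for all $x,y$, where the bracket on the right is the commutator in $\mathrm{End}(\tilde{U}^+)$. Introduce
$$
G:=\{x\in L(\mathfrak{g},\rho,V,{\cal V},B_0)\mid \phi([x,y])=[\phi(x),\phi(y)]\text{ for all }y\in L(\mathfrak{g},\rho,V,{\cal V},B_0)\}.
$$
Because both sides are linear in $x$, the set $G$ is a linear subspace, and the preliminary identities give $V_{-1}\oplus V_0\oplus V_1\subset G$.

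The heart of the argument is to show that $G$ is closed under the bracket. First I would note that if $x_1\in G$ then, specializing $y=x_2$, one gets $\phi([x_1,x_2])=[\phi(x_1),\phi(x_2)]$ as operators. Then, for $x_1,x_2\in G$ and any $y$, using the Jacobi identity in $L(\mathfrak{g},\rho,V,{\cal V},B_0)$ (Proposition \ref{pr_lie_axiom}), the membership $x_1,x_2\in G$, and the Jacobi identity for the commutator bracket in the associative algebra $\mathrm{End}(\tilde{U}^+)$, I would compute
\begin{align*}
\phi([[x_1,x_2],y])&=\phi([x_1,[x_2,y]])-\phi([x_2,[x_1,y]])\\
&=[\phi(x_1),[\phi(x_2),\phi(y)]]-[\phi(x_2),[\phi(x_1),\phi(y)]]\\
&=[[\phi(x_1),\phi(x_2)],\phi(y)]=[\phi([x_1,x_2]),\phi(y)],
\end{align*}
so that $[x_1,x_2]\in G$. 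Hence $G$ is a Lie subalgebra containing the generating subspace $V_{-1}\oplus V_0\oplus V_1$, and since $L(\mathfrak{g},\rho,V,{\cal V},B_0)$ is generated by this subspace (Theorem \ref{StapLieAlgebraexists}), we conclude $G=L(\mathfrak{g},\rho,V,{\cal V},B_0)$, which is exactly the claim.

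I do not expect a genuine obstacle here: all the real content, namely the well-definedness of each $\pi^+_{n,m}$ and the three generator identities, has already been dispatched in Propositions \ref{pr_well-def_pi_i+1} and \ref{pr_well-def_pi_-i-1} and in the equations displayed before the statement. The only point requiring care is the bookkeeping of quantifiers: one must phrase $G$ with ``for all $y$'' so that the specialization $y=x_2$ is legitimate when deducing $\phi([x_1,x_2])=[\phi(x_1),\phi(x_2)]$, and one must invoke \emph{both} Jacobi identities, in $L(\mathfrak{g},\rho,V,{\cal V},B_0)$ and in $\mathrm{End}(\tilde{U}^+)$, in the correct places of the displayed computation.
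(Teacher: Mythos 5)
Your proposal is correct and follows essentially the same route as the paper: the paper also reduces to the generator identities displayed before the statement and propagates them via the Jacobi identity, phrased as an explicit induction on the degree $n$ with $z_n=p_{n-1}(x_1\otimes z_{n-1})$ rather than via your set $G$. Your closure-under-bracket computation for $G$ is exactly the paper's induction step, so the two arguments differ only in packaging.
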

\begin{proof}
To prove our claim, it is sufficient to show the case where $x=z_n\in V_n$ for some $n\in \mathbb{Z}$.
We argue by induction on $n$.
\par Assume that $n\geq 0$.
For $n=0,1$, our result has been shown.
For $n\geq 2$.
We can assume that $z_n=p_{n-1}(x_1\otimes z_{n-1})$ for some $x_1\in V_1$ and $z_{n-1}\in V_{n-1}$ without loss of generality.
Then, by the induction hypothesis, we have 
\begin{align*}
&\tilde{\pi }^{+ }([p_{n-1}(x_1\otimes z_{n-1}),y]\otimes u^+)=\tilde{\pi }^{+ }([x_1,[z_{n-1},y]]\otimes u^+) -\tilde{\pi }^{+ }([z_{n-1},[x_1,y]]\otimes u^+)\notag\\
&\quad =\tilde{\pi }^{+ }(x_1\otimes \tilde{\pi}^+([z_{n-1},y]\otimes u^+))-\tilde{\pi }^{+ }([z_{n-1},y]\otimes \tilde{\pi }^{+ }(x_1\otimes u^+))\notag\\
&\qquad-\tilde{\pi }^{+ }(z_{n-1}\otimes \tilde{\pi}^+([x_1,y]\otimes u^+))+\tilde{\pi }^{+ }([x_1,y]\otimes \tilde{\pi }^{+ }(z_{n-1}\otimes u^+))\notag\\
&\quad =\tilde{\pi }^{+ }(x_1\otimes \tilde{\pi}^+(z_{n-1}\otimes \tilde{\pi}^+(y\otimes u^+)))-\tilde{\pi }^{+ }(x_1\otimes \tilde{\pi}^+(y\otimes \tilde{\pi }^{+ }(z_{n-1}\otimes u^+)))\notag\\
&\qquad-\tilde{\pi }^{+ }(z_{n-1}\otimes \tilde{\pi}^+(y\otimes \tilde{\pi}^+(x_1\otimes u^+)))+\tilde{\pi }^{+ }(y\otimes \tilde{\pi}(z_{n-1}\otimes \tilde{\pi }^{+ }(x_1\otimes u^+)))\notag\\
&\qquad-\tilde{\pi }^{+ }(z_{n-1}\otimes \tilde{\pi}^+(x_1\otimes \tilde{\pi}^+(y\otimes u^+)))+\tilde{\pi }^{+ }(z_{n-1}\otimes \tilde{\pi}^+(y\otimes \tilde{\pi }^{+ }(x_1\otimes u^+)))\notag\\
&\qquad+\tilde{\pi }^{+ }(x_1\otimes \tilde{\pi}^+(y\otimes \tilde{\pi}^+(z_{n-1}\otimes u^+)))-\tilde{\pi }^{+ }(y\otimes \tilde{\pi}^+(x_1\otimes \tilde{\pi}^+(z_{n-1}\otimes u^+)))\notag\\
&\quad =\tilde{\pi }^{+ }(x_1\otimes \tilde{\pi}^+(z_{n-1}\otimes \tilde{\pi}^+(y\otimes u^+)))-\tilde{\pi }^{+ }(z_{n-1}\otimes \tilde{\pi}^+(x_1\otimes \tilde{\pi }^{+ }(y\otimes u^+)))\notag\\
&\qquad-\tilde{\pi }^{+ }(y\otimes \tilde{\pi}^+(x_1\otimes \tilde{\pi}^+(z_{n-1}\otimes u^+)))+\tilde{\pi }^{+ }(y\otimes \tilde{\pi}^+(z_{n-1}\otimes \tilde{\pi }^{+ }(x_1\otimes u^+)))\notag\\
&\quad =\tilde{\pi }^{+ }([x_1,z_{n-1}]\otimes \tilde{\pi}^+(y\otimes u^+))-\tilde{\pi }^{+ }(y\otimes \tilde{\pi}^+([x_1,z_{n-1}]\otimes u^+))\notag\\
&\quad =\tilde{\pi }^{+ }(p_{n-1}(x_1\otimes z_{n-1})\otimes \tilde{\pi}^+(y\otimes u^+))-\tilde{\pi }^{+ }(y\otimes \tilde{\pi}^+(p_{n-1}(x_1\otimes z_{n-1})\otimes u^+)).
\end{align*}
Thus, we have our result for any $n\geq 0$.
\par Similarly, we can obtain our result for any $n\leq -1$. 
This completes the proof.
\end{proof}
From \rm Proposition \ref{pr_tilde_pi_rep}, we have the following theorem.
\begin{theo}\label{th;pi_positivelyextend}
The vector space $\tilde{U}^+=\bigoplus _{m\in\mathbb{Z}}U_m ^+=\bigoplus _{m\geq 0}U_m ^+$ has a structure of a positively graded $L(\mathfrak{g},\rho,V,{\cal V},B_0)$-module whose representation is $\tilde{\pi }^{+ }$.
We call the module $(\tilde{\pi }^{+ },\tilde{U}^+)$ {\rm the positive extension of $U$ with respect to a standard pentad $(\mathfrak{g},\rho,V,{\cal V},B_0)$}.
(This is a special case of \cite [Theorem 1.2]{Shen}.)
\end{theo}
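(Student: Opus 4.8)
The plan is to observe that the theorem is essentially an assembly of the facts already established for the component maps $\pi^+_{n,m}$ and for $\tilde{\pi}^+$. There are three things to verify: that $(\tilde{\pi}^+,\tilde{U}^+)$ is a module over $L(\mathfrak{g},\rho,V,{\cal V},B_0)$, that the underlying vector space carries the grading $\tilde{U}^+=\bigoplus_{m\geq 0}U_m^+$, and that the action respects this grading in the sense demanded by the definition of a positively graded module.

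First I would note that the representation axiom is exactly the content of Proposition \ref{pr_tilde_pi_rep}: for any $x,y\in L(\mathfrak{g},\rho,V,{\cal V},B_0)$ and $u\in\tilde{U}^+$ we have $\tilde{\pi}^+([x,y]\otimes u)=\tilde{\pi}^+(x\otimes\tilde{\pi}^+(y\otimes u))-\tilde{\pi}^+(y\otimes\tilde{\pi}^+(x\otimes u))$. Since $\tilde{\pi}^+$ is, by its very construction, a well-defined linear map (the well-definedness of each $\pi^+_{n,m}$ having been secured in Propositions \ref{pr_well-def_pi_i+1} and \ref{pr_well-def_pi_-i-1}), this single identity is precisely the statement that $(\tilde{\pi}^+,\tilde{U}^+)$ is a Lie module over $L(\mathfrak{g},\rho,V,{\cal V},B_0)$.

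Next I would record the grading. By the defining convention for the spaces $U_m^+$ we have $U_{-j}^+=\{0\}$ for every $j\geq 1$, so $\tilde{U}^+=\bigoplus_{m\in\mathbb{Z}}U_m^+=\bigoplus_{m\geq 0}U_m^+$, which is the required positive grading of the underlying space. The compatibility of the action with this grading then follows immediately from the way each component map was set up: in Definitions \ref{defn;pi} and \ref{pi^i+-1} the map $\pi^+_{n,m}$ is declared as a linear map $V_n\otimes U_m^+\to U_{n+m}^+$, whence $\tilde{\pi}^+(V_n\otimes U_m^+)=\pi^+_{n,m}(V_n\otimes U_m^+)\subset U_{n+m}^+$ for all $n,m\in\mathbb{Z}$. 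This is exactly the condition $\tilde{\pi}^+(V_n\otimes U_m^+)\subset U_{n+m}^+$ appearing in the definition of a positively graded module, so combining the three observations yields the claim.

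Because all of the substantive work—the well-definedness of the $\pi^+_{n,m}$ and the full representation identity of Proposition \ref{pr_tilde_pi_rep}—has already been carried out, I do not expect a genuine obstacle here; the theorem is a direct corollary that collects these results. The only point requiring any care is purely bookkeeping, namely confirming that the target degree $n+m$ of each component map coincides with the grading shift prescribed for a positively graded module, and this is read off at once from the construction.
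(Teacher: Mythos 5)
Your proposal is correct and coincides with the paper's own treatment: the paper derives Theorem \ref{th;pi_positivelyextend} directly from Proposition \ref{pr_tilde_pi_rep} (together with the well-definedness results of Propositions \ref{pr_well-def_pi_i+1} and \ref{pr_well-def_pi_-i-1} and the built-in degree shifts of the maps $\pi^+_{n,m}$), exactly as you do. The only content of the theorem beyond what precedes it is the bookkeeping you carry out, so nothing is missing.
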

By the same argument, we can obtain a negatively graded Lie module of $L(\mathfrak{g},\rho,V,{\cal V},B_0)$.
\begin{defn}\label{defn;pi^-}
We define the following linear maps:
\begin{align*}
\pi _{0,m}^-:V_0\otimes U_{m}^-\rightarrow U_{m}^-,
\quad \pi _{1,m}^-:V_1\otimes U_{m}^-\rightarrow U_{m+1}^-,
\quad \pi _{-1,m}^-:V_{-1}\otimes U_{m}^-\rightarrow U_{m-1}^-
\end{align*}
by:
\begin{align}
&\pi _{0,m}^-(a\otimes u_{m}^-):=\pi _{m}^-(a\otimes  u_{m}^-),\label{defn_pi^0_m-}\\
&\pi _{1,m}^-(x_1\otimes u_{m}^-):=\begin{cases}0&(m\geq 0)\\u_m^-(x_1)&(m\leq -1)\end{cases},\label{defn_pi^1_m-}\\
&\pi _{-1,m}^-(y_{-1}\otimes u_{m}^-):=\begin{cases}0&(m\geq 1)\\r_{m}^-(y_{-1}\otimes u_m^-)&(m\leq 0)\end{cases}\label{defn_pi^-1_m-}
\end{align}
where $m\in\mathbb{Z}$, $a\in V_0$, $x_1\in V_1$, $y_{-1}\in V_{-1}$ and $u_{m}^-\in U_{m}^-$.
\end{defn}
\begin{theo}\label{th;pi_negativelyextend}
The vector space $\tilde{U}^-=\bigoplus _{m\in\mathbb{Z}}U_m ^-=\bigoplus _{m\leq 0}U_m ^-$ has a structure of a negatively graded $L(\mathfrak{g},\rho,V,{\cal V},B_0)$-module whose representation is $\tilde{\pi}^{- }$.
We call the module $(\tilde{\pi }^{- },\tilde{U}^-)$ {\rm the negative extension of $U$ with respect to a standard pentad $(\mathfrak{g},\rho,V,{\cal V},B_0)$}.
(This is a special case of \cite [Theorem 1.2]{Shen}.)
\end{theo}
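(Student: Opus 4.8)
The plan is to mirror the entire construction carried out for $\tilde{\pi}^+$, exchanging the roles of the positive and negative graduations throughout. Starting from the maps $\pi^-_{0,m}$ and $\pi^-_{\pm 1,m}$ of Definition \ref{defn;pi^-}, I would first verify the three compatibility identities analogous to (\ref{pr_pi_0+1})--(\ref{pr_pi_1-1}), which follow by the same case analysis on the sign of $m$. The only nontrivial case is $m=0$: there both sides of the analogue of (\ref{pr_pi_1-1}) vanish, because $\pi^-_{1,0}=0$ on one side while on the other side $r^-_0(y_{-1}\otimes u^-_0)(x_1)=\pi^-_0([x_1,y_{-1}]\otimes u^-_0)$ cancels against $\pi^-_0([y_{-1},x_1]\otimes u^-_0)$.

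Next I would define, for $i\geq 1$ and all $m\in\mathbb{Z}$, the higher maps
\begin{align*}
\pi^-_{-i-1,m}(q_{-i}(y_{-1}\otimes\omega_{-i})\otimes u^-_m)&:=\pi^-_{-1,-i+m}(y_{-1}\otimes\pi^-_{-i,m}(\omega_{-i}\otimes u^-_m))-\pi^-_{-i,m-1}(\omega_{-i}\otimes\pi^-_{-1,m}(y_{-1}\otimes u^-_m)),\\
\pi^-_{i+1,m}(p_i(x_1\otimes z_i)\otimes u^-_m)&:=\pi^-_{1,i+m}(x_1\otimes\pi^-_{i,m}(z_i\otimes u^-_m))-\pi^-_{i,m+1}(z_i\otimes\pi^-_{1,m}(x_1\otimes u^-_m)),
\end{align*}
exactly as in Definition \ref{pi^i+-1} but with $\tilde{U}^-$ in place of $\tilde{U}^+$. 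Their well-definedness is the content of the mirror statements of Propositions \ref{pr_well-def_pi_i+1} and \ref{pr_well-def_pi_-i-1}, proved together by the same interleaved induction on $i$ and on $m$. The crucial structural point is that on the negative extension the surjective degree-$(-1)$ action plays the role that the degree-$(+1)$ action played before: since $U^-_{m-1}=\mathrm{Im}\ r^-_m$ for $m\leq 0$, the map $\pi^-_{-1,m}:V_{-1}\otimes U^-_m\to U^-_{m-1}$ is surjective, and this surjectivity is exactly what lets one descend from an identity for $\pi^-_{-i,m}$ on $U^-_{m-1}$ to the required identity for $\pi^-_{i+1,m}$ on $U^-_m$. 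After the substitutions $x_1\leftrightarrow y_{-1}$, $p_i\leftrightarrow q_{-i}$, $V_n\leftrightarrow V_{-n}$ and the reversal of the grading shift, the computations reproducing (\ref{well-def;pi_i+1_hom}) and (\ref{well-def;pi_-i-1_hom}) go through verbatim.

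Having obtained well-defined maps $\pi^-_{n,m}$ for all $n,m$, I would assemble them into $\tilde{\pi}^-:L(\mathfrak{g},\rho,V,{\cal V},B_0)\otimes\tilde{U}^-\to\tilde{U}^-$ via $\tilde{\pi}^-(z_n\otimes u^-_m):=\pi^-_{n,m}(z_n\otimes u^-_m)$ and verify the module axiom $\tilde{\pi}^-([x,y]\otimes u)=\tilde{\pi}^-(x\otimes\tilde{\pi}^-(y\otimes u))-\tilde{\pi}^-(y\otimes\tilde{\pi}^-(x\otimes u))$ by the induction on $n$ of Proposition \ref{pr_tilde_pi_rep}, now using the decomposition $z_n=q_{n+1}(y_{-1}\otimes z_{n+1})$ in place of $z_n=p_{n-1}(x_1\otimes z_{n-1})$ for the inductive step when $n\leq-2$. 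The grading condition $\pi^-_{n,m}(V_n\otimes U^-_m)\subset U^-_{n+m}$ holds by construction, and since $U^-_m=\{0\}$ for $m\geq 1$ the module is concentrated in degrees $m\leq 0$, so $(\tilde{\pi}^-,\tilde{U}^-)$ is a negatively graded $L(\mathfrak{g},\rho,V,{\cal V},B_0)$-module. The main obstacle is entirely bookkeeping: keeping the two interleaved inductions and the reversed grading shifts consistent across the mirror well-definedness arguments, since no genuinely new idea beyond the positive case is required.
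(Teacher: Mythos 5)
Your proposal is correct and is exactly the argument the paper intends: the paper's proof of this theorem is simply the remark that the positive-extension construction (Definitions \ref{defn;pi}--\ref{pi^i+-1} and Propositions \ref{defn;pi_1_-1_compati}--\ref{pr_tilde_pi_rep}) goes through mutatis mutandis, and you have carried out that mirroring faithfully, including the one point that genuinely needs checking, namely that the surjectivity of $\pi^-_{-1,m}:V_{-1}\otimes U^-_m\to U^-_{m-1}$ for $m\leq 0$ takes over the role played by the surjectivity of $\pi^+_{1,m-1}$ in Proposition \ref{pr_well-def_pi_-i-1}. No gap.
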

Note that an arbitrary module of $L(\mathfrak{g},\rho,V,{\cal V},B_0)$ is not necessary written in the form of $\tilde{U}^+$ or $\tilde{U}^-$.
For example, the adjoint representation of a loop algebra $L(\mathfrak{sl}_2,\mathrm{ad},\mathfrak{sl}_2,\mathfrak{sl}_2,K_{\mathfrak{sl}_2})={\cal L}(\mathfrak{sl}_2(\mathbb{C}))=\mathbb{C}[t,t^{-1}]\otimes \mathfrak{sl}_2(\mathbb{C})=\bigoplus _{n\in\mathbb{Z}}\mathbb{C}t^n\otimes \mathfrak{sl}_2$, where $K_{\mathfrak{sl}_2}$ is the Killing form of $\mathfrak{sl}_2$, cannot be written in the form of positively or negatively graded module.
Indeed, ${\cal L}(\mathfrak{sl}_2(\mathbb{C}))$ does not have a non-zero element which commutes with any element  of the form $t^{ }\otimes X$ or $t^{-1}\otimes X$ $(X\in \mathfrak{sl}_2)$.

\begin{pr}\label{pr;stapliemod}
Under the notation of Theorems \ref{th;pi_positivelyextend} and \ref{th;pi_negativelyextend}, $L(\mathfrak{g},\rho,V,{\cal V},B_0)$-modules $\tilde{U}^+$ and $\tilde{ U}^-$ have the following properties:
\begin{align}
&\text{$\tilde{U}^+$ and $\tilde{ U}^-$ are transitive,}\label{pr;stapliemod_property_1}\\
&\text{$\tilde{U}^+$ and $\tilde{ U}^-$ are $L(\mathfrak{g},\rho,V,{\cal V},B_0)$-irreducible if and only if $U=U^+_0=U^-_0$ is $\mathfrak{g}$-irreducible.}\label{pr;stapliemod_property_2}
\end{align}
(This is a special case of \cite [Theorem 1.1]{Shen}.)
\end{pr}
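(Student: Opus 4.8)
The plan is to treat $\tilde{U}^+$ in full; the assertions for $\tilde{U}^-$ then follow by the symmetric argument exchanging the roles of $V_1$ and $V_{-1}$ (and of raising and lowering). I would first dispose of the transitivity (\ref{pr;stapliemod_property_1}), which is almost immediate from the construction. For $m\geq 1$ the space $U_m^+$ is by definition a subspace of $\mathrm{Hom}(V_{-1},U_{m-1}^+)$, and by (\ref{defn_pi^-1_m}) the action of $V_{-1}$ is evaluation, $\tilde{\pi}^+(y_{-1}\otimes u_m^+)=\pi_{-1,m}^+(y_{-1}\otimes u_m^+)=u_m^+(y_{-1})$. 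Hence $\tilde{\pi}^+(V_{-1}\otimes u_m^+)=\{0\}$ forces $u_m^+$ to be the zero homomorphism, i.e. $u_m^+=0$, which is exactly the transitivity condition. For $\tilde{U}^-$ the identical reasoning applies, with $\pi_{1,m}^-$ acting by evaluation on $\mathrm{Hom}(V_1,U_{m+1}^-)$ via (\ref{defn_pi^1_m-}).

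For the implication in (\ref{pr;stapliemod_property_2}) that $\mathfrak{g}$-irreducibility of $U$ forces $L$-irreducibility of $\tilde{U}^+$, I would take a nonzero $L$-submodule $W$ and pick $0\neq w=\sum_{m=0}^{N}w_m\in W$ with $w_N\neq 0$. The crucial step is to extract the homogeneous top component: applying $N$ successive lowering operators from $V_{-1}$ annihilates each $w_m$ with $m<N$, since $\pi_{-1,m'}^+=0$ for $m'\leq 0$ so such a term dies after at most $m+1\leq N$ applications, and leaves $\tilde{\pi}^+(y_{-1}^N\otimes\cdots\otimes\tilde{\pi}^+(y_{-1}^1\otimes w_N))\in U_0^+$. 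Iterating the transitivity just proved, I may choose $y_{-1}^1,\dots,y_{-1}^N$ so that this element is nonzero, whence $W\cap U_0^+\neq\{0\}$. As $V_0=\mathfrak{g}$ preserves $U_0^+$, the intersection $W\cap U_0^+$ is a nonzero $\mathfrak{g}$-submodule of $U$, hence equals $U_0^+$ by irreducibility of $U$. Finally, since $U_{m+1}^+=\mathrm{Im}\,\pi_{1,m}^+$ for every $m\geq 0$, the inclusion $U_0^+\subset W$ propagates upward, $U_{m+1}^+=\pi_{1,m}^+(V_1\otimes U_m^+)\subset W$ by induction, giving $W=\tilde{U}^+$.

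For the converse I would prove the contrapositive by exhibiting an explicit proper submodule. Given a proper nonzero $\mathfrak{g}$-submodule $U'\subsetneq U=U_0^+$, set $W_0:=U'$ and $W_m:=\pi_{1,m-1}^+(V_1\otimes W_{m-1})$ for $m\geq 1$, and let $W:=\bigoplus_{m\geq 0}W_m$. This $W$ is $V_1$-stable by construction, and I would verify $V_0$- and $V_{-1}$-stability by induction on $m$ using the representation identities (\ref{pr_pi_0+1}) and (\ref{pr_pi_1-1}): applying $\pi_{0,m}^+(a\otimes-)$ or $\pi_{-1,m}^+(y_{-1}\otimes-)$ to a generator $\pi_{1,m-1}^+(x_1\otimes w)$ of $W_m$ rewrites it, via those identities, as a combination of terms $\pi_{1,m-2}^+(V_1\otimes W_{m-2})$ and $\pi_{0,m-1}^+(\mathfrak{g}\otimes W_{m-1})$, all lying in $W$ by the inductive hypothesis. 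Thus $W$ is an $L$-submodule; it is nonzero and, being graded with $W_0=U'\subsetneq U_0^+$, proper, so $\tilde{U}^+$ is reducible. The analogous construction with $V_{-1}$ in place of $V_1$, together with the corresponding identities for $\pi^-$, settles $\tilde{U}^-$.

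The main obstacle is the extraction step in the ($\Leftarrow$) direction: isolating a single homogeneous component of an a priori non-graded element $w\in W$ and then iterating transitivity to guarantee a nonzero image in degree $0$. Once that is secured, the upward propagation rests only on the surjectivity $U_{m+1}^+=\mathrm{Im}\,\pi_{1,m}^+$, and the stability verifications in the forward construction are routine consequences of the module relations (\ref{pr_pi_0+1}) and (\ref{pr_pi_1-1}) already established for $\tilde{\pi}^+$.
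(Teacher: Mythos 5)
Your proof is correct and follows essentially the same route as the paper: transitivity read off from the evaluation definition of the $V_{-1}$-action, extraction of a nonzero degree-zero component of a submodule via iterated transitivity, and upward propagation using $U_{m+1}^+=\mathrm{Im}\,r_m^+$. The only cosmetic difference is that you prove the converse as a contrapositive, explicitly checking $V_0$- and $V_{-1}$-stability of the graded submodule generated by a proper $\mathfrak{g}$-submodule, whereas the paper phrases the same construction directly as ``the submodule generated by $V_0$, $V_1$ and $W$''.
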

\begin{proof}
By the definition, we can show (\ref{pr;stapliemod_property_1}) immediately.
\par Let us show (\ref{pr;stapliemod_property_2}).
Assume that $U$ is an irreducible $\mathfrak{g}$-module.
Let $\underline{W}$ be an arbitrary non-zero $L(\mathfrak{g},\rho,V,{\cal V},B_0)$-submodule of $\tilde{U}^+$.
Then we have that $\underline{W}\cap U^+_0\neq \{0\}$ (cf. \cite [Corollary 1.2]{Shen}).
In fact, take a non-zero element $\underline {w}\in \underline{W}$.
Then there exist integers $0\leq m_1<\cdots <m_k$ and $\underline{w}_{m_1}\in \underline{W}\cap U^+_{m_1},\ldots ,\underline{w}_{m_k}\in \underline{W}\cap U^+_{m_k}$ such that $\underline{w}=\underline {w}_{m_1}+\cdots +\underline {w}_{m_k}$.
Since $\tilde{U}^+$ is transitive, we can take $y_{-1}^1,\ldots ,y_{-1}^{m_k}\in V_{-1}$ such that $0\neq \tilde{\pi }^+(y_{-1}^1\otimes \cdots \otimes \tilde{\pi }^+(y_{-1}^{m_k}\otimes w)\cdots )\in \underline{W}\cap U^+_0$.
By the assumption that $U$ is irreducible, we have $\underline {W}\cap U^+_0=U$.
Since $\tilde{U}^+$ is generated by $U=U_0^+$ and $V_0,V_1$, we have that $\underline {W}$ coincides with $\tilde{U}^+$.
\par Conversely, assume that $\tilde{U}^+$ is an irreducible $L(\mathfrak{g},\rho,V,{\cal V},B_0)$-module.
Take a non-zero $\mathfrak{g}$-submodule $W$ of $U$.
Then a submodule $\underline{W}$ of $\tilde{U}^+$ which is generated by $V_0,V_1, W$ is a non-zero $L(\mathfrak{g},\rho,V,{\cal V},B_0)$-submodule of $\tilde{U}^+$.
Thus, $\underline {W}=\tilde{U}^+$, and, in particular, $W=\underline{W}\cap U_0^+=U$.
Similarly, we can show (\ref{pr;stapliemod_property_2}) for the negative extension $\tilde{U}^-$.
\end{proof}

\begin{ex}\label{ex;2}
We retain to use the notation of {\rm Example} \ref{ex;1}.
Put $U:=\mathbb{C}$ and define a representation $\pi:\mathfrak{g}\otimes U\rightarrow U$ by:
\begin{align*}
\pi ((a,b,A)\otimes u):=au
\end{align*}for any $u\in U$.
Then, the positive extension $\tilde{U}^+$ of $U$ with respect to $(\mathfrak{g},\rho,V,{\cal V},B_0)$ is $3$-dimensional irreducible representation of $L(\mathfrak{g},\rho,V,{\cal V},B_0)=V_{-1}\oplus V_0\oplus V_1\simeq \mathfrak{gl}_1\oplus \mathfrak{sl}_3$.
In fact, for any $v\in V_1=V$, $\phi \in V_{-1}={\cal V}$ and $u\in U$, we have 
\begin{align*}
\tilde{\pi }^+(\phi \otimes \tilde{\pi }^+(v\otimes u))=-\tilde{\pi }^+(\Phi _{\rho }(v\otimes \phi )\otimes u)=-\pi ((-{}^tv\phi ,\frac{3}{2}{}^tv\phi ,v{}^t\phi -\frac{1}{2}{}^tv\phi I_2)\otimes u)={}^tv\phi u.
\end{align*}
Thus, the element $\tilde{\pi }^+(v\otimes u)$ can be identified with $uv\in V_1=V$ via $\langle \cdot,\cdot\rangle _V$, in particular, $U_1^+$ is $2$-dimensional.
Moreover, we have
\begin{align*}
&\tilde{\pi }^+(\phi \otimes\tilde{\pi }^+(v^{\prime }\otimes  \tilde{\pi }^+(v\otimes u)))\\
&\quad =-\tilde{\pi }^+((-{}^tv^{\prime }\phi ,\frac{3}{2}{}^tv^{\prime }\phi ,v^{\prime }{}^t\phi -\frac{1}{2}{}^tv^{\prime }\phi I_2)\otimes \tilde{\pi }^+(v\otimes u))+\tilde{\pi }^+(v^{\prime } \otimes\tilde{\pi }^+(\phi \otimes  \tilde{\pi }^+(v\otimes u)))\\
&\quad =-\tilde{\pi }^+({}^tv^{\prime }\phi\cdot v \otimes u)-\tilde{\pi }^+({}^tv\phi \cdot v^{\prime }\otimes u)+\tilde{\pi }^+(v\otimes {}^tv^{\prime }\phi u)+\tilde{\pi }^+(v^{\prime }\otimes {}^tv\phi u)\\
&\quad =0
\end{align*}for any $v,v^{\prime }\in V_1$, $\phi \in V_{-1}$ and $u\in U$.
Therefore, the positive extension $\tilde{U}^+=U_0^+\oplus U_1^+$ is a $3$-dimensional irreducible representation (see Proposition \ref{pr;stapliemod}).
\end{ex}

The positive and negative extensions of $U$ are characterized by the transitivity.
\begin{theo}\label{th;universalitystap}
Let $(\mathfrak{g},\rho,V,{\cal V},B_0)$  be a standard pentad.
Let $(\underline{\pi},\underline{U})=(\underline {\pi }, \bigoplus _{m\geq 0}\underline {U}_m)$ (respectively $(\underline{\varpi},\underline{\cal U})=(\underline {\varpi }, \bigoplus _{m\leq 0}\underline {\cal U}_m)$) be a positively graded Lie module (respectively a negatively graded Lie module) of $L(\mathfrak{g},\rho,V,{\cal V},B_0)$.
If the $L(\mathfrak{g},\rho,V,{\cal V},B_0)$-module $(\underline{\pi},\underline{U})$ (respectively $(\underline{\varpi},\underline{\cal U})$) is transitive and generated by $V_0$, $V_1$ and $\underline {U}_0$ (respectively generated by $V_0$, $V_{-1}$ and $\underline{\cal U}_0$), then $\underline {U}$ is isomorphic to the positive extension of $\underline {U}_0$ with respect to $(\mathfrak{g},\rho,V,{\cal V},B_0)$ (respectively $\underline {\cal U}$ is isomorphic to the negative extension of $\underline {\cal U}_0$ with respect to $(\mathfrak{g},\rho,V,{\cal V},B_0)$).
(This is a special case of \cite [Theorem 1.2]{Shen}.)
\end{theo}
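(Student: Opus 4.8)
The plan is to construct an explicit isomorphism of $L(\mathfrak{g},\rho,V,{\cal V},B_0)$-modules $\Theta\colon \tilde{U}^+\to \underline{U}$, where $\tilde{U}^+$ denotes the positive extension of $\underline{U}_0$, by building its graded pieces $\Theta_m\colon U_m^+\to \underline{U}_m$ by induction on $m$ and then forcing bijectivity by transitivity at the two ends. I treat the positive case; the negative case is completely symmetric, with the roles of $V_1$ and $V_{-1}$ interchanged. First I would set $\Theta_0:=\mathrm{id}$ on $U_0^+=\underline{U}_0$. Since $U_{m+1}^+=\mathrm{Im}\ r_m^+$, every element of $U_{m+1}^+$ has the form $\pi^+_{1,m}(x_1\otimes u_m^+)$, so I would define $\Theta_{m+1}$ by $\Theta_{m+1}(\pi^+_{1,m}(x_1\otimes u_m^+)):=\underline{\pi}(x_1\otimes\Theta_m(u_m^+))$, carrying along as inductive hypotheses that $\Theta_m$ intertwines the $V_0$-action, i.e. $\Theta_m(\pi^+_{0,m}(a\otimes u_m^+))=\underline{\pi}(a\otimes\Theta_m(u_m^+))$, and is compatible with the $V_{-1}$-action, i.e. $\Theta_{m-1}(\pi^+_{-1,m}(y_{-1}\otimes u_m^+))=\underline{\pi}(y_{-1}\otimes\Theta_m(u_m^+))$.

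The main obstacle is the well-definedness of $\Theta_{m+1}$, and this is exactly where transitivity of $\underline{U}$ enters. Suppose $\sum_s \pi^+_{1,m}(x_1^s\otimes u_m^{+,s})=0$ in $U_{m+1}^+$; I must show $\sum_s \underline{\pi}(x_1^s\otimes\Theta_m(u_m^{+,s}))=0$ in $\underline{U}_{m+1}$. Since $m+1\geq 1$, transitivity reduces this to checking that the image under $\underline{\pi}(y_{-1}\otimes{-})$ vanishes for every $y_{-1}\in V_{-1}$. Expanding each term with the module axiom $\underline{\pi}(y_{-1}\otimes\underline{\pi}(x_1^s\otimes w))=\underline{\pi}([y_{-1},x_1^s]\otimes w)+\underline{\pi}(x_1^s\otimes\underline{\pi}(y_{-1}\otimes w))$, then applying the $V_0$-intertwining and $V_{-1}$-compatibility hypotheses on $\Theta_m$ together with the defining relation for $r_m^+$, the whole sum collapses to $\Theta_m\big(\pi^+_{-1,m+1}(y_{-1}\otimes\sum_s r_m^+(x_1^s\otimes u_m^{+,s}))\big)$, which is $\Theta_m(0)=0$. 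This computation is the module analogue of the bookkeeping already carried out for $\pi^+$ in Propositions \ref{defn;pi_1_-1_compati} and \ref{pr_well-def_pi_i+1}, and I expect the verification of the two inductive hypotheses for $\Theta_{m+1}$ to run along the same lines. Since $L(\mathfrak{g},\rho,V,{\cal V},B_0)$ is generated by $V_0\oplus V_1\oplus V_{-1}$ and $\Theta$ intertwines each of these, $\Theta$ is then a genuine homomorphism of graded $L(\mathfrak{g},\rho,V,{\cal V},B_0)$-modules.

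Finally I would prove that $\Theta$ is bijective. For surjectivity, note that $\mathrm{Im}\,\Theta$ is an $L(\mathfrak{g},\rho,V,{\cal V},B_0)$-submodule containing $\underline{U}_0$; the generation hypothesis gives $\underline{U}_m=\underline{\pi}(V_1\otimes\underline{U}_{m-1})$ for all $m\geq 1$ (the $V_0$-action being absorbed via $\underline{\pi}(a\otimes\underline{\pi}(x_1\otimes w))=\underline{\pi}([a,x_1]\otimes w)+\underline{\pi}(x_1\otimes\underline{\pi}(a\otimes w))$), so closure of $\mathrm{Im}\,\Theta$ under $V_1$ forces $\mathrm{Im}\,\Theta=\underline{U}$. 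For injectivity I would use transitivity of $\tilde{U}^+$ (Proposition \ref{pr;stapliemod}): the kernel $\ker\Theta$ is graded, $\ker\Theta\cap U_0^+=\{0\}$ because $\Theta_0=\mathrm{id}$, and if $\ker\Theta$ met some $U_m^+$ with $m\geq 1$ in a nonzero element $u$, then transitivity would supply $y_{-1}\in V_{-1}$ with $\tilde{\pi}^+(y_{-1}\otimes u)\neq 0$; but $\Theta(\tilde{\pi}^+(y_{-1}\otimes u))=\underline{\pi}(y_{-1}\otimes\Theta(u))=0$ would place a nonzero element of $\ker\Theta$ in degree $m-1$, contradicting minimality. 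Hence $\ker\Theta=\{0\}$, and $\Theta$ is an isomorphism of graded modules restricting to the identity on the base $\underline{U}_0$, which is precisely the assertion.
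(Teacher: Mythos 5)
Your proposal is correct and follows essentially the same route as the paper's proof: the paper likewise builds the map degree by degree (its $\tau_i$ is your $\Theta_m$), establishes well-definedness by pushing down with $\underline{\pi}(y_{-1}\otimes{-})$ and invoking transitivity of $\underline{U}$, and then gets surjectivity from the generation hypothesis and injectivity from transitivity of the positive extension. No substantive differences to report.
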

\begin{proof}
We denote the positive extension of $\underline {U}_0$ with respect to $(\mathfrak{g},\rho,V,{\cal V},B_0)$ by 
\begin{align*}
\widetilde{\underline{U}_0}^+=\bigoplus _{m\geq 0}(\underline {U})^+_m
\end{align*}
and the canonical representation of $L(\mathfrak{g},\rho,V,{\cal V},B_0)$ on $\widetilde{\underline {U}_0}^+$ by $\widetilde{\underline {\pi }}^+$.
Note that $(\underline{U})^+_0=\underline{U}_0$.
We let $\tau _0:(\underline{U})^+_0\rightarrow \underline{U}_0$ be the identity map on $(\underline{U})^+_0=\underline{U}_0$ and define linear maps $\tau _i:(\underline{U})^+_i\rightarrow \underline{U}_i$ by 
\begin{align*}
\tau _i(r_{i-1}^+(x_1\otimes \underline{u}_{i-1}^+)):=\underline{\pi }(x_1\otimes \tau _{i-1}(\underline{u}_{i-1}^+))
\end{align*}
for $i\geq 1$ and any $x_1\in V_1$ and $\underline{u}_{i-1}^+\in (\underline {U})_{i-1}^+$ inductively.
These $\tau _i$'s are well-defined and satisfy the following equation:
\begin{align}
&\tau _{i+j}(\widetilde{\underline {\pi }}^+(a_j\otimes \underline {u}_i^+))=\underline {\pi }(a_j\otimes \tau _{i}(\underline {u}_i^+))\label{eq;univ_ijcondi}
\end{align}for $j=0,\pm 1$ and any $a_j\in V_j$, $\underline {u}_i^+\in (\underline {U})_{i}^+$.
Let us show it by induction on $i$.
It is clear that the equation (\ref{eq;univ_ijcondi}) holds when $i=0$ and $j=0,-1$.
In order to show the equation (\ref{eq;univ_ijcondi}) for $i=0$ and $j=1$, let us show that $\tau _1$ is well-defined.
Take an arbitrary element $y_{-1}\in V_{-1}$, then we have
\begin{align}
&\underline{\pi }(y_{-1}\otimes \underline{\pi }(x_1\otimes  \tau _{0}(\underline{u}_{0}^+)))=\underline{\pi }([y_{-1},x_1]\otimes \tau _{0}(\underline{u}_{0}^+))+\underline{\pi }(x_1\otimes \underline{\pi }(y_{-1}\otimes \tau _{0}(\underline{u}_{0}^+)))\notag\\
&\quad =\underline{\pi }([y_{-1},x_1]\otimes \tau _{0}(\underline{u}_{0}^+))=\tau _0(\widetilde{\underline {\pi }}^+([y_{-1},x_1]\otimes \underline{u}_{0}^+))=\tau _0 (\widetilde{\underline {\pi }}^+(y_{-1}\otimes r_0^+(x_1\otimes \underline{u}_{0}^+))).\label{eq;universality_welldef_i1}
\end{align}
Thus, if $x_1^1,\ldots, x_1^l\in V_1$ and $\underline{u}_{0}^{+1},\ldots ,\underline{u}_{0}^{+l}\in (\underline {U})_{0}^+$ satisfy $\sum _{s=1}^lr_0^+(x_1^s\otimes \underline{u}_{0}^{+s})=0$, then we have
\begin{align*}
\sum _{s=1}^l\underline{\pi }(y_{-1}\otimes \underline{\pi }(x_1^s\otimes  \tau _{0}(\underline{u}_{0}^{+s})))=0
\end{align*}
for any $y_{-1}\in V_{-1}$.
Since $(\underline{\pi},\underline{U})$ is transitive, it follows that $\sum _{s=1}^l\underline{\pi }(x_1^s\otimes \tau _{0}(\underline{u}_{0}^{+s}))=0$, and, thus, we have the well-definedness of $\tau _1$.
By the equation (\ref{eq;universality_welldef_i1}), we can obtain the equation (\ref{eq;univ_ijcondi}) where $i=0$ and $j=1$.
\par Let $i\geq 1$ and assume that $\tau _0,\ldots ,\tau _{i}$ are well-defined and that $\tau _{i}$ satisfies the equation (\ref{eq;univ_ijcondi}) for $j=0,-1$.
Then for any $y_{-1}\in V_{-1}$, we have
\begin{align}
&\underline{\pi }(y_{-1}\otimes \underline{\pi }(x_1\otimes \tau _{i}(\underline{u}_{i}^+)))=\underline{\pi }([y_{-1},x_1]\otimes \tau _{i}(\underline{u}_{i}^+))+\underline{\pi }(x_1\otimes \underline{\pi }(y_{-1}\otimes \tau _{i}(\underline{u}_{i}^+)))\notag\\
&\quad =\tau _{i}(\widetilde{\underline {\pi }}^+([y_{-1},x_1]\otimes \underline{u}_{i}^+))+\tau _{i}(\widetilde{\underline {\pi }}^+(x_1\otimes \widetilde{\underline {\pi }}^+(y_{-1}\otimes \underline{u}_{i}^+)))\notag\\
&\quad =\tau _{i}(\widetilde{\underline {\pi }}^+(y_{-1}\otimes \widetilde{\underline {\pi }}^+(x_1\otimes \underline{u}_{i}^+)))=\tau _{i} (\widetilde{\underline {\pi }}^+(y_{-1}\otimes r_0^+(x_1\otimes \underline{u}_{i}^+))).\label{eq;universality_welldef_i+1}
\end{align}
Thus, by the same argument to the argument of the case where $i=0$ and $j=1$, we have the well-definedness of $\tau _{i+1}$, i.e. $\tau _{i}$ satisfies satisfies the equation (\ref{eq;univ_ijcondi}) for $j=1$, and that $\tau _{i+1}$ satisfies the equation (\ref{eq;univ_ijcondi}) for $j=-1$.
Moreover, by a similar argument to the argument of (\ref{eq;universality_welldef_i+1}), we have that $\tau _{i+1}$ satisfies the equation (\ref{eq;univ_ijcondi}) for $j=0$.
Therefore, by induction on $i$, we can obtain the well-definedness of $\tau _i$ and the equation (\ref{eq;univ_ijcondi}) for all $i\geq 0$ and $j=0,\pm 1$.
\par We define a linear map $\tau :\widetilde{\underline{U}_0}^+\rightarrow \underline{U}$ by
\begin{align}
\tau (\underline {u}_i^+):=\tau _i(\underline {u}_i^+)
\end{align}for any $i\geq 0$ and $\underline{u}_i^+\in (\underline {U})_i^+$.
This $\tau $ is an isomorphism of $L(\mathfrak{g},\rho,V,{\cal V},B_0)$-modules.
In fact, by the assumption that $\underline {U}$ is generated by $V_1$ and $\underline {U}_0$, we have the surjectivity of $\tau $.
Moreover, by the equation (\ref{eq;univ_ijcondi}) in the cases where $i\geq 1$ and $j=-1$ and the definition of $\tau _0$ and the transitivity of the positive extension of $\widetilde{\underline{U}_0}^+$, we have the injectivity of $\tau $.
Thus, $\tau $ is bijective.
Moreover, since $L(\mathfrak{g},\rho,V,{\cal V},B_0)$ is generated by $V_0$, $V_{\pm 1}$, it follows that $\tau $ is a homomorphism of $L(\mathfrak{g},\rho,V,{\cal V},B_0)$-modules from the equation (\ref{eq;univ_ijcondi}).
Therefore $\underline {U}$ is isomorphic to $\widetilde{\underline{U}_0}^+$  as $L(\mathfrak{g},\rho,V,{\cal V},B_0)$-modules.
\par By the same argument, we can prove our claim for $(\underline{\varpi},\underline{\cal U})$.
\end{proof}
As an application of Theorem \ref{th;universalitystap}, we have the following proposition.
\begin{pr}
Let $(\mathfrak{g},\rho,V,{\cal V},B_0)$ be a standard pentad and $U,W$ (respectively ${\cal U}, {\cal W}$) be $\mathfrak{g}$-modules.
Then the positive extension of $U\oplus W$ (respectively the negative extension of ${\cal U}\oplus {\cal W}$) with respect to $(\mathfrak{g},\rho,V,{\cal V},B_0)$ is isomorphic to a direct sum of positive extensions of $U$ and $W$ (respectively negative extensions of ${\cal U}$ and ${\cal W}$) with respect to $(\mathfrak{g},\rho,V,{\cal V},B_0)$, i.e.
\begin{align*}
(\widetilde{U\oplus W})^+\simeq \tilde{U}^+\oplus \tilde{W}^+\quad
(respectively\ (\widetilde{{\cal U}\oplus {\cal W}})^-\simeq \tilde{{\cal U}}^-\oplus \tilde{{\cal W}}^-).
\end{align*}
\end{pr}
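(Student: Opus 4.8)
The plan is to deduce the isomorphism from the characterization of positive extensions by transitivity, namely Theorem \ref{th;universalitystap}, so that no explicit inductive comparison of the graduations $U_m^+\oplus W_m^+$ and $(U\oplus W)_m^+$ is needed. First I would equip the vector space $\tilde{U}^+\oplus \tilde{W}^+$ with the componentwise action $\tilde{\pi}^+\oplus\tilde{\pi}^+$ of $L(\mathfrak{g},\rho,V,{\cal V},B_0)$ and the grading $(\tilde{U}^+\oplus\tilde{W}^+)_m:=U_m^+\oplus W_m^+$. Because each summand is positively graded and its representation shifts degree by the degree of the acting element, the direct sum is again a positively graded module, and its degree-zero space is $U_0^+\oplus W_0^+=U\oplus W$, as required to match the degree-zero part of $(\widetilde{U\oplus W})^+$.

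Next I would verify the two hypotheses of Theorem \ref{th;universalitystap} for $\tilde{U}^+\oplus\tilde{W}^+$. Transitivity passes to direct sums at once: if $(u,w)\in U_m^+\oplus W_m^+$ with $m\geq 1$ satisfies $(\tilde{\pi}^+\oplus\tilde{\pi}^+)(V_{-1}\otimes (u,w))=\{0\}$, then since the action is componentwise each of $u$ and $w$ is annihilated by $V_{-1}$, whence $u=0$ and $w=0$ by the transitivity of $\tilde{U}^+$ and $\tilde{W}^+$ established in Proposition \ref{pr;stapliemod}. Generation is equally direct: by the construction of the positive extension one has $U_{m+1}^+=\mathrm{Im}\ r_m^+=\tilde{\pi}^+(V_1\otimes U_m^+)$, so $\tilde{U}^+$ is generated by the action of $V_1$ on $U_0^+$, and likewise $\tilde{W}^+$ is generated by $V_1$ and $W_0^+$; consequently $\tilde{U}^+\oplus\tilde{W}^+$ is generated by $V_0$, $V_1$ and its degree-zero part $U\oplus W$.

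With both hypotheses in hand, Theorem \ref{th;universalitystap} produces a graded isomorphism of $L(\mathfrak{g},\rho,V,{\cal V},B_0)$-modules between $\tilde{U}^+\oplus\tilde{W}^+$ and the positive extension of its degree-zero part $U\oplus W$, which is precisely the asserted isomorphism $(\widetilde{U\oplus W})^+\simeq\tilde{U}^+\oplus\tilde{W}^+$; the isomorphism restricts to the identity in degree zero, matching the construction of $\tau$ in the proof of that theorem. The negative case $(\widetilde{{\cal U}\oplus{\cal W}})^-\simeq\tilde{{\cal U}}^-\oplus\tilde{{\cal W}}^-$ follows by the identical argument applied to negative extensions, using the transitivity condition involving $V_1$. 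I do not anticipate a genuine obstacle: the entire content is supplied by the universality theorem, and the only point requiring mild care is confirming that the componentwise action respects the chosen grading so that the direct sum is a \emph{graded} module rather than merely a module, which is immediate from the degree-shifting property of each $\tilde{\pi}^+$.
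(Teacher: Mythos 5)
Your proposal is correct and follows exactly the route the paper intends: the paper states this proposition as an immediate application of Theorem \ref{th;universalitystap} and gives no further proof, and your verification that $\tilde{U}^+\oplus\tilde{W}^+$ is a transitive positively graded module generated by $V_0$, $V_1$ and its degree-zero part $U\oplus W$ is precisely what is needed to invoke that theorem.
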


\subsection {A pairing between $(\tilde{\pi }^{+ },\tilde{U}^+)$ and $(\tilde{\varpi }^{- },\tilde{{\cal U}}^-)$}\label{sec;stapext_1}
In the previous section, we constructed positively and negatively graded $L(\mathfrak{g},\rho,V,{\cal V},B_0)$-modules.
Next, let us try to embed these modules into some graded Lie algebra.
For this, we need to embed $L(\mathfrak{g},\rho,V,{\cal V},B_0)$ and $(\tilde{\pi }^{+ },\tilde{U}^+)$ into some standard pentad.
However, as mentioned in Remark \ref{rem;mght_not_stap}, the objects $L(\mathfrak{g},\rho,V,{\cal V},B_0)$ and $\tilde{U}^+$ might not have a submodule of $\mathrm {Hom}(\tilde{U}^+,\mathfrak{k})$ and a bilinear form on $L(\mathfrak{g},\rho,V,{\cal V},B_0)$ satisfying the conditions (\ref{defn;condi_stap_pairing}) and (\ref{defn;condi_stap_Phimap}).
In the present and the next sections, we only consider the cases where $B_0$ is symmetric and $U$ has a submodule ${\cal U}\subset \mathrm {Hom }(U,\mathfrak{k})$ such that $(\mathfrak{g},\pi,U,{\cal U},B_0)$ is standard.
Then, we can show that a pentad $(L(\mathfrak{g},\rho,V,{\cal V},B_0),\tilde{\pi }^+,\tilde{U }^+,\tilde{\cal U}^-,B_L)$ is standard.
First, in this section, we consider the negative extension $\tilde{\cal U}^-$ of ${\cal U}$ and construct a non-degenerate invariant bilinear form $\tilde{U}^+\times \tilde{\cal U}^-\rightarrow \mathfrak{k}$ under the assumption (\ref{defn;condi_stap_pairing}) inductively (cf. \cite [Remark 1.4]{Shen}).
In the next section, we shall construct the $\Phi $-map of the pentad $(L(\mathfrak{g},\rho,V,{\cal V},B_0),\tilde{\pi }^+,\tilde{U }^+,\tilde{\cal U}^-,B_L)$. 
\begin{defn}\label{defn;pairing_stap_rep}
Let $(\tilde{\pi }^{+ },\tilde{U}^+)$ and $(\tilde{\varpi }^{- },\tilde{{\cal U}}^-)$, ${\cal U}\subset \mathrm {Hom}(U,\mathfrak{k})$ be $\mathfrak{g}$-modules such that the restriction of the canonical pairing $\langle \cdot,\cdot\rangle _0:U\times {\cal U}\rightarrow \mathfrak{k}$ is non-degenerate, and, let $\tilde{U}^+$ and $\tilde{\cal U}^-$ be the positive and negative extensions of $U$ and ${\cal U}$ respectively.
We define a bilinear map $\langle \cdot,\cdot\rangle _{0}^0$ by:
\begin{align}
\langle \cdot,\cdot \rangle _{0}^0:&U_{0}^+\times {\cal U}_{0}^-\rightarrow \mathfrak{k}\notag\\
&(u_{0}^+,w_0^-)\mapsto \langle u_{0}^+,w_{0}^-\rangle _0.
\end{align}
Moreover, for $i\geq 1$, we define a bilinear map $\langle \cdot,\cdot\rangle _{-i}^i$ by:
\begin{align}
\langle \cdot,\cdot \rangle _{-i}^i:&U_{i}^+\times {\cal U}_{-i}^-\rightarrow \mathfrak{k}\notag\\
&(r_{i-1}^+(x_1\otimes u_{i-1}^+),r_{-i+1}^-(y_{-1}\otimes w_{-i+1}^-))\mapsto -\langle \tilde{\pi }^+(y_{-1}\otimes r_{i-1}^+(x_1\otimes u_{i-1}^+)),w_{-i+1}^-\rangle^{i-1}_{-i+1}\label{defn;pairing_i_-i}
\end{align}inductively.
\end{defn}
The well-definedness of Definition \ref{defn;pairing_stap_rep} can be obtained by the following proposition.
\begin{pr}\label{pr;welldef_bra}
Let $j\geq 0$.
Assume that the bilinear map $\langle \cdot,\cdot\rangle _{-j}^{j}$ defined in (\ref{defn;pairing_i_-i}) is well-defined and satisfies the following equations:
\begin{align}
&\langle \tilde{\pi }^+(a\otimes u_{j}^+),w_{-j}^-\rangle ^{j}_{-j}+\langle u_{j}^+,\tilde{\pi }^-(a\otimes w_{-j}^-)\rangle ^{j}_{-j}=0\label{pr;welldef_pair_1},\\
&\langle \tilde{\pi }^+(y_{-1}\otimes \tilde{\pi }^+(x_1\otimes u_{j}^+)),w_{-j}^-\rangle ^{j}_{-j}=\langle u_{j}^+,\tilde{\pi }^-(x_1\otimes \tilde{\pi }^-(y_{-1}\otimes w_{-j}^-))\rangle ^{j}_{-j}\label{pr;welldef_pair_2}\\
&\langle \tilde{\pi }^+(x_1\otimes u_{j-1}^+),w_{-j}^-\rangle ^{j}_{-j}=\begin{cases}-\langle u_{j-1}^+,\tilde{\pi }^-(x_1\otimes w_{-j}^-)\rangle ^{j-1}_{-j+1}&(j\geq 1)\\0&(j=0)\end{cases}\label{pr;welldef_pair_2-2}
\end{align}
for any $a\in \mathfrak{g}=V_0\subset L(\mathfrak{g},\rho,V,{\cal V},B_0)$, $x_1\in V_1$, $y_{-1}\in V_{-1}$, $u_{j-1}^+\in U^+_{j-1}$, $u_{j}^+\in U^+_{j}$ and $w_{-j}^-\in {\cal U}^-_{-j}$.
Then the bilinear map $\langle \cdot,\cdot\rangle ^{j+1}_{-j-1}$ defined in (\ref{defn;pairing_i_-i}) is also well-defined and satisfies the following equations:
\begin{align}
&\langle \tilde{\pi }^+(a\otimes u_{j+1}^+),w_{-j-1}^-\rangle ^{j+1}_{-j-1}+\langle u_{j+1}^+,\tilde{\pi }^-(a\otimes w_{-j-1}^-)\rangle ^{j+1}_{-j-1}=0,\label{pr;welldef_pair_3}\\
&\langle \tilde{\pi }^+(y_{-1}\otimes \tilde{\pi }^+(x_1\otimes u_{j+1}^+)),w_{-j-1}^-\rangle ^{j+1}_{-j-1}=\langle u_{j+1}^+,\tilde{\pi }^-(x_1\otimes \tilde{\pi }^-(y_{-1}\otimes w_{-j-1}^-))\rangle ^{j+1}_{-j-1}\label{pr;welldef_pair_4}\\
&\langle \tilde{\pi }^+(x_1\otimes u_{j}^+),w_{-j-1}^-\rangle ^{j+1}_{-j-1}=-\langle u_j^+,\tilde{\pi }^-(x_1\otimes w_{-j-1}^-)\rangle ^{j}_{-j}\label{pr;welldef_pair_5}
\end{align}
for any $a\in \mathfrak{g}=V_0\subset L(\mathfrak{g},\rho,V,{\cal V},B_0)$, $x_1\in V_1$, $y_{-1}\in V_{-1}$, $u_j^+\in U_{j}^+$, $u_{j+1}^+\in U^+_{j+1}$ and $w_{-j-1}^-\in {\cal U}^-_{-j-1}$.
\end{pr}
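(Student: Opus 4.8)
The plan is to exploit the defining formula (\ref{defn;pairing_i_-i}) together with the hypothesis (\ref{pr;welldef_pair_2}) to rewrite the level-$(j{+}1)$ pairing in a second, ``mirror-image'' form, and then to deduce the claims in the order: well-definedness and (\ref{pr;welldef_pair_5}) first, then the $\mathfrak{g}$-invariance (\ref{pr;welldef_pair_3}), and finally (\ref{pr;welldef_pair_4}).

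First I would compute the defining value directly. Writing a typical element of $U^+_{j+1}$ as $\tilde{\pi}^+(x_1\otimes u_j^+)=r_j^+(x_1\otimes u_j^+)$ and of ${\cal U}^-_{-j-1}$ as $\tilde{\pi}^-(y_{-1}\otimes w_{-j}^-)=r_{-j}^-(y_{-1}\otimes w_{-j}^-)$, the definition (\ref{defn;pairing_i_-i}) reads
\begin{align*}
\langle \tilde{\pi}^+(x_1\otimes u_j^+),\tilde{\pi}^-(y_{-1}\otimes w_{-j}^-)\rangle^{j+1}_{-j-1}=-\langle \tilde{\pi}^+(y_{-1}\otimes \tilde{\pi}^+(x_1\otimes u_j^+)),w_{-j}^-\rangle^{j}_{-j},
\end{align*}
and applying the hypothesis (\ref{pr;welldef_pair_2}) at level $j$ to the right-hand side turns it into $-\langle u_j^+,\tilde{\pi}^-(x_1\otimes \tilde{\pi}^-(y_{-1}\otimes w_{-j}^-))\rangle^{j}_{-j}$. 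This single equality is the crux: its first form depends on the first slot only through the element $\tilde{\pi}^+(x_1\otimes u_j^+)\in U^+_{j+1}$ (it enters only via $\tilde{\pi}^+(y_{-1}\otimes\cdot)$), while its second form depends on the second slot only through the element $\tilde{\pi}^-(y_{-1}\otimes w_{-j}^-)\in{\cal U}^-_{-j-1}$ (it enters only via $\tilde{\pi}^-(x_1\otimes\cdot)$). Since the value can be written in a form not referring to the decomposition of the first argument and in another form not referring to that of the second argument, and the two agree, it depends only on the two elements; this is exactly the well-definedness of $\langle\cdot,\cdot\rangle^{j+1}_{-j-1}$. Moreover the second form, written with $w_{-j-1}^-=\tilde{\pi}^-(y_{-1}\otimes w_{-j}^-)$ and $\tilde{\pi}^-(x_1\otimes w_{-j-1}^-)=\tilde{\pi}^-(x_1\otimes\tilde{\pi}^-(y_{-1}\otimes w_{-j}^-))$, is precisely (\ref{pr;welldef_pair_5}).

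With (\ref{pr;welldef_pair_5}) established I would prove (\ref{pr;welldef_pair_3}). Expanding $\tilde{\pi}^+(a\otimes u_{j+1}^+)=\tilde{\pi}^+([a,x_1]\otimes u_j^+)+\tilde{\pi}^+(x_1\otimes\tilde{\pi}^+(a\otimes u_j^+))$ in the first term, and applying (\ref{pr;welldef_pair_5}) to the second term and then expanding $\tilde{\pi}^-(x_1\otimes\tilde{\pi}^-(a\otimes w_{-j-1}^-))$, every resulting summand has a factor in $V_1$ acting on a level-$j$ vector, so (\ref{pr;welldef_pair_5}) applies throughout; the $[a,x_1]$-contributions cancel, and what remains is $-\langle\tilde{\pi}^+(a\otimes u_j^+),W^-_{-j}\rangle^{j}_{-j}-\langle u_j^+,\tilde{\pi}^-(a\otimes W^-_{-j})\rangle^{j}_{-j}$ with $W^-_{-j}=\tilde{\pi}^-(x_1\otimes w_{-j-1}^-)$, which vanishes by (\ref{pr;welldef_pair_1}) at level $j$. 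Finally, for (\ref{pr;welldef_pair_4}) I would split $\tilde{\pi}^+(y_{-1}\otimes\tilde{\pi}^+(x_1\otimes u_{j+1}^+))$ by the Lie-module relation, apply the just-proved (\ref{pr;welldef_pair_3}) to the $[y_{-1},x_1]\in V_0$ term and (\ref{pr;welldef_pair_5}) to the other, then move $y_{-1}$ across using the defining equation read backwards, i.e. $\langle\tilde{\pi}^+(y_{-1}\otimes u_{j+1}^+),w_{-j}^-\rangle^{j}_{-j}=-\langle u_{j+1}^+,\tilde{\pi}^-(y_{-1}\otimes w_{-j}^-)\rangle^{j+1}_{-j-1}$ (which is (\ref{defn;pairing_i_-i}) now that well-definedness is known); collecting terms with the $\tilde{\pi}^-$ module relation produces the right-hand side of (\ref{pr;welldef_pair_4}).

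I expect the only genuinely delicate point to be the well-definedness in the first step, where one must keep track of the fact that each of the two forms of the value is manifestly independent of exactly one of the two decompositions, and that it is this that legitimates the bilinear extension to arbitrary elements of $U^+_{j+1}$ and ${\cal U}^-_{-j-1}$. Once that is secured, (\ref{pr;welldef_pair_3}) and (\ref{pr;welldef_pair_4}) are bookkeeping with the Lie-module relations and the level-$j$ hypotheses, entirely parallel to the inductive argument of \cite[section 1.2 and Proposition 3.2]{Sa}.
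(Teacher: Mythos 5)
Your proposal is correct and follows essentially the same route as the paper: well-definedness is obtained by noting that the defining value depends on the first argument only through $r_j^+(x_1\otimes u_j^+)$ while, after applying the level-$j$ identity (\ref{pr;welldef_pair_2}), it depends on the second argument only through $r_{-j}^-(y_{-1}\otimes w_{-j}^-)$ (these are exactly the paper's equations (\ref{eq;well-def_pairing_j=1+}) and (\ref{eq;well-def_pairing_j=1-})), and (\ref{pr;welldef_pair_5}), (\ref{pr;welldef_pair_3}), (\ref{pr;welldef_pair_4}) are then derived in the same order by the same Lie-module bookkeeping. The only cosmetic difference is that in (\ref{pr;welldef_pair_3}) the paper commutes $a$ past $y_{-1}$ inside the defining formula rather than expanding $u_{j+1}^+$ in the $V_1$-direction, which is an equivalent computation.
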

\begin{proof}
First, we let $j=0$.
It is clear that $\langle \cdot,\cdot\rangle ^0_0$ satisfies (\ref{pr;welldef_pair_1}) and (\ref{pr;welldef_pair_2-2}).
Let us show that $\langle \cdot,\cdot\rangle ^0_0$ satisfies (\ref{pr;welldef_pair_2}).
Indeed, under the above notation, we have
\begin{align}
\langle \tilde{\pi }^+(y_{-1}\otimes \tilde{\pi }^+(x_1\otimes u_{0}^{+})),w_{0}\rangle ^{0}_{0}=&\langle \tilde{\pi }^+([y_{-1},x_1]\otimes u_{0}^{+}),w_{0}\rangle ^{0}_{0}= \langle u_{0}^{+},\tilde{\pi }^-([x_1,y_{-1}]\otimes w_{0})\rangle ^{0}_{0} \notag\\
=&\langle u_{0}^{+},\tilde{\pi }^-(x_1\otimes \tilde{\pi }^-(y_{-1}\otimes w_0))\rangle ^{0}_{0}.\label{eq;well-pair_0_0}
\end{align}
Thus, the bilinear map $\langle \cdot,\cdot\rangle ^0_0$ satisfies the assumptions of Proposition \ref{pr;welldef_bra}.
\par Next, let us show that the bilinear map $\langle \cdot,\cdot\rangle ^1_{-1}$ is well-defined.
Take arbitrary natural numbers $\nu,\mu\in \mathbb{N}$ and elements $x_1^1,\ldots ,x_1^{\nu}\in V_1$, $u_{0}^{+,1},\ldots ,u_{0}^{+,\nu}\in U_0^+$, $y_{-1}^1,\ldots ,y_{-1}^{\mu }\in V_{-1}$, $w_{0}^{-,1},\ldots ,w_{0}^{-,\mu}\in {\cal U}_0^-$ satisfying 
\begin{align*}
\sum _{s=1}^{\nu }r_{0}^+(x_1^s\otimes u_{0}^{+,s})=0, \quad \sum _{t=1}^{\mu }r_{0}^-(y_{-1}^t\otimes w_{0}^{-,t})=0.
\end{align*}
Then, for any $y_{-1}\in V_{-1}$, $w_{0}^{-}\in {\cal U}_{0}^-$, $x_1\in V_1$ and $u_0^+\in U_0^+$, we have
\begin{align}
\langle \sum _{s=1}^{\nu } r_{0}^+(x_1^s\otimes u_{0}^{+,s})(y_{-1}),w_{0}^-\rangle ^{0}_{0}=0,\label{eq;well-def_pairing_j=1+}
\end{align}
and, by the equation (\ref{eq;well-pair_0_0}), we have
\begin{align}
\sum _{t=1}^{\mu }\langle r_{0}^+(x_1\otimes u_{0}^{+})(y_{-1}^{t}),w_{0}^{-,t}\rangle ^{0}_{0}=\sum _{t=1}^{\mu }\langle u_{0}^{+},r_{0}^-(y_{-1}^{t}\otimes w_{0}^{-,t})(x_1)\rangle ^{0}_{0}=0.\label{eq;well-def_pairing_j=1-}
\end{align}
By (\ref{eq;well-def_pairing_j=1+}) and (\ref{eq;well-def_pairing_j=1-}), we can obtain that $\langle \cdot,\cdot\rangle ^1_{-1}$ is well-defined.
\par Let us consider properties of $\langle\cdot,\cdot\rangle _{-1}^1$.
By (\ref{eq;well-pair_0_0}), we have that $\langle\cdot,\cdot\rangle _{-1}^1$ satisfies 
\begin{align}
\langle \tilde{\pi }^+(x_1\otimes u_0^+),w_{-1}^-\rangle _{-1}^1=-\langle u_0^+,\tilde{\pi }^-(x_1\otimes w_{-1}^-)\rangle _{-1}^1\label{eq;well-pair_0_0_eq1}
\end{align}
for any $x_1\in V_1$, $u_0^+\in U_0^+$ and $w_{-1}^-\in {\cal U}_{-1}^-$, i.e. $\langle \cdot,\cdot\rangle ^1_{-1}$ satisfies the equation (\ref{pr;welldef_pair_5}).
Moreover, $\langle \cdot,\cdot\rangle ^1_{-1}$ satisfies the equations (\ref{pr;welldef_pair_3}) and (\ref{pr;welldef_pair_4}).
In fact, for all $a\in V_0$, $x_1\in V_1$, $y_{-1}\in V_{-1}$, $u_0^+\in U_0^+$ and $w_0^-\in {\cal U}_0^-$, we have
\begin{align}
&\langle \tilde{\pi }^+(a\otimes r_0^+(x_1\otimes u_0^+)),r _{0}^-(y_{-1}\otimes w_0^-)\rangle ^1_{-1}=-\langle \tilde{\pi }^+(y_{-1}\otimes \tilde{\pi }^+(a\otimes \tilde{\pi }^+(x_1\otimes u_0^+))), w_0^-\rangle ^0_{0}\notag\\
&\quad =-\langle \tilde{\pi }^+(a\otimes \tilde{\pi }^+(y_{-1}\otimes \tilde{\pi }^+(x_1\otimes u_0^+))), w_0^-\rangle ^0_{0}+\langle  \tilde{\pi }^+([a,y_{-1}]\otimes \tilde{\pi }^+(x_1\otimes u_0^+)),w_0^-\rangle ^0_{0}\notag\\
&\quad =\langle \tilde{\pi }^+(y_{-1}\otimes \tilde{\pi }^+(x_1\otimes u_0^+)),\tilde{\pi }^-(a\otimes w_0^-)\rangle ^0_{0}-\langle \tilde{\pi }^+(x_1\otimes u_0^+),\tilde{\pi }^-([a,y_{-1}]\otimes w_0^-)\rangle ^1_{-1}\notag\\
&\quad =-\langle \tilde{\pi }^+(x_1\otimes u_0^+),\tilde{\pi }^-(y_{-1}\otimes \tilde{\pi }^-(a\otimes w_0^-))\rangle ^1_{-1}-\langle \tilde{\pi }^+(x_1\otimes u_0^+),\tilde{\pi }^-([a,y_{-1}]\otimes w_0^-)\rangle ^1_{-1}\notag\\
&\quad =-\langle r_0^+(x_1\otimes u_0^+),\tilde{\pi }^-(a\otimes r_{0}^-(y_{-1}\otimes w_0^-))\rangle ^1_{-1}.\label{eq;well-pair_0_0_eq2}
\end{align}
Thus $\langle \cdot,\cdot\rangle ^1_{-1}$ satisfies (\ref{pr;welldef_pair_3}).
And, from (\ref{eq;well-pair_0_0_eq1}) and (\ref{eq;well-pair_0_0_eq2}), we have 
\begin{align}
&\langle \tilde{\pi }^+(y_{-1}\otimes \tilde{\pi }^+(x_1\otimes u_1^+)), w_{-1}^-\rangle ^1_{-1}\notag\\
&\quad =\langle \tilde{\pi }^+([y_{-1},x_1]\otimes u_1^+), w_{-1}^-\rangle ^1_{-1}+\langle \tilde{\pi }^+(x_1\otimes \tilde{\pi }^+(y_{-1}\otimes u_1^+)),w_{-1}^-\rangle^1_{-1}\notag\\
&\quad =-\langle u_1^+, \tilde{\pi }^-([y_{-1},x_1]\otimes w_{-1}^-)\rangle ^1_{-1}-\langle  \tilde {\pi }^+(y_{-1}\otimes u_1^+),\tilde{\pi }^-(x_1\otimes w_{-1}^-)\rangle^0_{0}\notag\\
&\quad =\langle u_1^+, \tilde{\pi }^-([x_1,y_{-1}]\otimes w_{-1}^-)\rangle ^1_{-1}+\langle  u_1^+,\tilde{\pi }^-(y_{-1}\otimes \tilde{\pi }^-(x_1\otimes w_{-1}^-))\rangle^1_{-1}\notag\\
&\quad =\langle u_1^+,\tilde{\pi }^-(x_1\otimes \tilde{\pi }^-(y_{-1}\otimes w_{-1}^-))\rangle ^1_{-1}
\end{align}
for any $x_1\in V_1$, $y_{-1}\in V_{-1}$, $u_1^+\in U_1^+$ and $w_{-1}^-\in {\cal U}_{-1}^-$.
Thus $\langle \cdot,\cdot\rangle ^1_{-1}$ satisfies (\ref{pr;welldef_pair_4}).
\par We let $j\geq 1$.
Suppose that the bilinear map $\langle \cdot,\cdot\rangle ^{j}_{-j}$ is well-defined and satisfies the equations (\ref{pr;welldef_pair_1}), (\ref{pr;welldef_pair_2}) and (\ref{pr;welldef_pair_2-2}).
Let us show the well-definedness of $\langle \cdot,\cdot\rangle ^{j+1}_{-j-1}$.
Take arbitrary natural numbers $\nu,\mu\in \mathbb{N}$ and elements $x_1^1,\ldots ,x_1^{\nu}\in V_1$, $u_{j}^{+,1},\ldots ,u_{j}^{+,\nu}\in U_0^+$, $y_{-1}^1,\ldots ,y_{-1}^{\mu }\in V_{-1}$, $w_{-j}^{-,1},\ldots ,w_{-j}^{-,\mu}\in {\cal U}_0^-$ satisfying
\begin{align}
\sum _{s=1}^{\nu } r_{j}^+(x_1^s\otimes u_{j}^{+,s})=0,\quad \sum _{t=1}^{\mu } r_{-j}^-(y_{-1}^t\otimes w_{-j}^{-,t})=0.
\end{align}
Then, by the equation (\ref{pr;welldef_pair_2}) and the same argument to the argument of (\ref{eq;well-def_pairing_j=1+}) and (\ref{eq;well-def_pairing_j=1-}), we have the following equations:
\begin{align}
\langle \sum _{s=1}^{\nu } r_{j}^+(x_1^s\otimes u_{j}^{+,s})(y_{-1}),w_{-j}^-\rangle ^{j}_{-j}=0,\quad  \sum _{t=1}^{\mu }\langle r_{j}^+(x_1\otimes u_{j}^{+})(y_{-1}^{t}),w_{-j}^{-,t}\rangle ^{j}_{-j}=0.
\end{align}
Thus, we have that the bilinear map $\langle \cdot,\cdot\rangle ^{j+1}_{-j-1}$ is well-defined.
\par From the equation (\ref{pr;welldef_pair_2}), we have
\begin{align}
\langle  \tilde{\pi }^+(x_1\otimes u_j^+),w_{-j-1}^-\rangle _{-j-1}^{j+1}=-\langle u_j^+,\tilde{\pi }^-(x_1\otimes w_{-j-1}^-)\rangle _{-j}^j\label{eq;well-pair_0_0_eq3}
\end{align}
for any $x_1\in V_1$, $u_j^+\in U_j^+$ and $w_{-j-1}\in {\cal U}_{-j-1}^-$.
We can show that the bilinear map $\langle \cdot,\cdot\rangle ^{j+1}_{-j-1}$ satisfies the equation (\ref{pr;welldef_pair_5}) from the equation (\ref{eq;well-pair_0_0_eq3}) and that it also satisfies the equations (\ref{pr;welldef_pair_3}) and (\ref{pr;welldef_pair_4}) by the same argument to the argument of the case where $j=0$.
\end{proof}
By Proposition \ref{pr;welldef_bra}, we can obtain pairings $\langle \cdot,\cdot\rangle _{-j}^j$ for all $j\geq 0$ inductively.
Then, we can define a pairing between $(\tilde{\pi }^+,\tilde{U}^+)$ and $(\tilde{\varpi }^-,\tilde{\cal U}^-)$.
\begin{defn}\label{defn;bra_posext_negext}
We define a bilinear map $\langle \cdot,\cdot\rangle :\tilde{U}^+\times \tilde{{\cal U}}^-\rightarrow \mathfrak{k}$ by:
\begin{align}
\langle u_n^+,w_{-m}^-\rangle :=\begin{cases}\langle u_n^+,w_{-n}^-\rangle ^n_{-n}&(n=m)\\0&(n\neq m)\end{cases}
\end{align}for any $n,m\geq 0$, $u_n^+\in U_n^+\subset \tilde{U}^+$ and $w_{-m}^-\in {\cal U}_{-m}^-\subset \tilde{{\cal U}}^-$.
\end{defn}
By Definition \ref{defn;pairing_stap_rep} and Proposition \ref{pr;welldef_bra}, we have that $\langle \cdot ,\cdot\rangle $ satisfies 
\begin{align}
\langle \tilde{\pi }^+(z_{j}\otimes \tilde{u}^+),\tilde{w}^-\rangle =-\langle \tilde{u}^+,\tilde{\pi }^-(z_j\otimes \tilde{w}^-)\rangle \label{eq;extpair_eq0}
\end{align}
for $j=0,\pm 1$ and any $z_j\in V_j$, $\tilde{u}^+\in \tilde{U}^+$, $\tilde{w}^-\in \tilde{\cal U}^-$.
\begin{pr}\label{pr;pairing_Linv_nondeg}
The bilinear form $\langle \cdot,\cdot\rangle :\tilde{U}^+\times \tilde{{\cal U}}^-\rightarrow \mathfrak{k}$ is non-degenerate and $L(\mathfrak{g},\rho,V,{\cal V},B_0)$-invariant (cf. \cite [Definition 1.4 and Remark 1.4]{Shen}).
\end{pr}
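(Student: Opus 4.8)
The plan is to establish the two assertions separately: first the $L(\mathfrak{g},\rho,V,{\cal V},B_0)$-invariance, and then the non-degeneracy, both building on the graded identity (\ref{eq;extpair_eq0}).

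For invariance, write $L:=L(\mathfrak{g},\rho,V,{\cal V},B_0)$ and consider the set
\[
S:=\{z\in L \mid \langle \tilde{\pi }^+(z\otimes \tilde{u}^+),\tilde{w}^-\rangle =-\langle \tilde{u}^+,\tilde{\pi }^-(z\otimes \tilde{w}^-)\rangle \text{ for all } \tilde{u}^+\in \tilde{U}^+,\ \tilde{w}^-\in \tilde{\cal U}^-\}.
\]
Since $\tilde{\pi }^{\pm}$ are linear in their first slot and $\langle \cdot,\cdot\rangle $ is bilinear, $S$ is a linear subspace, and by (\ref{eq;extpair_eq0}) it contains $V_0$, $V_1$ and $V_{-1}$. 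First I would verify that $S$ is a Lie subalgebra: for $z_1,z_2\in S$ I would expand $\tilde{\pi }^+([z_1,z_2]\otimes \tilde{u}^+)$ using the representation property of Proposition \ref{pr_tilde_pi_rep}, move each factor $z_i$ across the pairing using the membership $z_i\in S$, and recollect the outcome as $-\langle \tilde{u}^+,\tilde{\pi }^-([z_1,z_2]\otimes \tilde{w}^-)\rangle$ by the analogous representation property of $\tilde{\pi }^-$ on the negative extension. Hence $[z_1,z_2]\in S$. As $L$ is generated by $V_0\oplus V_1\oplus V_{-1}$, this forces $S=L$, which is precisely invariance.

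For non-degeneracy, since $\langle \cdot,\cdot\rangle $ vanishes on $U_n^+\times {\cal U}_{-m}^-$ whenever $n\neq m$, it suffices to show that each restriction $\langle \cdot,\cdot\rangle ^n_{-n}\colon U_n^+\times {\cal U}_{-n}^-\to \mathfrak{k}$ is non-degenerate, which I would prove by induction on $n\geq 0$. The base case $n=0$ is the assumed non-degeneracy of $\langle \cdot,\cdot\rangle _0$. For the inductive step, suppose $u_n^+\in U_n^+$ pairs to zero with all of ${\cal U}_{-n}^-$. Every element of ${\cal U}_{-n}^-$ has the form $\tilde{\pi }^-(y_{-1}\otimes w_{-n+1}^-)$ with $y_{-1}\in V_{-1}$ and $w_{-n+1}^-\in {\cal U}_{-n+1}^-$, so applying (\ref{eq;extpair_eq0}) with $j=-1$ gives
\[
0=\langle u_n^+,\tilde{\pi }^-(y_{-1}\otimes w_{-n+1}^-)\rangle =-\langle \tilde{\pi }^+(y_{-1}\otimes u_n^+),w_{-n+1}^-\rangle
\]
for all such $w_{-n+1}^-$; the induction hypothesis then forces $\tilde{\pi }^+(y_{-1}\otimes u_n^+)=0$ for every $y_{-1}\in V_{-1}$, and the transitivity of $\tilde{U}^+$ (Proposition \ref{pr;stapliemod}) yields $u_n^+=0$. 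The symmetric argument, exchanging the roles of $\tilde{\pi }^+$ and $\tilde{\pi }^-$ and using $U_n^+=\mathrm{Im}\ r_{n-1}^+$ together with the transitivity of $\tilde{\cal U}^-$, shows non-degeneracy in the second variable as well.

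I expect the non-degeneracy step to be the main obstacle, since it is where the induction on the grading must interlock with transitivity: one has to check that descending from $U_n^+$ to $U_{n-1}^+$ (respectively ascending from ${\cal U}_{-n}^-$ to ${\cal U}_{-n+1}^-$) via $\tilde{\pi }^{\mp}(V_{\mp 1}\otimes \cdot)$ is exactly the move the inductive hypothesis and the transitivity condition are designed to absorb, and that every element of $U_n^+$ and of ${\cal U}_{-n}^-$ is genuinely produced by the generating maps $r_{n-1}^+$ and $r_{-n+1}^-$. By contrast the invariance step is essentially formal once the subalgebra $S$ has been identified.
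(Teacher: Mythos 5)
Your proposal is correct and follows essentially the same route as the paper: non-degeneracy by induction on the grading degree, using (\ref{eq;extpair_eq0}) to push a null vector of $U_{n}^+$ down to $U_{n-1}^+$ and then killing it via the realization $U_n^+\subset \mathrm{Hom}(V_{-1},U_{n-1}^+)$ (equivalently, transitivity), and invariance by reducing to degrees $0,\pm 1$ via the fact that $L(\mathfrak{g},\rho,V,{\cal V},B_0)$ is generated by $V_{-1}\oplus V_0\oplus V_1$. Your packaging of the invariance step as ``the set $S$ of elements satisfying the invariance identity is a subalgebra containing the generators'' is just a cleaner phrasing of the paper's explicit induction on the degree $j$ of $x_j\in V_j$.
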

\begin{proof}
First, let us show that the bilinear form $\langle \cdot,\cdot\rangle $ is non-degenerate.
For this, it is sufficient to show that the bilinear map $\langle \cdot,\cdot\rangle ^j_{-j}:U_j^+\times {\cal U}_{-j}\rightarrow \mathfrak{k}$ is non-degenerate for each $j\geq 0$.
We show it by induction on $j$.
For $j=0$, it follows that $\langle \cdot,\cdot\rangle ^0_0$ is non-degenerate from the assumption.
For $j+1$, we take an element $u_{j+1}^+\in U_{j+1}^+$ which satisfies $\langle u_{j+1}^+,r_{-j}^-(y_{-1}\otimes w_{-j}^-)\rangle ^{j+1}_{-j-1}=0$ for any $y_{-1}\in V_{-1}$ and $w_{-j}^-\in {\cal U}^-_{-j}$.
Then, we have
\begin{align*}
0=\langle u_{j+1}^+,r_{-j}^-(y_{-1}\otimes w_{-j}^- )\rangle ^{j+1}_{-j-1}=-\langle \tilde{\pi }^+(y_{-1}\otimes u_{j+1}^+),w_{-j}^-\rangle ^{j}_{-j}=-\langle u_{j+1}^+(y_{-1}),w_{-j}^-\rangle ^{j}_{-j}.
\end{align*}
By the induction hypothesis that $\langle \cdot ,\cdot \rangle ^j_{-j}$ is non-degenerate, we can obtain that $u_{j+1}^+(y_{-1})=0$ for any $y_{-1}\in V_{-1}$, and, thus, we have $u_{j+1}^+=0\in U^+_{j+1}\subset \mathrm {Hom}(V_{-1},U^+_j)$.
Similarly, we can show that an element $w_{-j-1}^-\in {\cal U}_{-j-1}^-$ which satisfies $\langle r_{j}^+(x_1\otimes u_{j}^+),w_{-j-1}^-\rangle ^{j+1}_{-j-1}=0$ for any $x_1\in V_1$ and $u_i^+\in U_i^+$ is $0$ by (\ref{eq;extpair_eq0}). 
Summarizing the above argument, we can obtain that the map $\langle \cdot,\cdot\rangle ^{j+1}_{-j-1}$ is non-degenerate.
Therefore, by induction, we can obtain that the bilinear map $\langle \cdot,\cdot\rangle :\tilde{U}^+\times \tilde{{\cal U}}^-\rightarrow \mathfrak{k}$ is non-degenerate.
\par Next, let us show that the bilinear map $\langle \cdot,\cdot\rangle $ is $L(\mathfrak{g},\rho,V,{\cal V},B_0)$-invariant.
For this, it is sufficient to show that the following equation holds:
\begin{align}
\langle \tilde{\pi }^+(x_j\otimes u_n^+),w_{-n-j}^-\rangle ^{n+j}_{-n-j}+\langle u_n^+,\tilde{\pi }^-(x_j\otimes w_{-n-j}^-)\rangle ^{n}_{-n}=0\label{eq;pair_L_inv}
\end{align}
for any $j,n\in \mathbb{Z}$, $x_j\in V_j$, $u_n\in U_n^+$ and $w_{-n-j}^-\in {\cal U}_{-n-j}^-$.
We shall show it by induction on $j$.
Assume that $j\geq 0$.
For $j=0,1$, the equation (\ref{eq;pair_L_inv}) follows from (\ref{eq;extpair_eq0}) immediately.
For $j+1$, by induction hypothesis, we have
\begin{align}
&\langle \tilde{\pi }^+([v_1,x_j]\otimes u_n^+),w_{-n-j-1}^-\rangle ^{n+j+1}_{-n-j-1}\notag\\
&\quad =\langle \tilde{\pi }^+(v_1\otimes \tilde{\pi }^+(x_j\otimes u_n^+)),w_{-n-j-1}^-\rangle ^{n+j+1}_{-n-j-1}-\langle \tilde{\pi }^+(x_j\otimes \tilde{\pi }^+(v_1\otimes u_n^+)),w_{-n-j-1}^-\rangle ^{n+j+1}_{-n-j-1}\notag\\
&\quad =-\langle \tilde{\pi }^+(x_j\otimes u_n^+), \tilde{\pi }^-(v_1\otimes w_{-n-j-1}^-)\rangle ^{n+j}_{-n-j}+\langle \tilde{\pi }^+(v_1\otimes u_n^+),\tilde{\pi }^-(x_j\otimes w_{-n-j-1}^-)\rangle ^{n+1}_{-n-1}\notag\\
&\quad =\langle u_n^+, \tilde{\pi }^-(x_j\otimes \tilde{\pi }^-(v_1\otimes w_{-n-j-1}^-))\rangle ^{n}_{-n}-\langle u_n^+,\tilde{\pi }^-(v_1\otimes \tilde{\pi }^-(x_j\otimes w_{-n-j-1}^-))\rangle ^{n}_{-n}\notag\\
&\quad =-\langle u_n^+, \tilde{\pi }^-([v_1,x_j]\otimes w_{-n-j-1}^-)\rangle ^{n}_{-n}
\end{align}
for any $n\in\mathbb{Z}$, $x_1\in V_1$, $v_j\in V_j$, $u_n^+\in U_n^+$ and $w_{-n-j-1}^-\in {\cal U}_{-n-j-1}^-$.
Thus,  by induction, we can show the equation (\ref{eq;pair_L_inv}) for all $j\geq 0$.
Similarly, we can obtain the equation (\ref{eq;pair_L_inv}) for all $j\leq 0$.
Thus, we have the equation (\ref{eq;pair_L_inv}) for all $j\in\mathbb{Z}$.
Therefore the bilinear map $\langle \cdot,\cdot\rangle :\tilde{U}^+\times \tilde{{\cal U}}^-\rightarrow \mathfrak{k}$ is $L(\mathfrak{g},\rho,V,{\cal V},B_0)$-invariant.
\end{proof}
By Proposition \ref{pr;pairing_Linv_nondeg}, we can regard $\tilde{\cal U}^-$ as an $L(\mathfrak{g},\rho,V,{\cal V},B_0)$-submodule of $\mathrm {Hom }(\tilde{U}^+,\mathfrak{k})$.

\subsection{The $\Phi $-map between $(\tilde{\pi }^+,\tilde{U}^+)$ and $(\tilde{\varpi }^-,\tilde{{\cal U}}^-)$}\label{sec;stapext_2}
We retain to assume that a pentad $(\mathfrak{g},\pi,U,{\cal U},B_0)$ is standard and the bilinear form $B_0$ is symmetric.
As I proved in section \ref{sec;stapext_1}, a pentad $(L(\mathfrak{g},\rho,V,{\cal V},B_0),\tilde{\pi }^{+ },\tilde{U}^+,\tilde{{\cal U}}^-,B_L)$ satisfies the condition (\ref{defn;condi_stap_pairing}).
Let us construct the $\Phi $-map of the pentad $(L(\mathfrak{g},\rho,V,{\cal V},B_0),\tilde{\pi }^{+ },\tilde{U}^+,\tilde{{\cal U}}^-,B_L)$ and show that it is standard.
\begin{defn}\label{defn;tildephi}
Assume that pentads $(\mathfrak{g},\rho,V,{\cal V},B_0)$ and $(\mathfrak{g},\pi,U,{\cal U},B_0)$ are standard and that $B_0$ is symmetric.
We define a linear map $\tilde{\Phi }^0_0:U_0^+\otimes {\cal U}_0^-\rightarrow V_0$ as:
\begin{align}
&\tilde{\Phi }^0_0(u_0^+\otimes w_0^-):=\Phi _{\pi }(u_0^+\otimes w_0^-)\label{eq;tildephi_phimap_0_0}
\end{align}where $x_1\in V_1$, $y_{-1}\in V_{-1}$, $u_0^+\in U_0^+$, $w_0^-\in {\cal U}_0^-$ and $\Phi _{\pi }$ is the $\Phi $-map of $(\mathfrak{g},\pi,U,{\cal U},B_0)$.
\par Moreover, for each $i\geq 0$, we inductively define a linear map $\tilde{\Phi }^{i+1}_{0}:U_{i+1}^+\otimes {\cal U}_{0}^-\rightarrow V_{i+1}$ by:
\begin{align}
\tilde{\Phi }^{i+1}_{0}(r_{i}^+(x_1\otimes u_i^+)\otimes w_{0}^-):=[x_1,\tilde{\Phi }^i_{0}(u_i^+\otimes w_{0}^-)],\label{eq;tildephi_phimap_i+1_0}
\end{align}
where $x_1\in V_1$, $y_{-1}\in V_{-1}$, $u_i^+\in U_i^+$ and $w_{0}^-\in {\cal U}_{0}^-$.
\par Assume that an integer $j\geq 0$ satisfies a condition that we have linear maps $\tilde{\Phi }^k_{-j}:U_k^+\otimes {\cal U}_{-j}^-\rightarrow V_{k-j}$ for all $k\geq 0$.
Then, for any $k\geq 0$, we define a linear map $\tilde{\Phi }^{k}_{-j-1}:U_{k}^+\otimes {\cal U}_{-j-1}^-\rightarrow V_{k-j-1}$ by:
\begin{align}
&\tilde{\Phi }^{k}_{-j-1}(u_k^+\otimes r_{-j}^-(y_{-1}\otimes w_{-j}^-))\notag\\
&\quad:=\begin{cases}[y_{-1},\tilde{\Phi }^0_{-j}(u_0^+\otimes w_{-j}^-)]&(k=0)\\ [y_{-1},\tilde{\Phi }^k_{-j}(u_k^+\otimes w_{-j}^-)]-\tilde{\Phi }^{k-1}_{-j}(\tilde{\pi }^+(y_{-1}\otimes u_k^+)\otimes w_{-j}^-)&(k\geq 1)\end{cases}\label{eq;tildephi_phimap_k_-i-1}
\end{align}
where $y_{-1}\in V_{-1}$, $u_k^+\in U_k^+$ and $w_{-j}^-\in {\cal U}_{-j}^-$.
\par Consequently, we can define linear maps $\tilde{\Phi }^i_{-j}:U_i^+\otimes {\cal U}_{-j}^-\rightarrow V_{i-j}$ for all $i,j\geq 0$.
\end{defn}
\begin{pr}\label{pr;tildephi}
The linear map $\tilde{\Phi }^i_{-j}$ is well-defined and satisfies the following equation:
\begin{align}
B_L(a_{-i+j},\tilde{\Phi }^i_{-j}(u_i^+\otimes w_{-j}^-))=\langle \tilde{\pi }^+(a_{-i+j}\otimes u_i^+),w_{-j}^-\rangle \label{pr;tildephi_phimap}
\end{align}for any $i,j\geq 0$, $a_{-i+j}\in V_{-i+j}$, $u_i^+\in U_{i}^+$ and $w_{-j}^-\in {\cal U}_{-j}^-$.
\end{pr}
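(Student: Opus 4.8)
The plan is to prove both assertions — well-definedness of $\tilde{\Phi}^i_{-j}$ and the defining identity (\ref{pr;tildephi_phimap}) — simultaneously, by induction following exactly the order in which the maps were built in Definition \ref{defn;tildephi}: first the maps $\tilde{\Phi}^i_0$ for increasing $i\geq 0$, and then, for each $j\geq 0$, the whole family $\tilde{\Phi}^k_{-j-1}$ ($k\geq 0$) out of the level-$j$ maps. The observation that turns well-definedness into a formal consequence of (\ref{pr;tildephi_phimap}) is the following. By Proposition \ref{pr;stap_bilinear_exist} the form $B_L$ is non-degenerate and satisfies $B_L(V_n,V_m)=0$ for $n+m\neq 0$, so its restriction to $V_{-n}\times V_n$ is a non-degenerate pairing for every $n$; hence an element of $V_{i-j}$ is uniquely determined by the functional $a\mapsto B_L(a,\cdot)$ on $V_{-(i-j)}$. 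Since the right-hand side of (\ref{pr;tildephi_phimap}), namely $a\mapsto \langle \tilde{\pi}^+(a\otimes u_i^+),w_{-j}^-\rangle$, depends only on $u_i^+$ and on the element $w_{-j}^-$ (not on how these are written as images of $r^+$ or $r^-$), once I verify that the recursively defined element satisfies (\ref{pr;tildephi_phimap}) its independence of all such choices follows at once. For the base case $i=j=0$, equation (\ref{pr;tildephi_phimap}) reads $B_0(a_0,\Phi _{\pi }(u_0^+\otimes w_0^-))=\langle \pi(a_0\otimes u_0^+),w_0^-\rangle$, which is precisely the defining relation of the $\Phi$-map $\Phi _{\pi }$ of the standard pentad $(\mathfrak{g},\pi,U,{\cal U},B_0)$.

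For the step increasing $i$ at level $j=0$, I would take $a\in V_{-(i+1)}$ and compute, using invariance of $B_L$ and the induction hypothesis for $\tilde{\Phi}^i_0$ applied to $[a,x_1]\in V_{-i}$,
\[
B_L\bigl(a,[x_1,\tilde{\Phi}^i_0(u_i^+\otimes w_0^-)]\bigr)=B_L\bigl([a,x_1],\tilde{\Phi}^i_0(u_i^+\otimes w_0^-)\bigr)=\langle \tilde{\pi}^+([a,x_1]\otimes u_i^+),w_0^-\rangle .
\]
By the representation property of $\tilde{\pi}^+$ (Proposition \ref{pr_tilde_pi_rep}) together with the vanishing $U^+_{-1}=\{0\}$, which kills $\tilde{\pi}^+(x_1\otimes\tilde{\pi}^+(a\otimes u_i^+))$, the last term equals $\langle \tilde{\pi}^+(a\otimes\tilde{\pi}^+(x_1\otimes u_i^+)),w_0^-\rangle=\langle \tilde{\pi}^+(a\otimes r_i^+(x_1\otimes u_i^+)),w_0^-\rangle$, exactly the right-hand side of (\ref{pr;tildephi_phimap}) for $\tilde{\Phi}^{i+1}_0$. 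Non-degeneracy of $B_L$ on $V_{-(i+1)}\times V_{i+1}$ then promotes this identity to the well-definedness of $\tilde{\Phi}^{i+1}_0$.

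For the step increasing $j$, I would fix $j\geq 0$, assume (\ref{pr;tildephi_phimap}) for every $\tilde{\Phi}^k_{-j}$, and treat the two cases of (\ref{eq;tildephi_phimap_k_-i-1}) separately. In the case $k\geq 1$, for $a\in V_{-k+j+1}$ I expand $B_L(a,\cdot)$ of the two summands using invariance of $B_L$ and the induction hypotheses for $\tilde{\Phi}^k_{-j}$ (via $[a,y_{-1}]\in V_{-(k-j)}$) and for $\tilde{\Phi}^{k-1}_{-j}$; the representation property of $\tilde{\pi}^+$ collapses the outcome to $-\langle \tilde{\pi}^+(y_{-1}\otimes\tilde{\pi}^+(a\otimes u_k^+)),w_{-j}^-\rangle$, which by the pairing relation (\ref{eq;extpair_eq0}) with $z_j=y_{-1}$ equals $\langle \tilde{\pi}^+(a\otimes u_k^+),\tilde{\pi}^-(y_{-1}\otimes w_{-j}^-)\rangle=\langle \tilde{\pi}^+(a\otimes u_k^+),r_{-j}^-(y_{-1}\otimes w_{-j}^-)\rangle$, the desired right-hand side. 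The case $k=0$ is the same computation but shorter, since $\tilde{\pi}^+(y_{-1}\otimes u_0^+)\in U^+_{-1}=\{0\}$ deletes one term outright. As before, non-degeneracy of $B_L$ on $V_{-(k-j-1)}\times V_{k-j-1}$ (together with non-degeneracy of the pairing from Proposition \ref{pr;pairing_Linv_nondeg} to see that the right-hand side is representation-independent) upgrades (\ref{pr;tildephi_phimap}) to the well-definedness of $\tilde{\Phi}^k_{-j-1}$.

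The main obstacle is organizational rather than computational: one must route the double induction so that every appeal to the induction hypothesis lands on a map of strictly lower order (smaller $i$ at level $0$, or level $j$ while building level $j+1$), and keep the gradings $a\in V_{-(i-j)}$, $[a,x_1]\in V_{-i}$, $\tilde{\pi}^+(a\otimes u_k^+)\in U^+_{j+1}$ consistent at each use of $B_L$-invariance, the representation identity, and (\ref{eq;extpair_eq0}). The one genuinely delicate point is the logical sequencing of well-definedness: the identity (\ref{pr;tildephi_phimap}) for the new map must first be derived for an \emph{arbitrary chosen} representation of its argument, purely from the lower-order maps, and only afterwards does non-degeneracy of $B_L$ force the resulting element to be independent of that choice — the same pattern already used for $p_i$, for $\pi^+_{i+1,m}$, and for $B_L$ itself earlier in the paper.
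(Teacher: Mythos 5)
Your proposal is correct and follows essentially the same route as the paper's own proof: the same double induction (first $\tilde{\Phi}^i_0$ by induction on $i$, then level $-j-1$ from level $-j$), the same computation pairing the candidate element against $V_{-(i-j)}$ via invariance of $B_L$ and the representation property of $\tilde{\pi}^+$ (with $U^+_{-1}=\{0\}$ killing the extra term), and the same use of non-degeneracy of $B_L$ on $V_{-(i-j)}\times V_{i-j}$ to convert the verified identity into well-definedness. The only quibble is that the representation-independence of the right-hand side rests on the \emph{well-definedness} of the pairing $\langle\cdot,\cdot\rangle^{j+1}_{-j-1}$ (Proposition \ref{pr;welldef_bra}), not on its non-degeneracy, but this does not affect the argument.
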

\begin{proof}
Let us show that the linear maps defined by the equations (\ref{eq;tildephi_phimap_0_0}), (\ref{eq;tildephi_phimap_i+1_0}) and (\ref{eq;tildephi_phimap_k_-i-1}) satisfy our claim by induction.
First, let us show that the linear map $\tilde{\Phi }^{i+1}_{0}$ $(i\geq 0)$ defined in (\ref{eq;tildephi_phimap_i+1_0}) is well-defined by induction on $i$.
For $i=0$, under the above notation, we have
\begin{align}
B_L(a_{-1},[x_1,\tilde{\Phi }^0_{0}(u_0^+\otimes w_{0}^-)])=&B_L([a_{-1},x_1],\tilde{\Phi }^0_{0}(u_0^+\otimes w_{0}^-))\notag=\langle \tilde{\pi }^+([a_{-1},x_1]\otimes u_0^+),w_0^-\rangle \\
=&\langle r_0^+(x_1\otimes u_0^+)(a_{-1}),w_0^-\rangle =\langle \tilde{\pi }^+(a_{-1}\otimes r_0^+(x_1\otimes u_0^+)),w_0^-\rangle \label{proof;tildephi_phimap_0+1_0}
\end{align}for any $a_{-1}\in V_{-1}$.
Thus, if $x_1^1,\ldots, x_1^{l}\in V_1$ and $u_0^{+,1},\ldots ,u_0^{ +,l}\in U_0^+$ satisfy $\sum _{s=1}^{l }r_0^+(x_1^s\otimes u_0^{+,s})=0$, then we have
\begin{align}
\sum _{s=1}^{l }B_L(a_{-1},[x_1^s,\tilde{\Phi }^0_{0}(u_0^{+,s}\otimes w_{0}^-)])=0
\end{align}
for any $a_{-1}\in V_{-1}$.
Since the restriction of $B_L$ to $V_{-1}\times V_1$ is non-degenerate, we have 
\begin{align}
\sum _{s=1}^{l }[x_1^s,\tilde{\Phi }^0_{0}(u_0^{+,s}\otimes w_{0}^-)]=0,
\end{align}and, thus, the map $\tilde{\Phi }^1_0$ is well-defined.
The equation (\ref{pr;tildephi_phimap}) follows from (\ref{proof;tildephi_phimap_0+1_0}).
\par For $i\geq 1$, under the notation of (\ref{eq;tildephi_phimap_i+1_0}), we have
\begin{align}
&B_L(a_{-i-1},[x_1,\tilde{\Phi }^i_{0}(u_i^+\otimes w_{0}^-)])=B_L([a_{-i-1},x_1],\tilde{\Phi }^i_{0}(u_i^+\otimes w_{0}^-))\notag=\langle \tilde{\pi }^+([a_{-i-1},x_1]\otimes u_i^+),w_0^-\rangle \\
&\quad =\langle \tilde{\pi }^+(a_{-i-1}\otimes \tilde{\pi }^+(x_1\otimes u_i^+)),w_0^-\rangle -\langle \tilde{\pi }^+(x_1\otimes \tilde{\pi }^+(a_{-i-1}\otimes u_i^+)),w_0^-\rangle \notag\\
&\quad =\langle \tilde{\pi }^+(a_{-i-1}\otimes r_i^+(x_1\otimes u_i^+)),w_0^-\rangle 
\end{align}
by the induction hypothesis for any $a_{-i-1}\in V_{-i-1}$.
Thus, by the same argument to the argument of the case where $i=0$, we have the well-definedness of $\tilde{\Phi }^{i+1}_0$ and that $\tilde{\Phi }^{i+1}_0$ satisfies the equation (\ref{pr;tildephi_phimap}).
Therefore, by induction, we can obtain our claim on $\tilde{\Phi }^{i+1}_0$ for all $i\geq 0$.
\par Let us show that the linear maps defined in (\ref{eq;tildephi_phimap_k_-i-1}) are well-defined.
We assume that an integer $i\geq 0$ satisfies the condition that we have linear maps $\tilde{\Phi }^k_{-i}:U_k^+\otimes {\cal U}_{-i}^-\rightarrow V_{k-i}$ for all $k\geq 0$ which satisfy the equation (\ref{pr;tildephi_phimap}).
When $i=0$, it has been shown that this assumption holds.
Then, we can show the well-definedness of the linear maps $\tilde{\Phi }_{-1}^k$ $(k\geq 0)$ by induction on $k$.
When $k=0$, we can show that $\tilde{\Phi }_{-1}^0$ is well-defined and satisfies (\ref{pr;tildephi_phimap}) by a similar argument to the argument of (\ref{eq;tildephi_phimap_i+1_0}).
When $k\geq 1$, we have
\begin{align}
&B_L(a_{-k+1},[y_{-1},\tilde{\Phi }^k_{0}(u_k^+\otimes w_{0}^-)]-\tilde{\Phi }^{k-1}_{0}(\tilde{\pi }^+(y_{-1}\otimes u_k^+)\otimes w_{0}^-))\notag\\
&\quad =B_L([a_{-k+1},y_{-1}],\tilde{\Phi }^k_{0}(u_k^+\otimes w_{0}^-))-B_L(a_{-k+1},\tilde{\Phi }^{k-1}_{0}(\tilde{\pi }^+(y_{-1}\otimes u_k^+)\otimes w_{0}^-))\notag\\
&\quad =\langle \tilde{\pi }^+([a_{-k+1},y_{-1}]\otimes u_k^+),w_{0}^-\rangle-\langle \tilde{\pi }^+(a_{-k+1}\otimes \tilde{\pi }^+(y_{-1}\otimes u_k^+)),w_{0}^-\rangle\notag\\
&\quad =-\langle \tilde{\pi }^+(y_{-1}\otimes \tilde{\pi }^+(a_{-k+1}\otimes u_k^+)),w_{0}^-\rangle\notag\\
&\quad =\langle \tilde{\pi }^+(a_{-k+1}\otimes u_k^+),\tilde{\pi }^-(y_{-1}\otimes w_{0}^-)\rangle =\langle \tilde{\pi }^+(a_{-k+1}\otimes u_k^+),r_{0}^-(y_{-1}\otimes w_{0}^-)\rangle 
\end{align}for any $k\geq 1$ and $a_{-k+1}\in V_{-k+1}$ under the notation of (\ref{eq;tildephi_phimap_k_-i-1}).
Thus, by a similar argument to the argument of (\ref{eq;tildephi_phimap_i+1_0}), we have the well-definedness of $\tilde{\Phi }^k_{-1}$ for all $k\geq 1$ and that $\tilde{\Phi }^k_{-1}$ satisfies the equation (\ref{pr;tildephi_phimap}).
For $i\geq 1$, by the same argument to the argument of the case where $i=0$, we have the well-definedness of $\tilde{\Phi }^{k}_{-i-1}$ for all $k\geq 0$ and that $\tilde{\Phi }^{k}_{-i-1}$ satisfies the equation (\ref{pr;tildephi_phimap}).
Thus, by induction, we have linear maps $\tilde{\Phi }^i_{-j}$ for all $i,j\geq 0$ which satisfies the equation (\ref{pr;tildephi_phimap}).
This completes the proof.
\end{proof}

As a corollary of Propositions \ref{pr;pairing_Linv_nondeg} and \ref{pr;tildephi}, we have the following theorem.
\begin{theo}\label{theo;stapstep}
Let $(\mathfrak{g},\rho,V,{\cal V},B_0)$ and $(\mathfrak{g},\pi,U,{\cal U},B_0)$ be standard pentads and assume that $B_0$ is symmetric.
Then a pentad $(L(\mathfrak{g},\rho,V,{\cal V},B_0),\tilde{\pi }^+,\tilde{U}^+,\tilde{{\cal U}}^-,B_L)$ is also a standard pentad whose $\Phi $-map, denoted by $\tilde{\Phi }_{\pi }^+$, is defined by:
\begin{align}
\tilde{\Phi }_{\pi }^+(u_i^+\otimes w_{-j}^-):=\tilde{\Phi }^i_{-j}(u_i^+\otimes w_{-j}^-)\label{th;eq;phimap}
\end{align}for any $i,j\geq 0$, $u_{i}^+\in U_i^+$ and $w_{-j}^-\in {\cal U}_{-j}^-$, where $\tilde{\Phi }^i_{-j}$ is the linear map defined in Definition \ref{defn;tildephi}.
\end{theo}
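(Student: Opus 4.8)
The plan is to recognize this theorem as essentially a corollary of the two preceding propositions, Propositions \ref{pr;pairing_Linv_nondeg} and \ref{pr;tildephi}, and to spell out precisely how they combine to verify the two defining conditions of a standard pentad from Definition \ref{defn;stap}. Since $B_L$ is already known to be a non-degenerate symmetric invariant bilinear form on $L(\mathfrak{g},\rho,V,{\cal V},B_0)$ by Proposition \ref{pr;stap_bilinear_exist}, the data $(L(\mathfrak{g},\rho,V,{\cal V},B_0),\tilde{\pi }^+,\tilde{U}^+,\tilde{\cal U}^-,B_L)$ is a legitimate pentad in the sense of Definition \ref{defn;phimap}, with $\tilde{\cal U}^-$ regarded as a submodule of $\mathrm{Hom}(\tilde{U}^+,\mathfrak{k})$ via the pairing of Proposition \ref{pr;pairing_Linv_nondeg}. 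First I would dispose of condition (\ref{defn;condi_stap_pairing}): the non-degeneracy of the restriction of the canonical pairing to $\tilde{U}^+\times\tilde{\cal U}^-$ is exactly the first assertion of Proposition \ref{pr;pairing_Linv_nondeg}, so nothing further is needed there.

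The substance lies in condition (\ref{defn;condi_stap_Phimap}), for which I would take the linear map $\tilde{\Phi }^+_{\pi }$ assembled from the homogeneous pieces $\tilde{\Phi }^i_{-j}$ via (\ref{th;eq;phimap}) and show it is a $\Phi $-map, i.e. that
\begin{align*}
B_L(a,\tilde{\Phi }^+_{\pi }(u^+\otimes w^-))=\langle \tilde{\pi }^+(a\otimes u^+),w^-\rangle
\end{align*}
holds for \emph{every} $a\in L(\mathfrak{g},\rho,V,{\cal V},B_0)$, not merely for the single homogeneous degree treated in Proposition \ref{pr;tildephi}. By bilinearity it suffices to treat $u^+=u_i^+\in U_i^+$, $w^-=w_{-j}^-\in {\cal U}_{-j}^-$ and $a=a_n\in V_n$ homogeneous.

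Here the key observation is a degree count. Since $\tilde{\Phi }^i_{-j}(u_i^+\otimes w_{-j}^-)\in V_{i-j}$ and $B_L$ pairs $V_n$ nontrivially only with $V_{-n}$ (the last clause defining $B_L$ in Proposition \ref{pr;stap_bilinear_exist}), the left-hand side vanishes unless $n=-i+j$; symmetrically $\tilde{\pi }^+(a_n\otimes u_i^+)\in U_{n+i}^+$ pairs nontrivially with ${\cal U}_{-j}^-$, under Definition \ref{defn;bra_posext_negext}, only when $n+i=j$, so the right-hand side also vanishes unless $n=-i+j$. Thus both sides are zero off the single degree $n=-i+j$, and on that degree the required identity is precisely equation (\ref{pr;tildephi_phimap}) of Proposition \ref{pr;tildephi}.

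The only point requiring care, and the one I expect to be the main (if modest) obstacle, is this degree-matching argument: one must check that the grading of $B_L$ and the grading of the pairing $\langle\cdot,\cdot\rangle$ are compatible so that the two vanishing conditions coincide on the same component $V_{-i+j}$, after which the identity on that component is furnished verbatim by Proposition \ref{pr;tildephi}. Once this is in place, $\tilde{\Phi }^+_{\pi }$ is a $\Phi $-map, so condition (\ref{defn;condi_stap_Phimap}) holds, and together with condition (\ref{defn;condi_stap_pairing}) it follows that the pentad $(L(\mathfrak{g},\rho,V,{\cal V},B_0),\tilde{\pi }^+,\tilde{U}^+,\tilde{\cal U}^-,B_L)$ is standard, completing the proof.
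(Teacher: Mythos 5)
Your proposal is correct and follows essentially the same route as the paper, which presents Theorem \ref{theo;stapstep} precisely as a corollary of Propositions \ref{pr;pairing_Linv_nondeg} and \ref{pr;tildephi} without further argument. The degree-matching check you supply (both sides vanish unless $n=-i+j$, and on that component the identity is exactly (\ref{pr;tildephi_phimap})) is the right way to fill in the implicit step, and the second equality required of a $\Phi$-map is covered by the $L(\mathfrak{g},\rho,V,{\cal V},B_0)$-invariance of the pairing established in Proposition \ref{pr;pairing_Linv_nondeg}.
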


\subsection{Chain rule}\label{sec;chainrule}
Under the assumptions of sections \ref{sec;stapext_1} and \ref{sec;stapext_2}, let us construct the Lie algebra associated with a standard pentad of the form $(L(\mathfrak{g},\rho,V,{\cal V},B_0),\tilde{\pi }^+,U^+,{\cal U}^-,B_L)$.
To find the structure of the Lie algebra $L(L(\mathfrak{g},\rho,V,{\cal V},B_0),\tilde{\pi }^+,U^+,{\cal U}^-,B_L)$, we give the following theorem.

\begin{theo}[chain rule]\label{theo;chainrule}
Let $(\mathfrak{g},\rho,V,{\cal V},B_0)$ and $(\mathfrak{g},\pi,U,{\cal U},B_0)$ be standard pentads.
Assume that $B_0$ is symmetric.
Then a pentad $(L(\mathfrak{g},\rho,V,{\cal V},B_0),\tilde{\pi }^+,\tilde{U}^+,\tilde{\cal U}^-,B_L)$ is also a standard pentad and the Lie algebra associated with it is isomorphic to $L(\mathfrak{g},\rho\oplus \pi,V\oplus U,{\cal V}\oplus {\cal U},B_0)$, i.e. we have
\begin{align}
L(L(\mathfrak{g},\rho,V,{\cal V},B_0),\tilde{\pi }^+,\tilde{U}^+,\tilde{\cal U}^-,B_L)\simeq L(\mathfrak{g},\rho\oplus \pi,V\oplus U,{\cal V}\oplus {\cal U},B_0)\label{eq;chainrule}
\end{align}as Lie algebras.
\end{theo}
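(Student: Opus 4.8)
The plan is to realize the right-hand side of (\ref{eq;chainrule}) as the Lie algebra of a standard pentad and to recover the left-hand side from it by regrading and applying the universality theorem, Theorem \ref{th;universality_stap}. Write $\mathfrak{M}:=L(\mathfrak{g},\rho\oplus\pi,V\oplus U,{\cal V}\oplus {\cal U},B_0)$; this is indeed the Lie algebra associated with a standard pentad, since the pairing between $V\oplus U$ and ${\cal V}\oplus {\cal U}$ is non-degenerate and the $\Phi$-map is $\Phi_{\rho\oplus\pi}((v,u)\otimes(\phi,\psi))=\Phi_\rho(v\otimes\phi)+\Phi_\pi(u\otimes\psi)$. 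As $B_0$ is symmetric, $\mathfrak{M}$ carries the non-degenerate symmetric invariant form $B_{\mathfrak{M}}$ of Proposition \ref{pr;stap_bilinear_exist}.

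First I would equip $\mathfrak{M}$ with a $\mathbb{Z}^2$-grading refining its natural $\mathbb{Z}$-grading, assigning bidegrees $(0,0)$, $(1,0)$, $(-1,0)$, $(0,1)$, $(0,-1)$ to $\mathfrak{g}$, $V$, ${\cal V}$, $U$, ${\cal U}$ respectively. An induction on the construction in Definition \ref{defn_p_q} shows that every structure map (the $\mathfrak{g}$-action, the $\Phi$-map, the maps $p_i,q_{-i}$, and hence the bracket) is homogeneous for this bigrading, so that $\mathfrak{M}=\bigoplus_{p,q}\mathfrak{M}_{(p,q)}$ with $[\mathfrak{M}_{(p,q)},\mathfrak{M}_{(p',q')}]\subseteq \mathfrak{M}_{(p+p',q+q')}$, while $B_{\mathfrak{M}}$ pairs $\mathfrak{M}_{(p,q)}$ only with $\mathfrak{M}_{(-p,-q)}$. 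Collapsing the first index gives the coarser grading $\mathfrak{M}=\bigoplus_{q}\mathfrak{M}^{[q]}$ with $\mathfrak{M}^{[q]}:=\bigoplus_p \mathfrak{M}_{(p,q)}$; since the generators of $\mathfrak{M}$ all lie in $q$-degrees $-1,0,1$, this $q$-grading makes $\mathfrak{M}$ a graded Lie algebra generated by $\mathfrak{M}^{[-1]}\oplus\mathfrak{M}^{[0]}\oplus\mathfrak{M}^{[1]}$.

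Next I would identify the three low pieces. The subalgebra $\mathfrak{M}^{[0]}$, with its $p$-grading, is generated by $\mathfrak{g},V,{\cal V}$ and satisfies conditions (\ref{univ:condi_1}) and (\ref{univ:condi_2}) of Theorem \ref{th;universality_stap} with bilinear form $B_0$, so $\mathfrak{M}^{[0]}\simeq L(\mathfrak{g},\rho,V,{\cal V},B_0)$ and $B_{\mathfrak{M}}\!\mid_{\mathfrak{M}^{[0]}\times\mathfrak{M}^{[0]}}$ corresponds to $B_L$. For the module pieces, comparing the bidegree-$(i,1)$ component of the sum-pentad maps $p_i$ with the maps $r_i^+$ of the extension construction shows by induction that $\mathfrak{M}^{[1]}$, as a graded $\mathfrak{M}^{[0]}$-module, is reproduced exactly by the inductive recipe defining $\tilde{U}^+$; likewise $\mathfrak{M}^{[-1]}$ matches $\tilde{{\cal U}}^-$. (Alternatively one checks that $\mathfrak{M}^{[1]}$ is a transitive positively graded $\mathfrak{M}^{[0]}$-module generated by its degree-$0$ part $U$ and invokes Theorem \ref{th;universalitystap}.) Moreover the pairing induced by $B_{\mathfrak{M}}$ between $\mathfrak{M}^{[1]}$ and $\mathfrak{M}^{[-1]}$ coincides with the pairing of Definition \ref{defn;bra_posext_negext}, so that $(\mathfrak{M}^{[0]},\mathrm{ad},\mathfrak{M}^{[1]},\mathfrak{M}^{[-1]},B_{\mathfrak{M}}\!\mid_{\mathfrak{M}^{[0]}\times\mathfrak{M}^{[0]}})$ is equivalent, in the sense of Definition \ref{defn;quad_iso} with $c=1$, to $(L(\mathfrak{g},\rho,V,{\cal V},B_0),\tilde{\pi}^+,\tilde{U}^+,\tilde{{\cal U}}^-,B_L)$.

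Finally I would apply the universality theorem to the $q$-grading of $\mathfrak{M}$: condition (\ref{univ:condi_1}) follows from the generation of $\mathfrak{M}$ in $q$-degrees $-1,0,1$ together with the Jacobi identity, and (\ref{univ:condi_2}) from the bidegree-homogeneity of the non-degenerate form $B_{\mathfrak{M}}$. Theorem \ref{th;universality_stap} then yields $\mathfrak{M}\simeq L(\mathfrak{M}^{[0]},\mathrm{ad},\mathfrak{M}^{[1]},\mathfrak{M}^{[-1]},B_{\mathfrak{M}}\!\mid_{\mathfrak{M}^{[0]}\times\mathfrak{M}^{[0]}})$, and combining this with the pentad equivalence above and Proposition \ref{pr;stapequi_lieiso} gives the desired isomorphism (\ref{eq;chainrule}). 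I expect the main obstacle to be the bookkeeping of the bigrading: proving that the sum-pentad graduations split as $\mathfrak{M}_n=\bigoplus_{p+q=n}\mathfrak{M}_{(p,q)}$ and that the one-$U$-factor sectors $\mathfrak{M}^{[\pm1]}$ reproduce, map for map, the inductive definitions of $\tilde{U}^+$ and $\tilde{{\cal U}}^-$; once these identifications are secured, the remainder is a direct application of the already-established universality and equivalence results.
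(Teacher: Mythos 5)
Your proposal is correct in outline but runs in the opposite direction from the paper's proof, and the two routes distribute the technical burden quite differently. The paper works on the \emph{left}-hand side: it uses Lemmas \ref{lem;derivationext} and \ref{lemma;der_Binv} to build two commuting derivations $\alpha$ (the inner degree, acting by $n$ on $V_n$, by $i$ on $U_i^+$, by $-j$ on ${\cal U}_{-j}^-$) and $\beta$ (the outer degree) on $L(L(\mathfrak{g},\rho,V,{\cal V},B_0),\tilde{\pi}^+,\tilde{U}^+,\tilde{\cal U}^-,B_L)$, takes the eigenspace decomposition of $\gamma=\alpha+\beta$ as a new $\mathbb{Z}$-grading, observes that $W^{\gamma}_0=\mathfrak{g}$, $W^{\gamma}_{\pm 1}=V\oplus U,\ {\cal V}\oplus{\cal U}$, and applies Theorem \ref{th;universality_stap} once. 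The payoff of that direction is that the degree $0,\pm 1$ pieces of the regraded algebra are the \emph{small} objects $\mathfrak{g}$, $V\oplus U$, ${\cal V}\oplus{\cal U}$, so no further identification is needed and universality finishes everything. You instead bigrade the \emph{right}-hand side $\mathfrak{M}=L(\mathfrak{g},\rho\oplus\pi,V\oplus U,{\cal V}\oplus{\cal U},B_0)$ by hand and collapse to the $q$-grading, which forces you to identify three infinite-dimensional graded sectors: $\mathfrak{M}^{[0]}$ with $L(\mathfrak{g},\rho,V,{\cal V},B_0)$ (this part is fine, via Theorem \ref{th;universality_stap}, since $\mathfrak{M}_{(p+1,0)}=[\mathfrak{M}_{(1,0)},\mathfrak{M}_{(p,0)}]$ because $\mathfrak{M}_n$ for $n\geq 1$ carries only bidegrees with both entries nonnegative), and $\mathfrak{M}^{[\pm 1]}$ with $\tilde{U}^+$ and $\tilde{\cal U}^-$. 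That last identification is exactly where you say the bookkeeping lives, and it is genuinely nontrivial: an element of $\mathfrak{M}_{(p,1)}$ sits inside $\mathrm{Hom}({\cal V}\oplus{\cal U},\mathfrak{M}_p)$ and has both a ${\cal V}$-component (matching $r_{p-1}^+$ up to sign) and a ${\cal U}$-component, so you must prove that restriction to ${\cal V}$ is injective on $\mathfrak{M}_{(p,1)}$ --- equivalently, that $\mathfrak{M}^{[1]}$ is transitive over $\mathfrak{M}^{[0]}$ before Theorem \ref{th;universalitystap} can be invoked. This does hold, but only via the symmetry of $B_0$ (e.g.\ $\langle\sum_s\rho(\Phi_{\pi}(u_s\otimes\psi)\otimes v_s),\eta\rangle=\sum_s B_0(\Phi_{\rho}(v_s\otimes\eta),\Phi_{\pi}(u_s\otimes\psi))=\langle\sum_s\pi(\Phi_{\rho}(v_s\otimes\eta)\otimes u_s),\psi\rangle$, propagated inductively), and you should make that argument explicit rather than leaving it as acknowledged bookkeeping. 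In short: your approach works and is a legitimate alternative, but the paper's choice to regrade the iterated construction rather than the direct-sum construction is what lets it bypass precisely the three sector identifications that constitute the bulk of your remaining work.
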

\begin{proof}
Note that the pentad $(\mathfrak{g},\rho\oplus \pi,V\oplus U,{\cal V}\oplus {\cal U},B_0)$ is a standard pentad whose $\Phi $-map $\Phi _{\rho\oplus \pi }$ is given by:
$$
\Phi _{\rho\oplus \pi }((v,u)\otimes (\phi ,\psi))=\Phi _{\rho }(v\otimes \phi )+\Phi _{\pi }(u\otimes \psi )
$$
where $v\in V$, $\phi \in {\cal V}$, $u\in U$, $\psi \in {\cal U}$ and $\Phi _{\rho }$ and $\Phi _{\pi }$ are the $\Phi $-maps of the pentads $(\mathfrak{g},\rho,V,{\cal V},B_0)$ and $(\mathfrak{g},\pi,U,{\cal U},B_0)$ respectively.
It has been already shown in Theorem \ref{theo;stapstep} that the pentad $(L(\mathfrak{g},\rho,V,{\cal V},B_0),\tilde{\pi }^+,\tilde{U}^+,\tilde{\cal U}^-,B_L)$ is standard.
We denote the $n$-graduations of $(\mathfrak{g},\rho,V,{\cal V},B_0)$ and  $(L(\mathfrak{g},\rho,V,{\cal V},B_0),\tilde{\pi }^+,\tilde{U}^+,\tilde{\cal U}^-,B_L)$ by $V_n$ and $(\tilde{U}^+)_n$, i.e. 
\begin{align}
L(\mathfrak{g},\rho,V,{\cal V},B_0)=\bigoplus _{n\in\mathbb{Z}}V_n,
\quad L(L(\mathfrak{g},\rho,V,{\cal V},B_0),\tilde{\pi }^+,\tilde{U}^+,\tilde{\cal U}^-,B_L)=\bigoplus _{m\in\mathbb{Z}}(\tilde{U}^+)_m.
\end{align}
Moreover, we denote $(\tilde{U}^+)_{1} $ and $(\tilde{U}^+)_{-1} $ by:
\begin{align}
(\tilde{U}^+)_{1} =\bigoplus _{i\geq 0} U_i^+,
\quad (\tilde{U}^+)_{-1} =\bigoplus _{j\geq 0}{\cal U}_{-j}^-.
\end{align}
Denote a bilinear form on $L(L(\mathfrak{g},\rho,V,{\cal V},B_0),\tilde{\pi }^+,\tilde{U}^+,\tilde{\cal U}^-,B_L)$ defined in {\rm Definition} \ref{pr;stap_bilinear_exist} by $\overline {B}_L$.
By Lemmas \ref{lem;derivationext} and \ref{lemma;der_Binv}, we can define derivations $\alpha $ and $\beta $ on $L(L(\mathfrak{g},\rho,V,{\cal V},B_0),\tilde{\pi }^+,U^+,{\cal U}^-,B_L)$ which satisfy
\begin{align}
\alpha (v_n)=nv_n,\quad\alpha (u_i^+)=iu_i^+,\quad\alpha (w_{-j}^-)=-jw_{-j}^-, \quad\beta (\tilde{u}^+_m)=m\tilde{u}^+_m
\end{align}
and
\begin{align}
\overline {B}_L(\alpha (\overline{z}),\overline{\omega })+\overline {B}_L(\overline{z},\alpha (\overline{\omega }))=\overline {B}_L(\beta (\overline{z}),\overline{\omega })+\overline {B}_L(\overline{z},\beta (\overline{\omega }))=0\label{overlineB_inv_der}
\end{align}
for any $n,m\in\mathbb{Z}$, $i,j\geq 0$, $v_n\in V_n$, $u_i^+\in U_i^+\subset (\tilde{U}^+)_{1}$, $w_{-j}^-\in {\cal U}_{-j}^-\subset (\tilde{U}^+)_{-1}$, $\tilde{u}_m^+\in (\tilde{U}^+)_m$ and $\overline {z},\overline {\omega }\in L(L(\mathfrak{g},\rho,V,{\cal V},B_0),\tilde{\pi }^+,\tilde{U}^+,\tilde{\cal U}^-,B_L)$.
Since $L(L(\mathfrak{g},\rho,V,{\cal V},B_0),\tilde{\pi }^+,\tilde{U}^+,\tilde{\cal U}^-,B_L) $ is generated by $L(\mathfrak{g},\rho,V,{\cal V},B_0)$ and $(\tilde{U}^+)_{\pm 1}$ and since $L(\mathfrak{g},\rho,V,{\cal V},B_0)$ and $(\tilde{U}^+)_{\pm 1}$ are generated by $V_0$, $V_{\pm 1}$, $U=U_0^+$ and ${\cal U}={\cal U}_0^-$, we have that $L(L(\mathfrak{g},\rho,V,{\cal V},B_0),\tilde{\pi }^+,\tilde{U}^+,\tilde{\cal U}^-,B_L) $ is generated by  $V_0$, $V_{\pm 1}$, $U_0^+$ and ${\cal U}_0^-$.
Put 
\begin{align*}
W_{(n,m)}:=\{\overline {X}\in L(L(\mathfrak{g},\rho,V,{\cal V},B_0),\tilde{\pi }^+,\tilde{U}^+,\tilde{\cal U}^-,B_L)\mid \alpha (\overline {X})=n\overline {X},\ \beta (\overline {X})=m\overline {X}\} 
\end{align*}
for any $n,m\in\mathbb{Z}$.
Then we can easily show that all eigenvalues of $\alpha $ and $\beta $ are integers by induction and that $[W_{(n,m)},W_{(k,l)}]\subset W_{(n+k,m+l)}$.
Thus, we can obtain the following $\mathbb{Z}$-grading of $L(L(\mathfrak{g},\rho,V,{\cal V},B_0),\tilde{\pi }^+,\tilde{U}^+,\tilde{\cal U}^-,B_L)$ induced by the eigenspace decomposition of $\gamma :=\alpha +\beta$:
\begin{align*}
L(L(\mathfrak{g},\rho,V,{\cal V},B_0),\tilde{\pi }^+,\tilde{U}^+,\tilde{\cal U}^-,B_L)=\bigoplus _{k\in \mathbb{Z}}(\bigoplus _{n+m=k}W_{(n,m)}).
\end{align*}
If we put $W^{\gamma }_k:=\{\overline{X}\mid \gamma (\overline{X})=k\overline{X}\}$, then we have $W^{\gamma }_k=\bigoplus _{n+m=k}W_{(n,m)}$ and, thus, we can obtain the following $\mathbb{Z}$-grading of $L(L(\mathfrak{g},\rho,V,{\cal V},B_0),\tilde{\pi }^+,\tilde{U}^+,\tilde{\cal U}^-,B_L)$:
\begin{align*}
L(L(\mathfrak{g},\rho,V,{\cal V},B_0),\tilde{\pi }^+,\tilde{U}^+,\tilde{\cal U}^-,B_L)=\bigoplus _{k\in\mathbb{Z}}W^{\gamma }_k.
\end{align*}
In particular, 
\begin{align}
W^{\gamma }_0=V_0,\quad  W^{\gamma }_1=V_1\oplus U_0^+, \quad W^{\gamma }_{-1}=V_{-1}\oplus {\cal U}_0^-.
\end{align}
We can easily show that $W^{\gamma }_{k+1}=[W^{\gamma }_{1},W^{\gamma }_{k}]$, $W^{\gamma }_{-k-1}=[W^{\gamma }_{-1},W^{\gamma }_{-k}]$ for all $k\geq 1$ and that the restriction of $\overline {B}_L$ to $W^{\gamma }_k\times W^{\gamma }_{-k}$ is non-degenerate for any $k\in \mathbb{Z}$ from (\ref{overlineB_inv_der}).
Therefore, by Theorem \ref{th;universality_stap}, we have the equation (\ref{eq;chainrule}).
\end{proof}

\begin{ex}\label{ex;3}
We retain to use the notations of Examples \ref{ex;1} and \ref{ex;2}.
Put ${\cal U}:=\mathbb{C}$ and define a representation $\varpi :\mathfrak{g}\otimes {\cal U}\rightarrow {\cal U}$ and a bilinear map $\langle \cdot,\cdot \rangle _U:U\times {\cal U}\rightarrow \mathbb{C}$ by:
$$
\varpi ((a,b,A)\otimes w):=-aw,\quad \langle u,w\rangle _U:=uw.
$$
We can identify ${\cal U}$ with $\mathrm {Hom}(U,\mathbb{C})$ via $\langle \cdot,\cdot \rangle _U$.
Then pentads $(L(\mathfrak{g},\rho,V,{\cal V},B_0),\tilde{\pi }^+,\tilde{U }^+,\tilde{\cal U}^-,B_L)$ and $(\mathfrak{g},\rho\oplus \pi,V\oplus U,{\cal V}\oplus {\cal U},B_0)$ are standard.
Let us show that the Lie algebra $L(\mathfrak{g},\rho\oplus \pi,V\oplus U,{\cal V}\oplus {\cal U},B_0)$ is isomorphic  to $\mathfrak{sl}_4$.
Put elements
\begin{align*}
H_0:=\begin{pmatrix}\frac{5}{4}&&&\\&\frac{1}{4}&&\\&&\frac{-3}{4}&\\&&&\frac{-3}{4}\end{pmatrix},\quad H_1:=\begin{pmatrix}\frac{3}{4}&&&\\&\frac{-1}{4}&&\\&&\frac{-1}{4}&\\&&&\frac{-1}{4}\end{pmatrix},\quad H_2:=\begin{pmatrix}\frac{1}{2}&&&\\&\frac{1}{2}&&\\&&\frac{-1}{2}&\\&&&\frac{-1}{2}\end{pmatrix}\in \mathfrak{sl}_4.
\end{align*}
Then we can obtain a $\mathbb{Z}$-grading of $\mathfrak{sl}_4$ by the eigenspace decomposition of $\mathrm {ad}\ H_0$:
\begin{align}
\mathfrak{sl}_4=\bigoplus _{i=-2}^2\mathfrak{l}_i \quad (\mathfrak{l}_i:=\{X\in\mathfrak{sl}_4\mid [H_0,X]=iX\}).\label{ex;grading}
\end{align}
In particular, 
\begin{align*}
&\mathfrak{l}_0=\{ \begin{pmatrix} a &0&0&0\\0&b &0&0\\0&0&\multicolumn{2}{c}{\raisebox{1.5ex}[0pt]{}} \\0&0&\multicolumn{2}{c}{\raisebox{1.5ex}[0pt]{\Large $A$}} \end{pmatrix}\mid a,b\in\mathbb{C}, A\in \mathfrak{gl}_2, a+b+\mathrm {Tr}(A)=0\}\simeq \mathfrak{gl}_1\oplus \mathfrak{gl}_1\oplus \mathfrak{sl}_2,\\
&\mathfrak{l}_1=\{ \begin{pmatrix} 0&u&0&0\\0&0&v_1&v_2\\0&0&0&0\\0&0&0&0\end{pmatrix}\mid u, v_1,v_2\in \mathbb{C}\},\quad \mathfrak{l}_{-1}=\{ \begin{pmatrix} 0&0&0&0\\\psi &0&0&0\\0&\phi _1&0&0\\0&\phi _2&0&0\end{pmatrix}\mid \psi , \phi _1,\phi _2\in \mathbb{C}\}.
\end{align*}
Then, we have that $\mathfrak{l}_0\simeq \mathbb{C}H_1\oplus \mathbb{C}H_2\oplus \mathfrak{sl}_2$ and that the restriction of a bilinear form $T$, defined by $T(X,X^{\prime }):=\mathrm{Tr}(XX^{\prime })$ $(X,X^{\prime }\in \mathfrak{sl}_4)$, to $\mathfrak{l}_0\times \mathfrak{l}_0$ satisfies:
$$
T\mid _{\mathfrak{l}_0\times \mathfrak{l}_0}((a,b,A),(a^{\prime},b^{\prime},A^{\prime}))=\frac{3}{4}aa^{\prime }+bb^{\prime}+\frac{1}{2}(ab^{\prime }+a^{\prime }b)+ \mathrm{Tr}(AA^{\prime}),
$$
where $a,a^{\prime }\in \mathbb{C}H_1$, $b,b^{\prime }\in \mathbb{C}H_2$, $A,A^{\prime }\in \mathfrak{sl}_2$.
Thus, we can easily show that the grading (\ref{ex;grading}) and the Killing form of $\mathfrak{sl}_4$, denoted by $K_{\mathfrak{sl}_4}$, satisfy the assumptions of Theorem \ref{th;universality_stap}
and that a pentad $(\mathfrak{l}_0,\mathrm {ad},\mathfrak{l}_1,\mathfrak{l}_{-1},K_{\mathfrak{sl}_4}\mid _{\mathfrak{l}_0\times \mathfrak{l}_0})$ is equivalent to $(\mathfrak{g},\rho\oplus \pi,V\oplus U,{\cal V}\oplus {\cal U},B_0)$ (cf. \cite[the theory of prehomogeneous vector spaces of parabolic type]{ru-1,ru-2,ru-3}).
Thus, by Theorems \ref{th;universality_stap} and \ref{theo;chainrule}, we have
$$
L(L(\mathfrak{g},\rho,V,{\cal V},B_0),\tilde{\pi }^+,\tilde{U }^+,\tilde{\cal U}^-,B_L)\simeq L(\mathfrak{g},\rho\oplus \pi,V\oplus U,{\cal V}\oplus {\cal U},B_0)\simeq \mathfrak{sl}_4.
$$
\par In this case, we can directly check that the Lie algebra $L(L(\mathfrak{g},\rho,V,{\cal V},B_0),\tilde{\pi }^+,\tilde{U }^+,\tilde{\cal U}^-,B_L)$ is isomorphic to $\mathfrak{sl}_4$ using Examples \ref{ex;sa_sl}, \ref{ex;1} and \ref{ex;2}.
In fact, by the results of Examples \ref{ex;1} and \ref{ex;2}, we have that the pentad $(L(\mathfrak{g},\rho,V,{\cal V},B_0),\tilde{\pi }^+,\tilde{U }^+,\tilde{\cal U}^-,B_L)$ is equivalent to the pentad $(\mathfrak{gl}_1\oplus \mathfrak{sl}_3,  \Lambda _1 ,\mathbb{C}^3,\mathbb{C}^3, \kappa _3)$, which is defined in Example \ref{ex;sa_sl}.
Thus, we have that the Lie algebra $L(L(\mathfrak{g},\rho,V,{\cal V},B_0),\tilde{\pi }^+,\tilde{U }^+,\tilde{\cal U}^-,B_L)$ is isomorphic to $\mathfrak{sl}_4$.
\end{ex}

\medskip
\begin{flushleft}
Nagatoshi Sasano\\
Institute of Mathematics-for-Industry\\
Kyushu University\\
744, Motooka, Nishi-ku, Fukuoka 819-0395\\
Japan\\
E-mail: n-sasano@imi.kyushu-u.ac.jp
\end{flushleft}

\end{document}